\theoremstyle{plain}
\newtheorem{thm}{Theorem}[section]
\newtheorem{prop}[thm]{Proposition}
\newtheorem{lem}[thm]{Lemma}
\newtheorem{cor}[thm]{Corollary}
\theoremstyle{definition}
\newtheorem{defi}[thm]{Definition}
\theoremstyle{remark}
\newtheorem{remark}[thm]{Remark}
\numberwithin{equation}{section}
\newcommand{\N}{\mathbb{N}}
\newcommand{\Z}{\mathbb{Z}}
\newcommand{\R}{\mathbb{R}}
\newcommand{\C}{\mathbb{C}}
\newcommand{\EE}{\mathbb{E}}
\newcommand{\TT}{\mathbb{T}}
\newcommand{\EX}{\mathbb{X}}
\newcommand{\CB}{\mathcal{B}}
\newcommand{\CC}{\mathcal{C}}
\newcommand{\CD}{\mathcal{D}}
\newcommand{\CE}{\mathcal{E}}
\newcommand{\CF}{\mathcal{F}}
\newcommand{\CI}{\mathcal{I}}
\newcommand{\CL}{\mathcal{L}}
\newcommand{\CM}{\mathcal{M}}
\newcommand{\CR}{\mathcal{R}}
\newcommand{\CS}{\mathcal{S}}
\newcommand{\CX}{\mathcal{X}}
\newcommand{\pt}{\partial_t}
\newcommand{\pxi}{\partial_i}
\newcommand{\rs}{\varodot}
\newcommand{\pl}{\varolessthan}
\newcommand{\pr}{\varogreaterthan}
\newcommand{\be}{\mathbbm{e}}
\newcommand{\CTCC}[1]{C_T\CC^{#1}}
\newcommand{\drawC}[2]{
\draw[thick] #1 -- #2;
\filldraw #1 circle (0.8pt);
\filldraw #2 circle (1.6pt);}
\newcommand{\drawI}[2]{
\draw[thick] #1 -- #2;
\filldraw #1 circle (0.8pt);
\filldraw #2 circle (0.8pt);}
\newcommand{\putrs}[1]{
\filldraw[white] #1 circle (2.5pt);
\filldraw[black] #1 circle (0.4pt);
\draw[draw=black] #1 circle (2.5pt);
}
\newcommand{\drawAC}[2]{
\draw[thick] #1 -- #2;
\draw[fill=black] #1 circle (0.8pt);
\draw[draw=black,fill=white] #2 circle (1.6pt);
}
\newcommand{\IX}{{\,
\begin{tikzpicture}[baseline=0.5pt, scale=6/12]
\coordinate (O1) at (0,0);
\coordinate (X1) at (0pt,12pt);
\drawC{(O1)}{(X1)}
\end{tikzpicture}\,}}
\newcommand{\XX}{{\,
\begin{tikzpicture}[baseline=0.5pt, scale=6/12]
\coordinate (O1) at (0,0);
\coordinate (X1) at (-4pt,12pt);
\coordinate (X2) at (+4pt,12pt);
\drawC{(O1)}{(X1)}
\drawC{(O1)}{(X2)}
\end{tikzpicture}\,}}
\newcommand{\XXX}{{\,
\begin{tikzpicture}[baseline=0.5pt, scale=6/12]
\coordinate (O1) at (0,0);
\coordinate (X1) at (-6pt,11pt);
\coordinate (X2) at (+0pt,12pt);
\coordinate (X3) at (+6pt,11pt);
\drawC{(O1)}{(X1)}
\drawC{(O1)}{(X2)}
\drawC{(O1)}{(X3)}
\end{tikzpicture}\,}}
\newcommand{\ICC}{{\,
\begin{tikzpicture}[baseline=1pt, scale=6/12]
\coordinate (O1) at (0pt,0pt);
\coordinate (O2) at (0pt,7pt);
\coordinate (X1) at (-3pt,15pt);
\coordinate (X3) at (+3pt,15pt);
\drawC{(O2)}{(X1)}
\drawC{(O2)}{(X3)}
\drawI{(O1)}{(O2)}
\end{tikzpicture}\,}}
\newcommand{\ICCC}{{\,
\begin{tikzpicture}[baseline=1pt, scale=6/12]
\coordinate (O1) at (0pt,0pt);
\coordinate (O2) at (0pt,7pt);
\coordinate (X1) at (-5pt,14pt);
\coordinate (X2) at (+0pt,15pt);
\coordinate (X3) at (+5pt,14pt);
\drawC{(O2)}{(X1)}
\drawC{(O2)}{(X2)}
\drawC{(O2)}{(X3)}
\drawI{(O1)}{(O2)}
\end{tikzpicture}\,}}
\newcommand{\CICCC}{{\,
\begin{tikzpicture}[baseline=1pt, scale=6/12]
\coordinate (O1) at (0pt,0pt);
\coordinate (O2) at (0pt,7pt);
\coordinate (X1) at (-5pt,14pt);
\coordinate (X2) at (+0pt,15pt);
\coordinate (X3) at (+5pt,14pt);
\coordinate (Y2) at (+7pt,6pt);
\drawC{(O2)}{(X1)}
\drawC{(O2)}{(X2)}
\drawC{(O2)}{(X3)}
\drawI{(O1)}{(O2)}
\drawC{(O1)}{(Y2)}
\putrs{(O1)}
\end{tikzpicture}\,}}
\newcommand{\cICCc}{{
\begin{tikzpicture}[baseline=1pt, scale=6/12]
\coordinate (O1) at (0pt,0pt);
\coordinate (O2) at (0pt,7pt);
\coordinate (X1) at (-3pt,15pt);
\coordinate (X2) at (3pt,15pt);
\coordinate (X3) at (6pt,4pt);
\coordinate (Y2) at (6pt,4pt);
\drawC{(O2)}{(X1)}
\drawC{(O2)}{(X2)}
\drawAC{(O2)}{(X3)}
\drawI{(O1)}{(O2)}
\drawAC{(O1)}{(Y2)}
\putrs{(O1)}
\end{tikzpicture}}}
\newcommand{\CCICC}{{\,
\begin{tikzpicture}[baseline=1pt, scale=6/12]
\coordinate (O1) at (0pt,0pt);
\coordinate (O2) at (0pt,7pt);
\coordinate (X1) at (-3pt,15pt);
\coordinate (X3) at (+3pt,15pt);
\coordinate (Y1) at (-5pt,6pt);
\coordinate (Y2) at (+5pt,6pt);
\drawC{(O2)}{(X1)}
\drawC{(O2)}{(X3)}
\drawI{(O1)}{(O2)}
\drawC{(O1)}{(Y1)}
\drawC{(O1)}{(Y2)}
\putrs{(O1)}
\end{tikzpicture}\,}}
\newcommand{\CcICc}{{\,
\begin{tikzpicture}[baseline=1pt, scale=6/12]
\coordinate (O1) at (0pt,0pt);
\coordinate (O2) at (0pt,7pt);
\coordinate (X1) at (-3pt,15pt);
\coordinate (X3) at (+6pt,4pt);
\coordinate (Y1) at (-5pt,6pt);
\coordinate (Y2) at (+6pt,4pt);
\drawC{(O2)}{(X1)}
\drawAC{(O2)}{(X3)}
\drawI{(O1)}{(O2)}
\drawC{(O1)}{(Y1)}
\drawAC{(O1)}{(Y2)}
\putrs{(O1)}
\end{tikzpicture}\,}}
\newcommand{\CCICCC}{{\,
\begin{tikzpicture}[baseline=1pt, scale=6/12]
\coordinate (O1) at (0pt,0pt);
\coordinate (O2) at (0pt,7pt);
\coordinate (X1) at (-5pt,14pt);
\coordinate (X2) at (+0pt,15pt);
\coordinate (X3) at (+5pt,14pt);
\coordinate (Y1) at (-7pt,6pt);
\coordinate (Y2) at (+7pt,6pt);
\drawC{(O2)}{(X1)}
\drawC{(O2)}{(X2)}
\drawC{(O2)}{(X3)}
\drawI{(O1)}{(O2)}
\drawC{(O1)}{(Y1)}
\drawC{(O1)}{(Y2)}
\putrs{(O1)}
\end{tikzpicture}\,}}
\newcommand{\CcICCc}{{\,
\begin{tikzpicture}[baseline=1pt, scale=6/12]
\coordinate (O1) at (0pt,0pt);
\coordinate (O2) at (0pt,7pt);
\coordinate (X1) at (-5pt,14pt);
\coordinate (X2) at (+0pt,15pt);
\coordinate (X3) at (+7pt,4pt);
\coordinate (Y1) at (-7pt,6pt);
\coordinate (Y2) at (+7pt,4pt);
\drawC{(O2)}{(X1)}
\drawC{(O2)}{(X2)}
\drawAC{(O2)}{(X3)}
\drawI{(O1)}{(O2)}
\drawC{(O1)}{(Y1)}
\drawAC{(O1)}{(Y2)}
\putrs{(O1)}
\end{tikzpicture}\,}}
\newcommand{\ccICcc}{{\,
\begin{tikzpicture}[baseline=1pt, scale=6/12]
\coordinate (O1) at (0pt,0pt);
\coordinate (O2) at (0pt,7pt);
\coordinate (X1) at (-7pt,4pt);
\coordinate (X2) at (+0pt,15pt);
\coordinate (X3) at (+7pt,4pt);
\coordinate (Y1) at (-7pt,4pt);
\coordinate (Y2) at (+7pt,4pt);
\drawAC{(O2)}{(X1)}
\drawC{(O2)}{(X2)}
\drawAC{(O2)}{(X3)}
\drawI{(O1)}{(O2)}
\drawAC{(O1)}{(Y1)}
\drawAC{(O1)}{(Y2)}
\putrs{(O1)}
\end{tikzpicture}\,}}
\title{The dynamical $\Phi_3^4$ model by an approximation of Laplacian}
\date{}
\author{Reo Adachi \thanks{Graduate School of Engineering Science, Osaka University, 1-3 Machikaneyama, Toyonaka, Osaka 560-8531, Japan, e-mail address: deruf4357@gmail.com
}}
\begin{document}
\maketitle

\begin{abstract}
    The dynamical $\Phi^4_3$ equation is a singular SPDE and has important applications in physics.
    In this paper, we consider the equation by approximating the Laplacian instead
    of the noise or the cubic term as in previous studies.
    By using a good transformation as in \cite{JP21}, we show that the approximating equation is locally well-posed and that its solution converges to the solution of the dynamical $\Phi^4_3$ equation.
    We expect to be able to show the global well-posedness
    and the existence of the invariant measure.
\end{abstract}

{\bf{2020 Mathematics Subject Classification:}}~60H17, 81S20, 35R60, 81T08.

\vspace{2mm}

{\bf{Keywords:}}~stochastic quantization, singular SPDE, 
paracontrolled calculus, invariant measure, quantum field theory.

\section{Introduction}
In this paper, we consider the dynamical $\Phi^4_3$ equation, which is formally written as
\begin{equation}
    \label{eq:intro_Phi43}
    (\pt - \Delta +1)\phi = -\phi^3 +\infty \cdot \phi + \xi, \quad (t,x) \in \R_+ \times \TT^3,
\end{equation}
where $\TT^3:=(\R/\Z)^3$ is the three-dimensional torus and $\xi$ is a space-time white noise on $\R_+\times \TT^3$.
Our main results are that the approximating equation
\begin{equation}
    \label{eq:intro_approximation}
    (\pt - \Delta + \epsilon\Delta^2 +1)\phi_\epsilon = -\phi_\epsilon^3 + (3a_\epsilon -9b_\epsilon)\phi_\epsilon + \xi, \quad (t,x) \in \R_+ \times \TT^3
\end{equation}
satisfies the $\epsilon$-uniform local well-posedness and that a sequence of solutions $\{\phi_\epsilon\}_{0< \epsilon \leq 1}$ converges to a solution $\phi$ of (\ref{eq:intro_Phi43}).

The dynamical $\Phi^4_3$ equation has three important applications in physics.
Firstly, it is a model of a scalar field that appears in the constructive quantum field theory.
Its invariant measure called $\Phi^4_3$ measure is expected to be constructed from the equation (\ref{eq:intro_Phi43}) by the theory of stochastic quantization \cite{PW81}.
Secondly, its solution is expected to describe the 3D Ising model with Glauber dynamics and Kac interactions near critical temperature \cite{GLP99}.
\cite{MW17a} showed that two-dimensional dynamical Ising-Kac model converges to the solution of dynamical $\Phi^4_2$ equation.
Finally, three-dimensional continuous phase coexistence model near a bifurcation point is descibed by the dynamical $\Phi^4_3$ models \cite{HX18}.

The dynamical $\Phi^4_3$ equation belongs to the class of singular SPDEs.
A singular SPDE is an equation containing a product of unknown functions which cannot be interpreted in the usual sense.
For the two-dimensional case, the SPDE was solved by Da Prato and Debussche by decomposing the equation into the sum of the OU process and a better remainder \cite{DPD03}.
However, for the three-dimensional case, this approach could not be extended due to the worse regularity, hence the local well-posedness of the dynamical $\Phi^4_3$ equation had been an open problem for a long time until the ground-breaking work by Hairer.
He created a new mathematical theory called the theory of ``Regularity Structure'' \cite{Hai14}, which can treat a large class of semilinear parabolic SPDEs involving the dynamical $\Phi^4_3$ equation.
Furthermore, another approach called the theory of ``paracontrolled distributions'' was created by Gubinelli, Imkeller and Perkowski \cite{GIP15}.
Catellier and Chouk \cite{CC18} showed the local well-posedness in this way.
After that, Mourrat and Weber \cite{MW17b} showed the global well-posedness using the a priori estimate and Gubinelli and Hofmanov\'{a} \cite{GH19} shows the global well-posedness on $\R_+\times\R^3$ by considering weighted Besov space.
Recently, a simpler method using Paracontrolled distributions was proposed in \cite{JP21}, where the authors used something called the inverse Cole-Hopf transform to cancel the ill terms in the equation.
This method is also used in this paper.
For the invariant measure, Albeverio and Kusuoka \cite{AK17} showed its direct construction via the tightness of the approximating measures.

In this paper, we use a different approximation from previous studies.
For example, \cite{CC18} and \cite{MW17b} approximated the noise and \cite{AK17} approximated the cubic terms.
On the other hand, we aprroximate Laplacian and show the local well-posedness and the convergence of the solution.
Although a method for the approximating Laplacian has already been done in \cite{EX21}, we were able to show these in another way by using the method in \cite{JP21}.
We state the main result.
\begin{thm}
    Fix $0<\kappa<1/8$.
    For every initial value $\phi(0)\in \CC^{-1/2-\kappa}$ and $0<\epsilon\leq 1$, there exists a random variable $T>0$ such that the equation (\ref{eq:intro_approximation}) has a unique solution $\phi_\epsilon$ on $C([0,t];\CC^{-1/2-\kappa})$ and the sequence $\{\phi_\epsilon\}_{0 < \epsilon\leq 1}$ of the solution of (\ref{eq:intro_approximation}) converges to the solution $\phi \in C([0,t];\CC^{-1/2-\kappa})$ of the (\ref{eq:intro_Phi43}) in probability for every $0<t<T$.
\end{thm}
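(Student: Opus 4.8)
The strategy is to follow the method of \cite{JP21}: apply a good transformation (an inverse Cole--Hopf type change of variables) that turns the singular equation into one with sufficiently regular coefficients, then solve the transformed equation by a fixed-point argument in a suitable space of paracontrolled distributions, and finally pass to the limit $\epsilon \to 0$ using $\epsilon$-uniform estimates on all the stochastic data. The presence of the biharmonic regularisation $\epsilon\Delta^2$ changes the linear operator from $\pt - \Delta + 1$ to $\pt - \Delta + \epsilon\Delta^2 + 1$, so the first task is to record the mapping properties of the associated semigroup $e^{-t(-\Delta+\epsilon\Delta^2+1)}$ on Besov--Hölder spaces $\CC^\alpha$: it should satisfy the same Schauder-type smoothing estimates as the heat semigroup, \emph{uniformly in $0<\epsilon\le 1$}, and converge to $e^{-t(-\Delta+1)}$ as $\epsilon\to 0$. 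This is where the constants $a_\epsilon$ and $b_\epsilon$ enter — they are the $\epsilon$-dependent renormalisation constants coming from the (logarithmically and linearly divergent) second and third Wick powers of the stochastic convolution $Z_\epsilon$ driven by $\xi$ through the regularised operator.

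Concretely, the steps I would carry out are: (i) define the stochastic objects — the stationary solution $Z_\epsilon$ of the linear equation, its Wick powers $Z_\epsilon^{\diamond 2}$, $Z_\epsilon^{\diamond 3}$, and the resonant/paraproduct combinations appearing in the trees drawn in the preamble (the diagrams $\IX$, $\XX$, $\ICC$, etc.) — and prove that they live in the expected $C_T\CC^{\alpha}$ spaces with moment bounds uniform in $\epsilon$, using Gaussian hypercontractivity together with the kernel estimates on the regularised heat kernel from the previous step; (ii) prove these stochastic data converge in probability (in the appropriate negative-regularity norm) to the corresponding objects for the $\Phi^4_3$ equation as $\epsilon\to 0$, which is the analogue of the standard convergence of the BPHZ-renormalised model; (iii) apply the good transformation of \cite{JP21} to \eqref{eq:intro_approximation}, obtaining an equation for the transformed unknown whose nonlinearity is a locally Lipschitz map of paracontrolled data, with all bounds and Lipschitz constants uniform in $\epsilon$; (iv) run the Banach fixed point in a ball of a paracontrolled solution space on a random time interval $[0,T]$, with $T>0$ chosen (depending on the size of the stochastic data, hence $\epsilon$-uniformly) so the contraction holds; (v) undo the transformation to get $\phi_\epsilon\in C([0,T];\CC^{-1/2-\kappa})$, and use the continuity of the solution map together with the convergence of the data from step (ii) to conclude $\phi_\epsilon\to\phi$ in probability on each $[0,t]$ with $t<T$.

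The restriction $0<\kappa<1/8$ should be exactly what is needed to keep all the resonant products in the paracontrolled ansatz in strictly positive regularity (so that multiplication is well-defined) while keeping the stochastic convolution in $\CC^{-1/2-\kappa}$ with $-1/2-\kappa>-1$; this is the standard subcriticality window for $\Phi^4_3$, slightly shrunk to absorb the transformation.

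The main obstacle I expect is step (i)–(ii): establishing that the regularised heat kernel $e^{-t(-\Delta+\epsilon\Delta^2+1)}$ enjoys Schauder estimates \emph{with constants independent of $\epsilon$} and that the renormalisation constants $a_\epsilon, b_\epsilon$ have the right divergence rates and compensate exactly the divergences in $Z_\epsilon^{\diamond 2}, Z_\epsilon^{\diamond 3}$. Unlike a mollification of the noise, here the symbol is $|k|^2 + \epsilon|k|^4 + 1$, so for $|k|\gtrsim \epsilon^{-1/2}$ the quartic term dominates and the kernel behaves differently at high frequencies; one must check that this improves rather than worsens the relevant bounds and, crucially, that it does not disturb the near-critical-frequency regime where the log-divergence of the $\Phi^4_3$ model is generated. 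Once the kernel estimates and the convergence of the stochastic data are in hand, the deterministic analysis (steps (iii)–(v)) is a routine, if lengthy, adaptation of \cite{JP21}, since the transformation and the fixed-point argument only use the abstract Schauder and paraproduct estimates, which by then hold uniformly in $\epsilon$.
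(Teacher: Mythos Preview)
Your overall architecture is right, and steps (i)--(ii) and (iv)--(v) match the paper closely. But there is a genuine gap in step (iii), where you call the deterministic analysis a ``routine, if lengthy, adaptation'' of \cite{JP21}. The Cole--Hopf type transformation $v_\epsilon = e^{3\ICC_\epsilon} g_\epsilon$ does \emph{not} commute cleanly with the biharmonic term: expanding $\CL_\epsilon(e^h g)$ produces, beyond the usual $|\nabla h|^2$ and $\nabla h\cdot\nabla g$ terms, a cascade of $\epsilon$-weighted error terms such as $\epsilon\Delta h\,\Delta v$, $\epsilon\,\widetilde{B}(h,v)$, $\epsilon\,T(h,h,v)$, $\epsilon\,B(h,h)^2$ (the paper derives the full list in its Appendix). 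These have too many derivatives to be bounded in $\CC^{-1/2-\kappa}$ using only the $\epsilon$-uniform heat Schauder estimates you propose. The paper's remedy is twofold: it introduces an $\epsilon$-dependent norm $\|u\|_{\CC_\epsilon^\alpha}:=\|u\|_{\CC^\alpha}+\epsilon^{1-\kappa/4}\|u\|_{\CC^{\alpha+2-\kappa}}$ that captures the additional smoothing of $e^{-\epsilon t\Delta^2}$, and it modifies the driving vector, replacing $\nabla\ICC\rs\nabla\ICC - b$ by $\CD_\epsilon := \nabla\ICC_\epsilon\rs\nabla\ICC_\epsilon + \epsilon\,\Delta\ICC_\epsilon\rs\Delta\ICC_\epsilon - b_\epsilon$, so that the new resonant combination is what actually converges. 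With these two devices each extra term is bounded by $\epsilon^{\kappa/4}$ times products of $\CC_\epsilon^\alpha$-norms, and the fixed point closes uniformly in $\epsilon$ on the space $\CE_T^{-1/2-\kappa}\CC_\epsilon^{3/2-2\kappa}$.

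A secondary remark: you speak of a ``paracontrolled solution space'' and a ``paracontrolled ansatz'', but the point of the \cite{JP21} transformation (and of this paper) is precisely to \emph{avoid} a paracontrolled ansatz for the unknown; after the exponential change of variables and one further subtraction ($u_\epsilon = v_\epsilon - y_\epsilon$), the remainder $u_\epsilon$ lives in a plain weighted H\"older space and the fixed point is classical. All the paracontrolled machinery (paralinearisation, commutator lemmas, Bony paramultiplication) is used only to analyse the \emph{coefficients} $Z_\epsilon^{(i)}$ and $\overline{G}_\epsilon$ built from the stochastic data, not to describe the solution space.
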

We reformulate the above theorem more precisely in Theorems \ref{thm:LWP_of_u_epsilon} and \ref{thm:Convergence} below.
\begin{remark}
    We might be able to take the regularity of the initial value to $-2/3+\kappa$ from previous studies.
    However, we require significant modifications in order to show it.
\end{remark}
In this paper, we can only show up to the local well-posedness, but since the global well-posedness of $\Phi^4_3$ equation has already been confirmed, we expect to be able to show the global well-posedness of this approximating equation and the existence of the invariant measure.
Then we can consider the new problem of whether its invariant measures converge to the $\Phi^4_3$ measure.

The organization of the present paper is as follows.
In Section 2, we formulate the approximating equation (\ref{eq:intro_approximation}) and show that it has $\epsilon$-uniform local well-posedness and a sequence of solutions converges to the solution of the dynamical $\Phi^4_3$ equation.
In Section 3, we show the convergence of stochastic objects.
Although the approximating sequence of such objects differs from the previous ones in \cite{JP21}, the proof of convergence is essentially same.

\subsubsection*{Acknowledgement.}
The author are grateful to Professor Masato Hoshino for helpful discussions and valuable comments.

\section{Local well-posedness and convergence}
\label{sec:2}
This section is devoted to the proof of Theorem \ref{thm:LWP_of_u_epsilon} (local well-posedness) and Theorem \ref{thm:Convergence} (convergence).
In Section \ref{sec:Formulate}, we formulate the approximating equation.
In Section \ref{sec:Preliminary}, we estimates the right-hand side of the approximating equation for main theorems.
In Section \ref{sec:LWP}, we prove Theorem \ref{thm:LWP_of_u_epsilon} and in Section \ref{sec:Convergence}, we prove Theorem \ref{thm:Convergence}.

\subsection*{Notation}
We write $\mathbbm{e}_k(x):=e^{2 \pi i k \cdot x}$ for $k \in \Z^3, x \in \R^3$.
Let $\CS$ denote the space of smooth functions on $\TT^3$ and $\CS'$ denote the space of tempered distributions on $\TT^3$.
The Fourier transform $\CF u$ for $u\in \CS$ is defined by
\[
    \CF u(k):=\int_{\TT^3} \be_{-k}(x)u(x) dx
\]
and its inverse $\CF^{-1}v$ for a rapidly decreasing sequence $\{v(k)\}_{k\in \Z^3}$ is defined by
\[
    \CF^{-1}v(x):=\sum_{k\in \Z^3} v(k)\be_{k}(x).
\]

We denote by $\{\rho_j\}_{j=-1}^{\infty}$ a dyadic partition of unity (\cite{GIP15} Section A.1).
The Littlewood-Paley blocks $\{\Delta_j\}_{j=-1}^{\infty}$ are defined by
\[
    \Delta_j u:= \CF^{-1}(\rho_j \CF u).
\]
We write $\Delta_{<k}:=\sum_{j=-1}^{k-1}\Delta_j$.

For $1\leq p,q \leq \infty$ and $\alpha\in \R$, the Besov space $\CB_{p,q}^\alpha$ is defined by
\begin{align*}
    \CB_{p,q}^\alpha
    := \{ u \in \CS' ; \|u\|_{\CB_{p,q}^\alpha}:=\|(2^{j\alpha}\|\Delta_j u\|_{L^p})_{j\geq -1}\|_{\ell^q} < \infty \}.
\end{align*}
In particular, we write $\CC^\alpha:=\CB_{\infty,\infty}^\alpha$.

Let $\kappa>0$ and $0\leq \epsilon \leq 1$. We write $\CC_\epsilon^\alpha:=\{ u \in \CS'; \|u\|_{\CC_\epsilon^\alpha} <\infty \}$, where
\[
    \|u\|_{\CC_\epsilon^\alpha}
    :=\|u\|_{\CC^\alpha} + \epsilon^{1-\frac{\kappa}{4}} \|u\|_{\CC^{\alpha+2-\kappa}}.
\]
In particular, $\CC^\alpha_0=\CC^\alpha$.

Fix $T>0$.
Let $X$ be a Banach space with norm $\|\cdot \|_X$.
$C_TX$ is the space of all continuous functions from $[0,T]$ to $X$ which is equipped with the supremum norm
\[
    \|u\|_{C_TX} := \sup_{0\leq t\leq T} \|u(t)\|_{X}.
\]
For $0\leq \delta \leq 1$, $C_T^\delta X$ is the space of all $\delta$-H\"{o}lder continuous functions from $[0,T]$ to $X$ which is equipped with the seminorm
\[
    \|u\|_{C_T^\delta X} := \sup_{0\leq s < t \leq T} \frac{\|u(t)-u(s)\|_{X}}{|t-s|^\delta}.
\]
Fix $\kappa > 0$ and $0\leq \epsilon \leq 1$.
For $\alpha \in \R$ and $\beta\leq\alpha$,  we set $\CE_T^\beta \CC_\epsilon^\alpha:=\{ u \in C([0,T], \CC^\beta); \|u\|_{\CE_T^\beta \CC_\epsilon^\alpha} < \infty \}$, where
\[
    \|u\|_{\CE_T^\beta\CC_\epsilon^\alpha}:=\sup_{0\leq t\leq T}\|u(t)\|_{\CC^\beta} + \sup_{0<t\leq T}t^{\frac{\alpha-\beta}{2}}\|u(t)\|_{\CC_\epsilon^\alpha}.
\]
Note that the first term of right-hand side is independent of $\epsilon$.
In particular, $\CE_T^\alpha\CC_\epsilon^\alpha=C_T\CC_\epsilon^\alpha$.
We often represent $\CE_T^\beta\CC^\alpha:=\CE_T^\beta\CC_0^\alpha$ for simplicity.

For $u, v \in \CS'$, we define the Bony's paraproduct
\[
    u \pl v = v \pr v := \sum_{j\geq -1} \sum_{i=-1}^{j-2} \Delta_i u \Delta_j v
\]
and the resonant product
\[
    u \rs v := \sum_{|i-j|\leq 1} \Delta_i u \Delta_j v
\]
if the right-hand side makes sense.
For simplicity, we denote ``inner paraproduct'' and ``inner resonant product'' by
\[
    \nabla f \pl \nabla g:= \sum_{i=1}^3 \pxi f \pl \pxi g, 
    \quad 
    \nabla f \rs \nabla g := \sum_{i=1}^3 \pxi f \rs \pxi g,
\]
respectively.

We use the following differential operator:
\[
    \CL_\epsilon := \pt - \Delta + \epsilon \Delta^2 + 1.
\]
The heat semigroups $e^{t\Delta}$ and $e^{-\epsilon t \Delta^2}$ are given by
\[
    e^{t\Delta} u := \CF^{-1}\left(\phi(t^{\frac{1}{2}}\cdot)\CF u\right), \quad e^{-\epsilon t \Delta^2}u:= \CF^{-1}\left(\widetilde{\phi}((\epsilon t)^{\frac{1}{4}}\cdot)\CF u\right),
\]
where
\[
    \phi(x):=e^{-4\pi^2|x|^2},\quad \widetilde{\phi}(x):=e^{-16\pi^4|x|^4}.
\]
We write $P_t^\epsilon:=e^{t(\Delta-\epsilon\Delta^2-1)}$ and $\CL_\epsilon^{-1}[v](t):=\int_{0}^t P_{t-s}^\epsilon v(s) ds$.

We say that $f\lesssim_a g$ if there is a constant $C(a)>0$ depending only on some parameter $a$ such that $f\leq C(a) g$.

\subsection{Definition of driving vectors and solutions}
\label{sec:Formulate}
\subsubsection{Transformation}
For any $ 0 < \epsilon \leq 1$, we consider approximating the dynamical $\Phi^4_3$ equation on the three-dimensional torus $\TT^3$ of the following form:
\begin{empheq}[left=\empheqlbrace]{align}\label{eq:phi43}
    &\CL_\epsilon \phi_\epsilon = - \phi_\epsilon^3 + (3a_\epsilon - 9 b_\epsilon)\phi_\epsilon + \xi, &&(t,x) \in \R_+\times \TT^3 \\
    &\phi_\epsilon(0,\cdot) = \phi(0), && x \in \TT^{3}
    \label{eq:phi43_init}
\end{empheq}
where $\phi(0) \in \CC^{-1/2-\kappa}$ for some sufficiently small $\kappa > 0$, $a_\epsilon$ and $b_\epsilon$ are deterministic real numbers which diverge as $\epsilon$ tends to $0$ and $\xi$ is a two-sided space-time white noise on $\R\times \TT^3$.
Thanks to the approximation of Laplacian using $\Delta^2$, the solution $\phi_\epsilon$ is smooth enough and (2.1) makes sense.
Since we only expect that $\phi_\epsilon$ converges in $\CC^{-1/2-\kappa}$ as $\epsilon$ tends to $0$ by Schauder estimates (Proposition \ref{prop:Schauder_estimates}), we cannot expect the convergence of the cubic term $\phi_\epsilon^3$.
Therefore, the limiting equation of (\ref{eq:phi43}) is ill-defined.
Hence, by decomposing this equation, we need to derive some well-posed equation.

In what follows, we fix sufficiently small $\kappa>0$.
We consider the stochastic terms built from $\xi$.
\begin{defi}
    We call a family of functions
    \begin{multline*}
        \EX := (\IX, \XX, \ICC(0), \ICCC, \CICCC, \CCICC, \CCICCC, \CD)\\
        \in C_T\CC^{-\frac{1}{2}-\kappa} \times C_T\CC^{-1-\kappa} \times \CC^{1-\kappa} \times (C_T\CC^{\frac{1}{2}-\kappa}\cap C_T^{\frac{1}{4}-\frac{\kappa}{2}} L^\infty) \\
        \times C_T\CC^{-\kappa} \times C_T\CC^{-\kappa} \times C_T\CC^{-\frac{1}{2}-\kappa} \times C_T\CC^{-\kappa}
    \end{multline*}
    a \textit{driving vector}.
    We denote by $\CX_T^{\kappa}$ the set of all driving vectors.
    We define the norm $\|\cdot\|_{\CX_T^{\kappa}}$ by the sum of the norm of each component.
\end{defi}
\begin{remark}
    The third component $\ICC(0)$ is independent of $t$.
\end{remark}
Specifically, we consider the following driving vectors.
\begin{defi}
    \label{defi:EX_epsilon}
    Let $\xi$ be a two-sided space-time white noise on $\R\times \TT^3$ and $\{\EX_\epsilon\}_{0< \epsilon \leq 1}$ be a sequence of stochastic driving vector as defined as follows:
    \begin{enumerate}[(i)]
        \item Let $\IX_\epsilon$ be a stationary solution of
        \[
            \CL_\epsilon \IX_\epsilon = \xi.
        \]
        \item Let $a_\epsilon:=\EE[(\IX_\epsilon)^2]$, $\XX_\epsilon := (\IX_\epsilon)^2 - a_\epsilon$ and $\XXX_\epsilon := (\IX_\epsilon)^3 - 3a_\epsilon\IX_\epsilon$.
        \item Let $\ICC_\epsilon (0):=\int_{-\infty}^0 P_{-s}^\epsilon \XX_\epsilon(s) ds$.
        \item Let $\ICCC_\epsilon$ be a stationary solution of
        \[
            \CL_\epsilon \ICCC_\epsilon = \XXX_\epsilon.
        \]
        \item  Let $b_\epsilon:=\EE[\ICC_\epsilon \rs \XX_\epsilon]$ and
        \begin{align*}
            \CICCC_\epsilon &:= \ICCC_\epsilon \rs \IX_\epsilon,&\CCICC_\epsilon &:= \ICC_\epsilon\rs \XX_\epsilon - b_\epsilon, \\
            \CCICCC_\epsilon &:= \ICCC_\epsilon \rs \XX_\epsilon - 3b_\epsilon \IX_\epsilon, & \CD_\epsilon &:= \nabla \ICC_\epsilon \rs \nabla \ICC_\epsilon + \epsilon \Delta \ICC_\epsilon \rs \Delta \ICC_\epsilon - b_\epsilon.
        \end{align*}
    \end{enumerate}
\end{defi}
\begin{remark}
    We need to modify $\nabla\ICC_\epsilon \rs \nabla \ICC_\epsilon - b_\epsilon$ in \cite{JP21} to $\nabla \ICC_\epsilon \rs \nabla \ICC_\epsilon + \epsilon \Delta \ICC_\epsilon \rs \Delta \ICC_\epsilon - b_\epsilon$ due to the change of $\Delta$ to $\Delta -\epsilon \Delta^2$.
\end{remark}

We can prove that a sequence of driving vectors $\{\EX_\epsilon\}_{0 < \epsilon \leq 1}$ in Definition \ref{defi:EX_epsilon} converges in probability in $\CX_T^\kappa$ as $\epsilon$ tends to $0$. (See Theorem \ref{thm:Convergence_of_DV}.)
If this fact holds, the equation for $\phi_\epsilon$ can be decomposed in the following form.
\begin{prop}\label{prop:Formulate}
    Let $\phi(0) \in \CC^{-1/2-\kappa}$ and $\phi_\epsilon$ be a solution of (\ref{eq:phi43}) and (\ref{eq:phi43_init}).
    Let $\{\EX_\epsilon\}_{0 < \epsilon \leq 1}$ be a sequence of driving vectors in Definition \ref{defi:EX_epsilon}.
    Let $\ICC_\epsilon$ be a solution of 
    \[
        \CL_\epsilon \ICC_\epsilon = \XX_\epsilon
    \]
    with initial value $\ICC_\epsilon(0)$ and $y_\epsilon$ be a solution of
    \[
        \CL_\epsilon y_\epsilon = 3e^{3\ICC_\epsilon}\pl(\ICCC_\epsilon \pl \XX_\epsilon),\quad y_\epsilon(0)=0.
    \]
    We set
    \[
        u_\epsilon := e^{3\ICC_\epsilon} (\phi_\epsilon - \IX_\epsilon + \ICCC_\epsilon) -y_\epsilon.
    \]
    Then, $u_\epsilon$ satisfies the equation
    \begin{empheq}[left=\empheqlbrace]{align}\label{eq:L_epsilon_u}
        &\CL_\epsilon u_\epsilon = \Phi_\epsilon(u_\epsilon) - 2\widetilde{G}_\epsilon(3\ICC_\epsilon,u_\epsilon), \\
        &u_\epsilon(0)=e^{3\ICC_\epsilon(0)}(\phi(0)-\IX_\epsilon(0)+\ICCC_\epsilon(0)),
    \end{empheq}
    where we set
    \begin{align}
        \Phi_\epsilon(u_\epsilon) &:= e^{-6\ICC_\epsilon} u_\epsilon^3 + Z^{(2)}_\epsilon u_\epsilon^2 + Z^{(1)}_\epsilon u_\epsilon + Z^{(0)}_\epsilon -6\nabla \ICC_\epsilon \cdot \nabla u_\epsilon, \label{eq:defi_of_Phi}\\
        \widetilde{G}_\epsilon(h,v) &:= \epsilon \{\Delta h\Delta v -\widetilde{B}(h,v) + 2T(v,h,h) + T(h,h,v) - 2 B(h,h)B(h,v) \}. \label{eq:defi_of_G}
    \end{align}
    for some $Z^{(0)}_\epsilon$, $Z^{(1)}_\epsilon$ and $Z^{(2)}_\epsilon$ which are continuous functions from $\EX_\epsilon$ to $C_T\CC^{-1/2-\kappa}$
    and the bilinear terms $B$, $\widetilde{B}$ and the trilinear term $T$, which will appear frequently, are defined as follows.
    \begin{align*}
        B(f,g) &:= \nabla f \cdot \nabla g,\\
        \widetilde{B}(f,g) &:= \nabla\cdot(\Delta f \nabla g) + \nabla\cdot(\nabla f \Delta g) + \Delta B(f,g),\\
        T(f,g,h)&:= \nabla\cdot(B(f,g) \nabla h).
    \end{align*}
\end{prop}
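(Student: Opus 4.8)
The statement is, for fixed $\epsilon>0$, an identity between classical functions — everything being smooth thanks to the $\epsilon\Delta^2$ regularization — together with a regularity claim on the coefficients $Z^{(i)}_\epsilon$; the plan is to carry out the two substitutions that define $v_\epsilon$ and then $u_\epsilon$ and keep track of the resulting terms. First I would set $v_\epsilon:=\phi_\epsilon-\IX_\epsilon+\ICCC_\epsilon$ and use linearity of $\CL_\epsilon$ together with $\CL_\epsilon\IX_\epsilon=\xi$ and $\CL_\epsilon\ICCC_\epsilon=\XXX_\epsilon$ to obtain $\CL_\epsilon v_\epsilon=-\phi_\epsilon^3+\XXX_\epsilon+(3a_\epsilon-9b_\epsilon)\phi_\epsilon$. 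Setting $q_\epsilon:=\phi_\epsilon-\IX_\epsilon=v_\epsilon-\ICCC_\epsilon$, I would expand $\phi_\epsilon^3=(q_\epsilon+\IX_\epsilon)^3$ and substitute the Wick identities $\IX_\epsilon^2=\XX_\epsilon+a_\epsilon$ and $\IX_\epsilon^3=\XXX_\epsilon+3a_\epsilon\IX_\epsilon$ for every power of $\IX_\epsilon$; all terms carrying the divergent constant $a_\epsilon$ cancel against $3a_\epsilon\phi_\epsilon$, and one is left with $\CL_\epsilon v_\epsilon=-q_\epsilon^3-3q_\epsilon^2\IX_\epsilon-3q_\epsilon\XX_\epsilon-9b_\epsilon q_\epsilon-9b_\epsilon\IX_\epsilon$.

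Next I would compute $\CL_\epsilon(e^{3\ICC_\epsilon}v_\epsilon)$ by the Leibniz rule for $\CL_\epsilon=\pt-\Delta+\epsilon\Delta^2+1$, writing $h:=3\ICC_\epsilon$. Collecting terms, $\CL_\epsilon(e^{h}v_\epsilon)=e^{h}\CL_\epsilon v_\epsilon-2e^{h}\nabla h\cdot\nabla v_\epsilon+e^{h}v_\epsilon(\pt h-\Delta h-B(h,h))+\epsilon R_\epsilon$, where $R_\epsilon$ is the fourth-order remainder gathering all of $\Delta^2(e^{h}v_\epsilon)$ except the piece $e^{h}\Delta^2 v_\epsilon$ (which is already contained in $e^{h}\CL_\epsilon v_\epsilon$). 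Using $\CL_\epsilon h=3\XX_\epsilon$ to write $\pt h-\Delta h=3\XX_\epsilon-h-\epsilon\Delta^2 h$ and then inserting $v_\epsilon=e^{-3\ICC_\epsilon}(u_\epsilon+y_\epsilon)$ — so that $e^{h}v_\epsilon=u_\epsilon+y_\epsilon$ and $e^{h}q_\epsilon=(u_\epsilon+y_\epsilon)-e^{h}\ICCC_\epsilon$ — the term $3e^{h}v_\epsilon\XX_\epsilon$ cancels the piece $-3e^{h}v_\epsilon\XX_\epsilon$ produced by $e^{h}\cdot(-3q_\epsilon\XX_\epsilon)$ inside $e^{h}\CL_\epsilon v_\epsilon$, leaving from that product only $3e^{3\ICC_\epsilon}\ICCC_\epsilon\XX_\epsilon$. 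The genuinely new object is $R_\epsilon$: using $\Delta^2(fg)=(\Delta^2 f)g+f\Delta^2 g+2\widetilde B(f,g)-2\Delta f\Delta g$ — this is precisely the role of $\widetilde B$, namely to absorb the Hessian terms $\langle\nabla^2 f,\nabla^2 g\rangle$ occurring in $\Delta^2$ of a product — with $f=e^{h}$, $g=v_\epsilon$, together with $\Delta(e^{h})=e^{h}(\Delta h+B(h,h))$ and the analogous expansions of $\nabla\Delta(e^{h})$ and $\Delta^2(e^{h})$, then dividing by $e^{h}$ and substituting $v_\epsilon=e^{-3\ICC_\epsilon}(u_\epsilon+y_\epsilon)$, I expect $\epsilon R_\epsilon$ (once the pieces feeding the $Z^{(i)}_\epsilon$ below have been split off) to collapse exactly to $-2\widetilde G_\epsilon(3\ICC_\epsilon,u_\epsilon)$, the $T$-type and $B(h,h)B(h,v)$-type terms arising from the $\nabla h$ factors produced whenever a derivative hits $e^{h}$.

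Then I would set $u_\epsilon:=e^{3\ICC_\epsilon}v_\epsilon-y_\epsilon$ and use $\CL_\epsilon y_\epsilon=3e^{3\ICC_\epsilon}\pl(\ICCC_\epsilon\pl\XX_\epsilon)$, so that $\CL_\epsilon u_\epsilon=\CL_\epsilon(e^{3\ICC_\epsilon}v_\epsilon)-3e^{3\ICC_\epsilon}\pl(\ICCC_\epsilon\pl\XX_\epsilon)$; the subtracted paraproduct removes the single paracontrolled-rough component of $3e^{3\ICC_\epsilon}\ICCC_\epsilon\XX_\epsilon$ (which, up to smoother remainders, equals $3(e^{3\ICC_\epsilon}\ICCC_\epsilon)\pl\XX_\epsilon$). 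In what remains I would replace the ill-defined resonant products $\ICCC_\epsilon\rs\IX_\epsilon$, $\ICC_\epsilon\rs\XX_\epsilon$, $\ICCC_\epsilon\rs\XX_\epsilon$ and $\nabla\ICC_\epsilon\rs\nabla\ICC_\epsilon+\epsilon\Delta\ICC_\epsilon\rs\Delta\ICC_\epsilon$ by $\CICCC_\epsilon$, $\CCICC_\epsilon+b_\epsilon$, $\CCICCC_\epsilon+3b_\epsilon\IX_\epsilon$ and $\CD_\epsilon+b_\epsilon$ respectively, and verify that every occurrence of $b_\epsilon$ cancels; this uses the structural relations between the renormalization constants implicit in Definition \ref{defi:EX_epsilon}, in particular that the renormalizations of $\ICC_\epsilon\rs\XX_\epsilon$ and of $\nabla\ICC_\epsilon\rs\nabla\ICC_\epsilon+\epsilon\Delta\ICC_\epsilon\rs\Delta\ICC_\epsilon$ are both governed by the same $b_\epsilon$. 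Separating off the cubic term $e^{-6\ICC_\epsilon}u_\epsilon^3$ and the transport term $-6\nabla\ICC_\epsilon\cdot\nabla u_\epsilon$, the rest is a polynomial $Z^{(2)}_\epsilon u_\epsilon^2+Z^{(1)}_\epsilon u_\epsilon+Z^{(0)}_\epsilon$ whose coefficients $Z^{(2)}_\epsilon,Z^{(1)}_\epsilon,Z^{(0)}_\epsilon$ are explicit expressions in $e^{\pm3\ICC_\epsilon}$, $\nabla\ICC_\epsilon$, $\ICCC_\epsilon$, $y_\epsilon$, $\IX_\epsilon$ and the components of $\EX_\epsilon$; their continuity as maps $\EX_\epsilon\mapsto C_T\CC^{-1/2-\kappa}$ then follows from the paraproduct and resonant-product estimates, the Schauder estimates (Proposition \ref{prop:Schauder_estimates}) applied to $\ICC_\epsilon$ and $y_\epsilon$, and the continuity of $F\mapsto e^{\pm3F}$ on the relevant Hölder spaces.

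The main obstacle is the second step: $\epsilon\Delta^2$ applied to $e^{3\ICC_\epsilon}v_\epsilon$ produces a large number of terms through two successive Leibniz expansions, and one must check that, after dividing by $e^{3\ICC_\epsilon}$ and passing to $u_\epsilon$, they reorganize exactly into $-2\widetilde G_\epsilon(3\ICC_\epsilon,u_\epsilon)$ with the prescribed combination of $B$, $\widetilde B$ and $T$ — this is the part with no counterpart in \cite{JP21}, and it is precisely what forces the addition of the term $\epsilon\Delta\ICC_\epsilon\rs\Delta\ICC_\epsilon$ to $\nabla\ICC_\epsilon\rs\nabla\ICC_\epsilon$ in the definition of the driving vector, as noted in the remark after Definition \ref{defi:EX_epsilon}. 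The accompanying estimates on the $Z^{(i)}_\epsilon$ are routine paracontrolled calculus and are carried out in the preliminary subsection.
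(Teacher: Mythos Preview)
Your three-step outline matches the paper's, and the algebraic identity for $\CL_\epsilon(e^{h}g)$ is indeed a long but routine Leibniz computation (the paper does it in Appendix~B, arriving at the closed form $\CL_\epsilon v=(\CL_\epsilon h)v-hv+e^{h}\CL_\epsilon g+F_\epsilon(h)v-2G_\epsilon(h,v)$ with $v=e^{h}g$). Where your proposal has a genuine gap is not the $\Delta^2$ expansion but the \emph{renormalization bookkeeping} that comes afterwards, in two places you pass over. First, after subtracting $3e^{3\ICC_\epsilon}\pl(\ICCC_\epsilon\pl\XX_\epsilon)$ you are left with $3e^{3\ICC_\epsilon}\rs(\ICCC_\epsilon\pl\XX_\epsilon)$, which is still ill-defined (sum of regularities $(1-\kappa)+(-1-\kappa)<0$); it does \emph{not} reduce directly to any of the basic resonant products you list. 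The paper handles it by paralinearizing $e^{3\ICC_\epsilon}=3e^{3\ICC_\epsilon}\pl\ICC_\epsilon+\CC^{2-2\kappa}$ and then applying the commutator lemma twice, which is what actually brings $\CCICC_\epsilon$ into the picture and absorbs one factor of $b_\epsilon$.

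Second, and more seriously, you do not address $G_\epsilon(3\ICC_\epsilon,y_\epsilon)$. By linearity $G_\epsilon(3\ICC_\epsilon,u_\epsilon+y_\epsilon)=G_\epsilon(3\ICC_\epsilon,u_\epsilon)+G_\epsilon(3\ICC_\epsilon,y_\epsilon)$, and the second summand contains $\nabla\ICC_\epsilon\rs\nabla y_\epsilon$, which is again ill-defined since $y_\epsilon\in\CC^{1-\kappa}$ only. This term cannot simply be pushed into $Z^{(0)}_\epsilon$; it requires its own $9b_\epsilon e^{3\ICC_\epsilon}\ICCC_\epsilon$ counterterm. The paper makes this work by first showing $y_\epsilon=3(e^{3\ICC_\epsilon}\ICCC_\epsilon)\pl\ICC_\epsilon+\CC^{3/2-3\kappa}$ (via Bony's paramultiplication bound and commutation of $\CL_\epsilon^{-1}$ with paraproduct), and then using the commutator lemma to extract $e^{3\ICC_\epsilon}\ICCC_\epsilon\,\CD_\epsilon$ from $\nabla\ICC_\epsilon\rs\nabla\{(e^{3\ICC_\epsilon}\ICCC_\epsilon)\pl\ICC_\epsilon\}+\epsilon\Delta\ICC_\epsilon\rs\Delta\{\cdots\}$. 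So the $b_\epsilon$'s do not ``cancel'' against one another: the $-9b_\epsilon\phi_\epsilon$ from the original equation must be split into three specific pieces, each paired with a distinct ill-defined object ($\CCICCC_\epsilon$ and $\CCICC_\epsilon$ in the $\ICCC_\epsilon\XX_\epsilon$ block, $\CD_\epsilon$ in $F_\epsilon$, and $\CD_\epsilon$ again in the $y_\epsilon$ block). Tracking this splitting, and the two commutator/paralinearization arguments that reveal where $\CCICC_\epsilon$ and the second $\CD_\epsilon$ hide, is the real content of the proof.
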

\begin{proof}
Proof will be done in the following three steps:
(1) Transform from $\phi_\epsilon$ to $g_\epsilon=\phi_\epsilon-\IX_\epsilon+\ICCC_\epsilon$,
(2) Transform from $g_\epsilon$ to $v_\epsilon=e^{3\ICC_\epsilon}g_\epsilon$,
(3) Transform from $v_\epsilon$ to $u_\epsilon = v_\epsilon - y_\epsilon$.

\textbf{(1) Transform from $\phi_\epsilon$ to $g_\epsilon=\phi_\epsilon-\IX_\epsilon+\ICCC_\epsilon$.}
Since we cannot expect the convergence of $\phi_\epsilon^3$, we decompose $\phi_\epsilon = \psi_\epsilon + \IX_\epsilon$ and have the equation
\begin{align}
    \CL_\epsilon \psi_\epsilon 
    &= -(\psi_\epsilon + \IX_\epsilon)^3 +3(a_\epsilon - 3b_\epsilon)(\psi_\epsilon + \IX_\epsilon) \notag \\
    &= -\psi_\epsilon^3 -3\psi_\epsilon^2\IX_\epsilon -3\psi_\epsilon(\IX_\epsilon^2-a_\epsilon) - (\IX_\epsilon^3-3a_\epsilon\IX_\epsilon)- 9b_\epsilon(\psi_\epsilon + \IX_\epsilon) \notag \\
    &= -\psi_\epsilon^3 - 3\psi_\epsilon^2 \IX_\epsilon - 3\psi_\epsilon \XX_\epsilon -\XXX_\epsilon - 9b_\epsilon(\psi_\epsilon + \IX_\epsilon). \label{eq:L_epsilon_psi}
\end{align}
By Schauder estimates, we expect that $\psi_\epsilon$ converges in $\CC^{1/2-\kappa}$.
Hence, we still cannot expect the convergence of the product $\psi_\epsilon \XX_\epsilon$ since the sum of the regularities is negative.
We decompose $\psi_\epsilon=g_\epsilon-\ICCC_\epsilon$, then from (\ref{eq:L_epsilon_psi}), we have
\begin{align}
    \CL_\epsilon g_\epsilon
    &= -(g_\epsilon - \ICCC_\epsilon)^3 - 3(g_\epsilon - \ICCC_\epsilon)^2\IX_\epsilon -3g_\epsilon\XX_\epsilon \notag \\
    &\quad + 3(\ICCC_\epsilon\XX_\epsilon-3b_\epsilon \IX_\epsilon)  - 9b_\epsilon(g_\epsilon - \ICCC_\epsilon ) \notag \\
    &= -3g_\epsilon\XX_\epsilon + Q(g_\epsilon),\label{eq:L_epsilon_g}
\end{align}
where
\begin{align*}
    Q(g_\epsilon) &:= Q_0(g_\epsilon)+ 3(\ICCC_\epsilon\XX_\epsilon-3b_\epsilon \IX_\epsilon)  - 9b_\epsilon(g_\epsilon - \ICCC_\epsilon ), \\
    Q_0(g_\epsilon) &:= -(g_\epsilon - \ICCC_\epsilon)^3 - 3(g_\epsilon - \ICCC_\epsilon)^2\IX_\epsilon .
\end{align*}
Because of the regularities, the products $\ICCC_\epsilon\IX_\epsilon$,  $(\ICCC_\epsilon)^2\IX_\epsilon$ and $\ICCC_\epsilon\XX_\epsilon$ are not expected to converge.
But, by the Bony's decomposition, we have
\begin{align}
    \ICCC_\epsilon \IX_\epsilon
    &= \ICCC_\epsilon(\pl + \pr)\IX_\epsilon + \CICCC_\epsilon, \notag \\
    \ICCC_\epsilon\XX_\epsilon - 3b_\epsilon\IX_\epsilon
    &= \ICCC_\epsilon(\pl+\pr)\XX_\epsilon + \CCICCC_\epsilon \label{eq:CCICCC}
\end{align}
and by using commutator estimates (Proposition \ref{prop:commutator_estimates}), we have
\begin{align*}
    (\ICCC_\epsilon)^2 \IX_\epsilon
    &= \ICCC_\epsilon\{\ICCC_\epsilon(\pl + \pr)\IX_\epsilon + \CICCC_\epsilon\}\\
    &= \ICCC_\epsilon \rs (\ICCC_\epsilon \pl \IX_\epsilon ) +  \ICCC_\epsilon \rs (\ICCC_\epsilon \pr \IX_\epsilon)\\
    &\quad +\ICCC_\epsilon(\pl + \pr)\{\ICCC_\epsilon (\pl + \pr) \IX_\epsilon\} + \ICCC_\epsilon\CICCC_\epsilon\\
    &= C(\ICCC_\epsilon,\IX_\epsilon,\ICCC_\epsilon) +  \ICCC_\epsilon \rs (\ICCC_\epsilon \pr \IX_\epsilon)\\
    &\quad +\ICCC_\epsilon(\pl + \pr)\{\ICCC_\epsilon (\pl + \pr) \IX_\epsilon\} + 2\ICCC_\epsilon\CICCC_\epsilon.
\end{align*}
Hence, we can show that $\ICCC_\epsilon \IX_\epsilon$ and $(\ICCC_\epsilon)^2 \IX_\epsilon$ converge in $\CC^{-1/2-\kappa}$ and $\ICCC_\epsilon(\pl+\pr)\XX_\epsilon + \CCICCC_\epsilon$ converges in $\CC^{-1-\kappa}$ from the convergence of $\EX_\epsilon$.

\textbf{(2) Transform from $g_\epsilon$ to $v_\epsilon=e^{3\ICC_\epsilon}g_\epsilon$.} 
By Schauder estimates, we expect that $g_\epsilon$ converges in $\CC^{1-\kappa}$, therefore we still cannot expect the convergence of the product $g_\epsilon\XX_\epsilon$ in (\ref{eq:L_epsilon_g}) since the sum of the regularities is still negative.
To remove it, we set 
\[
    v_\epsilon=e^{3\ICC_\epsilon}g_\epsilon
\]
as in \cite{JP21}.
Here, we state the following property of $\ICC_\epsilon$ as a corollary from Propositions \ref{prop:Effects_of_heat_semigroup}, \ref{prop:Effects_of_heat_semigroup_2} and \ref{prop:Schauder_estimates}. 
\begin{lem}
    \label{lem:LICC}
    Define $\ICC_\epsilon$ as in Proposition \ref{prop:Formulate}.
    Then, $\ICC_\epsilon$ is a continuous function from $\EX_\epsilon$ to $C_T\CC_\epsilon^{1-\kappa}$ and it holds
    \begin{equation}
        \|\ICC_\epsilon\|_{C_T^{1/2-\kappa/2}L^\infty} \lesssim_{\EX_\epsilon} 1.
    \end{equation}
\end{lem}
We consider the equation satisfied by $v_\epsilon$.
Note that the following relationship holds for $\CL_\epsilon v_\epsilon$.
\begin{align}
    \CL_\epsilon v_\epsilon = (3\CL_\epsilon \ICC_\epsilon)v_\epsilon -3\ICC_\epsilon v_\epsilon  + e^{3\ICC_\epsilon}(\CL_\epsilon g_\epsilon) + F_\epsilon(3\ICC_\epsilon)v_\epsilon - 2G_\epsilon(3\ICC_\epsilon,v_\epsilon) \label{eq:L_epsilon_v},
\end{align}
where
\begin{align*}
    F_\epsilon(h) &:= \{|\nabla h|^2+\epsilon(\Delta h)^2\} + \epsilon \{- \widetilde{B}(h,h) + 2T(h,h,h) - B(h,h)^2\},\\
    G_\epsilon(h,v) &:= \nabla h \cdot \nabla v +\epsilon\Delta h\Delta v\\
    &\quad + \epsilon \{-\widetilde{B}(h,v) + 2T(v,h,h) + T(h,h,v) - 2 B(h,h)B(h,v) \}.
\end{align*}
The derivations are given in Appendix (Proposition \ref{prop:derivation_of_equation}).
Now, we show that $v_\epsilon$ satisfies the equation
\begin{align}
    \CL_\epsilon v_\epsilon
    &= \{e^{-6\ICC_\epsilon}v_\epsilon^3 + \widetilde{Z}^{(2)}_\epsilon v_\epsilon^2 + \widetilde{Z}^{(1)}_\epsilon v_\epsilon + \widetilde{Z}^{(0)}_\epsilon\} \notag \\
    &\quad + 3e^{3\ICC_\epsilon}\pl(\ICCC_\epsilon \pl \XX_\epsilon)
    - 2\{G_\epsilon(3\ICC_\epsilon,v_\epsilon) - 9b_\epsilon e^{3\ICC_\epsilon} \ICCC_\epsilon\},
    \label{eq:CL_epsilon_v}
\end{align}
for some $\widetilde{Z}^{(i)}_\epsilon \in C_T\CC^{-1/2-\kappa}$ to be defined later.
From (\ref{eq:L_epsilon_g}) and (\ref{eq:L_epsilon_v}), we have
\begin{align}
    \CL_\epsilon v_\epsilon
    &= 3\XX_\epsilon v_\epsilon - 3\ICC_\epsilon v_\epsilon + e^{3\ICC_\epsilon}\{-3g_\epsilon \XX_\epsilon + Q(g_\epsilon)\}  + F_\epsilon(3\ICC_\epsilon)v_\epsilon - 2G_\epsilon(3\ICC_\epsilon,v_\epsilon) \notag\\
    &= - 3\ICC_\epsilon v_\epsilon + e^{3\ICC_\epsilon}Q(e^{-3\ICC_\epsilon} v_\epsilon) +F_\epsilon(3\ICC_\epsilon)v_\epsilon - 2G_\epsilon(3\ICC_\epsilon,v_\epsilon)\notag\\
    &= \{e^{3\ICC_\epsilon}Q_0(e^{-3\ICC_\epsilon}v_\epsilon) - 3\ICC_\epsilon v_\epsilon\} + 3e^{3\ICC_\epsilon}\{(\ICCC_\epsilon\XX_\epsilon - 3b_\epsilon \IX_\epsilon) - 3b_\epsilon\ICCC_\epsilon\}\notag\\
    &\quad + \{ F_\epsilon(3\ICC_\epsilon)- 9b_\epsilon\}v_\epsilon
     - 2\{G_\epsilon(3\ICC_\epsilon,v_\epsilon) - 9b_\epsilon e^{3\ICC_\epsilon} \ICCC_\epsilon\}.
     \label{eq:L_epsilon_v_1}
\end{align}
The first term has no problems.
Next, we consider the second term.
By the decomposition (\ref{eq:CCICCC}), we have only to check the convergence of the product 
\[
    3e^{3\ICC_\epsilon} \rs (\ICCC_\epsilon \pl \XX_\epsilon) - 9b_\epsilon e^{3\ICC_\epsilon}\ICCC_\epsilon
\]
since we have $\ICCC_\epsilon \pl \XX_\epsilon \in C_T\CC^{-1-\kappa}$ and $\ICCC_\epsilon\pr \XX_\epsilon, \CCICCC_\epsilon \in C_T\CC^{-1/2-\kappa}$.
By the paralinearization theorem (Proposition \ref{prop:paralinearization}), we have 
\[
    3e^{3\ICC_\epsilon} - 9 e^{3\ICC_\epsilon}\pl \ICC_\epsilon \in C_T \CC^{2-2\kappa}, 
\]
therefore we can show that the resonant product 
\[
    (3e^{3\ICC_\epsilon} - 9 e^{3\ICC_\epsilon}\pl \ICC_\epsilon)\rs (\ICCC_\epsilon \pl \XX_\epsilon) 
\]
converges in $\CC^{1-3\kappa}$.
By using commutator estimates (Proposition \ref{prop:commutator_estimates}) twice, we have
\begin{align*}
    &(9 e^{3\ICC_\epsilon}\pl \ICC_\epsilon) \rs (\ICCC_\epsilon \pl \XX_\epsilon) - 9b_\epsilon e^{3\ICC_\epsilon} \ICCC_\epsilon\\
    &= 9C(e^{3\ICC_\epsilon},\ICC_\epsilon,\ICCC_\epsilon\pl\XX_\epsilon) + 9 e^{3\ICC_\epsilon}(\ICC_\epsilon \rs (\ICCC_\epsilon \pl \XX_\epsilon)) - 9b_\epsilon e^{3\ICC_\epsilon} \ICCC_\epsilon\\
    &=9C(e^{3\ICC_\epsilon},\ICC_\epsilon,\ICCC_\epsilon\pl\XX_\epsilon) + 9 e^{3\ICC_\epsilon} C(\ICCC_\epsilon,\XX_\epsilon,\ICC_\epsilon) + 9e^{3\ICC_\epsilon} \ICCC_\epsilon(\ICC_\epsilon \rs \XX_\epsilon-b_\epsilon)\\
    &= 9C(e^{3\ICC_\epsilon},\ICC_\epsilon,\ICCC_\epsilon\pl\XX_\epsilon) + 9 e^{3\ICC_\epsilon} C(\ICCC_\epsilon,\XX_\epsilon,\ICC_\epsilon) + 9 e^{3\ICC_\epsilon} \ICCC_\epsilon \CCICC_\epsilon.
\end{align*}
Thus, we have
\begin{align*}
    3e^{3\ICC_\epsilon}\{(\ICCC_\epsilon\XX_\epsilon - 3b_\epsilon \IX_\epsilon) - 3b_\epsilon\ICCC_\epsilon\} 
    = 3e^{3\ICC_\epsilon}\pl(\ICCC_\epsilon \pl \XX_\epsilon) + Z_\epsilon,
\end{align*}
where $Z_\epsilon$ is a continuous function from $\EX_\epsilon$ to $\CC^{-1/2-\kappa}$.
More precisely, $Z_\epsilon$ can be expressed as
\begin{align}
    Z_\epsilon &:= 3e^{3\ICC_\epsilon}(\ICCC_\epsilon \pr \XX_\epsilon + \CCICCC_\epsilon) + 3e^{3\ICC_\epsilon}\pr(\ICCC_\epsilon \pl \XX_\epsilon) \notag \\
    &\quad + (3e^{3\ICC_\epsilon} - 9e^{3\ICC_\epsilon} \pl \ICC_\epsilon) \rs (\ICCC_\epsilon \pl \XX_\epsilon) + 9C(e^{3\ICC_\epsilon},\ICC_\epsilon, \ICCC_\epsilon \pl \XX_\epsilon) \notag \\
    &\quad + 9e^{3\ICC_\epsilon} C(\ICCC_\epsilon,\XX_\epsilon,\ICC_\epsilon) + 9e^{3\ICC_\epsilon}\ICCC_\epsilon \CCICC_\epsilon. \label{eq:defi_of_Z}
\end{align}

At last, we consider the third term of (\ref{eq:L_epsilon_v_1}).
By using
\begin{align*}
    |\nabla \ICC_\epsilon|^2 &= 2\nabla \ICC_\epsilon \pl \nabla \ICC_\epsilon + \nabla \ICC_\epsilon \rs \nabla \ICC_\epsilon, \\
    (\Delta \ICC_\epsilon)^2 &= 2 \Delta \ICC_\epsilon \pl \Delta \ICC_\epsilon + \Delta \ICC_\epsilon \rs \Delta \ICC_\epsilon,
\end{align*}
we have
\begin{align}
    &F_\epsilon(3\ICC_\epsilon) - 9 b_\epsilon\notag\\
    &= 9\{ |\nabla \ICC_\epsilon|^2 + \epsilon (\Delta \ICC_\epsilon)^2 -b_\epsilon \} \notag\\
    &\quad + \epsilon \{- \widetilde{B}(3\ICC_\epsilon,3\ICC_\epsilon) + 2T(3\ICC_\epsilon,3\ICC_\epsilon,3\ICC_\epsilon) - B(3\ICC_\epsilon,3\ICC_\epsilon)^2\} \notag\\
    &= 9(2\nabla \ICC_\epsilon \pl \nabla \ICC_\epsilon + \CD_\epsilon) \notag\\
    &\quad + \epsilon \{18 \Delta \ICC_\epsilon \pl \Delta \ICC_\epsilon - \widetilde{B}(3\ICC_\epsilon,3\ICC_\epsilon) + 2T(3\ICC_\epsilon,3\ICC_\epsilon,3\ICC_\epsilon) - B(3\ICC_\epsilon,3\ICC_\epsilon)^2\} \notag\\
    &=: \overline{F}_\epsilon. \label{eq:defi_of_bar_F}
\end{align}
Thanks to $\epsilon$-dependent regularity of $\ICC_\epsilon$, we can show that $\overline{F}_\epsilon$ converges in $\CC^{-1/3-4\kappa/3}$ as $\epsilon$ tends to $0$. (See Lemma \ref{lem:bar_F_estimates}.)

Therefore, we have
\begin{align*}
    \CL_\epsilon v_\epsilon 
    &= \{ e^{3\ICC_\epsilon}Q_0(e^{-3\ICC_\epsilon}v_\epsilon) - 3\ICC_\epsilon v_\epsilon + Z_\epsilon + \overline{F}_\epsilon v_\epsilon \}\\
    &\quad + 3e^{3\ICC_\epsilon}\pl(\ICCC_\epsilon \pl \XX_\epsilon) - 2\{G_\epsilon(3\ICC_\epsilon,v_\epsilon) - 9b_\epsilon e^{3\ICC_\epsilon} \ICCC_\epsilon\}. 
\end{align*}
Here, by expanding the first term, we have
\begin{align*}
    \{ e^{3\ICC_\epsilon}Q_0(e^{-3\ICC_\epsilon}v_\epsilon) - 3\ICC_\epsilon v_\epsilon + Z_\epsilon + \overline{F}_\epsilon v_\epsilon \}
    &=e^{-6\ICC_\epsilon}v_\epsilon^3 + \widetilde{Z}^{(2)}_\epsilon v_\epsilon^2 + \widetilde{Z}^{(1)}_\epsilon v_\epsilon + \widetilde{Z}^{(0)}_\epsilon,
\end{align*}
where $\widetilde{Z}^{(0)}_\epsilon$, $\widetilde{Z}^{(1)}_\epsilon$ and $\widetilde{Z}^{(2)}_\epsilon$ are continuous functions from $\EX_\epsilon$ to $C_T\CC^{-1/2-\kappa}$ for $i=0,1,2$. 
More precisely, $\widetilde{Z}^{(i)}_\epsilon$ can be expressed as
\begin{align}
    \widetilde{Z}^{(2)}_\epsilon&:=3e^{-3\ICC_\epsilon}(\ICCC_\epsilon-\IX_\epsilon), \label{eq:defi_of_tilde_Z_2}\\
    \widetilde{Z}^{(1)}_\epsilon&:=3\ICCC_\epsilon(-\ICCC_\epsilon+2\IX_\epsilon)-3\ICC_\epsilon+\overline{F}_\epsilon, \label{eq:defi_of_tilde_Z_1} \\
    \widetilde{Z}^{(0)}_\epsilon&:=e^{3\ICC_\epsilon}\ICCC_\epsilon^2(\ICCC_\epsilon-3\IX_\epsilon) + Z_\epsilon.\label{eq:defi_of_tilde_Z_0}
\end{align}
Therefore, we have (\ref{eq:CL_epsilon_v}).

\textbf{(3) Transform from $v_\epsilon$ to $u_\epsilon = v_\epsilon - y_\epsilon$.}
We can expect that $v_\epsilon$ converges in $\CC^{1-\kappa}$, hence we cannot expect the convergence of the product $\nabla \ICC_\epsilon \cdot \nabla v_\epsilon$ in $G_\epsilon$ since the sum of the regularities is negative.
To remove it, we decompose $v_\epsilon = u_\epsilon + y_\epsilon$.
Here, we state the following property of $y_\epsilon$ as a corollary from Proposition \ref{prop:Schauder_estimates}. 
\begin{lem}
    \label{lem:Ly}
    Define $y_\epsilon$ as in Proposition \ref{prop:Formulate}.
    Then, $y_\epsilon$ is a continuous function from $\EX_\epsilon$ to $C_T\CC_\epsilon^{1-\kappa}$.
\end{lem}
We show that $u_\epsilon$ satisfies (\ref{eq:L_epsilon_u}).
From (\ref{eq:CL_epsilon_v}) and the linearity of $G_\epsilon$ with respect to second variable, we have
\begin{align}
    \CL_\epsilon u_\epsilon
    &= \{e^{-6\ICC_\epsilon}(u_\epsilon+y_\epsilon)^3 + \widetilde{Z}^{(2)}_\epsilon (u_\epsilon+y_\epsilon)^2 + \widetilde{Z}^{(1)}_\epsilon (u_\epsilon+y_\epsilon) + \widetilde{Z}^{(0)}_\epsilon\} \notag\\
    &\quad  -2\{G_\epsilon(3\ICC_\epsilon, y_\epsilon) - 9b_\epsilon e^{3\ICC_\epsilon}\ICCC_\epsilon \} - 2G_\epsilon(3\ICC_\epsilon,u_\epsilon) \notag \\
    &= \{e^{-6\ICC_\epsilon}(u_\epsilon+y_\epsilon)^3 + \widetilde{Z}^{(2)}_\epsilon (u_\epsilon+y_\epsilon)^2 + \widetilde{Z}^{(1)}_\epsilon (u_\epsilon+y_\epsilon) + \widetilde{Z}^{(0)}_\epsilon -6\nabla \ICC_\epsilon\cdot \nabla u_\epsilon\} \notag \\
    &\quad  -2\{G_\epsilon(3\ICC_\epsilon, y_\epsilon) - 9b_\epsilon e^{3\ICC_\epsilon}\ICCC_\epsilon \} - 2\widetilde{G}_\epsilon(3\ICC_\epsilon,u_\epsilon).
    \label{eq:CL_epsilon_u_1}
\end{align}
We can show that the last term $\widetilde{G}_\epsilon(3\ICC_\epsilon,u_\epsilon)$ converges in $\CC^{-1/3-4\kappa/3}$ if the regularity of $u_\epsilon$ is more than $1+2\kappa$. (See Lemma \ref{lem:G_epsilon_u_estimates}.)

It is sufficient that we consider the second term.
It is divided into two terms as follows:
\begin{align*}
    G_\epsilon(3\ICC_\epsilon, y_\epsilon) - 9b_\epsilon e^{3\ICC_\epsilon} \ICCC_\epsilon
    = 3(\nabla \ICC_\epsilon \cdot \nabla y_\epsilon +\epsilon\Delta \ICC_\epsilon\Delta y_\epsilon - 3b_\epsilon e^{3\ICC_\epsilon} \ICCC_\epsilon) + \overline{G}_\epsilon^{(1)}, 
\end{align*}
where we set
\begin{multline}
    \overline{G}_\epsilon^{(1)}
    :=\epsilon \{-\widetilde{B}(3\ICC_\epsilon,y_\epsilon) + 2T(y_\epsilon,3\ICC_\epsilon,3\ICC_\epsilon)\\
    \quad \quad + T(3\ICC_\epsilon,3\ICC_\epsilon,y_\epsilon)
    - 2 B(3\ICC_\epsilon,3\ICC_\epsilon)B(3\ICC_\epsilon,y_\epsilon) \}
    \label{eq:defi_of_bar_G_1}
\end{multline}
and we can show that $\overline{G}_\epsilon^{(1)}$ converges in $\CC^{-1/3-4\kappa/3}$. (See Lemma \ref{lem:widehat_G_epsilon_u_estimates}.)
For the remaining term, we can decompose
\begin{align}
    &3(\nabla \ICC_\epsilon \cdot \nabla y_\epsilon +\epsilon\Delta \ICC_\epsilon\Delta y_\epsilon - 3b_\epsilon e^{3\ICC_\epsilon} \ICCC_\epsilon)\notag \\
    &= \nabla \ICC_\epsilon \rs \nabla y_\epsilon +\epsilon\Delta \ICC_\epsilon \rs \Delta y_\epsilon - 3b_\epsilon e^{3\ICC_\epsilon} \ICCC_\epsilon + \overline{G}_\epsilon^{(2)},\label{eq:R_term_1}
\end{align}
where $\overline{G}_\epsilon^{(2)}$ is defined by the paraproduct term as follows:
\begin{align}
    \label{eq:defi_of_bar_G_2}
    \overline{G}_\epsilon^{(2)}:=3\{\nabla \ICC_\epsilon(\pl+\pr)\nabla y_\epsilon + \epsilon \Delta\ICC_\epsilon(\pl+\pr)\Delta y_\epsilon\}.
\end{align}
We can show that $\overline{G}_\epsilon^{(2)}$ also converges in $\CC^{-3\kappa}$ as $\epsilon$ tends to $0$.
(See Lemma \ref{lem:widehat_G_epsilon_u_estimates}.)
It remains to check the convergence of the resonant products.
We will show it in same way as Lemma A.2 of \cite{JP21}.
By Bony's paramultiplication bound (Proposition \ref{prop:Bonys_paramultiplication}), we have
\begin{multline}
    \|3e^{3\ICC_\epsilon}\pl(\ICCC_\epsilon\pl\XX_\epsilon) - (3e^{3\ICC_\epsilon}\ICCC_\epsilon)\pl\XX_\epsilon\|_{C_T\CC^{-1/2-2\kappa}}\\
    \lesssim \|3e^{3\ICC_\epsilon}\|_{C_T\CC^{1/2-\kappa}}\|\ICCC_\epsilon\|_{C_T\CC^{1/2-\kappa}}\|\XX_\epsilon\|_{C_T\CC^{-1-\kappa}},
    \label{eq:y_Bony}
\end{multline}
then it holds
\[
    \CL_\epsilon y_\epsilon = C_T\CC^{-1/2-2\kappa} + (3e^{3\ICC_\epsilon}\ICCC_\epsilon)\pl\XX_\epsilon.
\]
By using Schauder estimates (Proposition \ref{prop:Schauder_estimates}), we obtain
\[
    y_\epsilon = \CTCC{3/2-2\kappa} + \CL^{-1}_{\epsilon}[(3e^{3\ICC_\epsilon}\ICCC_\epsilon)\pl\XX_\epsilon].
\]
Moreover, by using Proposition \ref{prop:commutation_L_inverse_paraproduct}, $\CL_\epsilon^{-1}$ approximately commutes with paraproduct, that is
\begin{multline}
    \|\CL_\epsilon^{-1}[(3e^{3\ICC_\epsilon}\ICCC_\epsilon)\pl\XX_\epsilon] - 3(e^{3\ICC_\epsilon}\ICCC_\epsilon)\pl \{\ICC_\epsilon-P_\cdot^\epsilon \ICC_\epsilon(0)\}\|_{C_T\CC^{3/2 - 3\kappa
    }} \\
    \lesssim (\|3e^{3\ICC_\epsilon}\ICCC_\epsilon\|_{C_T\CC^{1/2-\kappa}}+\|3e^{3\ICC_\epsilon}\ICCC_\epsilon\|_{C_T^{1/4-\kappa/2}L^\infty}) \|\XX_\epsilon\|_{C_T\CC^{-1-\kappa}}.
    \label{eq:y_commutation}
\end{multline}
Here, note that
\[
    \CL_\epsilon^{-1}[\XX_\epsilon](t)
    = \ICC_\epsilon(t) - P_t^\epsilon \ICC_\epsilon(0)
\]
and $\|\ICC_\epsilon\|_{C_T^{1/2-\kappa/2}L^\infty} \lesssim 1$ from Lemma \ref{lem:LICC}.
Thus, we have
\begin{align*}
    y_\epsilon = Y_\epsilon -3(e^{3\ICC_\epsilon}\ICCC_\epsilon)\pl P_\cdot^\epsilon \ICC_\epsilon(0) + 3(e^{3\ICC_\epsilon}\ICCC_\epsilon)\pl \ICC_\epsilon, 
\end{align*}
where we define
\begin{align*}
    Y_\epsilon&:= \CL_\epsilon^{-1} [3e^{3\ICC_\epsilon}\pl(\ICCC_\epsilon\pl\XX_\epsilon)]  - 3(e^{3\ICC_\epsilon}\ICCC_\epsilon)\pl \{\ICC_\epsilon-P_\cdot^\epsilon \ICC_\epsilon(0)\}
\end{align*}
and we can show that $Y_\epsilon$ converges in $\CC^{3/2-3\kappa}$ as $\epsilon$ tends to $0$ by using (\ref{eq:y_Bony}) and (\ref{eq:y_commutation}). 
In particular, we have
\begin{align}\label{eq:Y_epsilon_estimates}
    \|Y_\epsilon\|_{C_T\CC^{3/2-3\kappa}} \lesssim_{\EX_\epsilon} 1.
\end{align}
Therefore, we can decompose the first term of (\ref{eq:R_term_1}) as follows:
\begin{align}
    &\nabla \ICC_\epsilon \rs \nabla y_\epsilon +\epsilon\Delta \ICC_\epsilon \rs \Delta y_\epsilon - 3b_\epsilon e^{3\ICC_\epsilon} \ICCC_\epsilon \notag \\
    &= 9[ \nabla \ICC_\epsilon \rs \nabla \{(e^{3\ICC_\epsilon}\ICCC_\epsilon)\pl \ICC_\epsilon\} \notag\\
    &\quad + \epsilon \Delta \ICC_\epsilon \rs \Delta \{(e^{3\ICC_\epsilon}\ICCC_\epsilon)\pl \ICC_\epsilon\} -b_\epsilon e^{3\ICC_\epsilon}\ICCC_\epsilon ] + \overline{G}_\epsilon^{(3)} + \overline{G}_\epsilon^{(4)}, \label{eq:R_term_2}
\end{align}
where we define
\begin{align}
    \overline{G}_\epsilon^{(3)}
    &:=3\{\nabla \ICC_\epsilon \rs \nabla Y_\epsilon + \epsilon \Delta \ICC_\epsilon \rs \Delta Y_\epsilon\}, \label{eq:defi_of_bar_G_3} \\
    \overline{G}_\epsilon^{(4)}
    &:=-9[\nabla \ICC_\epsilon \rs \nabla \{(e^{3\ICC_\epsilon}\ICCC_\epsilon) \pl P_\cdot^\epsilon \ICC_\epsilon(0)\} \notag \\
    &\quad\quad\quad  + \epsilon \Delta \ICC_\epsilon \rs \Delta \{(e^{3\ICC_\epsilon}\ICCC_\epsilon) \pl P_\cdot^\epsilon \ICC_\epsilon(0)\}]. \label{eq:defi_of_bar_G_4}
\end{align}
They are the inner product terms including $Y_\epsilon$ and $P_\cdot^\epsilon \ICC_\epsilon(0)$, respectively, and we can show that they converge in $\CC^{1/2-5\kappa}$ and in $\CC^{\kappa}$, respectively. 
(See Lemma \ref{lem:widehat_G_epsilon_u_estimates}.)
For the remaining terms, by using Leibniz rule and commutator estimates (Proposition \ref{prop:commutator_estimates}), we decompose
\begin{align*}
    &\nabla \ICC_\epsilon \rs \nabla \{(e^{3\ICC_\epsilon}\ICCC_\epsilon)\pl \ICC_\epsilon\}\\
    & = \nabla \ICC_\epsilon \rs \{ \nabla (e^{3\ICC_\epsilon}\ICCC_\epsilon)\pl \ICC_\epsilon\} + \nabla \ICC_\epsilon \rs \{(e^{3\ICC_\epsilon}\ICCC_\epsilon)\pl \nabla \ICC_\epsilon\} \\
    & = \nabla \ICC_\epsilon \rs \{ \nabla (e^{3\ICC_\epsilon}\ICCC_\epsilon)\pl \ICC_\epsilon\} +  C(e^{3\ICC_\epsilon}\ICCC_\epsilon, \nabla\ICC_\epsilon,\nabla \ICC_\epsilon) + e^{3\ICC_\epsilon}\ICCC_\epsilon(\nabla \ICC_\epsilon \rs \nabla \ICC_\epsilon)
\end{align*}
and
\begin{align*}
    &\epsilon \Delta \ICC_\epsilon \rs \Delta \{(e^{3\ICC_\epsilon}\ICCC_\epsilon)\pl \ICC_\epsilon\} \\
    &= \epsilon [\Delta \ICC_\epsilon \rs\{ \Delta (e^{3\ICC_\epsilon}\ICCC_\epsilon)\pl \ICC_\epsilon\} + 2 \Delta \ICC_\epsilon \rs  \{\nabla (e^{3\ICC_\epsilon}\ICCC_\epsilon)\pl \nabla \ICC_\epsilon\} \\
    &\quad \quad + \Delta \ICC_\epsilon \rs  \{(e^{3\ICC_\epsilon}\ICCC_\epsilon)\pl \Delta \ICC_\epsilon\}] \\
    &= \epsilon [\Delta \ICC_\epsilon \rs\{ \Delta (e^{3\ICC_\epsilon}\ICCC_\epsilon)\pl \ICC_\epsilon\} + 2 \Delta \ICC_\epsilon \rs  \{\nabla (e^{3\ICC_\epsilon}\ICCC_\epsilon)\pl \nabla \ICC_\epsilon\} \\
    &\quad\quad +  C(e^{3\ICC_\epsilon}\ICCC_\epsilon, \Delta\ICC_\epsilon,\Delta \ICC_\epsilon) + e^{3\ICC_\epsilon}\ICCC_\epsilon(\Delta \ICC_\epsilon \rs \Delta \ICC_\epsilon)].
\end{align*}
Note that $\CD_\epsilon = \nabla \ICC_\epsilon \rs \nabla \ICC_\epsilon + \epsilon \Delta \ICC_\epsilon \rs \Delta \ICC_\epsilon - b_\epsilon$.
Let
\begin{align}
    \overline{G}_\epsilon^{(5)}
    &:= 9\{ \nabla \ICC_\epsilon \rs  (\nabla(e^{3\ICC_\epsilon}\ICCC_\epsilon)\pl \ICC_\epsilon) \notag \\
    &\quad+ \epsilon \Delta \ICC_\epsilon \rs  (\Delta(e^{3\ICC_\epsilon}\ICCC_\epsilon)\pl \ICC_\epsilon) + 2\epsilon \Delta \ICC_\epsilon \rs  (\nabla(e^{3\ICC_\epsilon}\ICCC_\epsilon)\pl \nabla \ICC_\epsilon) \}, \label{eq:defi_of_bar_G_5} \\
    \overline{G}_\epsilon^{(6)}
    &:=9\{ C(e^{3\ICC_\epsilon}\ICCC_\epsilon, \nabla\ICC_\epsilon,\nabla \ICC_\epsilon ) + \epsilon C(e^{3\ICC_\epsilon}\ICCC_\epsilon, \Delta \ICC_\epsilon,\Delta \ICC_\epsilon ) + e^{3\ICC_\epsilon}\ICCC_\epsilon \CD_\epsilon\}.\label{eq:defi_of_bar_G_6}
\end{align}
Then, the first term of (\ref{eq:R_term_2}) is equal to the sum of $\overline{G}_\epsilon^{(5)}$ and $\overline{G}_\epsilon^{(6)}$.
We can show that they converge in $\CC^{1/2-4\kappa}$ and $\CC^{-\kappa}$, respectively. (See Lemma \ref{lem:widehat_G_epsilon_u_estimates}.)
Therefore, for the second term of (\ref{eq:CL_epsilon_u_1}), we have
\[
    G_\epsilon(3\ICC_\epsilon, y_\epsilon) - 9b_\epsilon e^{3\ICC_\epsilon} \ICCC_\epsilon=\sum_{j=1}^6 \overline{G}_\epsilon^{(j)}=:\overline{G}_\epsilon
\]
and $\overline{G}_\epsilon$ converges in $\CC^{-1/3-4\kappa/3}$.

We define
\[
    \Phi_\epsilon(u_\epsilon):=e^{-6\ICC_\epsilon}(u_\epsilon+y_\epsilon)^3+\widetilde{Z}_\epsilon^{(2)} (u_\epsilon+y_\epsilon)^2 +\widetilde{Z}_\epsilon^{(1)} (u_\epsilon + y_\epsilon) + \widetilde{Z}_\epsilon^{(0)} - 2\overline{G}_\epsilon-6\nabla \ICC_\epsilon \cdot \nabla u_\epsilon,
\]
which is the sum of the first and second terms of (\ref{eq:CL_epsilon_u_1}).
By expanding it, we have
\[
    \Phi_\epsilon(u_\epsilon)=e^{-6\ICC_\epsilon}u_\epsilon^3+Z_\epsilon^{(2)} u_\epsilon^2 +Z_\epsilon^{(1)} u_\epsilon + Z_\epsilon^{(0)}-6\nabla \ICC_\epsilon \cdot \nabla u_\epsilon,
\]
where $Z^{(0)}_\epsilon$, $Z^{(1)}_\epsilon$ and $Z^{(2)}_\epsilon$ are continuous functions from $\EX_\epsilon$ to $C_T\CC^{-1/2-\kappa}$ for $i=0,1,2$. 
More precisely, $Z^{(i)}_\epsilon$ can be expressed as
\begin{align}
    Z^{(2)}_\epsilon &:= \widetilde{Z}^{(2)}_\epsilon + 3e^{-6\ICC_\epsilon}y_\epsilon \label{eq:defi_of_Z_2},\\
    Z^{(1)}_\epsilon &:= \widetilde{Z}^{(1)}_\epsilon + 2\widetilde{Z}^{(2)}_\epsilon y_\epsilon + 3e^{-6\ICC_\epsilon}y_\epsilon^2 \label{eq:defi_of_Z_1},\\
    Z^{(0)}_\epsilon &:= \widetilde{Z}^{(0)}_\epsilon + \widetilde{Z}^{(1)}_\epsilon y_\epsilon + \widetilde{Z}^{(2)}_\epsilon y_\epsilon^2 +e^{-6\ICC_\epsilon}y_\epsilon^3 -2 \overline{G}_\epsilon\label{eq:defi_of_Z_0},
\end{align}
where $\widetilde{Z}^{0}_\epsilon$, $\widetilde{Z}^{1}_\epsilon$ and $\widetilde{Z}^{2}_\epsilon$ are defined by (\ref{eq:defi_of_tilde_Z_2}) to (\ref{eq:defi_of_tilde_Z_0}).
Therefore, we have (\ref{eq:L_epsilon_u}).
\end{proof}

\subsubsection{Formulation of problems}
Let $\CR_\epsilon(\EX_\epsilon, u_\epsilon)$ be the right-hand side of (\ref{eq:L_epsilon_u}), that is
\[
    \CR_\epsilon(\EX_\epsilon,u_\epsilon)
    :=\Phi_\epsilon(u_\epsilon) - 2\widetilde{G}_\epsilon(3\ICC_\epsilon,u_\epsilon).
\]
Moreover, we extend the definition of $\Phi_\epsilon$, $Z_\epsilon^{(i)}$, $\overline{F}_\epsilon$, $\widetilde{G}_\epsilon$, $\overline{G}_\epsilon$ and $\CR_\epsilon$ to the case $\epsilon = 0$ by omitting the term containing $\epsilon$.
For example, we define
\[
    \overline{F}_0(\EX):= 9(2\nabla \ICC \pl \nabla \ICC + \CD).
\]
In particular, $\widetilde{G}_0=0$.
We consider the equation
\begin{align}\label{eq:L_epsilon_u_epsilon}
    \CL_\epsilon u_\epsilon = \CR_\epsilon(\EX_\epsilon,u_\epsilon)
\end{align}
for $ 0\leq \epsilon \leq 1$, the initial condition $u_\epsilon(0)$ and the driving vector $\EX_\epsilon$.
The main problem of this section is the following:
\begin{enumerate}[(i)]
    \item The $\epsilon$-uniform local well-posedness of (\ref{eq:L_epsilon_u_epsilon}).
    \item The convergence of $\{u_\epsilon\}_{0 < \epsilon \leq 1}$ to $u_0$.
\end{enumerate}
We will show (i) in Section \ref{sec:LWP} and (ii) in Section \ref{sec:Convergence}.
In the next section, we will estimates $\CR_\epsilon(\EX_\epsilon,u_\epsilon)$ in oreder to prove (i) and (ii).

\subsection{Preliminary}
\label{sec:Preliminary}

Fix $\kappa>0$ and $0<T\leq 1$.
Let $\EX$, $\EX^{(1)}$ and $\EX^{(2)}$ be elements of $\CX_T^\kappa$ and $\{\EX_\epsilon\}_{0< \epsilon \leq 1}$, $\{\EX_\epsilon^{(1)}\}_{0< \epsilon \leq 1}$ and $\{\EX_\epsilon^{(2)}\}_{0< \epsilon \leq 1}$ be sequences of $\CX_T^\kappa$.
\begin{prop}
    \label{prop:Right-hand_side_estimates}
    For any $0<t\leq T$ and $0\leq\epsilon\leq 1$, we have the following:
    \begin{enumerate}[(1)]
        \item 
            For $u_\epsilon \in \CE_T^{-1/2-\kappa}\CC_\epsilon^{3/2-2\kappa}$,
            \begin{align}
                \|\CR_\epsilon(\EX_\epsilon, u_\epsilon)(t)\|_{\CC^{-1/2-\kappa}} \leq C t^{-1+\frac{\kappa}{2}} (1+\|u_\epsilon\|_{\CE_T^{-1/2-\kappa}\CC_\epsilon^{3/2-2\kappa}}^3),
            \end{align}
            where, $C$ is a positive constant depending only on $\kappa$, $\kappa'$ and $\|\EX_\epsilon\|_{\CX_T^\kappa}$.
        \item 
            For any $u_\epsilon^{(1)}, u_\epsilon^{(2)} \in \CE_T^{-1/2-\kappa}\CC_\epsilon^{3/2-2\kappa}$,
            \begin{multline*}
                \|\CR_\epsilon(\EX_\epsilon^{(1)},u_\epsilon^{(1)})(t)- \CR_\epsilon(\EX_\epsilon^{(2)},u_\epsilon^{(2)})(t)\|_{\CC^{-1/2-\kappa}}\\
                \leq t^{-1+\frac{\kappa}{2}}(C'_1\|\EX_\epsilon^{(1)}-\EX_\epsilon^{(2)}\|_{\CX_T^\kappa} + C_2' \|u_\epsilon^{(1)}-u_\epsilon^{(2)}\|_{\CE_T^{-1/2-\kappa}\CC_\epsilon^{3/2-2\kappa}}).
            \end{multline*}
            Here, $C'_1$ and $C'_2$ are positive constants depending only on $\kappa$, $\kappa'$, $\|\EX_\epsilon^{(i)}\|_{\CX_T^\kappa}$ and $\|u_\epsilon^{(i)}\|_{\CE_T^{-1/2-\kappa}\CC_\epsilon^{3/2-2\kappa}}$.
            In particular, $C'_1$ is given by at most third-order polynomial and $C'_2$ is given by at most second-order polynomial in $\|u_\epsilon^{(i)}\|_{\CE_T^{-1/2-\kappa}\CC_\epsilon^{3/2-2\kappa}}$.
    \end{enumerate}
    Moreover, for any $0<t\leq T$, $0<\epsilon \leq 1$, $u\in \CE_T^{-1/2-\kappa}\CC^{3/2-2\kappa}$ and $u_\epsilon\in \CE_T^{-1/2-\kappa}\CC_\epsilon^{3/2-2\kappa}$,  we have
    \begin{multline*}
        \|\CR(\EX,u)(t)- \CR_\epsilon(\EX_\epsilon,u_\epsilon)(t)\|_{\CC^{-1/2-\kappa}}\\
        \leq t^{-1+\frac{\kappa}{2}} \{C''_1(\epsilon^{\frac{\kappa}{4}}+\|\EX-\EX_\epsilon\|_{\CX_T^\kappa} )+ C_2'' \|u-u_\epsilon\|_{\CE_T^{-1/2-\kappa}\CC^{3/2-2\kappa}}\}.
    \end{multline*}
    Here, $C''_1$ and $C''_2$ are positive constants depending only on $\kappa$, $\kappa'$, $\|\EX\|_{\CX_T^\kappa}$, $\|\EX_\epsilon\|_{\CX_T^\kappa}$, $\|u\|_{\CE_T^{-1/2-\kappa}\CC^{3/2-2\kappa}}$ and $\|u_\epsilon\|_{\CE_T^{-1/2-\kappa}\CC_\epsilon^{3/2-2\kappa}}$. 
    In particular, $C''_1$ is given by at most third-order polynomial and $C''_2$ is given by at most second-order polynomial in $\|u\|_{\CE_T^{-1/2-\kappa}\CC^{3/2-2\kappa}}$ and $\|u_\epsilon\|_{\CE_T^{-1/2-\kappa}\CC_\epsilon^{3/2-2\kappa}}$.
\end{prop}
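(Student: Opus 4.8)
The plan is to expand $\CR_\epsilon(\EX_\epsilon,u_\epsilon)=\Phi_\epsilon(u_\epsilon)-2\widetilde{G}_\epsilon(3\ICC_\epsilon,u_\epsilon)$ into the explicit terms exhibited in the proof of Proposition~\ref{prop:Formulate} and to bound each of them separately. The inputs are: the a priori bounds $\|\ICC_\epsilon\|_{C_T\CC_\epsilon^{1-\kappa}}\lesssim_{\EX_\epsilon}1$ and $\|\ICC_\epsilon\|_{C_T^{1/2-\kappa/2}L^\infty}\lesssim_{\EX_\epsilon}1$ (Lemma~\ref{lem:LICC}), $\|y_\epsilon\|_{C_T\CC_\epsilon^{1-\kappa}}\lesssim_{\EX_\epsilon}1$ (Lemma~\ref{lem:Ly}), $\|Y_\epsilon\|_{C_T\CC^{3/2-3\kappa}}\lesssim_{\EX_\epsilon}1$ from \eqref{eq:Y_epsilon_estimates}, and the boundedness of the $Z_\epsilon^{(i)}$ in $C_T\CC^{-1/2-\kappa}$ and of $\overline{F}_\epsilon,\overline{G}_\epsilon$ in $C_T\CC^{-1/3-4\kappa/3}$, all with constants depending only on $\|\EX_\epsilon\|_{\CX_T^\kappa}$; Bony's paramultiplication bound (Proposition~\ref{prop:Bonys_paramultiplication}) and the commutator estimates (Proposition~\ref{prop:commutator_estimates}); and, directly from the definition of $\CE_T^{-1/2-\kappa}\CC_\epsilon^{3/2-2\kappa}$ together with Besov interpolation, the inequality $\|u_\epsilon(t)\|_{\CC^\beta}\lesssim t^{-(\beta+1/2+\kappa)/2}\|u_\epsilon\|_{\CE_T^{-1/2-\kappa}\CC_\epsilon^{3/2-2\kappa}}$ for $-1/2-\kappa\le\beta\le3/2-2\kappa$, with its $\epsilon$-weighted refinement reaching $\beta=7/2-3\kappa$ at the extra cost $\epsilon^{-(1-\kappa/4)}$. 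Since $0<t\le T\le1$ throughout, any constant is $\le t^{-1+\kappa/2}$.

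First I would treat the $\epsilon$-free part $\Phi_\epsilon(u_\epsilon)$. The term $Z_\epsilon^{(0)}$ contributes only $\lesssim_{\EX_\epsilon}1\le t^{-1+\kappa/2}$. The transport term $-6\nabla\ICC_\epsilon\cdot\nabla u_\epsilon$ is the one responsible for the announced singularity rate: since $\nabla\ICC_\epsilon\in C_T\CC^{-\kappa}$ and $\nabla u_\epsilon(t)\in\CC^{1/2-2\kappa}$ with $-\kappa+(1/2-2\kappa)>0$ for $\kappa<1/8$, Bony's multiplication places it in $\CC^{-\kappa}\hookrightarrow\CC^{-1/2-\kappa}$ with norm $\lesssim_{\EX_\epsilon}t^{-1+\kappa/2}\|u_\epsilon\|_{\CE_T^{-1/2-\kappa}\CC_\epsilon^{3/2-2\kappa}}$. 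The polynomial terms $Z_\epsilon^{(1)}u_\epsilon$, $Z_\epsilon^{(2)}u_\epsilon^2$ and $e^{-6\ICC_\epsilon}u_\epsilon^3$ are products of negative-regularity distributions, the $Z_\epsilon^{(i)}\in\CC^{-1/2-\kappa}$ (and $e^{-6\ICC_\epsilon}\in\CC^{1-\kappa}$), with powers of $u_\epsilon$, and are meaningful only because $u_\epsilon(t)$ is a genuine function for $t>0$: I would Bony-decompose each product, estimate $u_\epsilon^k$ in $\CC^\gamma$ by $\|u_\epsilon(t)\|_{\CC^\gamma}\|u_\epsilon(t)\|_{L^\infty}^{k-1}$, and take $\gamma$ slightly above $\max(0,1/2+\kappa)$ so that the resonant part is defined and the product lands in $\CC^{-1/2-\kappa}$. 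The resulting time exponent $(\gamma+1/2+\kappa)/2+(k-1)(1/2+\kappa)/2$ stays strictly below $1-\kappa/2$ for $k\le3$ and this choice of $\gamma$ precisely when $\kappa<1/8$, the residual slack being absorbed into the auxiliary parameter $\kappa'$. Collecting these bounds gives $\|\Phi_\epsilon(u_\epsilon)(t)\|_{\CC^{-1/2-\kappa}}\lesssim_{\EX_\epsilon}t^{-1+\kappa/2}(1+\|u_\epsilon\|_{\CE_T^{-1/2-\kappa}\CC_\epsilon^{3/2-2\kappa}}^3)$.

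Next is the $\epsilon$-correction $\widetilde{G}_\epsilon(3\ICC_\epsilon,u_\epsilon)$, which is linear in $u_\epsilon$ and whose summands are products of at most two derivatives of $\ICC_\epsilon$ with at most two derivatives of $u_\epsilon$, all carrying a single factor $\epsilon$. The mechanism is to distribute this $\epsilon$ over the two factors using the $\epsilon$-weighted norms: Lemma~\ref{lem:LICC} gives $\|\ICC_\epsilon\|_{C_T\CC^{3-2\kappa}}\lesssim_{\EX_\epsilon}\epsilon^{-(1-\kappa/4)}$, and the definition of $\CC_\epsilon^{3/2-2\kappa}$ gives $\|u_\epsilon(t)\|_{\CC^{7/2-3\kappa}}\lesssim\epsilon^{-(1-\kappa/4)}t^{-1+\kappa/2}\|u_\epsilon\|_{\CE_T^{-1/2-\kappa}\CC_\epsilon^{3/2-2\kappa}}$; interpolating each of these against the corresponding uniform bound (in $\CC^{1-\kappa}$, resp.\ $\CC^{3/2-2\kappa}$) yields a family of bounds indexed by $(\theta_1,\theta_2)\in[0,1]^2$. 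For every summand one picks $(\theta_1,\theta_2)$ so that, first, the Bony and resonant products (together with the outer divergences appearing in $\widetilde{B}$ and $T$) land in $\CC^{-1/3-4\kappa/3}\hookrightarrow\CC^{-1/2-\kappa}$ — which needs the regularity $3/2-2\kappa$ of $u_\epsilon$ to exceed $1+2\kappa$, i.e.\ $\kappa<1/8$ — and, second, the net power $1-(1-\kappa/4)(\theta_1+\theta_2)$ of $\epsilon$ is nonnegative; checking that this window of $(\theta_1,\theta_2)$ is nonempty is the core computation. This yields $\|\widetilde{G}_\epsilon(3\ICC_\epsilon,u_\epsilon)(t)\|_{\CC^{-1/2-\kappa}}\lesssim_{\EX_\epsilon}t^{-1+\kappa/2}(1+\|u_\epsilon\|_{\CE_T^{-1/2-\kappa}\CC_\epsilon^{3/2-2\kappa}})$, which together with the previous step proves (1). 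This $\epsilon$-correction is the step I expect to be the main obstacle: one must hold three constraints in balance simultaneously — landing every $\epsilon$-term in $\CC^{-1/2-\kappa}$, keeping the net power of $\epsilon$ nonnegative (strictly equal to $\kappa/4$ for the convergence estimate), and keeping the time singularity down to $t^{-1+\kappa/2}$ — and the range of $(\theta_1,\theta_2)$ on which all three hold degenerates exactly as $\kappa\uparrow1/8$, which is what forces $0<\kappa<1/8$.

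For (2) I would rerun the same estimates on differences, using multilinearity — writing $A^{(1)}B^{(1)}-A^{(2)}B^{(2)}=(A^{(1)}-A^{(2)})B^{(1)}+A^{(2)}(B^{(1)}-B^{(2)})$ and $(u^{(1)})^k-(u^{(2)})^k=(u^{(1)}-u^{(2)})\sum_{j=0}^{k-1}(u^{(1)})^j(u^{(2)})^{k-1-j}$ — together with the fact that $\ICC_\epsilon$, the $Z_\epsilon^{(i)}$, $y_\epsilon$, $Y_\epsilon$, $\overline{F}_\epsilon$ and $\overline{G}_\epsilon$ (hence each $\overline{G}_\epsilon^{(j)}$) depend continuously, and from their explicit formulas locally Lipschitz-continuously, on $\EX_\epsilon$, so that all coefficient differences are dominated by $\|\EX_\epsilon^{(1)}-\EX_\epsilon^{(2)}\|_{\CX_T^\kappa}$; reading off degrees, the difference of cubic coefficients multiplied by $(u_\epsilon^{(1)})^3$ produces the third-order polynomial $C_1'$, while the explicit factor $u_\epsilon^{(1)}-u_\epsilon^{(2)}$ leaves a second-order polynomial $C_2'$. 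For the final estimate, split $\CR_\epsilon(\EX_\epsilon,u_\epsilon)-\CR(\EX,u)=[\CR_\epsilon(\EX_\epsilon,u_\epsilon)-\CR_0(\EX_\epsilon,u_\epsilon)]+[\CR_0(\EX_\epsilon,u_\epsilon)-\CR_0(\EX,u)]$: the first bracket is precisely the collection of $\epsilon$-carrying terms ($\widetilde{G}_\epsilon$ and the $\epsilon$-parts of $\overline{F}_\epsilon$ and $\overline{G}_\epsilon$), and rerunning the interpolation of the previous paragraph with $(\theta_1,\theta_2)$ forced so that the net $\epsilon$-power equals $\kappa/4$ bounds it by $\lesssim_{\EX_\epsilon}t^{-1+\kappa/2}\epsilon^{\kappa/4}(1+\|u_\epsilon\|_{\CE_T^{-1/2-\kappa}\CC_\epsilon^{3/2-2\kappa}}^3)$, while the second bracket is the $\epsilon=0$ instance of (2) (only the $\CC^{3/2-2\kappa}$-part of the norm of $u_\epsilon$ enters $\CR_0$). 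Adding the two brackets gives the claimed inequality; the estimates throughout are routine paracontrolled bookkeeping of the type already performed in the proof of Proposition~\ref{prop:Formulate}.
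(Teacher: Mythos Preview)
Your approach is essentially the paper's: the proposition is obtained there by combining a $\Phi_\epsilon$-estimate (Lemma~\ref{lem:Phi_estimates}, built on the $Z^{(i)}_\epsilon$ bounds of Lemma~\ref{lem:Z_estimates}) with a $\widetilde{G}_\epsilon$-estimate (Lemma~\ref{lem:G_epsilon_u_estimates}), using exactly the interpolation devices you describe (Lemmas~\ref{lem:interpolation_estimates_epsilon} and \ref{lem:interpolation_estimates}) and the same identification of the constraint $\kappa<1/8$ via $\alpha=3/2-2\kappa>1+2\kappa$ in the $\epsilon\Delta\ICC_\epsilon\Delta u_\epsilon$ term. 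One small correction to your list of inputs: $Z_\epsilon^{(0)}$ and $\overline{G}_\epsilon$ are \emph{not} bounded uniformly in time --- the term $\overline{G}_\epsilon^{(4)}$, which involves $P_t^\epsilon\ICC_\epsilon(0)$, carries a $t^{-2\kappa}$ singularity (Lemma~\ref{lem:widehat_G_epsilon_u_estimates}) that propagates to $Z_\epsilon^{(0)}$ (Lemma~\ref{lem:Z_estimates}); this is harmless for the final bound since $t^{-2\kappa}\le t^{-1+\kappa/2}$, but the claim ``$Z_\epsilon^{(0)}$ contributes only $\lesssim_{\EX_\epsilon}1$'' should be adjusted.
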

We prove Proposition \ref{prop:Right-hand_side_estimates} as a consequence of Lemmas \ref{lem:Phi_estimates} and \ref{lem:G_epsilon_u_estimates}.

\subsubsection{Basic estimates}
First, we introduce two interpolation estimates which appear frequently in this section.
\begin{lem}
    \label{lem:interpolation_estimates_epsilon}
    Let $\alpha \in \R$ and $0<\epsilon \leq 1$.
    For any $u\in \CC_\epsilon^\alpha$ and $ 0\leq \delta \leq 1$, we have
    \[
        \|u\|_{\CC^{\alpha+\delta(2-\kappa)}}
        \lesssim \epsilon^{-\delta(1-\frac{\kappa}{4})} \|u\|_{\CC_\epsilon^\alpha}.
    \]
\end{lem}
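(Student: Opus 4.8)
The plan is to reduce everything to a single elementary interpolation inequality on Hölder--Besov scales and then optimize in a scaling parameter. Recall that for any $\alpha \in \R$, any $r > 0$, and any $\theta \in [0,1]$ one has the standard estimate
\[
    \|u\|_{\CC^{\alpha + \theta r}} \lesssim \|u\|_{\CC^{\alpha}}^{1-\theta}\,\|u\|_{\CC^{\alpha+r}}^{\theta},
\]
which follows directly from the definition of the $\CC^{\beta} = \CB^\beta_{\infty,\infty}$ norm by splitting $2^{j(\alpha+\theta r)}\|\Delta_j u\|_{L^\infty} = \bigl(2^{j\alpha}\|\Delta_j u\|_{L^\infty}\bigr)^{1-\theta}\bigl(2^{j(\alpha+r)}\|\Delta_j u\|_{L^\infty}\bigr)^{\theta}$ and taking the supremum over $j$. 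Applying this with $r = 2-\kappa$ and $\theta = \delta$ gives
\[
    \|u\|_{\CC^{\alpha+\delta(2-\kappa)}} \lesssim \|u\|_{\CC^\alpha}^{1-\delta}\,\|u\|_{\CC^{\alpha+2-\kappa}}^{\delta}.
\]

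Next I would invoke Young's inequality in the form $a^{1-\delta}b^{\delta} \le (1-\delta)\lambda^{-\delta/(1-\delta)} a + \delta \lambda\, b$ for any $\lambda > 0$, or more simply just use $a^{1-\delta} b^{\delta} \le \max\{a,b\}$ together with a rescaling trick: apply the bound above to the rescaled quantity and track the powers of $\epsilon$. Concretely, write
\[
    \|u\|_{\CC^{\alpha+\delta(2-\kappa)}}
    \lesssim \|u\|_{\CC^\alpha}^{1-\delta}\,\bigl(\epsilon^{1-\frac{\kappa}{4}}\|u\|_{\CC^{\alpha+2-\kappa}}\bigr)^{\delta}\,\epsilon^{-\delta(1-\frac{\kappa}{4})}
    \le \epsilon^{-\delta(1-\frac{\kappa}{4})}\,\max\Bigl\{\|u\|_{\CC^\alpha},\ \epsilon^{1-\frac{\kappa}{4}}\|u\|_{\CC^{\alpha+2-\kappa}}\Bigr\}
    \le \epsilon^{-\delta(1-\frac{\kappa}{4})}\,\|u\|_{\CC_\epsilon^\alpha},
\]
where the last step uses that each of the two terms in the max is bounded by $\|u\|_{\CC_\epsilon^\alpha} = \|u\|_{\CC^\alpha} + \epsilon^{1-\frac{\kappa}{4}}\|u\|_{\CC^{\alpha+2-\kappa}}$ by definition. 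The endpoint cases $\delta = 0$ (trivial, since $\|u\|_{\CC^\alpha} \le \|u\|_{\CC_\epsilon^\alpha}$) and $\delta = 1$ (which reads $\epsilon^{1-\frac{\kappa}{4}}\|u\|_{\CC^{\alpha+2-\kappa}} \le \|u\|_{\CC_\epsilon^\alpha}$, again immediate) are consistent and can be checked separately if one prefers to avoid $a^0 b^1$-type degeneracies in the interpolation inequality.

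There is essentially no hard part here — the only mild subtlety is making sure the interpolation inequality $\|u\|_{\CC^{\alpha+\theta r}}\lesssim \|u\|_{\CC^\alpha}^{1-\theta}\|u\|_{\CC^{\alpha+r}}^\theta$ is applied with a genuinely $\epsilon$-independent implied constant (it is, since it depends only on the dyadic partition of unity and on $\theta$, $r$, $\alpha$, all of which are fixed once $\kappa$ is fixed), and that the implicit constant may be taken uniform in $\delta \in [0,1]$ and in $\alpha$ in any fixed range. One could also absorb the whole argument into a one-line proof: $u \in \CC_\epsilon^\alpha$ means $(u, \epsilon^{1-\kappa/4}u)$ lies in $\CC^\alpha \times \CC^{\alpha+2-\kappa}$ with controlled norm, and complex (or real) interpolation of these two spaces at parameter $\delta$ yields $\CC^{\alpha+\delta(2-\kappa)}$ with the stated weight $\epsilon^{-\delta(1-\kappa/4)}$; I would present the hands-on version above since it is self-contained and avoids quoting interpolation-space functors.
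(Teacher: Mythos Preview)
Your proof is correct and follows essentially the same approach as the paper: both use the standard Besov interpolation inequality $\|u\|_{\CC^{\alpha+\delta(2-\kappa)}} \lesssim \|u\|_{\CC^\alpha}^{1-\delta}\|u\|_{\CC^{\alpha+2-\kappa}}^{\delta}$ and then bound each factor by $\|u\|_{\CC_\epsilon^\alpha}$ after inserting the appropriate power of $\epsilon$. The paper simply writes $\|u\|_{\CC^{\alpha+2-\kappa}} \le \epsilon^{-1+\kappa/4}\|u\|_{\CC_\epsilon^\alpha}$ and $\|u\|_{\CC^\alpha} \le \|u\|_{\CC_\epsilon^\alpha}$ directly, whereas you route through $a^{1-\delta}b^\delta \le \max\{a,b\} \le a+b$; these are equivalent and your added remark on the $\epsilon$- and $\delta$-uniformity of the interpolation constant is a nice touch.
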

\begin{proof}
    By the interpolation estimate (\cite{BCD11} Lemma 2.80), we have
    \begin{align*}
        \|u\|_{\CC^{\alpha+\delta(2-\kappa)}}
        & \lesssim \|u\|_{\CC^{\alpha+2-\kappa}}^{\delta}\|u\|_{\CC^{\alpha}}^{1-\delta} \\
        & \lesssim (\epsilon^{-1+\frac{\kappa}{4}} \|u\|_{\CC_\epsilon^{\alpha}})^{\delta}\|u\|_{\CC_\epsilon^{\alpha}}^{1-\delta} \\
        &= \epsilon^{-\delta(1-\frac{\kappa}{4})} \|u\|_{\CC_\epsilon^\alpha}.
    \end{align*}
\end{proof}
In the following, we use
\begin{align*}
    \|u\|_{\CC^{\alpha+2-\kappa}}&\lesssim \epsilon^{-1+\frac{\kappa}{4}}\|u\|_{\CC_\epsilon^\alpha}, && (\delta=1), \\
    \|u\|_{\CC^{\alpha+1-\frac{\kappa}{2}}} &\lesssim \epsilon^{-\frac{1}{2}(1-\frac{\kappa}{4})} \|u\|_{\CC_\epsilon^\alpha}, &&(\delta = 1/2), \\
    \|u\|_{\CC^{\alpha+\frac{2}{3}-\frac{\kappa}{3}}} &\lesssim \epsilon^{-\frac{1}{3}(1-\frac{\kappa}{4}) }\|u\|_{\CC_\epsilon^\alpha}, && (\delta = 1/3).
\end{align*}
\begin{lem}
    \label{lem:interpolation_estimates}
    Let $\beta<\alpha$ and $0<t\leq T$.
    For any $u \in \CE_T^\beta\CC^\alpha$ and $\gamma \in [\beta, \alpha]$, we have
    \[
        \|u(t)\|_{\CC^\gamma}
        \lesssim t^{-\frac{\gamma-\beta}{2}} \|u\|_{\CE_T^\beta\CC^\alpha}.
    \]
    Moreover, for any $\beta<0<\alpha$ and $u \in \CE_T^\beta\CC^\alpha$, we have
    \[
        \|u(t)\|_{L^\infty}\lesssim t^{\frac{\beta}{2}}\|u\|_{\CE_T^\beta\CC^\alpha}.
    \]
\end{lem}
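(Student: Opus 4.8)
The plan is to read both displayed bounds off the two pieces of information that the norm $\|u\|_{\CE_T^\beta\CC^\alpha}$ records about the trajectory $t\mapsto u(t)$, namely the uniform bound $\|u(t)\|_{\CC^\beta}\le\|u\|_{\CE_T^\beta\CC^\alpha}$ on $[0,T]$ and the weighted bound $\|u(t)\|_{\CC^\alpha}\le t^{-(\alpha-\beta)/2}\|u\|_{\CE_T^\beta\CC^\alpha}$ on $(0,T]$ (both immediate from the definition of $\CE_T^\beta\CC^\alpha$ with $\epsilon=0$), and to interpolate between them. For the first estimate I would fix $\gamma\in[\beta,\alpha]$, set $\theta:=(\gamma-\beta)/(\alpha-\beta)\in[0,1]$ so that $\gamma=(1-\theta)\beta+\theta\alpha$, and use the block-wise identity
\[
2^{j\gamma}\|\Delta_j u(t)\|_{L^\infty}=\bigl(2^{j\alpha}\|\Delta_j u(t)\|_{L^\infty}\bigr)^{\theta}\bigl(2^{j\beta}\|\Delta_j u(t)\|_{L^\infty}\bigr)^{1-\theta}\le\|u(t)\|_{\CC^\alpha}^{\theta}\,\|u(t)\|_{\CC^\beta}^{1-\theta}.
\]
Taking the supremum over $j\ge-1$ gives the convexity inequality $\|u(t)\|_{\CC^\gamma}\le\|u(t)\|_{\CC^\alpha}^{\theta}\|u(t)\|_{\CC^\beta}^{1-\theta}$ (this is the interpolation estimate \cite{BCD11} Lemma~2.80 already invoked in the proof of Lemma~\ref{lem:interpolation_estimates_epsilon}); inserting the two bounds above yields $\|u(t)\|_{\CC^\gamma}\lesssim t^{-\theta(\alpha-\beta)/2}\|u\|_{\CE_T^\beta\CC^\alpha}$, and since $\theta(\alpha-\beta)/2=(\gamma-\beta)/2$ this is the claim.

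For the $L^\infty$ bound I cannot just take $\gamma=0$ in the first part, because $\CC^0=\CB^0_{\infty,\infty}$ does not embed into $L^\infty$ and the constant in $\|v\|_{L^\infty}\lesssim_\gamma\|v\|_{\CC^\gamma}$ blows up as $\gamma\downarrow0$. Instead I would split frequencies at an integer cutoff $N\ge0$:
\[
\|u(t)\|_{L^\infty}\le\sum_{j\ge-1}\|\Delta_j u(t)\|_{L^\infty}\le\sum_{-1\le j<N}2^{-j\beta}\|u(t)\|_{\CC^\beta}+\sum_{j\ge N}2^{-j\alpha}\|u(t)\|_{\CC^\alpha},
\]
and since $\beta<0<\alpha$ each geometric series is comparable to its extreme term, so $\|u(t)\|_{L^\infty}\lesssim 2^{-N\beta}\|u(t)\|_{\CC^\beta}+2^{-N\alpha}\|u(t)\|_{\CC^\alpha}\lesssim\bigl(2^{-N\beta}+2^{-N\alpha}t^{-(\alpha-\beta)/2}\bigr)\|u\|_{\CE_T^\beta\CC^\alpha}$. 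Choosing $N$ with $2^N\sim t^{-1/2}$ — admissible because $t\le T\le1$ forces $t^{-1/2}\ge1$ — balances the two terms at $2^{-N\beta}\sim t^{\beta/2}$, which gives $\|u(t)\|_{L^\infty}\lesssim t^{\beta/2}\|u\|_{\CE_T^\beta\CC^\alpha}$. (Equivalently one could use the real-interpolation identity $(\CB^\beta_{\infty,\infty},\CB^\alpha_{\infty,\infty})_{\theta,1}=\CB^0_{\infty,1}$ with $\theta=-\beta/(\alpha-\beta)$ together with $\CB^0_{\infty,1}\hookrightarrow L^\infty$, which reproduces the same exponent.)

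This is a routine interpolation lemma, so I do not expect a real obstacle; the only points that need a little care are noticing that the $L^\infty$ estimate is strictly stronger than anything the first part gives at a fixed $\gamma>0$ — so it has to go through the frequency-splitting (or $\CB^0_{\infty,1}$) argument rather than by specialization — and checking that the optimal cutoff $N\sim\tfrac12\log_2(1/t)$ stays nonnegative, which it does precisely because $T\le1$.
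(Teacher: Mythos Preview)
Your proof is correct and takes essentially the same approach as the paper. For the first inequality both you and the paper invoke the convexity bound $\|u(t)\|_{\CC^\gamma}\lesssim\|u(t)\|_{\CC^\alpha}^{\theta}\|u(t)\|_{\CC^\beta}^{1-\theta}$ from \cite{BCD11} and insert the two pieces of the $\CE_T^\beta\CC^\alpha$ norm; for the $L^\infty$ bound the paper writes $\|u(t)\|_{L^\infty}\le\|u(t)\|_{\CB^0_{\infty,1}}\lesssim\|u(t)\|_{\CC^\alpha}^{-\beta/(\alpha-\beta)}\|u(t)\|_{\CC^\beta}^{\alpha/(\alpha-\beta)}$ directly---precisely the real-interpolation alternative you mention in parentheses---and your frequency-splitting argument is just the hands-on proof of that same inequality.
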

\begin{proof}
    By the interpolation estimate (\cite{BCD11} Lemma 2.80), we have
    \begin{align*}
        \|u(t)\|_{\CC^\gamma}
        &\lesssim \|u(t)\|_{\CC^\alpha}^{\frac{\gamma-\beta}{\alpha-\beta}} \|u(t)\|_{\CC^\beta}^{\frac{\alpha-\gamma}{\alpha-\beta}} \\
        &\lesssim (t^{-\frac{\alpha-\beta}{2}}\|u\|_{\CE_T^\beta\CC^\alpha})^{\frac{\gamma-\beta}{\alpha-\beta}} (\|u\|_{\CE_T^\beta\CC^\alpha})^{\frac{\alpha-\gamma}{\alpha-\beta}} \\
        &= t^{-\frac{\gamma-\beta}{2}}\|u\|_{\CE_T^\beta\CC^\alpha}.
    \end{align*}
    Moreover, for $\beta < 0 < \alpha$, we have
    \begin{align*}
        \|u(t)\|_{L^\infty}
        &\leq \sum_{j\geq -1}\|\Delta_j u(t)\|_{L^\infty}=\|u(t)\|_{\CB_{\infty,1}^{0}} \\
        &\lesssim \|u(t)\|_{\CC^\alpha}^{\frac{-\beta}{\alpha-\beta}} \|u(t)\|_{\CC^{\beta}}^{\frac{\alpha}{\alpha-\beta}} \\
        &\lesssim (t^{-\frac{\alpha-\beta}{2}}\|u\|_{\CE_T^\beta\CC^\alpha})^{\frac{-\beta}{\alpha-\beta}} (\|u\|_{\CE_T^\beta\CC^\alpha})^{\frac{\alpha}{\alpha-\beta}}\\
        &\lesssim t^{\frac{\beta}{2}}\|u\|_{\CE_T^\beta\CC^\alpha}.
    \end{align*}
\end{proof}

Next, we show the estimates of $B$, $T$ and $\widetilde{B}$.
Their estimates are obtained from the following lemma.
\begin{lem}\label{lem:product_estimates}
    Let $ 0 \leq \delta \leq 1$ and $\alpha, \beta<0$ satisfy $\alpha+\beta+\delta(2-\kappa)>0$.
    For $0<\epsilon\leq 1$, $f\in \CC_\epsilon^\alpha$ and $g\in \CC_\epsilon^\beta$, we have
    \begin{equation}
        \|fg\|_{\CC^{\alpha+\beta+\delta(2-\kappa)}}\lesssim \epsilon^{-\delta(1-\frac{\kappa}{4})}\|f\|_{\CC_\epsilon^\alpha}\|g\|_{\CC_\epsilon^\beta} \label{eq:product_estimates}.
    \end{equation} 
\end{lem}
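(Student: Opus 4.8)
The plan is to bound the product through Bony's decomposition $fg = f \pl g + f \pr g + f \rs g$, estimating each of the three pieces separately in $\CC^{\gamma}$ with $\gamma := \alpha + \beta + \delta(2-\kappa)$. In every piece I would spend the whole ``surplus'' $\delta(2-\kappa)$ on a single one of the two factors, paying for it the factor $\epsilon^{-\delta(1-\kappa/4)}$ via the interpolation bound of Lemma \ref{lem:interpolation_estimates_epsilon}, applied with exactly the parameter $\delta$. The reason I cannot split the surplus between both factors is that this would force $\delta(2-\kappa) < (-\alpha) + (-\beta)$, the exact negation of the hypothesis $\alpha + \beta + \delta(2-\kappa) > 0$; so the argument has to go term by term rather than by interpolating $f$ and $g$ each into a fixed space and invoking a single product estimate.

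Concretely, for the low--high term $f \pl g$: since $\alpha < 0$, the paraproduct estimate (Proposition \ref{prop:Bonys_paramultiplication}) gives $\|f \pl g\|_{\CC^{\gamma}} = \|f \pl g\|_{\CC^{\alpha + (\beta + \delta(2-\kappa))}} \lesssim \|f\|_{\CC^{\alpha}}\|g\|_{\CC^{\beta + \delta(2-\kappa)}}$, and Lemma \ref{lem:interpolation_estimates_epsilon} bounds the last norm by $\epsilon^{-\delta(1-\kappa/4)}\|g\|_{\CC_\epsilon^{\beta}}$ while $\|f\|_{\CC^{\alpha}} \leq \|f\|_{\CC_\epsilon^{\alpha}}$. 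For $f \pr g = g \pl f$ the roles of the two factors are exchanged: since $\beta < 0$, one gets $\|f \pr g\|_{\CC^{\gamma}} \lesssim \|g\|_{\CC^{\beta}}\|f\|_{\CC^{\alpha + \delta(2-\kappa)}} \lesssim \epsilon^{-\delta(1-\kappa/4)}\|f\|_{\CC_\epsilon^{\alpha}}\|g\|_{\CC_\epsilon^{\beta}}$. For the resonant term $f \rs g$ the only requirement is that the two regularity indices sum to something positive; I would keep $f$ in $\CC^{\alpha}$ and raise $g$ to $\CC^{\beta + \delta(2-\kappa)}$, so that the sum is exactly $\gamma > 0$ by hypothesis, and the resonant estimate gives $\|f \rs g\|_{\CC^{\gamma}} \lesssim \|f\|_{\CC^{\alpha}}\|g\|_{\CC^{\beta + \delta(2-\kappa)}} \lesssim \epsilon^{-\delta(1-\kappa/4)}\|f\|_{\CC_\epsilon^{\alpha}}\|g\|_{\CC_\epsilon^{\beta}}$. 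Adding the three bounds yields (\ref{eq:product_estimates}).

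I do not expect a genuine obstacle here: the one point that needs care is precisely the impossibility of distributing the surplus symmetrically, which is what forces the term-by-term treatment. As a sanity check, in the degenerate case $\delta = 0$ the prefactor $\epsilon^{-\delta(1-\kappa/4)}$ equals $1$, no interpolation is used, and the statement collapses to the classical product estimate $\CC^{\alpha} \times \CC^{\beta} \hookrightarrow \CC^{\alpha+\beta}$ valid for $\alpha, \beta < 0$ with $\alpha + \beta > 0$, via the same three-line computation.
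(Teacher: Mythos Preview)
Your proof is correct and matches the paper's argument essentially line for line: Bony decomposition, then for each of $f\pl g$, $f\pr g$, $f\rs g$ one factor is lifted by $\delta(2-\kappa)$ via Lemma~\ref{lem:interpolation_estimates_epsilon} and the standard paraproduct/resonant bounds are applied. One cosmetic slip: the paraproduct estimate you invoke is Proposition~\ref{prop:paraproduct_estimates}, not Proposition~\ref{prop:Bonys_paramultiplication} (the latter is the iterated paraproduct bound $f\pl(g\pl h)$, which is not what you need here).
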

\begin{proof}
    By Bony's decomposition, we have
    \[
        fg=f\pl g + f\pr g+ f\rs g.
    \]
    By using paraproduct estimates (Proposition \ref{prop:paraproduct_estimates}) and Lemma \ref{lem:interpolation_estimates_epsilon} for each term, we will show (\ref{eq:product_estimates}).
    For the first term, we have
    \begin{align*}
        \|f\pl g\|_{\CC^{\alpha+\beta+\delta(2-\kappa)}}
        \lesssim \|f\|_{\CC^\alpha}\|g\|_{\CC^{\beta+\delta(2-\kappa)}} 
        \lesssim \epsilon^{-\delta(1-\frac{\kappa}{4})}\|f\|_{\CC_\epsilon^\alpha}\|g\|_{\CC_\epsilon^\beta}
    \end{align*}
    since $\alpha<0$.
    For the second term, we have
    \begin{align*}
        \|f\pr g\|_{\CC^{\alpha+\beta+\delta(2-\kappa)}}
        \lesssim \|f\|_{\CC^{\alpha+\delta(2-\kappa)}}\|g\|_{\CC^{\beta}}
        \lesssim \epsilon^{-\delta(1-\frac{\kappa}{4})}\|f\|_{\CC_\epsilon^\alpha}\|g\|_{\CC_\epsilon^\beta}
    \end{align*}
    since $\beta<0$.
    For the last term, we have
    \begin{align*}
        \|f\rs g\|_{\CC^{\alpha+\beta+\delta(2-\kappa)}}
        \lesssim \|f\|_{\CC^{\alpha+\delta(2-\kappa)}}\|g\|_{\CC^{\beta}} 
        \lesssim \epsilon^{-\delta(1-\frac{\kappa}{4})}\|f\|_{\CC_\epsilon^\alpha}\|g\|_{\CC_\epsilon^\beta}
    \end{align*}
    since $\alpha+\beta+\delta(2-\kappa) > 0$.
\end{proof}

\begin{cor}\label{cor:B_estimates}
    Let $ 0\leq \delta \leq 1$ and $\alpha, \beta<1$ satisfy $\alpha+\beta-2+\delta(2-\kappa)>0$.
    For $0<\epsilon\leq 1$, $f\in \CC_\epsilon^\alpha$ and $g\in \CC_\epsilon^\beta$, we have
    \begin{align*}
        \|B(f,g)\|_{\CC^{\alpha+\beta-2+\delta(2-\kappa)}}
        \lesssim \epsilon^{-\delta(1-\frac{\kappa}{4})}\|f\|_{\CC_\epsilon^\alpha}\|g\|_{\CC_\epsilon^\beta}.
    \end{align*}
\end{cor}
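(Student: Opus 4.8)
The plan is to reduce the statement directly to Lemma \ref{lem:product_estimates} by expanding $B(f,g)=\sum_{i=1}^{3}\partial_i f\,\partial_i g$ and treating each summand as a product of two distributions of negative regularity. The first step is to observe that differentiation maps $\CC_\epsilon^\alpha$ boundedly into $\CC_\epsilon^{\alpha-1}$. Indeed, by the standard fact that $\partial_i\colon \CC^s\to\CC^{s-1}$ is bounded for every $s\in\R$, we have $\|\partial_i f\|_{\CC^{\alpha-1}}\lesssim\|f\|_{\CC^\alpha}$ and $\|\partial_i f\|_{\CC^{(\alpha-1)+2-\kappa}}=\|\partial_i f\|_{\CC^{\alpha+1-\kappa}}\lesssim\|f\|_{\CC^{\alpha+2-\kappa}}$; adding the second estimate weighted by $\epsilon^{1-\kappa/4}$ to the first gives $\|\partial_i f\|_{\CC_\epsilon^{\alpha-1}}\lesssim\|f\|_{\CC_\epsilon^\alpha}$, and likewise $\|\partial_i g\|_{\CC_\epsilon^{\beta-1}}\lesssim\|g\|_{\CC_\epsilon^\beta}$.

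The second step is to apply Lemma \ref{lem:product_estimates} with the exponents $\alpha':=\alpha-1$, $\beta':=\beta-1$, and the same $\delta$. The hypothesis $\alpha,\beta<1$ ensures $\alpha',\beta'<0$, and the assumed inequality $\alpha+\beta-2+\delta(2-\kappa)>0$ is precisely $\alpha'+\beta'+\delta(2-\kappa)>0$, so the lemma applies and yields, for each $i$,
\[
\|\partial_i f\,\partial_i g\|_{\CC^{\alpha+\beta-2+\delta(2-\kappa)}}
\lesssim \epsilon^{-\delta(1-\frac{\kappa}{4})}\|\partial_i f\|_{\CC_\epsilon^{\alpha-1}}\|\partial_i g\|_{\CC_\epsilon^{\beta-1}}
\lesssim \epsilon^{-\delta(1-\frac{\kappa}{4})}\|f\|_{\CC_\epsilon^\alpha}\|g\|_{\CC_\epsilon^\beta}.
\]
Summing over $i=1,2,3$ and absorbing the constant $3$ into the implicit constant gives the claimed bound on $\|B(f,g)\|_{\CC^{\alpha+\beta-2+\delta(2-\kappa)}}$.

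There is essentially no genuine obstacle here: the corollary is a bookkeeping consequence of Lemma \ref{lem:product_estimates}. The only point that needs a moment's care is distributing the derivative so that each factor loses exactly one degree of regularity in \emph{both} constituents of the $\CC_\epsilon$ norm (the $\CC^\alpha$ part and the $\CC^{\alpha+2-\kappa}$ part), and checking that the admissibility threshold $\alpha+\beta-2+\delta(2-\kappa)>0$ corresponds exactly to the condition $\alpha'+\beta'+\delta(2-\kappa)>0$ required by the lemma; once these are verified the argument is immediate.
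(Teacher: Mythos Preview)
Your proof is correct and follows essentially the same route as the paper: both arguments reduce to Lemma~\ref{lem:product_estimates} applied with exponents $\alpha-1,\beta-1$, using that $\partial_i$ maps $\CC_\epsilon^\alpha$ boundedly into $\CC_\epsilon^{\alpha-1}$ (which you verify by treating both components of the $\CC_\epsilon$ norm, exactly as the paper intends). The only difference is presentational; your version spells out the bookkeeping a bit more explicitly.
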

\begin{proof}
    Note that $\|\partial^k \phi\|_{\CC^{\alpha-|k|}} \lesssim \|\phi\|_{\CC^\alpha}$ for any multiindex $k \in \N^d$ (\cite{BCD11} Lemma 2.1).
    From Lemma \ref{lem:product_estimates}, we have
    \[
        \|B(f,g)\|_{\CC^{\alpha+\beta-2+\delta(2-\kappa)}}\lesssim\epsilon^{-\delta(1-\frac{\kappa}{4})}\|\nabla f\|_{\CC_\epsilon^{\alpha-1}}\|\nabla g\|_{\CC_\epsilon^{\beta-1}} \lesssim \epsilon^{-\delta(1-\frac{\kappa}{4})}\|f\|_{\CC_\epsilon^\alpha}\|g\|_{\CC_\epsilon^\beta}
    \]
    since $\alpha, \beta <1$ and $\alpha+\beta-2+\delta(2-\kappa)>0$.
\end{proof}

\begin{cor}\label{cor:T_estimates}
    Let $\alpha, \beta<1$ satisfy $\alpha+\beta-2(1+\kappa)/3>0$ and let $\gamma\in \R$ satisfy $\alpha+\beta+\gamma>1+\kappa$.
    For $0<\epsilon\leq 1$, $f\in \CC_\epsilon^\alpha$, $g\in \CC_\epsilon^\beta$ and $h \in \CC^\gamma$, we have
    \begin{align*}
        \|T(f,g,h)\|_{\CC^{(\alpha+\beta-3+\frac{2}{3}(2-\kappa))\wedge(\gamma-2+ \frac{1}{3}(2-\kappa))}}
        \lesssim \epsilon^{-1+\frac{\kappa}{4}}\|f\|_{\CC_\epsilon^\alpha}\|g\|_{\CC_\epsilon^\beta}\|h\|_{\CC_\epsilon^\gamma}.
    \end{align*}
\end{cor}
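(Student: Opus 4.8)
The plan is to start from the factorization
\[
  T(f,g,h)=\nabla\cdot\big(B(f,g)\,\nabla h\big)=\sum_{i=1}^{3}\partial_i\big(B(f,g)\,\partial_i h\big),
\]
so that it suffices to estimate each product $B(f,g)\,\partial_i h$ in a H\"older--Besov space and then pay a single derivative through $\|\partial_i w\|_{\CC^{s-1}}\lesssim\|w\|_{\CC^{s}}$ (\cite{BCD11} Lemma 2.1), exactly as in the proof of Corollary \ref{cor:B_estimates}. The one genuine decision is how to spend the $\epsilon$-budget: I would assign $\delta_1=2/3$ to the factor $B(f,g)$ and $\delta_3=1/3$ to $h$, so that $\delta_1+\delta_3=1$ and the total cost is $\epsilon^{-(\delta_1+\delta_3)(1-\kappa/4)}=\epsilon^{-1+\kappa/4}$. (Here $h$ has to be read as an element of $\CC_\epsilon^\gamma$, consistently with the right-hand side of the claimed bound; this is the only place where the $\epsilon$-improved regularity of $h$ is used.)

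The steps I would carry out, in order, are as follows. First, apply Corollary \ref{cor:B_estimates} with $\delta=2/3$, which is legitimate precisely because $\alpha,\beta<1$ and the hypothesis $\alpha+\beta-2(1+\kappa)/3>0$ is nothing but $p:=\alpha+\beta-2+\tfrac23(2-\kappa)>0$; this yields $\|B(f,g)\|_{\CC^{p}}\lesssim\epsilon^{-\frac23(1-\kappa/4)}\|f\|_{\CC_\epsilon^\alpha}\|g\|_{\CC_\epsilon^\beta}$. Second, from Lemma \ref{lem:interpolation_estimates_epsilon} with $\delta=1/3$ together with the derivative bound,
\[
  \|\partial_i h\|_{\CC^{q}}\lesssim\|h\|_{\CC^{\gamma+\frac13(2-\kappa)}}\lesssim\epsilon^{-\frac13(1-\kappa/4)}\|h\|_{\CC_\epsilon^\gamma},\qquad q:=\gamma-1+\tfrac13(2-\kappa).
\]
Third, decompose $B(f,g)\,\partial_i h=B(f,g)\pl\partial_i h+B(f,g)\pr\partial_i h+B(f,g)\rs\partial_i h$ and use the paraproduct estimates (Proposition \ref{prop:paraproduct_estimates}): since $p>0$ we have $B(f,g)\in L^\infty$, so the low--high term lies in $\CC^{q}$; the high--low term lies in $\CC^{p+(q\wedge 0)}$ and is unconditionally defined; and the resonant term lies in $\CC^{p+q}$ once $p+q>0$, which holds because $p+q=\alpha+\beta+\gamma-1-\kappa>0$ is exactly the hypothesis $\alpha+\beta+\gamma>1+\kappa$. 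Since $p>0$, the smallest of these three exponents is simply $\min(p,q)$, so $B(f,g)\,\partial_i h\in\CC^{\min(p,q)}$ with constant $\lesssim\epsilon^{-(1-\kappa/4)}\|f\|_{\CC_\epsilon^\alpha}\|g\|_{\CC_\epsilon^\beta}\|h\|_{\CC_\epsilon^\gamma}$. Fourth, applying $\partial_i$ and summing over $i$ costs one derivative, so $T(f,g,h)\in\CC^{\min(p,q)-1}$; and $\min(p,q)-1=\big(\alpha+\beta-3+\tfrac23(2-\kappa)\big)\wedge\big(\gamma-2+\tfrac13(2-\kappa)\big)$, which is the assertion.

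The part that needs care is the bookkeeping of the three Bony pieces of $B(f,g)\,\partial_i h$ and checking that the two standing hypotheses are used without slack: $\alpha+\beta>2(1+\kappa)/3$ is exactly what makes $B(f,g)$ of positive regularity in Corollary \ref{cor:B_estimates} at the level $\delta=2/3$, and $\alpha+\beta+\gamma>1+\kappa$ is exactly what makes the resonant product $B(f,g)\rs\partial_i h$ converge; this sharpness is what forces the allocation $\delta_1=2/3$, $\delta_3=1/3$ and hence the precise form of the exponent in the statement. No further difficulty arises, the remaining manipulations being routine applications of the paraproduct and interpolation estimates already recorded above.
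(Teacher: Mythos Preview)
Your proof is correct and follows essentially the same approach as the paper: use $T(f,g,h)=\nabla\cdot(B(f,g)\nabla h)$, apply Corollary~\ref{cor:B_estimates} with $\delta=2/3$ to control $B(f,g)$ in $\CC^{p}$ where $p=\alpha+\beta-2+\tfrac23(2-\kappa)>0$, apply Lemma~\ref{lem:interpolation_estimates_epsilon} with $\delta=1/3$ to control $\nabla h$ in $\CC^{q}$ where $q=\gamma-1+\tfrac13(2-\kappa)$, then combine and pay one derivative. The only difference is cosmetic: the paper records the product estimate $\|B(f,g)\nabla h\|_{\CC^{p\wedge q}}\lesssim\|B(f,g)\|_{\CC^{p}}\|\nabla h\|_{\CC^{q}}$ in one line, whereas you spell out the three Bony pieces and verify explicitly that the resonant term requires $p+q>0$, which is precisely the hypothesis $\alpha+\beta+\gamma>1+\kappa$.
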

\begin{proof}
    By the assumption, we have
    \begin{align*}
        &\|T(f,g,h)\|_{\CC^{(\alpha+\beta-3+\frac{2}{3}(2-\kappa))\wedge(\gamma-2+ \frac{1}{3}(2-\kappa))}}\\
        &\lesssim \|B(f,g)\nabla h\|_{\CC^{(\alpha+\beta-2+\frac{2}{3}(2-\kappa))\wedge(\gamma-1+ \frac{1}{3}(2-\kappa))}}\\
        &\lesssim \|B(f,g)\|_{\CC^{\alpha+\beta-2+\frac{2}{3}(2-\kappa)}}\|\nabla h\|_{\CC^{\gamma-1+ \frac{1}{3}(2-\kappa)}}.
    \end{align*}
    From Corollary \ref{cor:B_estimates} (with $\delta=2/3$) and Lemma \ref{lem:interpolation_estimates_epsilon} (with $\delta = 1/3$), we obtain
    \begin{align*}
        \|B(f,g)\|_{\CC^{\alpha+\beta-2+\frac{2}{3}(2-\kappa)}}\|\nabla h\|_{\CC^{\gamma-1+ \frac{1}{3}(2-\kappa)}}
        \lesssim \epsilon^{-1+\frac{\kappa}{4}}\|f\|_{\CC_\epsilon^\alpha}\|g\|_{\CC_\epsilon^\beta}\|h\|_{\CC_\epsilon^\gamma}.
    \end{align*}
\end{proof}

\begin{cor}\label{cor:tilde_B_estimates}
    Let $\alpha, \beta<1$ satisfy $\alpha+\beta>1+\kappa$.
    For $0<\epsilon\leq 1$, $f\in \CC_\epsilon^\alpha$ and $g\in \CC_\epsilon^\beta$, we have
    \begin{align*}
        \|\widetilde{B}(f,g)\|_{\CC^{\alpha+\beta-2-\kappa}} \lesssim \epsilon^{-1+\frac{\kappa}{4}}\|f\|_{\CC_\epsilon^{\alpha}}\|g\|_{\CC_\epsilon^{\beta}}.
    \end{align*}
\end{cor}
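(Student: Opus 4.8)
The plan is to reduce the estimate for $\widetilde{B}$ to the product and $B$-estimates already established, since by definition $\widetilde{B}(f,g) = \nabla\cdot(\Delta f\nabla g) + \nabla\cdot(\nabla f\Delta g) + \Delta B(f,g)$, and each of the three summands is, up to derivatives falling on products, a bilinear expression in $f$ and $g$ whose total loss of regularity is exactly $2+$(the extra $\kappa$ we are allowed to pay). First I would recall the elementary fact $\|\partial^k\psi\|_{\CC^{\gamma-|k|}}\lesssim\|\psi\|_{\CC^\gamma}$ (\cite{BCD11} Lemma 2.1) so that each outer divergence or Laplacian costs one or two derivatives in the Hölder scale; thus it suffices to bound $\|\Delta f\,\nabla g\|_{\CC^{\alpha+\beta-1-\kappa}}$, $\|\nabla f\,\Delta g\|_{\CC^{\alpha+\beta-1-\kappa}}$ and $\|B(f,g)\|_{\CC^{\alpha+\beta-2-\kappa}}$, since applying $\nabla\cdot$ to the first two (losing one derivative) and $\Delta$ to the last (losing two) all land in $\CC^{\alpha+\beta-2-\kappa}$.

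For the first two terms I would apply Lemma \ref{lem:product_estimates} with $\delta=1$: writing $\Delta f\in\CC_\epsilon^{\alpha-2}$ and $\nabla g\in\CC_\epsilon^{\beta-1}$, the hypothesis $(\alpha-2)+(\beta-1)+(2-\kappa)>0$ is precisely $\alpha+\beta>1+\kappa$, which is assumed, and the two negativity conditions $\alpha-2<0$, $\beta-1<0$ hold because $\alpha,\beta<1$; this gives $\|\Delta f\,\nabla g\|_{\CC^{\alpha+\beta-1-\kappa}}\lesssim\epsilon^{-1+\kappa/4}\|f\|_{\CC_\epsilon^\alpha}\|g\|_{\CC_\epsilon^\beta}$, and symmetrically for $\nabla f\,\Delta g$. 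For the last term I would invoke Corollary \ref{cor:B_estimates} with $\delta=1$: the condition $\alpha+\beta-2+(2-\kappa)>0$ is again exactly $\alpha+\beta>1+\kappa$, so $\|B(f,g)\|_{\CC^{\alpha+\beta-2-\kappa}}\lesssim\epsilon^{-1+\kappa/4}\|f\|_{\CC_\epsilon^\alpha}\|g\|_{\CC_\epsilon^\beta}$. Summing the three bounds and absorbing the derivative losses through \cite{BCD11} Lemma 2.1 yields the claim.

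There is essentially no hard part here — the statement is a bookkeeping corollary designed so that every hypothesis lines up with the single constraint $\alpha+\beta>1+\kappa$. The only point requiring a moment of care is checking that the intermediate regularities at which the product estimates are applied are the \emph{lowest} among the three summands (so that the final space $\CC^{\alpha+\beta-2-\kappa}$ is the common target after the outer derivatives), and that one consistently uses the $\CC_\epsilon$-norm rather than the plain $\CC$-norm when feeding into Lemma \ref{lem:product_estimates} and Corollary \ref{cor:B_estimates}, so that the factor $\epsilon^{-1+\kappa/4}$ comes out uniformly. Once those are verified, the proof is a two-line application of the preceding results.
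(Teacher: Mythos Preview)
Your approach matches the paper's exactly: bound $\nabla\cdot(\Delta f\,\nabla g)$ and $\nabla\cdot(\nabla f\,\Delta g)$ via Lemma~\ref{lem:product_estimates} with $\delta=1$, and $\Delta B(f,g)$ via Corollary~\ref{cor:B_estimates} with $\delta=1$. Two cosmetic slips to fix: before applying $\Delta$ you need $\|B(f,g)\|_{\CC^{\alpha+\beta-\kappa}}$ (not $\CC^{\alpha+\beta-2-\kappa}$), and the condition $\alpha+\beta-2+(2-\kappa)>0$ is $\alpha+\beta>\kappa$, not $\alpha+\beta>1+\kappa$ --- but since the assumed hypothesis implies this weaker one, the argument is unaffected.
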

\begin{proof}
    From Lemma \ref{lem:product_estimates}, we have
    \begin{align*}
        \|\nabla\cdot(\Delta f \nabla g)\|_{\CC^{\alpha+\beta-2-\kappa}}
        &\lesssim \|\Delta f \nabla g\|_{\CC^{\alpha+\beta-1-\kappa}}\\
        &\lesssim \epsilon^{-1+\frac{\kappa}{4}}\|\Delta f\|_{\CC_\epsilon^{\alpha-2}} \|\nabla g\|_{\CC_\epsilon^{\beta-1}} \\
        &\lesssim \epsilon^{-1+\frac{\kappa}{4}}\|f\|_{\CC_\epsilon^{\alpha}} \|g\|_{\CC_\epsilon^{\beta}}
    \end{align*}
    if $\alpha<2$, $\beta<1$ and $\alpha+\beta>1+\kappa$.
    Similarly, we have
    \begin{align*}
        \|\nabla\cdot (\nabla f \Delta g)\|_{\CC^{\alpha+\beta-2-\kappa}}
        \lesssim \epsilon^{-1+\frac{\kappa}{4}}\|f\|_{\CC_\epsilon^{\alpha}} \|g\|_{\CC_\epsilon^{\beta}}
    \end{align*}
    if $\alpha<1$, $\beta<2$ and $\alpha+\beta>1+\kappa$.
    From Corollary \ref{cor:B_estimates}, we have
    \begin{align*}
        \|\Delta B(f,g)\|_{\CC^{\alpha+\beta-2-\kappa}}
        &\lesssim \|B(f,g)\|_{\CC^{\alpha+\beta-\kappa}}
        \lesssim \epsilon^{-1+\frac{\kappa}{4}} \|f\|_{\CC_\epsilon^\alpha}\|g\|_{\CC_\epsilon^\beta}
    \end{align*}
    if $\alpha, \beta<1$ and $\alpha+\beta>\kappa$.
    From these, we obtain the estimates of $\widetilde{B}$.
\end{proof}

\subsubsection{Proof of Proposition \ref{prop:Right-hand_side_estimates}}
First, we estimate $Z^{(i)}_\epsilon$ for $i=0,1,2$ which is the coefficients of $\Phi_\epsilon$.
Recall the definitions (\ref{eq:defi_of_Z_2}) to (\ref{eq:defi_of_Z_0}) of $Z^{(0)}_\epsilon$, $Z^{(1)}_\epsilon$ and $Z^{(2)}_\epsilon$ and the definitions (\ref{eq:defi_of_tilde_Z_2}) to (\ref{eq:defi_of_tilde_Z_0}) of $\widetilde{Z}^{(0)}_\epsilon$, $\widetilde{Z}^{(1)}_\epsilon$ and $\widetilde{Z}^{(2)}_\epsilon$.
\begin{lem}
    \label{lem:Z_estimates}
    For any $0<t\leq T$ and $0\leq \epsilon \leq 1$, we have
    \begin{empheq}[left={\|Z_\epsilon^{(j)}(t)\|_{\CC^{-1/2-\kappa}}\leq \empheqlbrace}]{alignat*=2}
        &C &&\quad (j=1,2),\\
        &C t^{-2\kappa}(1+\epsilon^{\frac{\kappa}{4}}) &&\quad(j=0),
    \end{empheq}
    and 
    \begin{empheq}[left={\|(Z^{(j)}_\epsilon(\EX_\epsilon^{(1)})-Z^{(j)}_\epsilon(\EX_\epsilon^{(2)}))(t)\|_{\CC^{-1/2-\kappa}}\leq \empheqlbrace}]{alignat*=2}
        &C' \|\EX_\epsilon^{(1)}-\EX_\epsilon^{(2)}\|_{\CX_T^\kappa} &&\quad (j=1,2) \\
        &C't^{-2\kappa}\|\EX_\epsilon^{(1)}-\EX_\epsilon^{(2)}\|_{\CX_T^\kappa} &&\quad (j=0) . 
    \end{empheq}
    Here, $C$ is a positive constant depending only on $\kappa$ and $\|\EX_\epsilon\|_{\CX_T^\kappa}$ and 
    $C'$ is a positive constant depending only on $\kappa$ and $\|\EX_\epsilon^{(i)}\|_{\CX_T^\kappa}$.

    Moreover, we have
    \begin{empheq}[left={\|(Z^{(j)}_\epsilon(\EX_\epsilon)-Z^{(j)}_0(\EX))(t)\|_{\CC^{-1/2-\kappa}}\leq \empheqlbrace}]{alignat*=2}
        & C'' \|\EX_\epsilon-\EX\|_{\CX_T^\kappa} &&(j=2), \\
        &C''(\epsilon^{\frac{\kappa}{4}} + \|\EX_\epsilon-\EX\|_{\CX_T^\kappa}) &&(j=1),\\
        &C''t^{-2\kappa}(\epsilon^{\frac{\kappa}{4}} + \|\EX_\epsilon-\EX\|_{\CX_T^\kappa}) && (j=0). 
    \end{empheq}
    Here, $C''$ is a positive constant depending only on $\kappa$, $\|\EX\|_{\CX_T^\kappa}$ and $\|\EX_\epsilon\|_{\CX_T^\kappa}$.
\end{lem}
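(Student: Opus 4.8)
The plan is to read each $Z^{(j)}_\epsilon$ off the explicit formulas \eqref{eq:defi_of_Z_2}--\eqref{eq:defi_of_Z_0}, \eqref{eq:defi_of_tilde_Z_2}--\eqref{eq:defi_of_tilde_Z_0}, \eqref{eq:defi_of_Z}, \eqref{eq:defi_of_bar_F} and \eqref{eq:defi_of_bar_G_1}--\eqref{eq:defi_of_bar_G_6}, and to estimate, uniformly in $\epsilon$, each ``building block'' that occurs there: the components of $\EX_\epsilon$ (each of norm $\le\|\EX_\epsilon\|_{\CX_T^\kappa}$); the paracontrolled objects $\ICC_\epsilon,y_\epsilon\in C_T\CC_\epsilon^{1-\kappa}$, for which $\|\ICC_\epsilon\|_{C_T\CC_\epsilon^{1-\kappa}}+\|\ICC_\epsilon\|_{C_T^{1/2-\kappa/2}L^\infty}+\|y_\epsilon\|_{C_T\CC_\epsilon^{1-\kappa}}\lesssim_{\EX_\epsilon}1$ by Lemmas \ref{lem:LICC} and \ref{lem:Ly}; the compositions $e^{c\ICC_\epsilon}$, $c\in\{-6,-3,3\}$; the remainder $Y_\epsilon$ (with $\|Y_\epsilon\|_{C_T\CC^{3/2-3\kappa}}\lesssim_{\EX_\epsilon}1$ by \eqref{eq:Y_epsilon_estimates}) and the smoothed datum $P_\cdot^\epsilon\ICC_\epsilon(0)$; and the composite objects $Z_\epsilon$, $\overline{F}_\epsilon$, $\overline{G}_\epsilon$. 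Since $\CC^{1-\kappa}$ is a Banach algebra, the paralinearization theorem (Proposition \ref{prop:paralinearization}) and the local Lipschitz property of $x\mapsto e^{cx}$ together with Lemma \ref{lem:LICC} give $\|e^{c\ICC_\epsilon}\|_{C_T\CC^{1-\kappa}}+\|e^{c\ICC_\epsilon}\|_{C_T^{1/4-\kappa/2}L^\infty}+\|e^{c\ICC_\epsilon}-e^{c\ICC_\epsilon}\pl(c\ICC_\epsilon)\|_{C_T\CC^{2-2\kappa}}\lesssim_{\EX_\epsilon}1$, and all these maps are Lipschitz on balls of $\CX_T^\kappa$, which is exactly what the two families of difference estimates require.

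The composite objects are the only delicate points. In $Z_\epsilon$ (see \eqref{eq:defi_of_Z}) each of the six summands is a paraproduct, resonant product or commutator of blocks listed above, and Propositions \ref{prop:Bonys_paramultiplication}, \ref{prop:paraproduct_estimates} and \ref{prop:commutator_estimates}, used with $e^{c\ICC_\epsilon},\ICC_\epsilon\in\CC^{1-\kappa}$, $\ICCC_\epsilon\in\CC^{1/2-\kappa}$, $\XX_\epsilon\in\CC^{-1-\kappa}$, $\CCICC_\epsilon,\CCICCC_\epsilon\in\CC^{-1/2-\kappa}$, place each summand in $C_T\CC^{-1/2-\kappa}$ (the various accumulated multiples of $\kappa$ being absorbed in the usual way) with a bound $\lesssim_{\EX_\epsilon}1$ uniform in $t$. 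In $\overline{F}_\epsilon$ (see \eqref{eq:defi_of_bar_F}) the $\epsilon$-free part $9(2\nabla\ICC_\epsilon\pl\nabla\ICC_\epsilon+\CD_\epsilon)$ lies in $C_T\CC^{-2\kappa}$ (paraproduct estimate and $\CD_\epsilon\in C_T\CC^{-\kappa}$); for the $\epsilon$-multiplied terms one applies Corollaries \ref{cor:B_estimates}, \ref{cor:T_estimates}, \ref{cor:tilde_B_estimates} together with the interpolation Lemma \ref{lem:interpolation_estimates_epsilon} to $3\ICC_\epsilon\in\CC_\epsilon^{1-\kappa}$, so that each is bounded in $C_T\CC^{-1/3-4\kappa/3}$ by $\epsilon^{-1+\kappa/4}$ times a polynomial in $\|\EX_\epsilon\|_{\CX_T^\kappa}$; the explicit factor $\epsilon$ then converts this to $\epsilon^{\kappa/4}$, so $\overline{F}_\epsilon\in C_T\CC^{-1/3-4\kappa/3}\hookrightarrow C_T\CC^{-1/2-\kappa}$ with norm $\lesssim_{\EX_\epsilon}1+\epsilon^{\kappa/4}$. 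The six pieces of $\overline{G}_\epsilon$ are handled exactly as announced in the proof of Proposition \ref{prop:Formulate}: $\overline{G}_\epsilon^{(1)}$, the $\epsilon$-part of $\overline{G}_\epsilon^{(2)}$, $\overline{G}_\epsilon^{(5)}$ and $\overline{G}_\epsilon^{(6)}$ by the Corollaries and Lemma \ref{lem:interpolation_estimates_epsilon} (again producing an $\epsilon^{\kappa/4}$); $\overline{G}_\epsilon^{(3)}$ by the resonant product estimate and $\|Y_\epsilon\|_{C_T\CC^{3/2-3\kappa}}\lesssim_{\EX_\epsilon}1$; and $\overline{G}_\epsilon^{(4)}$ by the resonant product estimate together with the Schauder smoothing $\|P_t^\epsilon\ICC_\epsilon(0)\|_{\CC^{1-\kappa+\sigma}}\lesssim t^{-\sigma/2}\|\ICC_\epsilon(0)\|_{\CC^{1-\kappa}}$ (Proposition \ref{prop:Schauder_estimates}), where $\sigma$ is a multiple of $\kappa$ taken above $2\kappa$ (so the resonance against $\nabla\ICC_\epsilon\in\CC^{-\kappa}$ lands in positive regularity) but at most $4\kappa$ (so $t^{-\sigma/2}\le t^{-2\kappa}$ for $t\le T\le 1$); this is the sole source of the $t^{-2\kappa}$, and altogether $\|\overline{G}_\epsilon(t)\|_{\CC^{-1/2-\kappa}}\lesssim_{\EX_\epsilon}t^{-2\kappa}(1+\epsilon^{\kappa/4})$.

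It remains to assemble. From \eqref{eq:defi_of_tilde_Z_2}--\eqref{eq:defi_of_tilde_Z_0} and the regularities above, $\widetilde{Z}^{(2)}_\epsilon,\widetilde{Z}^{(1)}_\epsilon,\widetilde{Z}^{(0)}_\epsilon\in C_T\CC^{-1/2-\kappa}$ with $t$-uniform norms ($\widetilde{Z}^{(1)}_\epsilon$ inherits from $\overline{F}_\epsilon$ only the factor $1+\epsilon^{\kappa/4}\le 2$, absorbed into $C$), and \eqref{eq:defi_of_Z_2}--\eqref{eq:defi_of_Z_0} add only multiplications by the $C_T\CC^{1-\kappa}$-bounded factors $y_\epsilon$, $e^{-6\ICC_\epsilon}$ and the term $-2\overline{G}_\epsilon$, which occurs only in $Z^{(0)}_\epsilon$ and injects exactly the $t^{-2\kappa}(1+\epsilon^{\kappa/4})$ behaviour; this gives the first group of bounds. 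For the differences, expand each $Z^{(j)}_\epsilon$ into monomials in the building blocks and telescope via $\prod_kA_k-\prod_kA_k'=\sum_k(\prod_{l<k}A_l)(A_k-A_k')(\prod_{l>k}A_l')$: comparing $\EX^{(1)}_\epsilon$ with $\EX^{(2)}_\epsilon$, each factor-difference is $\lesssim\|\EX^{(1)}_\epsilon-\EX^{(2)}_\epsilon\|_{\CX_T^\kappa}$ (immediately for the components of $\EX_\epsilon$, and via the Lipschitz-on-balls continuity of $\ICC_\epsilon,y_\epsilon,e^{c\ICC_\epsilon},Y_\epsilon$ otherwise), which produces the asserted polynomial degrees $3$ for $C_1'$ and $2$ for $C_2'$; comparing with $\epsilon=0$, the same telescoping over the $\epsilon$-free monomials gives the $\|\EX_\epsilon-\EX\|_{\CX_T^\kappa}$ term, while the monomials carrying an explicit $\epsilon$ — present only in $\overline{F}_\epsilon$ and $\overline{G}_\epsilon$, hence in $Z^{(1)}_\epsilon$ and $Z^{(0)}_\epsilon$ but not in $Z^{(2)}_\epsilon$, and absent from $Z^{(j)}_0$ — are controlled by the $\epsilon^{\kappa/4}$ estimates above; this is why $\epsilon^{\kappa/4}$ appears for $j=0,1$ only and $t^{-2\kappa}$ for $j=0$ only.

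The principal obstacle is the bookkeeping: roughly twenty constituent terms must be processed, and for each $\epsilon$-carrying one the interpolation exponent $\delta$ in Lemma \ref{lem:interpolation_estimates_epsilon} — equivalently the fixed exponents $2/3$ and $1/3$ built into Corollaries \ref{cor:T_estimates} and \ref{cor:tilde_B_estimates} — must be tuned so that, \emph{after} multiplying by the explicit $\epsilon$, the term still sits in a H\"older space of index strictly above $-1/2-\kappa$ (the choices in those corollaries are exactly the tight ones), while simultaneously the $t$-singularity coming from the $P_\cdot^\epsilon\ICC_\epsilon(0)$ pieces is kept no worse than $t^{-2\kappa}$. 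Once the block estimates and their difference versions are established, the rest is routine.
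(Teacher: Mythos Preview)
Your proposal is correct and follows essentially the same route as the paper: the paper declares that only $\overline{F}_\epsilon$ and $\overline{G}_\epsilon$ are non-trivial and then proves precisely the block estimates you describe (Lemmas \ref{lem:bar_F_estimates} and \ref{lem:widehat_G_epsilon_u_estimates}), with $\overline{G}_\epsilon^{(4)}$ producing the $t^{-2\kappa}$ via the heat-semigroup smoothing of $P_t^\epsilon\ICC_\epsilon(0)$ and the $\epsilon$-carrying terms in $\overline{F}_\epsilon$, $\overline{G}_\epsilon$ producing the $\epsilon^{\kappa/4}$ via Lemma \ref{lem:interpolation_estimates_epsilon} and Corollaries \ref{cor:B_estimates}--\ref{cor:tilde_B_estimates}. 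Two very minor points: the smoothing $\|P_t^\epsilon\ICC_\epsilon(0)\|_{\CC^{1-\kappa+\sigma}}\lesssim t^{-\sigma/2}\|\ICC_\epsilon(0)\|_{\CC^{1-\kappa}}$ is Proposition \ref{prop:Effects_of_heat_semigroup}, not \ref{prop:Schauder_estimates}; and the remark about ``polynomial degrees $3$ for $C_1'$ and $2$ for $C_2'$'' belongs to Proposition \ref{prop:Right-hand_side_estimates}, not to the present lemma, whose statement makes no such claim.
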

Only the estimates of $\overline{F}_\epsilon$ and $\overline{G}_\epsilon$ in $Z^{(i)}_\epsilon$ are non-trivial.
Therefore, by estimating them, we can prove Lemma \ref{lem:Z_estimates}.

\begin{lem}
    \label{lem:bar_F_estimates}
    For any $0<t\leq T$ and $0\leq \epsilon\leq 1$, we have
    \begin{align}
        \label{eq:lem_bar_F}
        \|\overline{F}_\epsilon(t)\|_{\CC^{-1/3-4\kappa/3}} &\leq C,\\
        \|\overline{F}_\epsilon(\ICC_\epsilon^{(1)})(t)-\overline{F}_\epsilon(\ICC_\epsilon^{(2)})(t)\|_{\CC^{-1/3-4\kappa/3}} &\leq C'(\|\EX_\epsilon^{(1)}-\EX_\epsilon^{(2)}\|_{\CX_T^\kappa}).
    \end{align}
    Here, $C$ is a positive constant depending only on $\kappa$ and $\|\EX_\epsilon\|_{\CX_T^\kappa}$ and 
    $C'$ is a positive constant depending only on $\kappa$ and $\|\EX_\epsilon^{(i)}\|_{\CX_T^\kappa}$.

    Moreover, we have
    \begin{equation}
        \|\overline{F}_\epsilon(\ICC_\epsilon)(t)-\overline{F}_0(\ICC)(t)\|_{\CC^{-1/3-4\kappa/3}} \leq C''(\epsilon^{\frac{\kappa}{4}}+\|\EX-\EX_\epsilon\|_{\CX_T^\kappa}).
    \end{equation}
    Here, $C''$ is a positive constant depending only on $\kappa$, $\|\EX\|_{\CX_T^\kappa}$ and $\|\EX_\epsilon\|_{\CX_T^\kappa}$.
\end{lem}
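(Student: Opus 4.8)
The plan is to split $\overline{F}_\epsilon$ in (\ref{eq:defi_of_bar_F}) into the $\epsilon$-independent part $9(2\nabla\ICC_\epsilon\pl\nabla\ICC_\epsilon+\CD_\epsilon)$ and the $\epsilon$-weighted bracket, and to prove all three claims in $C_T\CC^{-1/3-4\kappa/3}$ (hence in particular at each $0<t\le T$). Throughout one uses $\|\ICC_\epsilon\|_{C_T\CC_\epsilon^{1-\kappa}}\lesssim_{\EX_\epsilon}1$ from Lemma \ref{lem:LICC}; since $\ICC_\epsilon$ solves the \emph{linear} equation $\CL_\epsilon\ICC_\epsilon=\XX_\epsilon$ with datum the component $\ICC_\epsilon(0)$, this bound and all its Lipschitz/convergence variants are controlled by the relevant $\CX_T^\kappa$-norms, so the asserted dependence of the constants follows automatically. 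For the $\epsilon$-independent part, $\CD_\epsilon\in C_T\CC^{-\kappa}\hookrightarrow C_T\CC^{-1/3-4\kappa/3}$ is a component of the driving vector and contributes directly to every bound, while, because $\nabla\ICC_\epsilon\in C_T\CC^{-\kappa}$ has negative regularity, the paraproduct estimate (Proposition \ref{prop:paraproduct_estimates}) gives $\nabla\ICC_\epsilon\pl\nabla\ICC_\epsilon\in C_T\CC^{-2\kappa}$ with norm $\lesssim\|\ICC_\epsilon\|_{C_T\CC^{1-\kappa}}^2$.

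For the $\epsilon$-weighted bracket the mechanism is that each summand costs at most a factor $\epsilon^{-1+\kappa/4}$, which the prefactor $\epsilon$ absorbs into $\epsilon^{\kappa/4}$. Concretely: estimate $\widetilde{B}(3\ICC_\epsilon,3\ICC_\epsilon)$ by Corollary \ref{cor:tilde_B_estimates} with $\alpha=\beta=1-\kappa$ (into $\CC^{-3\kappa}$); $T(3\ICC_\epsilon,3\ICC_\epsilon,3\ICC_\epsilon)$ by Corollary \ref{cor:T_estimates} with $\alpha=\beta=\gamma=1-\kappa$, which is exactly the term that produces the exponent $-1/3-4\kappa/3$, via the $\gamma-2+\tfrac13(2-\kappa)$ branch of the minimum; $\Delta\ICC_\epsilon\pl\Delta\ICC_\epsilon$ by the paraproduct estimate with one factor in $\CC^{-1-\kappa}$ and the other in $\CC^{1-2\kappa}$, the latter via Lemma \ref{lem:interpolation_estimates_epsilon} with $\delta=1$ (so $\|\ICC_\epsilon\|_{\CC^{3-2\kappa}}\lesssim\epsilon^{-1+\kappa/4}\|\ICC_\epsilon\|_{\CC_\epsilon^{1-\kappa}}$), landing in $\CC^{-3\kappa}$; and $B(3\ICC_\epsilon,3\ICC_\epsilon)^2$ by first applying Corollary \ref{cor:B_estimates} with a \emph{subcritical} $\delta$ — any $\tfrac{2\kappa}{2-\kappa}<\delta<\tfrac1{2(1-\kappa/4)}$, e.g. $\delta=1/2$ — to land $B(3\ICC_\epsilon,3\ICC_\epsilon)$ in a space of small positive regularity, and then squaring via the fact that $\CC^s$ is a multiplicative algebra for $s>0$. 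Summing and multiplying by $\epsilon$ gives $\|\overline{F}_\epsilon\|_{C_T\CC^{-1/3-4\kappa/3}}\le C$; here $\kappa<1/8$ clears every constraint of the above corollaries ($\kappa<1/3$, $\kappa<1/2$) and the embeddings $\CC^{-3\kappa},\CC^{-2\kappa}\hookrightarrow\CC^{-1/3-4\kappa/3}$ (valid for $\kappa<1/5$).

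The Lipschitz estimate is then purely a matter of multilinearity: each product, paraproduct, $B$, $T$, $\widetilde{B}$ above is multilinear in $\ICC_\epsilon$, so $\overline{F}_\epsilon(\ICC_\epsilon^{(1)})-\overline{F}_\epsilon(\ICC_\epsilon^{(2)})$ telescopes into terms in which one copy of $\ICC_\epsilon$ is replaced by $\ICC_\epsilon^{(1)}-\ICC_\epsilon^{(2)}$ (plus $\CD_\epsilon^{(1)}-\CD_\epsilon^{(2)}$, a component of $\EX_\epsilon^{(1)}-\EX_\epsilon^{(2)}$); each summand is bounded by the same corollaries, using $\|\ICC_\epsilon^{(1)}-\ICC_\epsilon^{(2)}\|_{C_T\CC_\epsilon^{1-\kappa}}\lesssim\|\EX_\epsilon^{(1)}-\EX_\epsilon^{(2)}\|_{\CX_T^\kappa}$ from Schauder (Proposition \ref{prop:Schauder_estimates}) applied to the linear difference equation. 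For the convergence claim, split $\overline{F}_\epsilon(\ICC_\epsilon)-\overline{F}_0(\ICC)$ into $9\bigl(2(\nabla\ICC_\epsilon\pl\nabla\ICC_\epsilon-\nabla\ICC\pl\nabla\ICC)+(\CD_\epsilon-\CD)\bigr)$ plus the full $\epsilon$-weighted bracket of (\ref{eq:defi_of_bar_F}) evaluated at $\ICC_\epsilon$: the latter is $\lesssim_{\EX_\epsilon}\epsilon^{\kappa/4}$ by the estimates of the previous paragraph (no convergence of $\ICC_\epsilon$ needed), and the former is $\lesssim\|\CD_\epsilon-\CD\|_{C_T\CC^{-\kappa}}+\|\ICC_\epsilon-\ICC\|_{C_T\CC^{1-2\kappa}}$ up to a constant depending on $\|\EX_\epsilon\|_{\CX_T^\kappa},\|\EX\|_{\CX_T^\kappa}$; finally $\|\CD_\epsilon-\CD\|_{C_T\CC^{-\kappa}}\le\|\EX_\epsilon-\EX\|_{\CX_T^\kappa}$, and $\|\ICC_\epsilon-\ICC\|_{C_T\CC^{1-2\kappa}}\lesssim\epsilon^{\kappa/4}+\|\EX_\epsilon-\EX\|_{\CX_T^\kappa}$ by the comparison heat-semigroup and Schauder estimates (Propositions \ref{prop:Effects_of_heat_semigroup}, \ref{prop:Effects_of_heat_semigroup_2}, \ref{prop:Schauder_estimates}) applied to $\CL_\epsilon\ICC_\epsilon=\XX_\epsilon$ versus $\CL_0\ICC=\XX$, using $\CL_\epsilon-\CL_0=\epsilon\Delta^2$ and the convergence of $\ICC_\epsilon(0)$ and $\XX_\epsilon$.

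The main obstacle is the $\epsilon$-exponent bookkeeping in the weighted bracket: for each summand one must choose the split of $\CC_\epsilon^{1-\kappa}$ into a $\CC^{1-\kappa}$-part and an $\epsilon^{-(1-\kappa/4)}$-weighted $\CC^{3-2\kappa}$-part so that the accumulated negative power of $\epsilon$ is strictly less than $1$ in modulus \emph{and} the output regularity stays $\ge-1/3-4\kappa/3$. The quadratic term $B(3\ICC_\epsilon,3\ICC_\epsilon)^2$ is the delicate one — the naive choice $\delta=1$ produces $\epsilon^{-1+\kappa/2}$ after squaring, which diverges — so $\delta$ must be kept below $\tfrac1{2(1-\kappa/4)}$ while still above $\tfrac{2\kappa}{2-\kappa}$, and it is here that the smallness of $\kappa$ is genuinely used. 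A secondary point is the comparison estimate for $\ICC_\epsilon-\ICC$ needed in the convergence claim, which must absorb the fourth-order perturbation $\epsilon\Delta^2$ simultaneously with the perturbation of the data.
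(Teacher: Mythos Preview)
Your proof is correct and follows essentially the same approach as the paper: split $\overline{F}_\epsilon$ into the $\epsilon$-independent piece $9(2\nabla\ICC_\epsilon\pl\nabla\ICC_\epsilon+\CD_\epsilon)$ and the $\epsilon$-weighted bracket, then handle each summand with the same tools (Proposition~\ref{prop:paraproduct_estimates}, Corollaries~\ref{cor:B_estimates}--\ref{cor:tilde_B_estimates}, and Lemma~\ref{lem:interpolation_estimates_epsilon}), including the choice $\delta=1/2$ for $B(3\ICC_\epsilon,3\ICC_\epsilon)^2$. The only cosmetic difference is your treatment of $\epsilon\,\Delta\ICC_\epsilon\pl\Delta\ICC_\epsilon$, where you place the two copies asymmetrically in $\CC^{-1-\kappa}$ and $\CC^{1-2\kappa}$ (via $\delta=1$ on one factor) rather than symmetrically in $\CC^{-3\kappa/2}$ (via $\delta=1/2$ on both) as the paper does; both routes land in $\CC^{-3\kappa}$ with the same $\epsilon^{\kappa/4}$ gain. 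Your explicit telescoping for the Lipschitz estimate and your comparison argument for $\ICC_\epsilon-\ICC$ via $\CL_0(\ICC_\epsilon-\ICC)=-\epsilon\Delta^2\ICC_\epsilon+(\XX_\epsilon-\XX)$ simply spell out what the paper subsumes under ``the remaining estimates can be obtained in the same way.''
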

\begin{proof}
    Recall the definition (\ref{eq:defi_of_bar_F}) of $\overline{F}_\epsilon$.
    By the definition of $\EX_\epsilon$, it holds
    \begin{align*}
        &\|(\nabla \ICC_\epsilon \pl \nabla \ICC_\epsilon)(t)\|_{\CC^{-2\kappa}}
        \lesssim \|\nabla \ICC_\epsilon(t)\|_{\CC^{-\kappa}}^2
        \lesssim_{\EX_\epsilon} 1 ,
        &&\|\CD_\epsilon (t)\|_{\CC^{-\kappa}} \lesssim_{\EX_\epsilon} 1
    \end{align*}
    and from Lemma \ref{lem:interpolation_estimates_epsilon} (with $\delta = 1/2$), we have
    \begin{align*}
        &\|\epsilon (\Delta \ICC_\epsilon \pl \Delta \ICC_\epsilon)(t)\|_{\CC^{-3\kappa}}
        \lesssim \epsilon \|\Delta \ICC_\epsilon(t)\|_{\CC^{-3\kappa/2}}^2
        \lesssim_{\EX_\epsilon} \epsilon^{\frac{\kappa}{4}}.
    \end{align*}
    From Corollaries \ref{cor:B_estimates} to \ref{cor:tilde_B_estimates}, we have
    \begin{align*}
        &\|\epsilon B(\ICC_\epsilon,\ICC_\epsilon)(t)^2\|_{\CC^{1/2-5\kappa/4}} \lesssim \epsilon (\epsilon^{-\frac{1}{2}+\frac{\kappa}{8}}\|\ICC_\epsilon(t)\|_{\CC_\epsilon^{1-\kappa}}^2)^2\lesssim_{\EX_\epsilon} \epsilon^{\frac{\kappa}{4}}\\
        &\|\epsilon T(\ICC_\epsilon, \ICC_\epsilon, \ICC_\epsilon)(t)\|_{\CC^{-1/3-4\kappa/3}} \lesssim \epsilon^{\frac{\kappa}{4}}\|\ICC_\epsilon(t)\|_{\CC_\epsilon^{1-\kappa}}^3\lesssim_{\EX_\epsilon} \epsilon^{\frac{\kappa}{4}},\\
        &\|\epsilon\widetilde{B}(\ICC_\epsilon,\ICC_\epsilon)(t)\|_{\CC^{-3\kappa}} \lesssim \epsilon^{\frac{\kappa}{4}}\|\ICC_\epsilon(t)\|_{\CC_\epsilon^{1-\kappa}}^2 \lesssim_{\EX_\epsilon} \epsilon^{\frac{\kappa}{4}}.
    \end{align*}
    From these, we obtain (\ref{eq:lem_bar_F}).
    The remaining estimates can be obtained in the same way.
\end{proof}

\begin{lem}\label{lem:widehat_G_epsilon_u_estimates}
    For any $0<t\leq T$ and $0\leq \epsilon\leq 1$,  we have
    \begin{align}
        \|\overline{G}_\epsilon (t)\|_{\CC^{-1/3-4\kappa/3}} &\leq C t^{-2\kappa},
        \label{eq:lem_bar_G}\\
        \|\overline{G}_\epsilon(\EX_\epsilon^{(1)}) (t) - \overline{G}_\epsilon(\EX_\epsilon^{(2)}) (t)\|_{\CC^{-1/3-4\kappa/3}}
        &\leq C' t^{-2\kappa}\|\EX_\epsilon^{(1)}-\EX_\epsilon^{(2)}\|_{\CX_T^\kappa}.
    \end{align}
    Here, $C$ is a positive constant depending only on $\kappa$ and $\|\EX_\epsilon\|_{\CX_T^\kappa}$ and 
    $C'$ is a positive constant depending only on $\kappa$ and $\|\EX_\epsilon^{(i)}\|_{\CX_T^\kappa}$.

    Moreover, we have
    \begin{align}
        \|\overline{G}_\epsilon(\EX_\epsilon) (t) - \overline{G}_0(\EX) (t)\|_{\CC^{-1/3-4\kappa/3}}
        \leq C'' t^{-2\kappa}(\epsilon^{\frac{\kappa}{4}}+\|\EX-\EX_\epsilon\|_{\CX_T^\kappa}).
    \end{align}
    Here, $C''$ is a positive constant depending only on $\kappa$, $\|\EX\|_{\CX_T^\kappa}$ and $\|\EX_\epsilon\|_{\CX_T^\kappa}$.
\end{lem}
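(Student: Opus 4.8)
Recall that $\overline{G}_\epsilon=\sum_{j=1}^6\overline{G}_\epsilon^{(j)}$, with the six pieces defined in (\ref{eq:defi_of_bar_G_1})--(\ref{eq:defi_of_bar_G_6}). The plan is to estimate each $\overline{G}_\epsilon^{(j)}$ separately in a Besov space $\CC^{\gamma_j}$ with $\gamma_j\ge -1/3-4\kappa/3$, the worst exponent being $\gamma_1=-1/3-4\kappa/3$, coming from the $\epsilon$-trilinear terms in $\overline{G}_\epsilon^{(1)}$; summing then yields the stated target space. Throughout I use Lemma~\ref{lem:LICC} (bounding $\ICC_\epsilon$ in $C_T\CC_\epsilon^{1-\kappa}$ and in $C_T^{1/2-\kappa/2}L^\infty$), Lemma~\ref{lem:Ly} (bounding $y_\epsilon$ in $C_T\CC_\epsilon^{1-\kappa}$), the bound (\ref{eq:Y_epsilon_estimates}) on $Y_\epsilon$ in $C_T\CC^{3/2-3\kappa}$, and the paralinearization theorem (Proposition~\ref{prop:paralinearization}) to control $e^{3\ICC_\epsilon}$, $e^{3\ICC_\epsilon}\ICCC_\epsilon$ and $\nabla(e^{3\ICC_\epsilon}\ICCC_\epsilon)$. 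The guiding point is that \emph{every summand carrying an explicit factor $\epsilon$ can be estimated with a net power $\epsilon^{\kappa/4}$}: distributing the $\CC_\epsilon$-norms over the bilinear/trilinear bounds of Corollaries~\ref{cor:B_estimates}--\ref{cor:tilde_B_estimates} and over Lemma~\ref{lem:interpolation_estimates_epsilon} costs at most $\epsilon^{-1+\kappa/4}$, which is beaten by the prefactor $\epsilon^{1}$. This single computation delivers all three claims: the first bound directly, and, since all the maps involved are multilinear in the finitely many building blocks $\ICC_\epsilon,\ICCC_\epsilon,\CD_\epsilon,e^{3\ICC_\epsilon},y_\epsilon,Y_\epsilon,P_\cdot^\epsilon\ICC_\epsilon(0)$, the difference bounds follow by telescoping, with the pure-$\epsilon$ summands (absent from $\overline{G}_0$) producing the $\epsilon^{\kappa/4}$ in the last estimate and the $\epsilon$-free summands producing the $\|\EX-\EX_\epsilon\|_{\CX_T^\kappa}$.

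Going through the pieces: for $\overline{G}_\epsilon^{(1)}$, apply Corollary~\ref{cor:tilde_B_estimates} (with $\alpha=\beta=1-\kappa$) to get $\epsilon\widetilde{B}(3\ICC_\epsilon,y_\epsilon)\in\CC^{-3\kappa}$, Corollary~\ref{cor:T_estimates} to the two trilinear terms, which land in $\CC^{-1/3-4\kappa/3}$, and Corollary~\ref{cor:B_estimates} (with $\delta=1/2$) together with Lemma~\ref{lem:product_estimates} to $B(3\ICC_\epsilon,3\ICC_\epsilon)B(3\ICC_\epsilon,y_\epsilon)$; all are time-continuous and carry $\epsilon^{\kappa/4}$. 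For $\overline{G}_\epsilon^{(2)}$ and $\overline{G}_\epsilon^{(3)}$, use the paraproduct and resonant estimates (Proposition~\ref{prop:paraproduct_estimates}, Lemma~\ref{lem:product_estimates}): $\nabla\ICC_\epsilon(\pl+\pr)\nabla y_\epsilon\in\CC^{-3\kappa}$ is immediate, while its $\epsilon$-Laplacian analogue and the terms $\nabla\ICC_\epsilon\rs\nabla Y_\epsilon+\epsilon\Delta\ICC_\epsilon\rs\Delta Y_\epsilon$ require spending $\CC_\epsilon$-regularity on $\Delta\ICC_\epsilon$ via Lemma~\ref{lem:interpolation_estimates_epsilon}, enough that the resonant products are well-defined, and land in $\CC^{1/2-5\kappa}$. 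For $\overline{G}_\epsilon^{(5)}$, $\overline{G}_\epsilon^{(6)}$, note $\nabla(e^{3\ICC_\epsilon}\ICCC_\epsilon)\in\CC^{-1/2-\kappa}$ and $e^{3\ICC_\epsilon}\ICCC_\epsilon\in\CC^{1/2-\kappa}$ by paralinearization; the commutator estimates (Proposition~\ref{prop:commutator_estimates}) give $C(e^{3\ICC_\epsilon}\ICCC_\epsilon,\nabla\ICC_\epsilon,\nabla\ICC_\epsilon)\in\CC^{1/2-3\kappa}$, and, after placing $\Delta\ICC_\epsilon$ in a space of slightly negative regularity, $\epsilon\,C(e^{3\ICC_\epsilon}\ICCC_\epsilon,\Delta\ICC_\epsilon,\Delta\ICC_\epsilon)$ in a space of positive regularity with a net $\epsilon^{\kappa/4}$, while $e^{3\ICC_\epsilon}\ICCC_\epsilon\,\CD_\epsilon$ is handled by Lemma~\ref{lem:product_estimates} and $\|\CD_\epsilon\|_{C_T\CC^{-\kappa}}$ from Definition~\ref{defi:EX_epsilon}.

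The time singularity $t^{-2\kappa}$ enters only through $\overline{G}_\epsilon^{(4)}$, which contains $P_\cdot^\epsilon\ICC_\epsilon(0)$ with $\ICC_\epsilon(0)\in\CC^{1-\kappa}$. To make $\nabla\ICC_\epsilon\rs\nabla\{(e^{3\ICC_\epsilon}\ICCC_\epsilon)\pl P_t^\epsilon\ICC_\epsilon(0)\}$ land in $\CC^{\kappa}$ one needs $P_t^\epsilon\ICC_\epsilon(0)\in\CC^{1+2\kappa}$, which by the $\epsilon$-uniform smoothing of the semigroup (Propositions~\ref{prop:Effects_of_heat_semigroup}, \ref{prop:Effects_of_heat_semigroup_2}, \ref{prop:Schauder_estimates}) costs $t^{-3\kappa/2}\le t^{-2\kappa}$ for $t\le T\le 1$; the $\epsilon$-Laplacian counterpart is treated as before. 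Since all other five pieces are time-continuous and $t^{-2\kappa}\ge 1$, their contributions are absorbed into $Ct^{-2\kappa}$.

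The main obstacle is the $\epsilon$-bookkeeping in the resonant and commutator terms weighted by $\epsilon$ (inside $\overline{G}_\epsilon^{(1)},\overline{G}_\epsilon^{(3)},\overline{G}_\epsilon^{(5)},\overline{G}_\epsilon^{(6)}$): the interpolation exponent $\delta$ of Lemma~\ref{lem:interpolation_estimates_epsilon} must be taken large enough that $\Delta\ICC_\epsilon$ sits in a space where the resonant product or commutator is admissible, yet small enough that the attendant negative $\epsilon$-powers are dominated by the explicit $\epsilon^{1}$ and leave a positive power of $\epsilon$; this two-sided constraint is satisfiable precisely because $\kappa$ is small, and it is the only place where delicacy is required. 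The remaining verification, namely the $\epsilon$-uniformity of the Schauder/semigroup bounds applied to $P_\cdot^\epsilon\ICC_\epsilon(0)$, is routine, and the difference estimates then follow by linearity in each building block.
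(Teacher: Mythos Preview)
Your proposal is correct and follows essentially the same route as the paper: decompose $\overline{G}_\epsilon$ into the six pieces $\overline{G}_\epsilon^{(1)},\dots,\overline{G}_\epsilon^{(6)}$, estimate each one separately in a Besov space of regularity at least $-1/3-4\kappa/3$ using the bilinear/trilinear bounds of Corollaries~\ref{cor:B_estimates}--\ref{cor:tilde_B_estimates}, the paraproduct and commutator estimates, and Lemma~\ref{lem:interpolation_estimates_epsilon} to trade $\CC_\epsilon$-regularity for negative powers of $\epsilon$, with the sole time singularity $t^{-2\kappa}$ arising from the semigroup smoothing applied to $P_t^\epsilon\ICC_\epsilon(0)$ in $\overline{G}_\epsilon^{(4)}$. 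The only minor deviations are cosmetic (you take $P_t^\epsilon\ICC_\epsilon(0)\in\CC^{1+2\kappa}$ costing $t^{-3\kappa/2}$ where the paper takes $\CC^{1+3\kappa}$ costing $t^{-2\kappa}$, and you invoke paralinearization for $e^{3\ICC_\epsilon}\ICCC_\epsilon\in\CC^{1/2-\kappa}$ where a direct product estimate suffices); the paper likewise dismisses the difference estimates with ``the remaining estimates can be obtained in the same way,'' matching your telescoping remark.
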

\begin{proof}
    We only show (\ref{eq:lem_bar_G}) by estimating for each $\overline{G}_\epsilon^{(i)}$. 
    The remaining estimates can be obtained in the same way.
    \begin{enumerate}[(1)]
        \item 
        Recall the definition (\ref{eq:defi_of_bar_G_1}) of $\overline{G}_\epsilon^{(1)}$.
        From Corollaries \ref{cor:B_estimates} to \ref{cor:tilde_B_estimates}, we can prove
        \[
            \|\overline{G}_\epsilon^{(1)}(t)\|_{\CC^{-1/3-4\kappa/3}} \lesssim_{\EX_\epsilon} \epsilon^{\frac{\kappa}{4}}
        \]
        in the same way as in Lemma \ref{lem:bar_F_estimates}.
    \item 
        Recall the definition (\ref{eq:defi_of_bar_G_2}) of $\overline{G}_\epsilon^{(2)}$.
        From Lemma \ref{lem:Ly} and Lemma \ref{lem:interpolation_estimates_epsilon} (with $\delta = 1/2$), we have
        \begin{align*}
            \|\nabla \ICC_\epsilon(t) (\pl+\pr) \nabla y_\epsilon(t)\|_{\CC^{-2\kappa}}
            &\lesssim \|\nabla \ICC_\epsilon(t) \|_{\CC^{-\kappa}} \|\nabla y_\epsilon(t)\|_{\CC^{-\kappa}} 
            \lesssim_{\EX_\epsilon} 1,\\
            \|\epsilon \Delta \ICC_\epsilon(t) (\pl+\pr) \Delta y_\epsilon(t) \|_{\CC^{-3\kappa}}
            &\lesssim \epsilon \|\Delta \ICC_\epsilon (t)\|_{\CC^{-3\kappa/2}}\|\Delta y_\epsilon (t)\|_{\CC^{-3\kappa/2}}
            \lesssim_{\EX_\epsilon} \epsilon^{\frac{\kappa}{4}}.
        \end{align*}
        Therefore, it holds
        \[
            \|\overline{G}_\epsilon^{(2)}(t)\|_{\CC^{-3\kappa}} \lesssim_{\EX_\epsilon} 1 + \epsilon^{\frac{\kappa}{4}}.
        \]
    \item 
        Recall the definition (\ref{eq:defi_of_bar_G_3}) of $\overline{G}_\epsilon^{(3)}$.
        From the inequality (\ref{eq:Y_epsilon_estimates}) and Lemma \ref{lem:interpolation_estimates_epsilon} (with $\delta = 1$), we have
        \begin{align*}
            \|\nabla \ICC_\epsilon(t) \rs \nabla Y_\epsilon(t)\|_{\CC^{1/2-4\kappa}}
            &\lesssim \|\nabla \ICC_\epsilon(t)\|_{\CC^{-\kappa}} \|\nabla Y_\epsilon (t)\|_{\CC^{1/2-3\kappa}} \lesssim_{\EX_\epsilon} 1,\\
            \|\epsilon \Delta \ICC_\epsilon(t) \rs \Delta Y_\epsilon (t)\|_{\CC^{1/2-5\kappa}}
            &\lesssim \epsilon \|\Delta \ICC_\epsilon(t)\|_{\CC^{1-2\kappa}} \|\Delta Y_\epsilon(t)\|_{\CC^{-1/2-3\kappa}}
            \lesssim_{\EX_\epsilon} \epsilon^{\frac{\kappa}{4}}.
        \end{align*}
        Therefore, it holds
        \[
            \|\overline{G}_\epsilon^{(3)}(t)\|_{\CC^{1/2-5\kappa}} \lesssim_{\EX_\epsilon} 1 + \epsilon^{\frac{\kappa}{4}}.
        \]
    \item
        Recall the definition (\ref{eq:defi_of_bar_G_4}) of $\overline{G}_\epsilon^{(4)}$.
        From Proposition \ref{prop:Effects_of_heat_semigroup} for $e^{t\Delta}$ and Lemma \ref{lem:interpolation_estimates_epsilon} (with $\delta = 1$), we have
        \begin{align*}
            &\|\nabla \ICC_\epsilon(t) \rs \nabla \{e^{3\ICC_\epsilon(t)}\ICCC_\epsilon(t)\pl P_t^\epsilon \ICC_\epsilon(0)\} \|_{\CC^{2\kappa}}\\
            &\lesssim \|\nabla \ICC_\epsilon(t)\|_{\CC^{-\kappa}} \|\nabla \{e^{3\ICC_\epsilon(t)}\ICCC_\epsilon(t)\pl P_t^\epsilon \ICC_\epsilon(0)\}\|_{\CC^{3\kappa}}\\
            &\lesssim_{\EX_\epsilon} \|P_t^\epsilon \ICC_\epsilon(0)\|_{\CC^{1+3\kappa}}
            \lesssim_{\EX_\epsilon} t^{-2\kappa},
        \end{align*}
        \begin{align*}
            &\| \epsilon \Delta \ICC_\epsilon(t) \rs \Delta \{e^{3\ICC_\epsilon(t)}\ICCC_\epsilon(t)\pl P_t^\epsilon \ICC_\epsilon(0)\} \|_{\CC^{\kappa}} \\
            &\lesssim \epsilon \|\Delta \ICC_\epsilon(t)\|_{\CC^{1-2\kappa}} \| \Delta \{e^{3\ICC_\epsilon(t)}\ICCC_\epsilon(t)\pl P_t^\epsilon \ICC_\epsilon(0)\} \|_{\CC^{-1+3\kappa}} \\
            &\lesssim_{\EX_\epsilon} \epsilon^{\frac{\kappa}{4}} \|P_t^\epsilon \ICC_\epsilon(0)\|_{\CC^{1+3\kappa}} 
            \lesssim_{\EX_\epsilon} \epsilon^{\frac{\kappa}{4}}t^{-2\kappa}.
        \end{align*}
        Therefore, it holds
        \[
            \|\overline{G}_\epsilon^{(4)}(t)\|_{\CC^{\kappa}} \lesssim_{\EX_\epsilon} t^{-2\kappa}(1 + \epsilon^{\frac{\kappa}{4}}).
        \]
    \item 
        Recall the definition (\ref{eq:defi_of_bar_G_5}) of $\overline{G}_\epsilon^{(5)}$.
        From Lemma \ref{lem:interpolation_estimates_epsilon} (with $\delta = 1$), we have
        \begin{align*}
            &\|\nabla \ICC_\epsilon(t) \rs  (\nabla(e^{3\ICC_\epsilon(t)}\ICCC_\epsilon(t))\pl \ICC_\epsilon(t))\|_{\CC^{1/2-3\kappa}}\\
            &\lesssim \|\nabla \ICC_\epsilon(t)\|_{\CC^{-\kappa}} \|\nabla(e^{3\ICC_\epsilon(t)}\ICCC_\epsilon(t))\pl \ICC_\epsilon(t)\|_{\CC^{1/2-2\kappa}} \\
            &\lesssim \|\ICC_\epsilon(t)\|_{\CC^{1-\kappa}} \|\nabla(e^{3\ICC_\epsilon(t)}\ICCC_\epsilon(t))\|_{\CC^{-1/2-\kappa}} \|\ICC_\epsilon(t)\|_{\CC^{1-\kappa}}
            \lesssim_{\EX_\epsilon} 1,
        \end{align*}
        \begin{align*}
            & \|\epsilon \Delta \ICC_\epsilon(t) \rs  (\Delta(e^{3\ICC_\epsilon(t)}\ICCC_\epsilon(t))\pl \ICC_\epsilon(t))\|_{\CC^{1/2-4\kappa}}\\
            & \lesssim \epsilon \|\Delta \ICC_\epsilon(t)\|_{\CC^{1-2\kappa}} \|\Delta(e^{3\ICC_\epsilon(t)}\ICCC_\epsilon(t))\pl \ICC_\epsilon(t)\|_{\CC^{-1/2-2\kappa}} \\
            & \lesssim \epsilon\|\ICC_\epsilon(t)\|_{\CC^{3-2\kappa}} \|\Delta(e^{3\ICC_\epsilon(t)}\ICCC_\epsilon(t))\|_{\CC^{-3/2-\kappa}} \|\ICC_\epsilon(t)\|_{\CC^{1-\kappa}}
            \lesssim_{\EX_\epsilon} \epsilon^{\frac{\kappa}{4}},
        \end{align*}
        \begin{align*}
            &\|\epsilon \Delta \ICC_\epsilon(t) \rs  (\nabla(e^{3\ICC_\epsilon(t)}\ICCC_\epsilon(t))\pl \nabla \ICC_\epsilon(t))\|_{\CC^{1/2-4\kappa}} \\
            &\lesssim \epsilon \|\Delta \ICC_\epsilon(t)\|_{\CC^{1-2\kappa}} \|\nabla(e^{3\ICC_\epsilon(t)}\ICCC_\epsilon(t))\pl \nabla\ICC_\epsilon(t)\|_{\CC^{-1/2-2\kappa}} \\
            & \lesssim \epsilon\|\ICC_\epsilon(t)\|_{\CC^{3-2\kappa}} \|\nabla(e^{3\ICC_\epsilon(t)}\ICCC_\epsilon(t))\|_{\CC^{-1/2-\kappa}} \|\nabla\ICC_\epsilon(t)\|_{\CC^{-\kappa}}
            \lesssim_{\EX_\epsilon} \epsilon^{\frac{\kappa}{4}}.
        \end{align*}
        Therefore, it holds
        \[
            \|\overline{G}_\epsilon^{(5)}(t)\|_{\CC^{1/2-4\kappa}} \lesssim_{\EX_\epsilon}1 + \epsilon^{\frac{\kappa}{4}}.
        \]
    \item 
        Recall the definition (\ref{eq:defi_of_bar_G_6}) of $\overline{G}_\epsilon^{(6)}$.
        From commutator estimates (Proposition \ref{prop:commutator_estimates}) and Lemma \ref{lem:interpolation_estimates_epsilon} (with $\delta = 1/2$), we have
        \begin{align*}
            &\|C(e^{3\ICC_\epsilon}\ICCC_\epsilon, \nabla\ICC_\epsilon,\nabla \ICC_\epsilon )(t)\|_{\CC^{1/2-3\kappa}}\\
            &\lesssim \|e^{3\ICC_\epsilon(t)}\ICCC_\epsilon(t)\|_{\CC^{1/2-\kappa}}\|\nabla\ICC_\epsilon(t)\|_{\CC^{-\kappa}}^2
            \lesssim_{\EX_\epsilon} 1,
        \end{align*}
        \begin{align*}        
            &\|\epsilon C(e^{3\ICC_\epsilon}\ICCC_\epsilon, \Delta\ICC_\epsilon,\Delta \ICC_\epsilon )(t)\|_{\CC^{1/2-4\kappa}}\\
            &\lesssim \epsilon \|e^{3\ICC_\epsilon(t)}\ICCC_\epsilon(t)\|_{\CC^{1/2-\kappa}} \|\Delta\ICC_\epsilon(t)\|_{\CC^{-3\kappa/2}}^2
            \lesssim_{\EX_\epsilon} \epsilon^{\frac{\kappa}{4}},
        \end{align*}
        \begin{align*}        
            \|e^{3\ICC_\epsilon(t)} \ICCC_\epsilon(t)\CD_\epsilon(t)\|_{\CC^{-\kappa}}
            &\lesssim_{\EX_\epsilon} 1. 
        \end{align*}
    \end{enumerate}
    Therefore, it holds
        \[
            \|\overline{G}_\epsilon^{(6)}(t)\|_{\CC^{-\kappa}} \lesssim_{\EX_\epsilon}1 + \epsilon^{\frac{\kappa}{4}}.
        \]
\end{proof}
We have obtained Lemma \ref{lem:Z_estimates}.
Next, we estimate $\Phi_\epsilon$.
\begin{lem}\label{lem:Phi_estimates}
    For any  $0<t\leq T$ and $0\leq\epsilon\leq 1$, we have the following:
    \begin{enumerate}[(1)]
        \item 
            For $u_\epsilon \in \CE_T^{-1/2-\kappa}\CC^{3/2-2\kappa}$, we have
            \begin{align}
                \|\Phi_\epsilon(\EX_\epsilon, u_\epsilon)(t)\|_{\CC^{-1/2-\kappa}}
                \leq C t^{-1+\frac{\kappa}{2}}(1+\|u_\epsilon\|_{\CE_T^{-1/2-\kappa}\CC^{3/2-2\kappa}}^3).\label{eq:Phi_estimates}
            \end{align}
            Here, $C$ is a positive constant depending only on $\kappa$, $\kappa'$ and $\|\EX_\epsilon\|_{\CX_T^\kappa}$.
        \item 
            For any $u_\epsilon^{(1)}, u_\epsilon^{(2)} \in \CE_T^{-1/2-\kappa}\CC^{3/2-2\kappa}$, we have
            \begin{align}
                &\|\Phi_\epsilon(\EX^{(1)}, u_\epsilon^{(1)})(t)-\Phi_\epsilon(\EX^{(2)},u_\epsilon^{(2)})(t)\|_{\CC^{-1/2-\kappa}} \notag\\
                &\leq  t^{-1+\frac{\kappa}{2}} (C'_1\|\EX_\epsilon^{(1)}-\EX_\epsilon^{(2)}\|_{\CX_T^\kappa} + C'_2\|u_\epsilon^{(1)}-u_\epsilon^{(2)}\|_{\CE_T^{-1/2-\kappa}\CC^{3/2-2\kappa}}).
            \end{align}
            Here, $C'_1$ and $C'_2$ are positive constants depending only on $\kappa$, $\kappa'$, $\|\EX_\epsilon^{(i)}\|_{\CX_T^\kappa}$ and $\|u_\epsilon^{(i)}\|_{\CE_T^{-1/2-\kappa}\CC^{3/2-2\kappa}}$.
            In particular, $C'_1$ is given by at most third-order polynomial and $C'_2$ is given by at most second-order polynomial in $\|u_\epsilon^{(i)}\|_{\CE_T^{-1/2-\kappa}\CC^{3/2-2\kappa}}$.
    \end{enumerate}
    Moreover, for any $u, u_\epsilon\in \CE_T^{-1/2-\kappa}\CC^{3/2-2\kappa}$, we have
    \begin{align}
        &\|\Phi_\epsilon(\EX_\epsilon, u_\epsilon)(t)-\Phi_0(\EX,u)(t)\|_{\CC^{-1/2-\kappa}} \notag\\
        &\leq  t^{-1+\frac{\kappa}{2}} \{C''_1(\epsilon^{\frac{\kappa}{4}} + \|\EX-\EX_\epsilon\|_{\CX_T^\kappa}) + C''_2\|u-u_\epsilon\|_{\CE_T^{-1/2-\kappa}\CC^{3/2-2\kappa}}\}.
    \end{align}
    Here, $C''_1$ and $C''_2$ are positive constants depending only on $\kappa$, $\kappa'$, $\|\EX\|_{\CX_T^\kappa}$, $\|\EX_\epsilon\|_{\CX_T^\kappa}$, $\|u\|_{\CE_T^{-1/2-\kappa}\CC^{3/2-2\kappa}}$ and $\|u_\epsilon\|_{\CE_T^{-1/2-\kappa}\CC^{3/2-2\kappa}}$.
    In particular, $C''_1$ is given by at most third-order polynomial and $C''_2$ is given by at most second-order polynomial in $\|u\|_{\CE_T^{-1/2-\kappa}\CC^{3/2-2\kappa}}$ and $\|u_\epsilon\|_{\CE_T^{-1/2-\kappa}\CC^{3/2-2\kappa}}$.
\end{lem}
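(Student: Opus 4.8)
The plan is to bound $\Phi_\epsilon(\EX_\epsilon,u_\epsilon)(t)$ in $\CC^{-1/2-\kappa}$ by treating its five summands $e^{-6\ICC_\epsilon}u_\epsilon^3$, $Z^{(2)}_\epsilon u_\epsilon^2$, $Z^{(1)}_\epsilon u_\epsilon$, $Z^{(0)}_\epsilon$ and $-6\nabla\ICC_\epsilon\cdot\nabla u_\epsilon$ one at a time, feeding in the coefficient bounds of Lemma~\ref{lem:Z_estimates}, the para/resonant-product estimates (Proposition~\ref{prop:paraproduct_estimates}, Lemma~\ref{lem:product_estimates}), and Lemma~\ref{lem:interpolation_estimates}, which converts spatial regularity of $u_\epsilon(t)$ into negative powers of $t$. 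Abbreviating $\|v\|_{\CE}:=\|v\|_{\CE_T^{-1/2-\kappa}\CC^{3/2-2\kappa}}$, the inputs are $\|u_\epsilon(t)\|_{L^\infty}\lesssim t^{-1/4-\kappa/2}\|u_\epsilon\|_{\CE}$, $\|u_\epsilon(t)\|_{\CC^{3/2-2\kappa}}\lesssim t^{-1+\kappa/2}\|u_\epsilon\|_{\CE}$, and more generally $\|u_\epsilon(t)\|_{\CC^\gamma}\lesssim t^{-(\gamma+1/2+\kappa)/2}\|u_\epsilon\|_{\CE}$ for $\gamma\in[-1/2-\kappa,3/2-2\kappa]$.

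For the cubic term we use only the $L^\infty$ norm: since $\ICC_\epsilon\in C_T\CC^{1-\kappa}\hookrightarrow C_TL^\infty$ gives $\|e^{-6\ICC_\epsilon}\|_{L^\infty}\lesssim_{\EX_\epsilon}1$, we get $\|e^{-6\ICC_\epsilon}u_\epsilon^3\|_{\CC^{-1/2-\kappa}}\lesssim\|e^{-6\ICC_\epsilon}\|_{L^\infty}\|u_\epsilon\|_{L^\infty}^3\lesssim_{\EX_\epsilon}t^{-3/4-3\kappa/2}\|u_\epsilon\|_{\CE}^3\le t^{-1+\kappa/2}\|u_\epsilon\|_{\CE}^3$, the last step using $t\le1$ and $2\kappa<1/4$. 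For $\nabla\ICC_\epsilon\cdot\nabla u_\epsilon=\sum_i\partial_i\ICC_\epsilon\,\partial_i u_\epsilon$, each product of $\partial_i\ICC_\epsilon\in\CC^{-\kappa}$ and $\partial_i u_\epsilon\in\CC^{1/2-2\kappa}$ has regularity sum $1/2-3\kappa>0$, hence lies in $\CC^{-\kappa}\hookrightarrow\CC^{-1/2-\kappa}$ with norm $\lesssim\|\partial_i\ICC_\epsilon(t)\|_{\CC^{-\kappa}}\|\partial_i u_\epsilon(t)\|_{\CC^{1/2-2\kappa}}\lesssim_{\EX_\epsilon}\|u_\epsilon(t)\|_{\CC^{3/2-2\kappa}}\lesssim t^{-1+\kappa/2}\|u_\epsilon\|_{\CE}$. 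For $Z^{(j)}_\epsilon u_\epsilon^{j}$ with $j\in\{1,2\}$ use Bony's decomposition: as $Z^{(j)}_\epsilon$ has negative regularity $-1/2-\kappa$, the two paraproduct pieces cost only $\|u_\epsilon^{j}\|_{L^\infty}=\|u_\epsilon\|_{L^\infty}^{j}$, whereas the resonant piece $Z^{(j)}_\epsilon\rs u_\epsilon^{j}$ needs $u_\epsilon^{j}\in\CC^\gamma$ with $\gamma>1/2+\kappa$; putting one factor of $u_\epsilon^{j}$ in $L^\infty$ gives $\|u_\epsilon^{j}(t)\|_{\CC^\gamma}\lesssim t^{-(\gamma+j(1/2+\kappa))/2}\|u_\epsilon\|_{\CE}^{j}$ for $\gamma\in(0,3/2-2\kappa]$. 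Together with $\|Z^{(1)}_\epsilon(t)\|_{\CC^{-1/2-\kappa}},\,\|Z^{(2)}_\epsilon(t)\|_{\CC^{-1/2-\kappa}}\lesssim1$ and $\|Z^{(0)}_\epsilon(t)\|_{\CC^{-1/2-\kappa}}\lesssim t^{-2\kappa}$ from Lemma~\ref{lem:Z_estimates}, and choosing $\gamma$ slightly above $1/2+\kappa$ when $j=2$ (for $j=1$ the natural regularity $3/2-2\kappa$ already exceeds $1/2+\kappa$), one checks that each resulting time exponent is $\le1-\kappa/2$ precisely because $\kappa<1/8$ and $\kappa'$ is small. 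Summing the five bounds proves \eqref{eq:Phi_estimates}.

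For part~(2) I would expand each of the five differences by a telescoping in which one factor is replaced at a time, e.g.
\[
e^{-6\ICC_\epsilon^{(1)}}(u_\epsilon^{(1)})^3-e^{-6\ICC_\epsilon^{(2)}}(u_\epsilon^{(2)})^3=\bigl(e^{-6\ICC_\epsilon^{(1)}}-e^{-6\ICC_\epsilon^{(2)}}\bigr)(u_\epsilon^{(1)})^3+e^{-6\ICC_\epsilon^{(2)}}\bigl(u_\epsilon^{(1)}-u_\epsilon^{(2)}\bigr)\bigl\{(u_\epsilon^{(1)})^2+u_\epsilon^{(1)}u_\epsilon^{(2)}+(u_\epsilon^{(2)})^2\bigr\},
\]
and analogously for the quadratic, linear and gradient terms, with $(u^{(1)})^k-(u^{(2)})^k$ factored through $u^{(1)}-u^{(2)}$. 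The factor $e^{-6\ICC_\epsilon^{(1)}}-e^{-6\ICC_\epsilon^{(2)}}$ is controlled in $\CC^{1-\kappa}$ by the local Lipschitz bound for the composition $h\mapsto e^{-6h}$ together with the (affine-linear) dependence of $\ICC_\epsilon$ on $\EX_\epsilon$ (cf.\ Lemma~\ref{lem:LICC}), hence by $\|\EX_\epsilon^{(1)}-\EX_\epsilon^{(2)}\|_{\CX_T^\kappa}$; the coefficient differences $Z^{(j)}_\epsilon(\EX_\epsilon^{(1)})-Z^{(j)}_\epsilon(\EX_\epsilon^{(2)})$ and $\ICC_\epsilon^{(1)}-\ICC_\epsilon^{(2)}$ come from Lemma~\ref{lem:Z_estimates} and Lemma~\ref{lem:LICC}. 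Running the part~(1) estimates on each summand gives the stated bound, with $C_1'$ at most cubic and $C_2'$ at most quadratic in the $\|u_\epsilon^{(i)}\|_{\CE}$, since each replacement of a $u$-factor leaves at most two others. Part~(3) is handled identically, writing $\Phi_\epsilon(\EX_\epsilon,u_\epsilon)-\Phi_0(\EX,u)$ and telescoping; the only extra point is that the coefficient differences $e^{-6\ICC_\epsilon}-e^{-6\ICC}$, $Z^{(j)}_\epsilon(\EX_\epsilon)-Z^{(j)}_0(\EX)$ and $\ICC_\epsilon-\ICC$ now carry an additional $\epsilon^{\kappa/4}$ (from the genuinely $\epsilon$-dependent pieces via Lemma~\ref{lem:Z_estimates} and the $\epsilon\to0$ convergence of $\ICC_\epsilon$ underlying Lemma~\ref{lem:LICC}), producing the $\epsilon^{\kappa/4}$ in the claim, while the $u$-dependence is exactly as in parts~(1)--(2).

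The one genuine difficulty is the bookkeeping of time singularities: one must verify that after splitting each nonlinear product into an $L^\infty$ factor plus factors carrying just the regularity needed for the paraproduct/resonant estimates, the exponents produced by Lemma~\ref{lem:interpolation_estimates} always sum to at most $1-\kappa/2$. The binding constraints are the cubic term $e^{-6\ICC_\epsilon}u_\epsilon^3$ (exponent $3/4+3\kappa/2$) and the resonant term $Z^{(2)}_\epsilon\rs u_\epsilon^2$ (exponent $3/4+3\kappa/2+\kappa'/2$), both of which stay below $1-\kappa/2$ exactly under the standing hypothesis $\kappa<1/8$ with $\kappa'$ small. Everything else reduces to routine applications of the product, commutator and interpolation estimates already established.
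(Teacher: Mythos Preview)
Your proposal is correct and follows essentially the same route as the paper: term-by-term bounds on the five summands of $\Phi_\epsilon$, using the product/paraproduct estimates together with Lemma~\ref{lem:interpolation_estimates} to trade regularity of $u_\epsilon(t)$ for powers of $t$, and invoking Lemma~\ref{lem:Z_estimates} for the coefficients; the paper likewise identifies the gradient term as the one saturating the exponent $1-\kappa/2$ and dismisses parts~(2) and~(3) with ``in the same way,'' which is exactly your telescoping argument. One cosmetic slip: your quoted exponent for $Z^{(2)}_\epsilon\rs u_\epsilon^2$ should be $3/4+\kappa+\kappa'/2$ (your own formula $t^{-(\gamma+2(1/2+\kappa))/2}$ with $\gamma=1/2+\kappa'$ gives this), not $3/4+3\kappa/2+\kappa'/2$, but either way the bound $\le 1-\kappa/2$ holds for $\kappa<1/8$.
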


\begin{proof}
    Recall the definition (\ref{eq:defi_of_Phi}) of $\Phi_\epsilon$.
    First we consider the cubic term.
    By using Lemma \ref{lem:interpolation_estimates}, we have
    \begin{align*}
        \|-e^{-6\ICC_\epsilon(t)}u_\epsilon(t)^3\|_{L^\infty}
        &\lesssim \|e^{-6\ICC_\epsilon}\|_{C_T\CC_\epsilon^{1-\kappa}} \|u_\epsilon(t)\|_{L^\infty}^3
        \lesssim_{\EX_\epsilon} t^{\frac{3\beta}{2}}\|u\|_{\CE_T^\beta\CC^\alpha}^3.
    \end{align*}
    for every $\beta<0$ and $\alpha > 0$.

    Next, we consider the square term.
    By using the interpolation estimate (Lemma \ref{lem:interpolation_estimates}), we have
    \begin{align*}
        \|u_\epsilon(t)\|_{\CC^{1/2+\kappa'}}^2
        &\lesssim \|u_\epsilon(t)\|_{L^\infty}\|u_\epsilon(t)\|_{\CC^{1/2+\kappa'}}\\
        &\lesssim t^{\frac{\beta}{2}}t^{-\frac{1/2+\kappa'-\beta}{2}}\|u_\epsilon\|_{\CE_T^\beta\CC^\alpha}^2\\
        &= t^{-\frac{1+2\kappa'}{4}+\beta}\|u_\epsilon\|_{\CE_T^\beta\CC^\alpha}^2
    \end{align*}
    for every $\kappa' > \kappa$, $\beta\leq 0$ and $\alpha\geq 1/2+\kappa'$.
    Hence, we have
    \begin{align*}
        \|Z_\epsilon^{(2)}(t)u_\epsilon^2(t)\|_{\CC^{-1/2-\kappa}}
        &\lesssim \|Z_\epsilon^{(2)}\|_{C_T\CC^{-1/2-\kappa}} \|u_\epsilon(t)\|_{\CC^{1/2+\kappa'}}^2 \\
        &\lesssim_{\EX_\epsilon} t^{-\frac{1+2\kappa'}{4}+\beta}\|u_\epsilon\|_{\CE_T^\beta\CC^\alpha}^2\\
        &\lesssim t^{-\frac{1+2\kappa'}{4}+\beta}(1+\|u_\epsilon\|_{\CE_T^\beta\CC^\alpha}^3).
    \end{align*}

    At last, we consider the linear terms.
    From Lemma \ref{lem:interpolation_estimates}, we have
    \begin{align*}
        &\|\nabla \ICC_\epsilon(t) \cdot \nabla u_\epsilon(t)+Z_\epsilon^{(1)}(t)u_\epsilon(t) + Z_\epsilon^{(0)}(t)\|_{\CC^{-1/2-\kappa}}\\
        &\lesssim \|\nabla \ICC_\epsilon(t)\|_{\CC^{-\kappa}}\|\nabla u_\epsilon(t)\|_{\CC^{\alpha-1}} + \|Z_\epsilon^{(1)}\|_{\CC^{-1/2-\kappa}}\|u_\epsilon(t)\|_{\CC^{\alpha}} + \|Z_\epsilon^{(0)}(t)\|_{\CC^{-1/2-\kappa}} \\
        &\lesssim_{\EX_\epsilon} \|u(t)\|_{\CC^{\alpha}} + t^{-2\kappa} \\
        &\lesssim t^{-\frac{\alpha-\beta}{2}}\|u\|_{\CE_T^\beta\CC^\alpha}+ t^{-2\kappa} \\
        &\lesssim t^{-\frac{\alpha-\beta}{2}}(1+\|u_\epsilon\|_{\CE_T^\beta\CC^\alpha}^3).
    \end{align*}
    for every $\alpha > 1+\kappa$ and $\beta<\alpha-4\kappa$.

    Taking $\alpha=3/2-2\kappa$ and $\beta=-1/2-\kappa$, we obtain (\ref{eq:Phi_estimates}).
    The remaining estimates can be obtained in the same way.
\end{proof}

At last, we estimate $\widetilde{G}_\epsilon(u_\epsilon)$ by using $\epsilon$-dependent estimates of $u_\epsilon$.
\begin{lem}\label{lem:G_epsilon_u_estimates}
    For any $0<t\leq T$ and $0\leq\epsilon\leq 1$,  we have the following:
    \begin{enumerate}[(1)]
        \item 
            For $u_\epsilon \in \CE_T^{-1/2-\kappa}\CC_\epsilon^{3/2-2\kappa}$, we have
            \begin{align}
                \label{eq:lem_G_epsilon_u}
                \|\widetilde{G}_\epsilon(u_\epsilon)(t)\|_{\CC^{-1/3-4\kappa/3}}
                \leq C\epsilon^{\frac{\kappa}{4}}t^{-1+\frac{\kappa}{2}}\|u_\epsilon\|_{\CE_T^{-1/2-\kappa}\CC_\epsilon^{3/2-2\kappa}}.
            \end{align}
            Here, $C$ is a positive constant depending only on $\kappa$ and $\|\EX_\epsilon\|_{\CX_T^\kappa}$.
        \item 
            For any $u^{(1)}_\epsilon, u^{(2)}_\epsilon \in \CE_T^{-1/2-\kappa}\CC_\epsilon^{3/2-2\kappa}$, we have
            \begin{multline}
                \|\widetilde{G}_\epsilon(\EX_\epsilon^{(1)},u_\epsilon^{(1)})(t)-\widetilde{G}_\epsilon(\EX_\epsilon^{(2)},u_\epsilon^{(2)})(t)\|_{\CC^{-1/3-4\kappa/3}}\\
                \leq C' t^{-1+\frac{\kappa}{2}} (\|\EX_\epsilon^{(1)} -\EX_\epsilon^{(2)}\|_{\CX_T^\kappa} +\|u_\epsilon^{(1)}-u_\epsilon^{(2)} \|_{\CE_T^{-1/2-\kappa}\CC_\epsilon^{3/2-2\kappa}}).
            \end{multline}
            Here, $C'$ is a positive constant depending only on $\kappa$, $\|\EX_\epsilon^{(i)}\|_{\CX_T^\kappa}$ and $\|u_\epsilon^{(i)}\|_{\CE_T^{-1/2-\kappa}\CC_\epsilon^{3/2-2\kappa}}$.
            In particular, it is given by a first-order polynomial in $\|u_\epsilon^{(i)}\|_{\CE_T^{-1/2-\kappa}\CC_\epsilon^{3/2-2\kappa}}$.
    \end{enumerate}
\end{lem}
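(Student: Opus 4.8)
The plan is to insert the definition $(\ref{eq:defi_of_G})$ of $\widetilde{G}_\epsilon$ and estimate, one at a time, the five summands of
$\widetilde{G}_\epsilon(3\ICC_\epsilon,u_\epsilon)=\epsilon\{\Delta(3\ICC_\epsilon)\Delta u_\epsilon-\widetilde{B}(3\ICC_\epsilon,u_\epsilon)+2T(u_\epsilon,3\ICC_\epsilon,3\ICC_\epsilon)+T(3\ICC_\epsilon,3\ICC_\epsilon,u_\epsilon)-2B(3\ICC_\epsilon,3\ICC_\epsilon)B(3\ICC_\epsilon,u_\epsilon)\}$
in $\CC^{-1/3-4\kappa/3}$, exactly in the style of Lemmas~\ref{lem:bar_F_estimates} and~\ref{lem:widehat_G_epsilon_u_estimates}. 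The two inputs are controlled by $\|\ICC_\epsilon(t)\|_{\CC_\epsilon^{1-\kappa}}\lesssim_{\EX_\epsilon}1$ (Lemma~\ref{lem:LICC}) and by $\|u_\epsilon(t)\|_{\CC_\epsilon^{3/2-2\kappa}}\lesssim t^{-1+\kappa/2}\|u_\epsilon\|_{\CE_T^{-1/2-\kappa}\CC_\epsilon^{3/2-2\kappa}}$, which is merely the definition of the $\CE$-norm together with Lemma~\ref{lem:interpolation_estimates}. Since every summand is linear in its second argument $v=u_\epsilon$, the resulting bound is automatically linear in $\|u_\epsilon\|_{\CE_T^{-1/2-\kappa}\CC_\epsilon^{3/2-2\kappa}}$ and carries the single factor $t^{-1+\kappa/2}$ from the lone occurrence of $u_\epsilon$; this also immediately yields the first-order dependence claimed in part~(2).

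For $\Delta(3\ICC_\epsilon)\Delta u_\epsilon$ both factors have negative regularity, so I would apply Lemma~\ref{lem:product_estimates} with $\delta=1$, obtaining $\Delta\ICC_\epsilon\,\Delta u_\epsilon\in\CC^{-1-\kappa-1/2-2\kappa+2-\kappa}=\CC^{1/2-4\kappa}$ with a factor $\epsilon^{-1+\kappa/4}$; here the hypothesis $\alpha+\beta+\delta(2-\kappa)>0$ of that lemma is exactly $\kappa<1/8$. For $\widetilde{B}(3\ICC_\epsilon,u_\epsilon)$ I would feed $\ICC_\epsilon\in\CC_\epsilon^{1-\kappa}$ and $u_\epsilon$ into Corollary~\ref{cor:tilde_B_estimates} via the embedding $\CC_\epsilon^{3/2-2\kappa}\hookrightarrow\CC_\epsilon^{1-\eta}$ for a small $\eta>0$, which gives $\widetilde{B}(3\ICC_\epsilon,u_\epsilon)\in\CC^{-2\kappa-\eta}$ with a factor $\epsilon^{-1+\kappa/4}$. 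For the two trilinear terms I would use Corollary~\ref{cor:T_estimates}: for $T(3\ICC_\epsilon,3\ICC_\epsilon,u_\epsilon)$ directly with $\gamma=3/2-2\kappa$ (resulting exponent $(1/3-8\kappa/3)\wedge(1/6-7\kappa/3)$), and for $T(u_\epsilon,3\ICC_\epsilon,3\ICC_\epsilon)$ with $u_\epsilon$ again embedded into $\CC_\epsilon^{1-\eta}$ (resulting exponent $(1/3-\eta-5\kappa/3)\wedge(-1/3-4\kappa/3)$), both $\ge-1/3-4\kappa/3$ for small $\eta$ and each with factor $\epsilon^{-1+\kappa/4}$. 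Finally $B(3\ICC_\epsilon,3\ICC_\epsilon)B(3\ICC_\epsilon,u_\epsilon)$ is the easiest: $B(3\ICC_\epsilon,u_\epsilon)$ needs no interpolation since $\nabla\ICC_\epsilon\in\CC^{-\kappa}$ and $\nabla u_\epsilon\in\CC^{1/2-2\kappa}$ already have positive total regularity, and $B(3\ICC_\epsilon,3\ICC_\epsilon)$ needs only a tiny positive $\delta$ in Corollary~\ref{cor:B_estimates} to become a genuine function, so this term lands in $\CC^{-\kappa}$ with a factor far better than $\epsilon^{-1+\kappa/4}$. In every case the interpolation cost is at most $\epsilon^{-1+\kappa/4}$, which cancels the $\epsilon$ in front of $\widetilde{G}_\epsilon$ and leaves $\epsilon^{\kappa/4}$ (or better); all the exponents embed into $\CC^{-1/3-4\kappa/3}$, and summing the five contributions gives $(\ref{eq:lem_G_epsilon_u})$.

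The main obstacle is that the interpolation budget is genuinely tight for $\widetilde{B}$. Writing $\widetilde{B}(f,g)=\nabla\cdot(\Delta f\,\nabla g+\nabla f\,\Delta g+\nabla B(f,g))$, the worst pieces of the vector field are of the type (two derivatives on $\ICC_\epsilon$)$\times$(one derivative on $u_\epsilon$) and its mirror, and to land the vector field in $\CC^{2/3-4\kappa/3}$ one must interpolate $\ICC_\epsilon$ up by $\delta_1\approx5/6$ and $u_\epsilon$ by $\delta_2\approx1/12$, with $\delta_1+\delta_2$ reaching exactly $1$ at $\kappa=1/8$; equivalently, in the route above, $-2\kappa-\eta\ge-1/3-4\kappa/3$ forces $\eta\le1/3-2\kappa/3$, which at $\kappa=1/8$ leaves only $\eta=1/4$, the smallest value still compatible with $\CC_\epsilon^{3/2-2\kappa}\hookrightarrow\CC_\epsilon^{1-\eta}$. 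Thus the hypothesis $\kappa<1/8$ is used precisely at its limit here, and everything else has comfortable slack.

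For part~(2) one telescopes: $\widetilde{G}_\epsilon(\EX_\epsilon^{(1)},u_\epsilon^{(1)})-\widetilde{G}_\epsilon(\EX_\epsilon^{(2)},u_\epsilon^{(2)})$ splits into $\widetilde{G}_\epsilon(\EX_\epsilon^{(1)},u_\epsilon^{(1)}-u_\epsilon^{(2)})$, bounded exactly as in part~(1) with $u_\epsilon^{(1)}-u_\epsilon^{(2)}$ in place of $u_\epsilon$ and with a constant depending only on $\|\EX_\epsilon^{(1)}\|_{\CX_T^\kappa}$, plus $\widetilde{G}_\epsilon(\EX_\epsilon^{(1)},u_\epsilon^{(2)})-\widetilde{G}_\epsilon(\EX_\epsilon^{(2)},u_\epsilon^{(2)})$, in which each monomial in $\ICC_\epsilon$ is expanded as a telescoping sum replacing $\ICC_\epsilon^{(1)}$ by $\ICC_\epsilon^{(2)}$ one factor at a time, using that $\ICC_\epsilon$ depends locally Lipschitz-ly on $\EX_\epsilon$, $\|\ICC_\epsilon^{(1)}-\ICC_\epsilon^{(2)}\|_{C_T\CC_\epsilon^{1-\kappa}}\lesssim_{\EX_\epsilon^{(i)}}\|\EX_\epsilon^{(1)}-\EX_\epsilon^{(2)}\|_{\CX_T^\kappa}$, which follows from Lemma~\ref{lem:LICC}. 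The latter piece is linear in $u_\epsilon^{(2)}$, accounting for the asserted first-order dependence of $C'$ on $\|u_\epsilon^{(i)}\|_{\CE_T^{-1/2-\kappa}\CC_\epsilon^{3/2-2\kappa}}$.
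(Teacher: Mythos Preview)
Your proposal is correct and follows the same term-by-term strategy as the paper: plug in the definition of $\widetilde G_\epsilon$ and bound each of the five summands via Lemma~\ref{lem:product_estimates} and Corollaries~\ref{cor:B_estimates}--\ref{cor:tilde_B_estimates}, then convert $\|u_\epsilon(t)\|_{\CC_\epsilon^{3/2-2\kappa}}$ into $t^{-1+\kappa/2}\|u_\epsilon\|_{\CE_T^{-1/2-\kappa}\CC_\epsilon^{3/2-2\kappa}}$. The only differences are cosmetic parameter choices: for $\epsilon\,\Delta\ICC_\epsilon\,\Delta u_\epsilon$ the paper interpolates each factor by $\delta=\tfrac12$ and lands in $\CC^{-3\kappa/2}$, whereas you use $\delta=1$ in Lemma~\ref{lem:product_estimates} and land in $\CC^{1/2-4\kappa}$; for the $T$, $\widetilde B$ and $BB$ terms the paper simply embeds $u_\epsilon\in\CC_\epsilon^{3/2-2\kappa}\hookrightarrow\CC_\epsilon^{1-\kappa}$ and invokes the corollaries exactly as in Lemma~\ref{lem:bar_F_estimates}, which is a slightly cleaner bookkeeping than your auxiliary $\eta$.

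One small correction to your commentary (not to the proof): the constraint $\kappa<\tfrac18$ is already saturated by the $\Delta\ICC_\epsilon\,\Delta u_\epsilon$ term (as you note), and is \emph{not} additionally tight at $\widetilde B$. Via Corollary~\ref{cor:tilde_B_estimates} with $\beta=1-\kappa$ (the paper's choice) you get $\CC^{-3\kappa}$, which embeds into $\CC^{-1/3-4\kappa/3}$ with room to spare for any $\kappa<\tfrac15$; and the embedding $\CC_\epsilon^{3/2-2\kappa}\hookrightarrow\CC_\epsilon^{1-\eta}$ holds for every $\eta>0$, so your sentence ``the smallest value still compatible with the embedding'' has the inequality the wrong way around. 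This is only a side remark about where the threshold lies and does not affect the argument.
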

\begin{proof}
    Recall the definition (\ref{eq:defi_of_G}) of $\widetilde{G}_\epsilon$.
    From Lemma \ref{lem:interpolation_estimates_epsilon} (with $\delta = 1/2$), we have
    \begin{align*}
        \|\epsilon \Delta \ICC_\epsilon(t) \Delta u_\epsilon(t)\|_{\CC^{-3\kappa/2}}
        &\lesssim \epsilon\|\Delta \ICC_\epsilon(t)\|_{\CC^{-3\kappa/2}}\|\Delta u_\epsilon(t)\|_{\CC^{\alpha-1-\kappa/2}}\\
        &\lesssim \epsilon\|\ICC_\epsilon(t)\|_{\CC^{2-3\kappa/2}}\|u_\epsilon(t)\|_{\CC^{\alpha+1-\kappa/2}}\\
        &\lesssim_{\EX_\epsilon} \epsilon^{\frac{\kappa}{4}} \|u_\epsilon(t)\|_{\CC_\epsilon^\alpha}
    \end{align*}
    if $\alpha>1+2\kappa$.
    Moreover, from Corollaries \ref{cor:B_estimates} to \ref{cor:tilde_B_estimates}, we have
    \begin{align*}
        \|\epsilon B(\ICC_\epsilon,\ICC_\epsilon)(t)B(\ICC_\epsilon,u_\epsilon)(t)\|_{\CC^{1/2-5\kappa/4}}
        &\lesssim_{\EX_\epsilon} \epsilon^{\frac{\kappa}{4}}\|u_\epsilon(t)\|_{\CC_\epsilon^{1-\kappa}}\lesssim \epsilon^{\frac{\kappa}{4}}\|u_\epsilon(t)\|_{\CC_\epsilon^{\alpha}},\\
        \|\epsilon T(u_\epsilon,\ICC_\epsilon,\ICC_\epsilon)(t)\|_{\CC^{-1/3-4\kappa/3}}
        &\lesssim_{\EX_\epsilon} \epsilon^{\frac{\kappa}{4}}\|u_\epsilon(t)\|_{\CC_\epsilon^{1-\kappa}}\lesssim \epsilon^{\frac{\kappa}{4}}\|u_\epsilon(t)\|_{\CC_\epsilon^{\alpha}},  \\
        \|\epsilon T(\ICC_\epsilon,\ICC_\epsilon,u_\epsilon)(t)\|_{\CC^{-1/3-4\kappa/3}}
        &\lesssim_{\EX_\epsilon} \epsilon^{\frac{\kappa}{4}}\|u_\epsilon(t)\|_{\CC_\epsilon^{1-\kappa}}\lesssim \epsilon^{\frac{\kappa}{4}}\|u_\epsilon(t)\|_{\CC_\epsilon^{\alpha}}, \\
        \|\epsilon\widetilde{B}(\ICC_\epsilon,u_\epsilon)(t)\|_{\CC^{-3\kappa}}
        &\lesssim_{\EX_\epsilon}\epsilon^{\frac{\kappa}{4}}\|u_\epsilon(t)\|_{\CC_\epsilon^{1-\kappa}}\lesssim\epsilon^{\frac{\kappa}{4}}\|u_\epsilon(t)\|_{\CC_\epsilon^{\alpha}}
    \end{align*}
    if $\alpha\geq 1-\kappa$.
    Here, from the definition of $\|\cdot\|_{\CE_T^\beta\CC_\epsilon^\alpha}$, we have
    \[
        \|u_\epsilon(t)\|_{\CC_\epsilon^\alpha}
        \lesssim t^{-\frac{\alpha-\beta}{2}} \|u\|_{\CE_T^\beta\CC_\epsilon^\alpha}
    \]
    for every $\beta \leq \alpha$.
    By taking $\alpha=3/2-2\kappa$ and $\beta=-1/2-\kappa$, we obtain (\ref{eq:lem_G_epsilon_u}).
    The remaining estimates can be obtained in the same way.
\end{proof}

\subsection{Local well-posedness of $u_\epsilon$}
\label{sec:LWP}
We show that the approximating dynamical $\Phi^4_3$ equation is locally well-posed.
\begin{thm}
    \label{thm:LWP_of_u_epsilon}
    Let $0 < \kappa < 1/8$ and $0\leq \epsilon\leq 1$.
    There exists a continuous function $\widetilde{T}_*:\CC^{-1/2-\kappa} \times \CX^\kappa_1 \to (0,1]$ such that the following (1) and (2) hold:
    \begin{enumerate}[(1)]
        \item For every $u(0) \in \CC^{-1/2-\kappa}$ and $\EX_\epsilon \in \CX_1^\kappa$, we set $T_*:=\widetilde{T}_*(u(0),\EX_\epsilon)$.
        Then, the equation (\ref{eq:L_epsilon_u_epsilon}) with the initial condition $u(0)$ admits a unique solution $u_\epsilon \in \CE_{T_*}^{-1/2-\kappa}\CC_\epsilon^{3/2-2\kappa}$ and there is a positive constant $C$ depending only on $\kappa$, $T_*$ and $\|\EX_\epsilon\|_{\CX_1^\kappa}$ such that
        \[
             \|u_\epsilon\|_{\CE_{T_*}^{-1/2-\kappa}\CC_\epsilon^{3/2-2\kappa}}
             \leq C (1\vee\|u(0)\|_{\CC_\epsilon^{-1/2-\kappa}}).
        \]
        \item In addition to (1), let $\{u^{(n)}(0)\}_{n=1}^\infty \subset \CC^{-1/2-\kappa}$ and $\{\EX_\epsilon^{(n)}\}_{n=1}^{\infty} \subset \CX_1^\kappa$ converge to $u(0)$ in $\CC^{-1/2-\kappa}$ and $\EX_\epsilon$ in $\CX_1^{\kappa}$, respectively.
        Set $T_*^{(n)}:=\widetilde{T}_*(u^{(n)}(0),\EX_\epsilon^{(n)})$ and let $u_\epsilon^{(n)}$ be a unique solution on $[0,T_*^{(n)}]$ to the equation (\ref{eq:L_epsilon_u_epsilon}) with the initial condition $u^{(n)}(0)$ driven by $\EX_\epsilon^{(n)}$.
        Then, for every $0<t<T_*$ we have
        \[
            \lim_{n\to \infty} \|u_\epsilon^{(n)}-u_\epsilon\|_{\CE_t^{-1/2-\kappa}\CC_\epsilon^{3/2-2\kappa}}=0.
        \]
    \end{enumerate}
\end{thm}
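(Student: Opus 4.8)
The plan is to recast \eqref{eq:L_epsilon_u_epsilon} with initial datum $u(0)$ in its mild form
\[
    u_\epsilon(t) = P_t^\epsilon u(0) + \CL_\epsilon^{-1}\bigl[\CR_\epsilon(\EX_\epsilon,u_\epsilon)\bigr](t), \qquad 0\le t\le T,
\]
and to solve it by a Banach fixed point argument in the space $\CE_T^{-1/2-\kappa}\CC_\epsilon^{3/2-2\kappa}$, with $T$ chosen small in a way that does not depend on $\epsilon$. This is the natural space: the regularity $3/2-2\kappa$ is what Schauder gains from the forcing in $\CC^{-1/2-\kappa}$, and the $\epsilon$-weighted higher norm built into $\CC_\epsilon$ is precisely what Lemma~\ref{lem:G_epsilon_u_estimates} needs in order to estimate the $\epsilon$-terms $\widetilde G_\epsilon(3\ICC_\epsilon,u_\epsilon)$ inside $\CR_\epsilon$. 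For $u(0)\in\CC^{-1/2-\kappa}$ and $\EX_\epsilon\in\CX_1^\kappa$ I would define $\CM_\epsilon(v):=P_\cdot^\epsilon u(0)+\CL_\epsilon^{-1}[\CR_\epsilon(\EX_\epsilon,v)]$ on the closed ball $\CB_R$ of radius $R$ in $\CE_T^{-1/2-\kappa}\CC_\epsilon^{3/2-2\kappa}$; a fixed point of $\CM_\epsilon$ is exactly a mild solution of \eqref{eq:L_epsilon_u_epsilon}.

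The two ingredients are the ($\epsilon$-uniform) Schauder estimates (Proposition~\ref{prop:Schauder_estimates}) and the nonlinear bounds of Proposition~\ref{prop:Right-hand_side_estimates}. From the former, $t\mapsto P_t^\epsilon u(0)$ lies in $\CE_T^{-1/2-\kappa}\CC_\epsilon^{3/2-2\kappa}$ with norm $\lesssim\|u(0)\|_{\CC^{-1/2-\kappa}}$, the extra $2-\kappa$ derivatives and the factor $\epsilon^{1-\kappa/4}$ in $\CC_\epsilon$ being supplied for free by the smoothing of $e^{-\epsilon t\Delta^2}$ over $[0,t]$, with no loss in $\epsilon$. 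Combining the Schauder estimate with Proposition~\ref{prop:Right-hand_side_estimates}(1) and the elementary bound $\int_0^t(t-s)^{-(1-\kappa/2)}s^{-1+\kappa/2}\,ds\lesssim t^{-(1-\kappa/2)}\,t^{\kappa/2}$ for $t\le1$ (here the gap $\alpha-\beta=2-\kappa<2$ and the time singularity $s^{-1+\kappa/2}$ with $\kappa/2>0$ are both crucial), I would get a gain of a positive power of $T$:
\[
    \bigl\|\CL_\epsilon^{-1}[\CR_\epsilon(\EX_\epsilon,v)]\bigr\|_{\CE_T^{-1/2-\kappa}\CC_\epsilon^{3/2-2\kappa}}\le C_1\,T^{\kappa/2}\bigl(1+\|v\|_{\CE_T^{-1/2-\kappa}\CC_\epsilon^{3/2-2\kappa}}^3\bigr),
\]
and similarly, from Proposition~\ref{prop:Right-hand_side_estimates}(2) with the same driving vector on both sides,
\[
    \bigl\|\CL_\epsilon^{-1}[\CR_\epsilon(\EX_\epsilon,v_1)-\CR_\epsilon(\EX_\epsilon,v_2)]\bigr\|_{\CE_T^{-1/2-\kappa}\CC_\epsilon^{3/2-2\kappa}}\le C_2\,T^{\kappa/2}\,p(R)\,\|v_1-v_2\|_{\CE_T^{-1/2-\kappa}\CC_\epsilon^{3/2-2\kappa}}
\]
for $v_1,v_2\in\CB_R$, with $p$ a fixed polynomial (at most quadratic). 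Setting $R:=1+2C_0(1\vee\|u(0)\|_{\CC^{-1/2-\kappa}})$, with $C_0$ the Schauder constant for the free term, and then choosing $T=T_*$ small enough that $C_1T_*^{\kappa/2}(1+R^3)\le R/2$ and $C_2T_*^{\kappa/2}p(R)\le1/2$, the map $\CM_\epsilon$ sends $\CB_R$ into itself and is a $\tfrac12$-contraction there; Banach's fixed point theorem then yields a unique $u_\epsilon\in\CB_R$, whence $\|u_\epsilon\|_{\CE_{T_*}^{-1/2-\kappa}\CC_\epsilon^{3/2-2\kappa}}\le R\le C(1\vee\|u(0)\|_{\CC_\epsilon^{-1/2-\kappa}})$. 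Uniqueness in the full space (not just in $\CB_R$) follows by the usual device: any two solutions coincide on a short initial interval where both lie in a common ball, and a continuity/connectedness argument propagates the agreement to all of $[0,T_*]$. Since $C_0,C_1,C_2,p$ depend only on $\kappa$, $\kappa'$ and $\|\EX_\epsilon\|_{\CX_1^\kappa}$ — and, by inspection of Propositions~\ref{prop:Schauder_estimates} and \ref{prop:Right-hand_side_estimates}, not on $\epsilon$ — one reads off an explicit formula $T_*=\widetilde T_*(u(0),\EX_\epsilon)$ that is continuous (indeed monotone) in $\|u(0)\|_{\CC^{-1/2-\kappa}}$ and $\|\EX_\epsilon\|_{\CX_1^\kappa}$; this $\epsilon$-independence of $\widetilde T_*$ is the ``$\epsilon$-uniform'' content of (1).

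For (2), let $T_*^{(n)}=\widetilde T_*(u^{(n)}(0),\EX_\epsilon^{(n)})$; by continuity of $\widetilde T_*$ and convergence of the data, $\liminf_n T_*^{(n)}\ge T_*$, so for fixed $t<T_*$ we have $t<T_*^{(n)}$ for large $n$, and the a priori bound from (1) makes $\sup_n\|u_\epsilon^{(n)}\|_{\CE_t^{-1/2-\kappa}\CC_\epsilon^{3/2-2\kappa}}$ finite. Subtracting the mild formulations and applying the Schauder estimate together with Proposition~\ref{prop:Right-hand_side_estimates}(2), on any subinterval $[0,\tau]$,
\[
    \|u_\epsilon^{(n)}-u_\epsilon\|_{\CE_\tau^{-1/2-\kappa}\CC_\epsilon^{3/2-2\kappa}}\le C_0\|u^{(n)}(0)-u(0)\|_{\CC_\epsilon^{-1/2-\kappa}}+C_2\tau^{\kappa/2}\bigl(C_1'\|\EX_\epsilon^{(n)}-\EX_\epsilon\|_{\CX_\tau^\kappa}+C_2'\|u_\epsilon^{(n)}-u_\epsilon\|_{\CE_\tau^{-1/2-\kappa}\CC_\epsilon^{3/2-2\kappa}}\bigr),
\]
where $C_1',C_2'$ are bounded uniformly in $n$ thanks to the uniform bound on the solution norms. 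Choosing $\tau\le T_*$ (uniformly in $n$) with $C_2\tau^{\kappa/2}C_2'\le1/2$ lets one absorb the last term and conclude $\|u_\epsilon^{(n)}-u_\epsilon\|_{\CE_\tau^{-1/2-\kappa}\CC_\epsilon^{3/2-2\kappa}}\to0$. Covering $[0,t]$ by finitely many intervals of length $\tau$ and iterating the same estimate on each — starting from the now-convergent value at the left endpoint, with Lipschitz constants that stay bounded by the uniform a priori bound — propagates the convergence to all of $[0,t]$.

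Granting Propositions~\ref{prop:Schauder_estimates} and \ref{prop:Right-hand_side_estimates}, which carry the whole analytic weight (the nonlinear estimates for $\CR_\epsilon$ in the $\epsilon$-weighted norms, with the sharp time-singularity exponent $-1+\kappa/2$, and the $\epsilon$-uniform parabolic smoothing of $\CL_\epsilon$), the argument above is a routine contraction. The two points I would treat with care are: (i) checking that the Schauder estimate genuinely yields an $\epsilon$-independent gain of $T^{\kappa/2}$ when the forcing carries the weight $s^{-1+\kappa/2}$ and the target regularity gap is exactly $2-\kappa$, so that the fixed-point window $T_*$ — and hence $\widetilde T_*$ — can be taken independent of $\epsilon$; and (ii) the patching step in (2) that upgrades convergence on a short interval to convergence on all of $[0,t]$, which needs the successive Lipschitz constants to remain bounded, as guaranteed by the uniform bound in (1).
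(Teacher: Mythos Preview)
Your proposal is correct and follows essentially the same route as the paper: mild formulation, Banach fixed point in $\CE_T^{-1/2-\kappa}\CC_\epsilon^{3/2-2\kappa}$ using the $T^{\kappa/2}$ gain from the Schauder estimate combined with Proposition~\ref{prop:Right-hand_side_estimates}, an explicit $\epsilon$-independent choice of $T_*$ depending only on $\|u(0)\|_{\CC^{-1/2-\kappa}}$ and $\|\EX_\epsilon\|_{\CX_1^\kappa}$, the standard propagation-of-uniqueness argument, and the iterated short-interval absorption for part~(2). The only cosmetic discrepancy is that the relevant Schauder estimate with the time-singular forcing $s^{-1+\kappa/2}$ is Proposition~\ref{prop:Schauder_estimates_2} rather than Proposition~\ref{prop:Schauder_estimates}; your point~(i) anticipates exactly this and the content you describe matches that proposition.
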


\begin{proof}
    Fix $M>0$ and $T>0$.
    We set 
    \[
        \CM_\epsilon u_\epsilon(t)
        := P^\epsilon_t u(0) + \CL_\epsilon^{-1}[\CR_\epsilon(\EX_\epsilon,u_\epsilon)](t).
    \]
    First, we show that $\CM_\epsilon$ is contraction map on some ball in $\CE_T^{-1/2-\kappa}\CC_\epsilon^{3/2-\kappa}$.
    To do it, we will estimate $\CM_\epsilon u_\epsilon$.
    By using Proposition \ref{prop:Effects_of_heat_semigroup} for $P^\epsilon_t u(0)$, we have
    \begin{align*}
        \|P^\epsilon_t u(0)\|_{\CC^{-1/2-\kappa}}&\lesssim \|u(0)\|_{\CC^{-1/2-\kappa}},\\
        t^{1-\frac{\kappa}{2}}\|P^\epsilon_t u(0)\|_{\CC^{3/2-2\kappa}}&\lesssim \|u(0)\|_{\CC^{-1/2-\kappa}},\\
        \epsilon^{1-\frac{\kappa}{4}} t^{1-\frac{\kappa}{2}} \|P^\epsilon_t u(0)\|_{\CC^{7/2-3\kappa}}
        &\lesssim\epsilon^{1-\frac{\kappa}{4}} t^{1-\frac{\kappa}{2}} (\epsilon t)^{-\frac{4-2\kappa}{4}} \|u(0)\|_{\CC^{-1/2-\kappa}}
        \leq \|u(0)\|_{\CC^{-1/2-\kappa}}.
    \end{align*}
    Therefore, it follows
    \[
        \|P^\epsilon_\cdot u(0)\|_{\CE_T^{-1/2-\kappa}\CC_\epsilon^{3/2-2\kappa}} \lesssim \|u(0)\|_{\CC^{-1/2-\kappa}}.
    \]
    By using Propositions \ref{prop:Schauder_estimates_2} and \ref{prop:Right-hand_side_estimates} for the second term of $\CM_\epsilon u_\epsilon$, we have
    \begin{align*}
        &\sup_{0<t\leq T} \|\CL_\epsilon^{-1}[\CR_\epsilon(\EX_\epsilon,u_\epsilon)](t)\|_{\CC^{-1/2-\kappa}}+ \sup_{0<t\leq T} t^{1-\frac{\kappa}{2}} \|\CL_\epsilon^{-1}[\CR_\epsilon(\EX_\epsilon,u_\epsilon)](t)\|_{\CC_\epsilon^{3/2-2\kappa}}\\
        & \lesssim T^{\frac{\kappa}{2}} \left(\sup_{0<t\leq T} t^{1-\frac{\kappa}{2}} \|\CR_\epsilon(\EX_\epsilon,u_\epsilon) (t)\|_{\CC^{-1/2-\kappa}} \right) \\
        &\lesssim T^{\frac{\kappa}{2}}(1+\|u_\epsilon\|_{\CE_T^{-1/2-\kappa}\CC_\epsilon^{3/2-2\kappa}}^3).
    \end{align*}
    Therefore, we have
    \[
        \|\CM_\epsilon u_\epsilon\|_{\CE_T^{-1/2-\kappa} \CC_\epsilon^{3/2-2\kappa}}
        \leq C_1 \|u(0)\|_{\CC^{-1/2-\kappa}} + C_2T^{\frac{\kappa}{2}} (1+\|u_\epsilon\|_{\CE_T^{-1/2-\kappa}\CC_\epsilon^{3/2-2\kappa}}^3).
    \]
    Here, $C_1$ and $C_2$ are positive constants and $C_2$ depends only on $\kappa$, $\kappa'$ and $\|\EX_\epsilon\|_{\CX_1^\kappa}$.
    Similarly, we have
    \begin{align*}
        &\|\CM_\epsilon u_\epsilon^{(1)} - \CM_\epsilon u_\epsilon^{(2)}\|_{\CE_T^{-1/2-\kappa}\CC_\epsilon^{3/2-2\kappa}}\\
        &=\|\CL_\epsilon^{-1}[\CR_\epsilon(\EX_\epsilon,u_\epsilon^{(1)})] - \CL_\epsilon^{-1}[\CR_\epsilon(\EX_\epsilon,u_\epsilon^{(2)})]\|_{\CE_T^{-1/2-\kappa}\CC_\epsilon^{3/2-2\kappa}}\\
        &\leq C_3T^{\frac{\kappa}{2}}\|u_\epsilon^{(1)}-u_\epsilon^{(2)} \|_{\CE_T^{-1/2-\kappa}\CC_\epsilon^{3/2-2\kappa}}
    \end{align*}
    from Propositions \ref{prop:Right-hand_side_estimates} and \ref{prop:Schauder_estimates_2}.
    Here, $C_3$ is a positive constant depending only on $\kappa$, $\kappa'$, $\|\EX_\epsilon^{(i)}\|_{\CX_1^\kappa}$ and $\|u_\epsilon^{(i)}\|_{\CE_T^{-1/2-\kappa}\CC_\epsilon^{3/2-2\kappa}}$.
    In particular $C_3$ is given by at most second-order polynomial in $\|u_\epsilon^{(i)}\|_{\CE_T^{-1/2-\kappa}\CC_\epsilon^{3/2-2\kappa}}$.
    Then, 
    \[
        C_3 \leq C_3'(1 +  \|u_\epsilon^{(1)}\|_{\CE_T^{-1/2-\kappa}\CC_\epsilon^{3/2-2\kappa}}^2 + \|u_\epsilon^{(2)}\|_{\CE_T^{-1/2-\kappa}\CC_\epsilon^{3/2-2\kappa}}^2)
    \]
    for some constant $C_3'>0$.
    Let
    \[
        M_*:=C_1\|u(0)\|_{\CC^{{-1/2-\kappa}}}\vee 1
    \]
    and 
    \[
        T_*:=\widetilde{T}_*(u(0),\EX_\epsilon):=\left( \frac{1}{2(C_2\vee C_3')(1+8M_*^2)} \right)^{2/\kappa}\wedge 1.
    \]
    Let
    \[
        \CB_{T_*,M_*}:=\{u_\epsilon\in \CE_{T_*}^{-1/2-\kappa}\CC_\epsilon^{3/2-2\kappa};\|u_\epsilon\|_{\CE_{T_*}^{-1/2-\kappa}\CC_\epsilon^{3/2-2\kappa}}\leq 2M_*\}.
    \]
    If $u, u^{(1)},u_\epsilon^{(2)} \in \CB_{T_*,M_*}$, then we have
    \begin{align*}
        \|\CM_\epsilon u_\epsilon\|_{\CE_{T_*}^{-1/2-\kappa}\CC_\epsilon^{3/2-2\kappa}} &\leq 2M_*\\
        \|\CM_\epsilon u_\epsilon^{(1)} - \CM_\epsilon u_\epsilon^{(2)}\|_{\CE_{T_*}^{-1/2-\kappa}\CC_\epsilon^{3/2-2\kappa}} &\leq \frac{1}{2}\|u_\epsilon^{(1)}-u_\epsilon^{(2)}\|_{\CE_{T_*}^{-1/2-\kappa}\CC_\epsilon^{3/2-2\kappa}}.
    \end{align*}
    Thus, $\CM_\epsilon$ is a contraction map on $\CB_{T_*,M_*}$.
    Hence, there exists a unique fixed point $u_\epsilon$ in $\CB_{T_*,M_*}$ by Banach's fixed point theorem.

    Next, we show that the solution on $[0,T_*]$ is unique.
    Let $u^{(1)}_\epsilon, u^{(2)}_\epsilon$ be solutions with a common initial condition $u(0)$ and we show that $u^{(1)}_\epsilon=u^{(2)}_\epsilon$.
    Taking $M>0$ such that
    \[
        \|u^{(i)}_\epsilon\|_{\CE_{T_*}^{-1/2-\kappa}\CC_\epsilon^{3/2-2\kappa}}\leq 2M,
    \]
    we can show that $\CM_\epsilon$ is a contraction on $\CB_{T_{**},M}$ by the similar arguments as above, where $T_{**}(\leq T_*)$ depends on $M$.
    Hence $u^{(1)}_\epsilon$ and $u^{(2)}_\epsilon$ coincide on $[0,T_{**}]$.
    Let $\tilde{u}^{(i)}_\epsilon(t):=u^{(i)}_\epsilon(t+T_{**})$, then it satisfies
    \[
        \|\tilde{u}^{(i)}_\epsilon\|_{\CE_{T_{*}-T_{**}}^{{-1/2-\kappa}}\CC_\epsilon^{3/2-2\kappa}}\leq 2M.
    \]
    Since $\tilde{u}_\epsilon^{(i)}$ is a solution with the initial condition $u^{(1)}_\epsilon(T_{**})=u^{(2)}_\epsilon(T_{**})$, $\tilde{u}^{(1)}_\epsilon$ and $\tilde{u}^{(2)}_\epsilon$ coincide on $[0,T_{**}]$ in similar way.
    Therefore, we can iterate the above arguments on $[kT_{**},(k+1)T_{**}\wedge T_*]$ for $k=1,2,\dots$ and thus $u^{(1)}_\epsilon$ and $u^{(2)}_\epsilon$ coincide on $[0,T_{*}]$.

    At last, we show (2).
    Since $T_*$ continuously depends on the initial condition $u(0)$ and the driving vector $\EX_\epsilon$, we have $T_*^{(n)} \to T_*$.
    Therefore, for every $\eta>0$, for sufficiently large $n$, we have $T^{(n)}_* > (1-\eta)T_*$.
    Thus, there exists a unique solution $u_\epsilon^{(n)}$ on $[0,(1-\eta)T_*]$ with the initial condition $u^{(n)}(0)$ and the driving vector $\EX_\epsilon^{(n)}$ from above procedure.
    It is sufficient to show that $u_\epsilon^{(n)}$ converges to $u_\epsilon$ on $[0,(1-\eta)T_*]$.
    From Proposition \ref{prop:Right-hand_side_estimates}, for every $0<\tau <(1-\eta)T_*$, we have
    \begin{align*}
        &\|u_\epsilon-u^{(n)}_\epsilon\|_{\CE_{\tau}^{-1/2-\kappa}\CC_\epsilon^{3/2-2\kappa}}\\
        &=\|\CM_\epsilon u_\epsilon - \CM_\epsilon u_\epsilon^{(n)}\|_{\CE_{\tau}^{-1/2-\kappa}\CC_\epsilon^{3/2-2\kappa}}\\
        &\leq C_4\|u(0)-u^{(n)}(0)\|_{\CC^{-1/2-\kappa}} \\
        &\quad + C_5 \tau^{\frac{\kappa}{2}} (\|\EX_\epsilon-\EX_\epsilon^{(n)}\|_{\CX_{(1-\eta)T_*}^\kappa}+\|u_\epsilon-u_\epsilon^{(n)}\|_{\CE_{\tau}^{-1/2-\kappa}\CC_\epsilon^{3/2-2\kappa}}).
    \end{align*}
    Setting $\tau=(2C_5)^{-2/\kappa}$, we have the convergence $u_\epsilon^{(n)}$ to $u_\epsilon$ on $[0,\tau]$.
    Hence, we can iterate the above arguments on $[k \tau, (k+1)\tau \wedge (1-\eta)T_*]$ for $k=1,2,\dots$ and have the convergence on $[0, (1-\eta)T_*]$.
\end{proof}

\subsection{Convergence as $\epsilon$ tends to 0}
At last, we show that the solution of the approximating equation converges the solution of the dynamical $\Phi^4_3$ equation.
\label{sec:Convergence}
\begin{thm}
    \label{thm:Convergence}
    Let $0<\kappa< 1/8$, $u(0) \in \CC^{-1/2-\kappa}$, $\EX \in \CX_1^\kappa$ and $\{\EX_\epsilon\}_{0< \epsilon \leq 1} \subset \CX_1^\kappa$.
    We set
    \[
        T_*^\epsilon:=\widetilde{T}_*(u(0), \EX_\epsilon),\quad T_* := \widetilde{T}_*(u(0), \EX).
    \]
    Let $u_\epsilon \in \CE_{T_*^\epsilon}^{-1/2-\kappa}\CC_\epsilon^{3/2-2\kappa}$ be a solution of the equation (\ref{eq:L_epsilon_u_epsilon}) with the initial condition $u(0)$ driven by $\EX_\epsilon$ and let $u \in \CE_{T_*}^{-1/2-\kappa}\CC^{3/2-2\kappa}$ be a solution of the equation (\ref{eq:L_epsilon_u_epsilon}) with $\epsilon=0$ and the initial condition $u(0)$ driven by $\EX$.
    If $\{\EX_\epsilon\}_{0 <\epsilon \leq 1}$ converges to $\EX$ in $\CX_1^{\kappa}$, then we have
    \[
        \lim_{\epsilon\downarrow 0}\|u_\epsilon - u\|_{\CE_t^{-1/2-\kappa}\CC^{3/2-2\kappa}} = 0.
    \]
    for every $0<t<T_*$.
\end{thm}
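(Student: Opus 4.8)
The plan is to replay the contraction–stability argument from the proof of Theorem \ref{thm:LWP_of_u_epsilon}(2), now comparing the $\CL_\epsilon$-flow with the $\CL_0$-flow; the only genuinely new point is the control of the bi-Laplacian correction, for which the auxiliary $\epsilon$-dependent norm $\CC_\epsilon^{3/2-2\kappa}$ carried through Section \ref{sec:2} is exactly what is needed.

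First I would pass to an $\epsilon$-uniform situation. Since $\EX_\epsilon \to \EX$ in $\CX_1^\kappa$, the norms $\|\EX_\epsilon\|_{\CX_1^\kappa}$ are bounded, and by continuity of $\widetilde{T}_*$ we have $T_*^\epsilon \to T_*$; so for fixed $0<t<T_*$ I choose $\eta>0$ with $t<(1-\eta)T_*<T_*^\epsilon$ for all small $\epsilon$. On $[0,(1-\eta)T_*]$ both $u_\epsilon$ and $u$ exist and, by Theorem \ref{thm:LWP_of_u_epsilon}(1), satisfy an $\epsilon$-uniform bound $\|u_\epsilon\|_{\CE_{(1-\eta)T_*}^{-1/2-\kappa}\CC_\epsilon^{3/2-2\kappa}}\vee\|u\|_{\CE_{(1-\eta)T_*}^{-1/2-\kappa}\CC^{3/2-2\kappa}}\le M$, with $M$ depending only on $\kappa$, $T_*$, $\|u(0)\|_{\CC^{-1/2-\kappa}}$ and $\sup_{0<\epsilon\le1}\|\EX_\epsilon\|_{\CX_1^\kappa}$. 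It then suffices to prove convergence on each short subinterval and iterate finitely many times.

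For the difference $w_\epsilon:=u_\epsilon-u$ I would use the $\CL_0$-mild formulation of $u_\epsilon$: from $\CL_0 u_\epsilon=\CR_\epsilon(\EX_\epsilon,u_\epsilon)-\epsilon\Delta^2 u_\epsilon$ and $u_\epsilon(0)=u(0)$,
\[
    w_\epsilon=\CL_0^{-1}\big[\CR_\epsilon(\EX_\epsilon,u_\epsilon)-\CR_0(\EX,u)\big]-\epsilon\,\CL_0^{-1}\big[\Delta^2 u_\epsilon\big],
\]
so that no heat-semigroup difference appears and all of the $\epsilon$-dependence of the linear part is concentrated in the last term. For the first term I invoke the mixed estimate of Proposition \ref{prop:Right-hand_side_estimates}, $\|\CR_\epsilon(\EX_\epsilon,u_\epsilon)(s)-\CR_0(\EX,u)(s)\|_{\CC^{-1/2-\kappa}}\lesssim s^{-1+\kappa/2}\big(C_1''(\epsilon^{\kappa/4}+\|\EX-\EX_\epsilon\|_{\CX_1^\kappa})+C_2''\|w_\epsilon\|_{\CE_\tau^{-1/2-\kappa}\CC^{3/2-2\kappa}}\big)$ with $C_1'',C_2''$ controlled through $M$ and the driving-vector norms, followed by the Schauder estimate (Proposition \ref{prop:Schauder_estimates_2}), which turns this into a bound $\lesssim\tau^{\kappa/2}\{(\epsilon^{\kappa/4}+\|\EX-\EX_\epsilon\|_{\CX_1^\kappa})+\|w_\epsilon\|_{\CE_\tau^{-1/2-\kappa}\CC^{3/2-2\kappa}}\}$. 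For the last term the $\CC_\epsilon$-scale does its job: $\|u_\epsilon\|_{\CE_T^{-1/2-\kappa}\CC_\epsilon^{3/2-2\kappa}}\le M$ forces $\epsilon^{1-\kappa/4}\|u_\epsilon(s)\|_{\CC^{7/2-3\kappa}}\lesssim s^{-1+\kappa/2}M$, hence $\epsilon\|\Delta^2 u_\epsilon(s)\|_{\CC^{-1/2-3\kappa}}\lesssim\epsilon^{\kappa/4}s^{-1+\kappa/2}M$, and Schauder again bounds $\|\epsilon\CL_0^{-1}[\Delta^2 u_\epsilon]\|_{\CE_\tau^{-1/2-\kappa}\CC^{3/2-2\kappa}}\lesssim\tau^{\kappa/2}\epsilon^{\kappa/4}M$ — the same mechanism as for $\widetilde{G}_\epsilon$ in Lemma \ref{lem:G_epsilon_u_estimates}.

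Finally I would close the loop: choosing $\tau$ small enough that the coefficient $C\tau^{\kappa/2}$ in front of $\|w_\epsilon\|_{\CE_\tau^{-1/2-\kappa}\CC^{3/2-2\kappa}}$ is $\le 1/2$ (note $C$ is $\epsilon$-uniform by Step~1), absorption gives $\|w_\epsilon\|_{\CE_\tau^{-1/2-\kappa}\CC^{3/2-2\kappa}}\lesssim\epsilon^{\kappa/4}+\|\EX-\EX_\epsilon\|_{\CX_1^\kappa}\to0$. In particular $u_\epsilon(\tau)\to u(\tau)$ in $\CC^{3/2-2\kappa}$, so the argument repeats on $[\tau,2\tau]$ (now with genuinely regular data at the left endpoint, which only improves every estimate), and after at most $\lceil(1-\eta)T_*/\tau\rceil$ steps we obtain $\|u_\epsilon-u\|_{\CE_t^{-1/2-\kappa}\CC^{3/2-2\kappa}}\to0$. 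I expect the main obstacle to be precisely the new term $\epsilon\CL_0^{-1}[\Delta^2 u_\epsilon]$: comparing solutions of two \emph{different} operators, rather than two solutions of the same operator as in Theorem \ref{thm:LWP_of_u_epsilon}(2), produces this error, and the whole reason for dragging the $\epsilon^{1-\kappa/4}$-weighted higher-regularity norm through the paper is to make it $o(1)$ in the solution space; the rest is a routine repetition of the fixed-point estimates.
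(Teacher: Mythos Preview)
Your overall strategy coincides with the paper's: the paper's proof is literally a two-line invocation of the mixed estimate in Proposition~\ref{prop:Right-hand_side_estimates} together with Proposition~\ref{prop:Schauder_estimates_2}, followed by absorption on a short interval $\tau=(2C_2')^{-2/\kappa}$ and iteration over $[k\tau,(k+1)\tau\wedge T_*]$. Where you go further than the paper is in making the operator mismatch explicit --- the paper's proof does not spell out how the difference between the $\CL_\epsilon$-mild and $\CL_0$-mild formulations is absorbed --- and your device of rewriting $\CL_0 u_\epsilon=\CR_\epsilon(\EX_\epsilon,u_\epsilon)-\epsilon\Delta^2 u_\epsilon$ is a natural way to isolate that discrepancy.

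There is, however, a genuine regularity gap in your treatment of the new term. From $\|u_\epsilon\|_{\CE_T^{-1/2-\kappa}\CC_\epsilon^{3/2-2\kappa}}\le M$ you correctly obtain $\epsilon^{1-\kappa/4}\|u_\epsilon(s)\|_{\CC^{7/2-3\kappa}}\lesssim s^{-1+\kappa/2}M$, hence $\epsilon\|\Delta^2 u_\epsilon(s)\|_{\CC^{-1/2-3\kappa}}\lesssim \epsilon^{\kappa/4}s^{-1+\kappa/2}M$. But Proposition~\ref{prop:Schauder_estimates_2} with input regularity $\alpha=-\tfrac12-3\kappa$ only produces output in $\CC^{\alpha+2-\kappa}=\CC^{3/2-4\kappa}$; more bluntly, passing from $\CC^{-1/2-3\kappa}$ to $\CC^{3/2-2\kappa}$ through $\CL_0^{-1}$ would require a parabolic gain of $2+\kappa>2$, which the heat kernel cannot deliver while keeping $(t-s)^{-\delta/2}$ integrable. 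So the claim ``Schauder again bounds $\|\epsilon\CL_0^{-1}[\Delta^2 u_\epsilon]\|_{\CE_\tau^{-1/2-\kappa}\CC^{3/2-2\kappa}}\lesssim\tau^{\kappa/2}\epsilon^{\kappa/4}M$'' fails as written. This is the $-\kappa$ built into the definition of $\CC_\epsilon^\alpha$ coming back to bite you. One repair is to run the entire absorption argument in the slightly weaker scale $\CE_\tau^{-1/2-\kappa}\CC^{3/2-4\kappa}$ (nothing in the contraction or the iteration actually uses the precise top exponent, and the uniform bounds on $u,u_\epsilon$ are already available there) and then upgrade to $\CC^{3/2-2\kappa}$ a posteriori; alternatively, avoid putting four derivatives on $u_\epsilon$ altogether and instead estimate $(P_t^\epsilon-P_t^0)u(0)$ and $(\CL_\epsilon^{-1}-\CL_0^{-1})[\CR_\epsilon]$ directly via Proposition~\ref{prop:Effects_of_heat_semigroup_2}, trading regularity for the factor $(\epsilon t)^{\beta/4}$.
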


\begin{proof}
    From Propositions \ref{prop:Right-hand_side_estimates} and \ref{prop:Schauder_estimates_2}, we have
    \begin{align*}
        &\|u- u_\epsilon\|_{\CE_\tau^{-1/2-\kappa}\CC^{3/2-2\kappa}}\\
        &\leq \tau^{\frac{\kappa}{2}} \{C'_1(\epsilon^{\frac{\kappa}{4}}+\|\EX-\EX_\epsilon\|_{\CX_\tau^\kappa}) + C_2' \|u-u_\epsilon\|_{\CE_\tau^{-1/2-\kappa}\CC^{3/2-2\kappa}}\}
    \end{align*}
    for any $0<\tau<T_*$.
    Taking $\tau = (2C_2')^{-2/\kappa}$, we have
    \[
        \lim_{\epsilon \downarrow 0} \|u- u_\epsilon\|_{\CE_\tau^{-1/2-\kappa}\CC^{3/2-2\kappa}} = 0.
    \]
    We can iterate the above arguments on $[k\tau,(k+1)\tau\wedge T_*]$ for $k=1,2,\dots$ and have the convergence as $\epsilon$ tends to 0.
\end{proof}


\section{Driving vectors}
\label{sec:Driver}
This section is devoted to the proof of Theorem \ref{thm:Convergence_of_DV}.
First, in Section \ref{sec:Convergence_criteria}, we recall the basic of multiple It\^{o}-Wiener integral.
Next, in Section \ref{sec:Ito_Wiener}, we derive that each element of the stochastic driving vector in Definition \ref{defi:EX_epsilon} is represented by multiple It\^{o}-Wiener integral.
At last, in Section \ref{sec:Convergence_of_DV}, we only show that differential term $\CD_\epsilon$ converges to $\CD$.
The convergences for other elements are shown in \cite{HIN}.

\subsection*{Notation}
We write $\mathbbm{e}_k(x):=e^{2 \pi i k \cdot x}$ for $k \in \Z^3, x \in \R^3$.
We set $E:=\R \times \Z^3$.
Throughout this section, we use the following notation about $E$:
\begin{itemize}
    \item We use $m=(s,k)$, $n=(t,l)$, $\mu=(\sigma, k)$ and $\nu=(\tau,l)$ to denote a generic element in $E$.
    \item We sometimes consider more than one elements $m_1,m_2,\dots$ indexed by positive integers.
    \item For $k_j \in \Z^3$, we write $k_{-j}:=-k_j$. 
    For $m_j=(s_j,k_j)$, we write $m_{-j}:=(s_j, -k_j)$.
    For $\mu_j=(\sigma_j,k_j)$, we write $\mu_{-j}:=(-\sigma_j,-k_j)$.
    \item For $p_1,\dots,p_n \in \Z\setminus\{0\}$, we use $k_{p_1,\dots, p_n}:=(k_{p_1},\dots, k_{p_n})$ and $k_{[p_1,\dots, p_n]}:=k_{p_1}+\dots+k_{p_n}$ for simplicity. 
    The same notations are used for $s$, $t$, $l$, $m$, $n$, $\sigma$, $\tau$, $\mu$ and $\nu$.
    \item We define $|k|_*:=1+|k| =1+\sqrt{k_1^2+k_2^2+k_3^2}$ for $k=(k_1,k_2,k_3)\in \Z^3$ and $|m|_*:= 1+ |s|^{1/2} + |k| $ for $m\in E$.
    \item For simplicity, we write
    \[
        \int_{E^p} f(m_{1,\dots,p}) d m_{1,\dots, p}
        := \sum_{k_{1,\dots,p}\in \Z^3} \int_{\R^{p}} f(m_{1,\dots,p}) ds_{1,\dots,p}
    \]
    for a integrable function $f$.
    \item We use the following function space.
    \begin{align*}
        L_p^n&:= \left\{ f:E^p\to \C ; \int_{E^p} |f(m_{1,\dots,p})|^n dm_{1,\dots,p} < \infty \right\},\quad (n\geq 1) \\
        L_p^\infty&:= \left\{ f:E^p\to \C ; \sup_{m_{1,\dots,p}\in E^p} |f(m_{1,\dots,p}) | < \infty \right\}.
    \end{align*}
\end{itemize}

Let $f:E^p \to \C$ satisfy
\[
    \int_{\R^p} |f(s_1,k_1,\dots,s_p,k_p)|^2 ds_1\dots ds_p < \infty
\]
for every $k_1, \dots, k_p \in \Z^3$.
For such $f$, we can define the Fourier transform $\CF_{\mathrm{time}}f$ with respect to time parameters.
In particular, if $f$ is integrable, then $\CF_{\mathrm{time}}f$ is given by
\begin{multline*}
    [\CF_{\mathrm{time}}f](\sigma_1,k_1,\dots,\sigma_p,k_p)\\
    = \int_{\R^p} e^{-2\pi i (\sigma_1 s_1 + \dots + \sigma_p s_p)} f(s_1,k_1,\dots,s_p,k_p) ds_1\cdots ds_p.
\end{multline*}

\subsection{Convergence criteria}
\label{sec:Convergence_criteria}
For $f \in L_p^2$, we denote the $p$-th multiple It\^{o}-Wiener integral by
\[
    \CI_p(f):=\int_{E^p} f(m_{1,\dots,p}) :d\widehat{W}(m_1)\cdots d\widehat{W}(m_p): ,
\]
where $W$ is a cylindrical Brownian motion and we set
\[
    \widehat{W}(s,k):=\left<W_s, \be_{-k}\right>.
\]
We consider a random field of the form
\[
    X(t,x)=\CI_p(f_{(t,x)})
\]
for a kernel $f_{(t,\cdot)} \in C(\TT^3,L^\infty_p)$.
Even if $f_{(t,\cdot)}$ is not square integrable, we can understand $X(t)$ as an $\CS'$-valued random variable such that
\[
    \left< X(t), \phi \right> = \CI_p(\left< f_{(t,\cdot)}, \phi \right>)
\]
if $\left< f_{(t,\cdot)}, \phi\right>:= \int_{\TT^3} f_{(t,x)}\phi(x) \in L^2_p$ for every $\phi\in \CS$.

We are interested in the case that $f$ satisfies the following good conditions.

\begin{defi}[\cite{HIN}, Definition 5.2]
    We say that a family $\{f_{(t,x)}\}_{t\geq 0, x\in \TT^3}$ is \textit{good} if it has the form
    \[
        f_{(t,x)}(m_{1,\dots,p})=\be_{k_{[1,\dots,p]}}(x)H_t(m_{1,\dots,p})
    \]
    for some $H_t \in L^\infty_p$ which is in $L^2$ with respect to $(s_1,\dots,s_p)$ for each fixed $(k_1,\dots,k_p)$ and $Q_t = \CF_{\mathrm{time}}H_t$ satisfies
    \[
        Q_t(\mu_{1,\dots,p}) = e^{-2\pi i \sigma_{[1,\dots,p]}t}Q_0(\mu_{1,\dots,p}).
    \]
\end{defi}

What we want to know is the estimates of $X$ by Besov norm.
For a function $f:E^p \to \C$ and $\mu=(\sigma, k)$, we write
\[
    \int_{\mu_{[1,\dots,p]}=\mu} f(\mu_{1,\dots,p})
\]
for the integration over the ``hyperplane'' $\{\mu_{[1,\dots,p]}=\mu;\mu_{1,\dots,p}\in E^p\}$.
From the following proposition, it is enough to estimate $Q_0$ in order to estimate the Besov norm of $X$.
\begin{prop}[\cite{HIN}, Proposition 5.4]\label{prop:criteria}
    Let $\{f_{(t,x)}\}_{t\geq 0,x\in \TT^3}$ be a good kernel.
    Assume that there exist $\gamma > 1$, $\delta \geq 0$ and $C>0$ such that
    \begin{equation*}
        \int_{\mu_{[1,\dots,p]} = \mu} |Q_0(\mu_{1,\dots, p})|^2 \leq C |\mu|_*^{-2\gamma} |k|_*^{-2\delta}.
    \end{equation*}
    Then we have
    \[
        \EE[\|X\|_{C_T^\kappa\CC^{\alpha-2\kappa}}^{2p}] \lesssim C^{2p}
    \]
    for every $1 < p < \infty$, $\alpha < -\frac{5}{2} + \gamma + \delta$ and $0 \leq \kappa < \frac{\gamma - 1}{2}\wedge 1$.
\end{prop}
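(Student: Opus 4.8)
The plan is to reduce the assertion to one second-moment computation in the $p$-th Wiener chaos and then feed it into Gaussian hypercontractivity together with a Kolmogorov-type continuity argument on Besov spaces. First I would fix $0<t\le T$ and $x\in\TT^3$ and use that the kernel is good to write $\Delta_j X(t,x)=\CI_p\big(m_{1,\dots,p}\mapsto\rho_j(k_{[1,\dots,p]})\be_{k_{[1,\dots,p]}}(x)H_t(m_{1,\dots,p})\big)$. Since symmetrisation is a contraction on $L_p^2$ (costing at most a factor $p!$) and $|\be_k(x)|=1$, the It\^o--Wiener isometry gives
\[
    \EE\big[|\Delta_j X(t,x)|^2\big]\ \lesssim_p\ \int_{E^p}\rho_j(k_{[1,\dots,p]})^2\,|H_t(m_{1,\dots,p})|^2\,dm_{1,\dots,p}.
\]

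Next I would move to the time-frequency side. Applying Plancherel in $(s_1,\dots,s_p)$ fibre by fibre and invoking the good-kernel identity $|Q_t(\mu_{1,\dots,p})|=|Q_0(\mu_{1,\dots,p})|$, the right-hand side equals $\sum_{k_{1,\dots,p}}\rho_j(k_{[1,\dots,p]})^2\int_{\R^p}|Q_0(\mu_{1,\dots,p})|^2\,d\sigma_{1,\dots,p}$. As $\rho_j(k_{[1,\dots,p]})$ depends only on $k:=k_{[1,\dots,p]}$, I would fibre the $\sigma$-integral over the value $\mu=(\sigma,k)$ of $\mu_{[1,\dots,p]}$ and apply the hypothesis to get
\[
    \EE\big[|\Delta_j X(t,x)|^2\big]\ \lesssim_p\ C\sum_{|k|_*\sim 2^j}|k|_*^{-2\delta}\int_{\R}|(\sigma,k)|_*^{-2\gamma}\,d\sigma .
\]
The parabolic rescaling $\sigma=(1+|k|)^2\tau$ yields $\int_{\R}|(\sigma,k)|_*^{-2\gamma}\,d\sigma\lesssim|k|_*^{2-2\gamma}$, the remaining $\tau$-integral converging exactly because $\gamma>1$; summing $|k|_*^{2-2\gamma-2\delta}$ over the $\lesssim 2^{3j}$ lattice points with $|k|_*\sim 2^j$ gives $\sup_{t,x}\EE[|\Delta_j X(t,x)|^2]\lesssim_p C\,2^{j(5-2\gamma-2\delta)}=C\,2^{-2j\alpha_0}$, where $\alpha_0:=\gamma+\delta-\tfrac{5}{2}$.

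Then I would rerun the same computation for the increment $\Delta_j\big(X(t)-X(s)\big)$, the only new ingredient being $|e^{-2\pi i\sigma_{[1,\dots,p]}t}-e^{-2\pi i\sigma_{[1,\dots,p]}s}|^2\le 2^{2-2\theta}|\sigma_{[1,\dots,p]}|^{2\theta}|t-s|^{2\theta}$ for $\theta\in[0,1]$, together with $|\sigma_{[1,\dots,p]}|\le|\mu_{[1,\dots,p]}|_*^2$; carrying the extra $|\mu|_*^{4\theta}$ through (which keeps the $\sigma$-integral convergent while $\gamma-2\theta>1$) gives $\sup_x\EE[|\Delta_j(X(t)-X(s))(x)|^2]\lesssim_p C\,|t-s|^{2\theta}\,2^{j(5+4\theta-2\gamma-2\delta)}$ for $0\le\theta<\tfrac{\gamma-1}{2}$. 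By Gaussian hypercontractivity on the fixed $p$-th chaos, $\EE[|\Delta_j X(t,x)|^{2p}]\lesssim_p(\EE[|\Delta_j X(t,x)|^2])^p$ and analogously for the increments; integrating in $x\in\TT^3$ and summing $2^{2pj\bar\alpha}\EE[\|\Delta_j X(t)\|_{L^{2p}}^{2p}]$ over $j$ bounds $\EE[\|X(t)\|_{\CB_{2p,2p}^{\bar\alpha}}^{2p}]\lesssim_p C^{p}$ for $\bar\alpha<\alpha_0$ and $\EE[\|X(t)-X(s)\|_{\CB_{2p,2p}^{\bar\alpha}}^{2p}]\lesssim_p C^{p}|t-s|^{2p\theta}$ for $\bar\alpha<\alpha_0-2\theta$. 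The Besov-valued Kolmogorov continuity theorem combined with $\CB_{2p,2p}^{\bar\alpha}\hookrightarrow\CC^{\bar\alpha-3/(2p)}$ then promotes this to $X\in C_T^{\theta-1/(2p)}\CC^{\bar\alpha-2\theta-3/(2p)}$ with $\EE[\|X\|^{2p}]\lesssim_p C^{p}$; taking $\theta$ slightly above the target $\kappa<\tfrac{\gamma-1}{2}\wedge 1$, $p$ large enough that the $1/(2p)$ and $3/(2p)$ losses are absorbed into the built-in loss $2\kappa$, and $\bar\alpha<\alpha_0$ with $\bar\alpha-2\theta-3/(2p)\ge\alpha-2\kappa$ (possible since $\alpha<\alpha_0$) gives precisely the stated ranges, the power of $C$ being whatever is read off from its single appearance in the hypothesis.

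The step I expect to be the main obstacle is the bookkeeping in the passage to time-frequency: setting up the fibration $\int_{\mu_{[1,\dots,p]}=\mu}$ correctly, combining Plancherel with the good-kernel identity, and performing the parabolic rescaling so that the exponent $5=2\cdot 3-1$ (three spatial dimensions, parabolic weight $2$ on time) and the sharp threshold $\gamma>1$ come out exactly as in the statement. Once that is in hand, the increment estimate and the chaos-to-Besov-to-H\"older upgrade are routine applications of hypercontractivity and the standard Kolmogorov/Besov machinery.
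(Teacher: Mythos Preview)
The paper does not supply its own proof of this proposition; it is quoted verbatim from \cite{HIN}, Proposition 5.4. Your outline is correct and is exactly the standard argument one expects in \cite{HIN}: It\^o--Wiener isometry for the Littlewood--Paley blocks, Plancherel in the time variables together with the good-kernel identity $|Q_t|=|Q_0|$, fibring over $\mu_{[1,\dots,p]}=\mu$ to invoke the hypothesis, the parabolic rescaling $\sigma=(1+|k|)^2\tau$ (which is precisely where $\gamma>1$ is used and where the exponent $5=3+2$ appears), and then the routine hypercontractivity/Besov-embedding/Kolmogorov upgrade. Two cosmetic remarks: your bookkeeping actually yields $C^{p}$ rather than the stated $C^{2p}$, which is harmless for the applications; and the full range $1<p<\infty$ in the conclusion is obtained from the large-$p$ case by H\"older's inequality, since your Besov-embedding losses $3/(2p)$ and $1/(2p)$ are only absorbed for $p$ large.
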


\subsection{It\^{o}-Wiener integral expressions of driving vectors}
\label{sec:Ito_Wiener}
We write $P_t^\epsilon:=e^{t(\Delta-\epsilon\Delta^2-1)}$.
Fix $0< \epsilon \leq 1$.
Since $\IX_\epsilon$ is a stationary solution to the equation
\[
    \CL_\epsilon \IX_\epsilon = \xi,
\]
we have the expression
\[
    \IX_\epsilon = \int_{-\infty}^{t} P_{t-s}^\epsilon \xi_\epsilon ds
    = \sum_{k\in \Z^d} \be_{k}(x) \int_\R \mathbbm{1}_{s \leq t} e^{-(t-s)\alpha_\epsilon(k)} d\widehat{W}(s,k).
\]
Hence, we have
\[
   \IX_\epsilon(t,x) = \CI_1(f^{\epsilon}_{(t,x)}),
\]
where 
\[
    f^{\epsilon}_{(t,x)}(s,k):=\be_k(x) H^{\epsilon}_t(s,k),\quad    H_t^\epsilon(s,k):= \mathbbm{1}_{s\leq t} e^{-(t-s)\alpha_\epsilon(k)},
\]
\[
    \alpha_\epsilon(k):=4\pi^2|k|^2 + \epsilon 16 \pi^4 |k|^4 +1 .
\]
Then, we can show that $f^{\epsilon}_{(t,x)}$ is a good kernel with functions $H^{\epsilon}_t$ and
\[
    Q_0^\epsilon(\sigma,k):=\frac{1}{-2\pi i \sigma + \alpha_\epsilon(k)}.
\]

We show that the driving vector
\[
    \EX_\epsilon = (\IX_\epsilon, \XX_\epsilon, \ICC_\epsilon(0), \ICCC_\epsilon, \CICCC_\epsilon, \CCICC_\epsilon, \CCICCC_\epsilon, \CD_\epsilon)
\]
in Definition \ref{defi:EX_epsilon} can be expressed in the form of It\^{o}-Wiener integral.
First, we have the following proposition about lower terms.
We define $p(\tau)$ as the number of leaves in Table \ref{tab:p_tau}.
\begin{table}[h]
    \centering
    \caption{the number of leaves in $\tau$}
    \label{tab:p_tau}
     \begin{tabular}{|c|c|c|c|c|c|c|c|c|c|}
      \hline
      $\tau$ & $\IX$& $\XX$ & $\XXX$ & $\ICC$ & $\ICCC$ & $\CICCC$ & $\CCICC$ & $\CCICCC$ & $\CD$ \\
      \hline
      $p(\tau)$ & $1$ & $2$ & $3$ & $2$ & $3$ & $4$ & $4$ & $5$ & $4$\\
      \hline
     \end{tabular}
   \end{table}
\begin{prop}[\cite{HIN} Proposition 5.5]
    Let $\tau = \IX, \XX, \XXX, \ICC, \ICCC, \nabla \ICC, \Delta \ICC$ and $p=p(\tau)$.
    Then 
    \[
        \tau_\epsilon(t,x) = \CI_p(f^{\epsilon, \tau}_{(t,x)}),
    \]
    where $f^{\epsilon, \tau}_{(t,x)}=f^{\epsilon, \tau}_{(t,x)}(m_{1,\dots,p})$ is a good kernel with functions $H^{\epsilon,\tau}_t$ and $Q^{\epsilon, \tau}_0$ defined as follows.
    \begin{enumerate}[(1)]
        \item $\{H^{\epsilon,\tau}_t\}_{t\geq 0} \subset L^2_p$ is given as follows.
        \begin{itemize}
            \item $H^{\epsilon,\IX}_t:=H^{\epsilon}_t$
            \item For $\tau = \XX, \XXX$,
                \[
                    H^{\epsilon,\tau}_t(m_{1,\dots,p}):=\prod_{j=1}^p H^\epsilon_t (m_j).
                \]
            \item For $\tau = \ICC, \ICCC$,
                \[
                    H^{\epsilon,\tau}_t(m_{1,\dots,p}):= \int_{\R} H^{\epsilon}_t(u, k_{[1,\dots,p]}) H^{\epsilon, \tau_0}_{u}(m_{1,\dots,p}) du,
                \]
                where $\tau_0=\XX,\XXX$, respectively.
            \item For $\tau = \nabla \ICC$,
                \[
                    H^{\epsilon,\nabla \ICC}_t (m_{1,2})=2\pi i k_{[1,2]}H^{\epsilon,\ICC}_t(m_{1,2}).
                \]
            \item For $\tau = \Delta \ICC$,
                \[
                    H^{\epsilon,\Delta \ICC}_t (m_{1,2})=(-4\pi^2) |k_{[1,2]}|^2 H^{\epsilon,\ICC}_t(m_{1,2}).
                \]
        \end{itemize}
        \item $Q^{\epsilon,\tau}_0 \in L^2_p$ is given as follows.
        \begin{itemize}
            \item $Q^{\epsilon, \IX}_0:=Q^{\epsilon}_0$.
            \item For $\tau = \XX, \XXX$,
                \[
                    Q^{\epsilon,\tau}_0(\mu_{1,\dots,p}):=\prod_{j=1}^{p}Q_0^{\epsilon}(\mu_j).
                \]
            \item For $\tau = \ICC, \ICCC$, 
                \[
                    Q^{\epsilon,\tau}_0(\mu_{1,\dots,p}) = Q^{\epsilon}_0(\mu_{[1,\dots,p]}) Q^{\epsilon, \tau_0}_0(\mu_{1,\dots,p}),
                \]
                where $\tau_0=\XX, \XXX$, respectively.
            \item For $\tau = \nabla \ICC$, 
                \[
                    Q^{\epsilon,\nabla \ICC}_0(\mu_{1,2})=2\pi i k_{[1,2]}Q^{\epsilon,\ICC}_0(\mu_{1,2}). 
                \]
            \item For $\tau = \Delta \ICC$, 
                \[
                    Q^{\epsilon,\Delta \ICC}_0(\mu_{1,2})=(-4\pi^2) |k_{[1,2]}|^2Q^{\epsilon,\ICC}_0(\mu_{1,2}). 
                \]
        \end{itemize}
    \end{enumerate}
\end{prop}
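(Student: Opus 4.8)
The plan is to derive the seven representations one family at a time, in order of increasing complexity, starting from the base case $\IX_\epsilon = \CI_1(f^\epsilon_{(t,\cdot)})$ already established, and to verify at each step that the new kernel is \emph{good} --- that it carries the spatial factor $\be_{k_{[1,\dots,p]}}(x)$ and that its time Fourier transform $Q_t = \CF_{\mathrm{time}}H_t$ obeys $Q_t(\mu_{1,\dots,p}) = e^{-2\pi i\sigma_{[1,\dots,p]}t}Q_0(\mu_{1,\dots,p})$. Two elementary mechanisms carry the argument: the product formula for multiple It\^o--Wiener integrals, which handles the Wick powers $\XX_\epsilon, \XXX_\epsilon$; and the action of the stationary solution map $Y \mapsto \big(t \mapsto \int_{-\infty}^{t} P_{t-s}^\epsilon Y(s)\,ds\big)$ of $\CL_\epsilon$, which handles $\ICC_\epsilon, \ICCC_\epsilon$ (this map at $t=0$ reproduces $\ICC_\epsilon(0) = \int_{-\infty}^0 P^\epsilon_{-s}\XX_\epsilon(s)\,ds$ from Definition \ref{defi:EX_epsilon}). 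The remaining two cases $\nabla\ICC_\epsilon, \Delta\ICC_\epsilon$ are then immediate.

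First I would treat $\XX_\epsilon$ and $\XXX_\epsilon$. Thanks to the $\epsilon\Delta^2$ regularization, $\IX_\epsilon(t,\cdot)$ is an honest smooth function and $a_\epsilon = \|f^\epsilon_{(t,x)}\|_{L^2_1}^2 = \sum_{k}(2\alpha_\epsilon(k))^{-1} < \infty$ for $\epsilon>0$, so the pointwise Wick powers are well defined; the multiplication formula for iterated integrals gives $(\IX_\epsilon)^2 - a_\epsilon = \CI_2\big((f^\epsilon_{(t,x)})^{\otimes 2}\big)$ and $(\IX_\epsilon)^3 - 3a_\epsilon\IX_\epsilon = \CI_3\big((f^\epsilon_{(t,x)})^{\otimes 3}\big)$, every contraction being absorbed into $a_\epsilon$. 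Since the spatial factors multiply as $\prod_j \be_{k_j}(x) = \be_{k_{[1,\dots,p]}}(x)$, the kernel has the good form with $H^{\epsilon,\tau}_t = \prod_j H^\epsilon_t(m_j)$, and goodness of $Q^{\epsilon,\tau}_0 = \prod_j Q^\epsilon_0(\mu_j)$ follows because $\CF_{\mathrm{time}}$ factorizes over the independent variables $s_1,\dots,s_p$ and each factor is good.

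Next I would establish the key lemma for the stationary solution map: if $Y = \CI_p(g_{(t,\cdot)})$ with $g$ good (functions $G_t, R_0$) and $X(t) = \int_{-\infty}^{t}P_{t-s}^\epsilon Y(s)\,ds$, then $X = \CI_p(h_{(t,\cdot)})$ with $h$ good, $H_t(m_{1,\dots,p}) = \int_\R H^\epsilon_t(u, k_{[1,\dots,p]})\,G_u(m_{1,\dots,p})\,du$ and $R^X_0 = Q^\epsilon_0(\mu_{[1,\dots,p]})\,R_0$. Here $P^\epsilon_{t-u}$ acts on the spatial mode $\be_{k_{[1,\dots,p]}}$ in $g_{(u,x)}$ by the multiplier $\mathbbm{1}_{u\le t}\,e^{-(t-u)\alpha_\epsilon(k_{[1,\dots,p]})} = H^\epsilon_t(u, k_{[1,\dots,p]})$; a stochastic Fubini argument commutes $\CI_p$ with $\int_{-\infty}^{t}\!du$; and moving $\CF_{\mathrm{time}}$ in $(s_1,\dots,s_p)$ past the $du$-integral, using goodness of $g$ together with $\int_0^\infty e^{-v\alpha_\epsilon(k) + 2\pi i\sigma v}\,dv = Q^\epsilon_0(\sigma,k)$, yields exactly $e^{-2\pi i\sigma_{[1,\dots,p]}t}\,Q^\epsilon_0(\mu_{[1,\dots,p]})\,R_0(\mu_{1,\dots,p})$. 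Applying this lemma with $Y = \XX_\epsilon$ ($p=2$) and $Y = \XXX_\epsilon$ ($p=3$) produces $\ICC_\epsilon$ and $\ICCC_\epsilon$ with the stated kernels.

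Finally, $\nabla$ and $\Delta$ hit the good kernel of $\ICC_\epsilon$ only through the spatial mode $\be_{k_{[1,2]}}$, multiplying both $H^{\epsilon,\ICC}_t$ and $Q^{\epsilon,\ICC}_0$ by $2\pi i k_{[1,2]}$, resp. $-4\pi^2|k_{[1,2]}|^2$; as these factors are independent of the time variables, goodness is preserved, which gives $\nabla\ICC_\epsilon$ and $\Delta\ICC_\epsilon$. The one genuinely non-routine point is the bookkeeping inside the lemma: justifying the stochastic Fubini interchange and checking that the new $H_t$ is square-integrable in $(s_1,\dots,s_p)$ for each fixed $(k_1,\dots,k_p)$ and bounded in $L^\infty_p$, which is precisely where the extra resolvent factor $Q^\epsilon_0(\mu_{[1,\dots,p]})$ supplies the needed decay; once that is in hand, all cases follow in the same way as above.
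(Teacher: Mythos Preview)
The paper does not supply its own proof of this proposition: it is imported wholesale from \cite{HIN}, Proposition 5.5, with no argument reproduced. Your sketch is correct and is essentially the standard derivation one expects (and that \cite{HIN} carries out in the unmodified setting): the Wick powers $\XX_\epsilon,\XXX_\epsilon$ via the product formula for multiple It\^o--Wiener integrals; the trees $\ICC_\epsilon,\ICCC_\epsilon$ via the stationary-solution-map lemma, whose Fourier computation
\[
\int_0^\infty e^{-v\alpha_\epsilon(k)+2\pi i\sigma v}\,dv=\frac{1}{\alpha_\epsilon(k)-2\pi i\sigma}=Q^\epsilon_0(\sigma,k)
\]
is exactly what produces the extra resolvent factor $Q^\epsilon_0(\mu_{[1,\dots,p]})$ and the goodness relation; and finally $\nabla,\Delta$ as Fourier multipliers on the spatial mode $\be_{k_{[1,2]}}$. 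The integrability and stochastic-Fubini checks you flag at the end are routine here since $\alpha_\epsilon(k)\ge 1$ gives uniform exponential decay in $t-u$, so nothing delicate is hidden there.
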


From this proposition, we can guess the limit $\tau$ of $\{\tau_\epsilon\}_{0<\epsilon\leq 1}$ as follows:
\begin{defi}
    Let $\tau \in \{\IX, \XX, \XXX, \ICC, \ICCC \}$ and $p=p(\tau)$.
    We define
    \[
        \tau(t,x) := \CI_p(f_{(t,x)}^{\tau}),
    \]
    where
    \[
        f_{(t,x)}^{\tau} := \lim_{\epsilon \downarrow 0}f_{(t,x)}^{\epsilon, \tau}.
    \]
    The same notations are used for $H_t^{\tau}$ and $Q_0^{\tau}$.
\end{defi}

\begin{prop}
    Let $\tau \in \{\IX, \XX, \XXX, \ICC, \ICCC \}$.
    Then, $f_{(t,x)}^{\tau}$ is a good kernel with functions $H_t^{\tau}$ and $Q_0^{\tau}$.
\end{prop}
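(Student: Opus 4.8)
The plan is to verify, one at a time, the three defining properties of a \emph{good kernel} for $f^{\tau}_{(t,x)}$, deriving each by passing to the limit $\epsilon\downarrow 0$ in the corresponding property of $f^{\epsilon,\tau}_{(t,x)}$ supplied by the preceding proposition, and exploiting the monotonicity $\alpha(k)\le\alpha_\epsilon(k)\le\alpha_1(k)$ for $0<\epsilon\le 1$, where $\alpha(k):=4\pi^2|k|^2+1$. The factorization $f^{\tau}_{(t,x)}(m_{1,\dots,p})=\be_{k_{[1,\dots,p]}}(x)H^{\tau}_t(m_{1,\dots,p})$ is easy: the prefactor $\be_{k_{[1,\dots,p]}}(x)$ does not depend on $\epsilon$, so it suffices to know that the pointwise limits $H^{\tau}_t=\lim_{\epsilon\downarrow0}H^{\epsilon,\tau}_t$ and $Q^{\tau}_0=\lim_{\epsilon\downarrow0}Q^{\epsilon,\tau}_0$ exist. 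I would check this by induction along the tree structure of $\tau\in\{\IX,\XX,\XXX,\ICC,\ICCC\}$: the base case $\tau=\IX$ is explicit because $\alpha_\epsilon(k)\downarrow\alpha(k)$; the products defining the $\XX$ and $\XXX$ kernels converge termwise; and in the $\ICC,\ICCC$ cases the time integral $\int_{\R}H^{\epsilon}_t(u,k_{[1,\dots,p]})H^{\epsilon,\tau_0}_u(m_{1,\dots,p})\,du$ is handled by dominated convergence, since all these kernels are nonnegative, are $\epsilon$-monotonically bounded by the $\epsilon=0$ ones, and satisfy $\int_{-\infty}^t e^{-(t-u)\alpha(k)}\,du=\alpha(k)^{-1}\le 1$, which keeps the $\epsilon=0$ integrands integrable. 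The resulting limits are given by exactly the same recursion with $\alpha_\epsilon$ replaced by $\alpha$.

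For the membership $H^{\tau}_t\in L^\infty_p$ and the square-integrability of $H^{\tau}_t(\cdot,k_{1,\dots,p})$ in the time variables for every fixed momentum vector, I would run the same induction once more, inserting the crude estimate $\alpha(k)\ge 1$ at each vertex; this produces an $\epsilon$-independent majorant $0\le H^{\epsilon,\tau}_t(m_{1,\dots,p})\le G^{\tau}_t(m_{1,\dots,p})$ in which $G^{\tau}_t$ depends on the time variables only through decaying exponentials of $t-s_j$ and is therefore bounded and, uniformly in the momenta, square-integrable in $(s_1,\dots,s_p)$. Letting $\epsilon\downarrow0$ yields $H^{\tau}_t\in L^\infty_p$ and the desired square-integrability in time, and by dominated convergence also $H^{\epsilon,\tau}_t(\cdot,k_{1,\dots,p})\to H^{\tau}_t(\cdot,k_{1,\dots,p})$ in $L^2(\R^p)$ for each fixed $(k_1,\dots,k_p)$.

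It remains to check the stationarity identity $\CF_{\mathrm{time}}H^{\tau}_t(\mu_{1,\dots,p})=e^{-2\pi i\sigma_{[1,\dots,p]}t}Q^{\tau}_0(\mu_{1,\dots,p})$, where $Q^{\tau}_0$ is the pointwise limit from the Definition above. Since $\CF_{\mathrm{time}}$ is, up to a constant, an isometry of $L^2(\R^p)$ in the time variables, the $L^2$-convergence just obtained forces $Q^{\epsilon,\tau}_t(\cdot,k_{1,\dots,p})\to\CF_{\mathrm{time}}H^{\tau}_t(\cdot,k_{1,\dots,p})$ in $L^2(\R^p)$, hence along a subsequence pointwise a.e.; on the other hand the goodness of $f^{\epsilon,\tau}$ gives $Q^{\epsilon,\tau}_t(\mu_{1,\dots,p})=e^{-2\pi i\sigma_{[1,\dots,p]}t}Q^{\epsilon,\tau}_0(\mu_{1,\dots,p})$, whose right-hand side converges everywhere to $e^{-2\pi i\sigma_{[1,\dots,p]}t}Q^{\tau}_0(\mu_{1,\dots,p})$. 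As the $L^2$ limit and the pointwise limit must coincide almost everywhere, the identity follows; in particular, taking $t=0$ gives $\CF_{\mathrm{time}}H^{\tau}_0=Q^{\tau}_0$ a.e., consistently with the Definition. This establishes that $f^{\tau}_{(t,x)}$ is a good kernel with functions $H^{\tau}_t$ and $Q^{\tau}_0$.

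The only point requiring some care is the inductive dominated-convergence argument in the last two steps, i.e.\ propagating an $\epsilon$-uniform integrable (respectively square-integrable) majorant up through the time integrals that build $\ICC$ and $\ICCC$. This is essentially routine, however, precisely because $\alpha_\epsilon\ge\alpha\ge 1$ makes the $\epsilon=0$ kernels dominate all the others, so that the finiteness needed is inherited from the $\epsilon$-independent bounds; the computations mirror those carried out for $\epsilon>0$ in the preceding proposition and in \cite{HIN}, and no genuinely new difficulty arises.
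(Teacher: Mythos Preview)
Your argument is correct, but it is considerably more elaborate than what the paper does: the paper states the proposition without proof. The implicit reasoning is that the limit kernels $H^{\tau}_t$ and $Q^{\tau}_0$ are given by exactly the same recursive formulas as in the preceding proposition with $\alpha_\epsilon$ replaced by $\alpha_0$, so the verification of the good-kernel axioms in \cite{HIN} Proposition~5.5 applies verbatim at $\epsilon=0$; nothing in that proof uses $\epsilon>0$.

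Your route via $\epsilon\downarrow 0$ limits, dominated convergence, and continuity of $\CF_{\mathrm{time}}$ on $L^2$ is sound, and the monotonicity $\alpha_0\le\alpha_\epsilon$ does yield the majorants you need. What it buys is a uniform treatment that never re-opens the \cite{HIN} computation; what it costs is the extra limiting machinery (dominated convergence at each inductive step, the $L^2$--pointwise limit matching for the stationarity identity). The direct approach---simply plugging $\epsilon=0$ into the formulas and re-running the \cite{HIN} argument---is shorter and is evidently what the paper has in mind.
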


Next, we consider the resonant products of two It\^{o}-Wiener integrals for the expression of $\tau= \CICCC, \CCICC, \CCICCC, \CD$.
We define the function $\psi_\circ : \Z^3 \times \Z^3 \to \R$ by
\begin{equation*}
    \psi_{\circ}(k,l):= \sum_{|i-j|\leq 1} \rho_i(k) \rho_j(l).
\end{equation*}

\begin{prop}[\cite{HIN}, Proposition 5.7]
    Let $f_{(t,x)}^{(1)}=f_{(t,x)}^{(1)}(m_{1,\dots,p_1})$ and $f_{(t,x)}^{(2)}=f_{(t,x)}^{(2)}(m_{1,\dots,p_2})$ be symmetric and good kernels with functions $H_t^{(1)}$, $Q_0^{(1)}$ and $H_t^{(2)}$, $Q_0^{(2)}$, respectively.
    Let $\tau_1(t,x)=\CI_{p_1}(f_{(t,x)}^{(1)})$ and $\tau_2(t,x)=\CI_{p_2}(f_{(t,x)}^{(2)})$.

    Then, we have
    \[
        \tau_1(t,x) \rs \tau_2(t,x) = \sum_{r=0}^{p_1\wedge p_2}r! \binom{p_1}{r}\binom{p_2}{r} \CI_{p_1+p_2-2r}(f_{(t,x)}^{(\tau_1,\tau_2,r)}),
    \]
    where $f_{(t,x)}^{(\tau_1,\tau_2,r)}$ are good kernels with functions $H_t^{(\tau_1,\tau_2,r)}$ and $Q_0^{(\tau_1,\tau_2,r)}$ defined as follows.
    \begin{enumerate}[(1)]
        \item For $r=0$, 
        \begin{multline*}
            H_t^{(\tau_1,\tau_2,0)}(m_{1,\dots,p_1+p_2})\\= \psi_\circ(k_{[1,\dots,p_1]},k_{[p_1+1,\dots, p_1+p_2]})H^{(1)}_t(m_{1,\dots,p_1})H^{(2)}_t(m_{p_1+1,\dots,p_1+p_2}).
        \end{multline*}
        For $r>0$,
        \begin{multline*}
            H_t^{(\tau_1,\tau_2,r)}(m_{1,\dots,p_1-r},m_{p_1+1,\dots, p_1+p_2-r})\\
            = \int_{E^{r}} H_t^{(\tau_1,\tau_2,0)}(m_{1,\dots,p_1-r},n_{1,\dots,r},m_{p_1,\dots,p_1+p_2-r},n_{-1,\dots,-r}) dn_{1,\dots,r}.
        \end{multline*}
        \item For $r=0$, 
        \begin{multline*}
            Q_0^{(\tau_1,\tau_2,0)}(\mu_{1,\dots,p_1+p_2})\\
            = \psi_{\circ}(k_{[1,\dots,p_1]},k_{[p_1+1,\dots,p_1+p_2]})Q_0^{(1)}(\mu_{1,\dots,p_1})Q_0^{(2)}(\mu_{p_1+1,\dots,p_1+p_2}).
        \end{multline*}
        For $r>0$,
        \begin{multline*}
            Q_0^{(\tau_1,\tau_2,r)}(\mu_{1,\dots,p_1-r},\mu_{p_1+1,\dots,p_1+p_2-r})\\
            = \int_{E^{r}} Q_0^{(\tau_1,\tau_2,0)}(\mu_{1,\dots,p_1-r},\nu_{1,\dots,r},\mu_{p_1,\dots,p_1+p_2-r},-\nu_{1,\dots,r}) d\nu_{1,\dots,r}.
        \end{multline*}
    \end{enumerate}
\end{prop}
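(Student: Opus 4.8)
The plan is to reduce this statement to the classical multiplication (product) formula for multiple Wiener--Itô integrals, viewing the resonant product $\rs$ as the ordinary pointwise product modified only by a Fourier multiplier. Since $\tau_1$ and $\tau_2$ need not be square integrable, I would work throughout at the level of the tested fields $\langle\tau_i(t,\cdot),\phi\rangle=\CI_{p_i}(\langle f^{(i)}_{(t,\cdot)},\phi\rangle)$, so that each Wiener integral acts on a genuine $L^2$ kernel, and pass back to the $\CS'$-valued identity at the end. To unwind the resonant product, note that from $f^{(i)}_{(t,x)}(m_{1,\dots,p_i})=\be_{k_{[1,\dots,p_i]}}(x)H^{(i)}_t(m_{1,\dots,p_i})$ the block $\Delta_j\tau_i$ has the same kernel with $H^{(i)}_t$ multiplied by $\rho_j(k_{[1,\dots,p_i]})$; summing over $|i-j|\le1$ and using $\be_k\be_l=\be_{k+l}$ shows that $\tau_1\rs\tau_2$ equals the product $\tau_1\tau_2$ with the factor $\psi_\circ(k_{[1,\dots,p_1]},k_{[p_1+1,\dots,p_1+p_2]})$ inserted on the total frequencies of the two factors, which is precisely the kernel $f^{(\tau_1,\tau_2,0)}_{(t,x)}$ of the statement.

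Next I would apply the multiplication formula: for symmetric $f\in L^2_{p_1}$ and $g\in L^2_{p_2}$,
\[
    \CI_{p_1}(f)\,\CI_{p_2}(g)=\sum_{r=0}^{p_1\wedge p_2}r!\binom{p_1}{r}\binom{p_2}{r}\,\CI_{p_1+p_2-2r}(f\otimes_r g),
\]
where $f\otimes_r g$ is the $r$-fold contraction. In the complex Gaussian setting of $\widehat W$ the contraction pairs the integration variable $n_j=(s_j,k_j)$ of the first kernel with its conjugate $n_{-j}=(s_j,-k_j)$ of the second and integrates over $E^r$, which is exactly the integral defining $H^{(\tau_1,\tau_2,r)}_t$ and $Q^{(\tau_1,\tau_2,r)}_0$; the combinatorial weights fall out of the formula directly. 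Along the way I would check that each $f^{(\tau_1,\tau_2,r)}_{(t,\cdot)}$ lies in $C(\TT^3,L^\infty_{p_1+p_2-2r})$ and is $L^2$ in the time variables, both of which follow from the uniform $L^\infty$ bound and exponential decay built into $H^{(1)},H^{(2)}$ together with the boundedness of $\psi_\circ$.

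Finally I would verify that each $f^{(\tau_1,\tau_2,r)}_{(t,x)}$ is \emph{good}. The $x$-dependence is the single exponential $\be_{k_{[1,\dots,p_1+p_2-2r]}}(x)$ by construction, so the substantive point is the identity $Q^{(\tau_1,\tau_2,r)}_t(\mu_\bullet)=e^{-2\pi i\sigma_{[1,\dots]}t}Q^{(\tau_1,\tau_2,r)}_0(\mu_\bullet)$. For $r=0$ it is immediate: $H^{(\tau_1,\tau_2,0)}_t$ is a product in disjoint time variables, so its time-Fourier transform factorises and the exponentials $e^{-2\pi i\sigma_{[1,\dots,p_1]}t}$ and $e^{-2\pi i\sigma_{[p_1+1,\dots]}t}$ multiply to the required one. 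For $r>0$ the key observation is that a paired variable enters the first factor as $\nu_j=(\tau_j,l_j)$ and the second as $\nu_{-j}=(-\tau_j,-l_j)$, so after Plancherel the contraction over the time variables becomes an integral over $\tau_{1,\dots,r}$ in which the factors $e^{-2\pi i\tau_j t}$ and $e^{+2\pi i\tau_j t}$ cancel, leaving exactly $e^{-2\pi i\sigma_{[\mathrm{remaining}]}t}$ and a closed formula for $Q^{(\tau_1,\tau_2,r)}_0$.

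I expect the main obstacle to be precisely this last step: commuting the $r$-fold contraction with the time-Fourier transform and keeping track of which time frequencies cancel against which, so that the factorised exponential structure defining goodness is preserved. Once that bookkeeping is settled, the remaining integrability and continuity assertions about the contracted kernels are routine consequences of the uniform exponential bounds on $H^{(1)}_t$ and $H^{(2)}_t$ and of the support properties of $\psi_\circ$.
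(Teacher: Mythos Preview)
Your proposal is correct and follows the standard route: identify the resonant product as the pointwise product with the Fourier multiplier $\psi_\circ$, apply the Wiener--It\^o product formula to get the contraction expansion, and then check that the contracted kernels remain good by tracking the time-frequency phases through Plancherel. Note, however, that the paper does not supply its own proof of this proposition---it is quoted verbatim from \cite{HIN}, Proposition 5.7---so there is no in-paper argument to compare against; your outline is precisely the kind of argument one finds in that reference.
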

Let $\tau = \CICCC, \CCICC, \CCICCC, \CD$.
From this proposition, $\tau_\epsilon$ can be expressed as a sum of It\^{o}-Wiener integrals.
Recall the renormalization (v) in Definition \ref{defi:EX_epsilon}.
Here, renormalization constant $b_\epsilon$ can be expressed $b_\epsilon = 2f^{(\ICC_\epsilon,\XX_\epsilon,2)}$.
\begin{itemize}
    \item For $\CICCC_\epsilon$, we have
    \begin{align}
        \CICCC_\epsilon(t,x)=\CI_4(f_{(t,x)}^{\epsilon,\CICCC}) + 3\CI_2(f_{(t,x)}^{\epsilon,\cICCc}),
    \end{align}
    where
    \[
        f^{\epsilon,\CICCC}:=f^{(\ICCC_\epsilon,\IX_\epsilon,0)},\quad f^{\epsilon,\cICCc}:=f^{(\ICCC_\epsilon,\IX_\epsilon,1)}.
    \]
    \item For $\CCICC_\epsilon$, we have
    \begin{align}
        \CCICC_\epsilon(t,x)=\CI_4(f_{(t,x)}^{\epsilon,\CCICC}) + 4\CI_2(f_{(t,x)}^{\epsilon,\CcICc}),
    \end{align}
    where
    \[
        f^{\epsilon,\CCICC}:=f^{(\ICC_\epsilon,\XX_\epsilon,0)},\quad f^{\epsilon,\CcICc}:=f^{(\ICC_\epsilon,\XX_\epsilon,1)}.
    \]
    \item For $\CCICCC_\epsilon$, we have
    \begin{align}
        \CCICCC_\epsilon(t,x)=\CI_5(f_{(t,x)}^{\epsilon,\CCICCC}) + 6\CI_3(f_{(t,x)}^{\epsilon,\CcICCc}) + 6\CI_1(\CR f_{(t,x)}^{\epsilon,\ccICcc}),
    \end{align}
    where
    \[
        f^{\epsilon,\CCICCC}:=f^{(\ICCC_\epsilon,\XX_\epsilon,0)},\,\, f^{\epsilon,\CcICCc}:=f^{(\ICCC_\epsilon,\XX_\epsilon,1)}, \,\,
        \CR f^{\epsilon,\ccICcc} := f^{(\ICCC_\epsilon,\XX_\epsilon,2)} - b_\epsilon f^{\epsilon,\IX},
    \]
    \item For $\CD_\epsilon$, we have
    \begin{align}
        \CD_\epsilon(t,x)=\CI_4(f_{(t,x)}^{\epsilon,\CD}) + 4\CI_2(f_{(t,x)}^{\epsilon,\CD,1}) + c_\epsilon, \label{eq:defi_of_CD_epsilon}
    \end{align}
    where
    \[
        f^{\epsilon,\CD}:=f^{(\nabla \ICC_\epsilon,\nabla \ICC_\epsilon,0)}+\epsilon f^{(\Delta \ICC_\epsilon,\Delta \ICC_\epsilon,0)},\,\, f^{\epsilon,\CD,1}:=f^{(\nabla \ICC_\epsilon,\nabla \ICC_\epsilon,1)}+\epsilon f^{(\Delta \ICC_\epsilon,\Delta \ICC_\epsilon,1)},
    \]
    \[
        c_\epsilon := 2f^{(\nabla \ICC_\epsilon,\nabla \ICC_\epsilon,2)}+2\epsilon f^{(\Delta \ICC_\epsilon,\Delta \ICC_\epsilon,2)} - b_\epsilon.
    \]
\end{itemize}

From these, we want to guess the limit $\tau$ of $\{\tau_\epsilon\}_{0<\epsilon\leq 1}$.
By using similar way in \cite{HIN} Proposition 5.18, we can show that there exists a kernel $\CR f^{\ccICcc}$ such that
\[
    \CR f^{\epsilon,\ccICcc} \to \CR f^{\ccICcc}.
\]
And, in Section \ref{sec:Differential_terms}, we will show that there exists a real number $c$ such that
\[
    c_\epsilon \to c.
\]
\begin{defi}
    $\tau = \CICCC,\CCICC,\CCICCC,\CD$ is defined as follows.
    \begin{align}
        \CICCC(t,x)&:=\CI_4(f_{(t,x)}^{\CICCC}) + 3\CI_2(f_{(t,x)}^{\cICCc}), \\
        \CCICC(t,x) &:= \CI_4(f_{(t,x)}^{\CCICC}) + 4\CI_2(f_{(t,x)}^{\CcICc}), \\
        \CCICCC(t,x)&:=\CI_5(f_{(t,x)}^{\CCICCC}) + 6\CI_3(f_{(t,x)}^{\CcICCc}) + 6\CI_1(\CR f_{(t,x)}^{\ccICcc}), \\
        \CD(t,x)&:=\CI_4(f_{(t,x)}^{\CD}) + 4\CI_2(f_{(t,x)}^{\CD,1}) + c. \label{eq:defi_of_CD}
    \end{align}
    Here, we set
    \[
        f_{(t,x)}^{\tau} := \lim_{\epsilon\downarrow 0} f_{(t,x)}^{\epsilon, \tau}
    \]
    for $\tau = \CICCC, \cICCc, \CCICC, \CcICc, \CCICCC, \CcICCc$ and 
    \[
        f_{(t,x)}^{\CD}:=\lim_{\epsilon\downarrow 0}f_{(t,x)}^{(\nabla \ICC_\epsilon,\nabla \ICC_\epsilon,0)}, \quad f_{(t,x)}^{\CD,1}:=\lim_{\epsilon\downarrow 0}f_{(t,x)}^{(\nabla \ICC_\epsilon,\nabla \ICC_\epsilon,1)}.
    \]
    The same notations are used for $H_t^{\tau}$ and $Q_0^{\tau}$.
\end{defi}
\begin{prop}
    Let $\tau \in \{ \CICCC,\CCICC,\CCICCC,\CD, \cICCc, \CcICc, \CcICCc\}$.
    Then, $f_{(t,x)}^{\tau}$ is a good kernel with functions $H_t^{\tau}$ and $Q_0^{\tau}$.
    $\CR f_{(t,x)}^{\ccICcc}$ and $f_{(t,x)}^{\CD,1}$ are also  good kernels with functions $\CR H_t^{\ccICcc}$, $\CR Q_0^{\ccICcc}$ and $H_t^{\CD,1}$, $Q_0^{\CD,1}$, respectively.
\end{prop}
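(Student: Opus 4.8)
The plan is to verify, for each limit kernel, the three clauses in the definition of a good kernel --- the factorization $f_{(t,x)}(m_{1,\dots,p})=\be_{k_{[1,\dots,p]}}(x)H_t(m_{1,\dots,p})$, the membership $H_t\in L_p^\infty$ together with square-integrability in the time variables at fixed momenta, and the covariance $Q_t(\mu_{1,\dots,p})=e^{-2\pi i\sigma_{[1,\dots,p]}t}Q_0(\mu_{1,\dots,p})$ --- by transporting it from the corresponding statement at finite $\epsilon$. For every fixed $0<\epsilon\le1$, [\cite{HIN}, Proposition~5.7] already gives the goodness of $f^{\epsilon,\CICCC},f^{\epsilon,\cICCc},\dots$ and, more usefully, exhibits $H^{\epsilon,\tau}_t$ and $Q^{\epsilon,\tau}_0$ explicitly in terms of the lower-order kernels $H^{\epsilon,\ICCC}_t,H^{\epsilon,\IX}_t,H^{\epsilon,\XX}_t$ (and $H^{\epsilon,\nabla\ICC}_t$ for $\CD$) through only three operations: multiplication by the $\epsilon$-independent cutoff $\psi_\circ$, pointwise products, and, when $r>0$, integration over the loop variables $n_{1,\dots,r}\in E^r$. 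Since the lower-order kernels and their $\epsilon\downarrow0$ limits are already known to be good (the Proposition immediately preceding this one, together with the elementary observation that the $\nabla\ICC,\Delta\ICC$ kernels are obtained from the $\ICC$ kernel by multiplication by the $\epsilon$-free symbols $2\pi i k_{[\cdot]}$ resp. $-4\pi^2|k_{[\cdot]}|^2$, which preserves goodness and commutes with the limit), it suffices to show that $\lim_{\epsilon\downarrow0}$ commutes with these three operations; then $f^\tau$ equals the [\cite{HIN}, Proposition~5.7] formula evaluated at the limit kernels, which are symmetric and good, and hence is good by that same proposition. Commutation with $\psi_\circ$ and with products is immediate from pointwise convergence, and the factorization together with the Fourier covariance is a pointwise identity at each $\epsilon$, so it survives any limit; thus the whole matter reduces to passing $\lim_{\epsilon\downarrow0}$ through the loop integrals $\int_{E^r}$.

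The tool for that is an $\epsilon$-uniform domination coming from the sign of the extra term $\epsilon\Delta^2$. Writing $\alpha(k):=4\pi^2|k|^2+1$, one has $\alpha_\epsilon(k)=\alpha(k)+16\pi^4\epsilon|k|^4\ge\alpha(k)>0$ for all $\epsilon\in(0,1]$, whence
\[
    |H^\epsilon_t(s,k)|=\mathbbm{1}_{s\le t}e^{-(t-s)\alpha_\epsilon(k)}\le\mathbbm{1}_{s\le t}e^{-(t-s)\alpha(k)},\qquad |Q^\epsilon_0(\sigma,k)|=\bigl|{-2\pi i\sigma+\alpha_\epsilon(k)}\bigr|^{-1}\le\bigl|{-2\pi i\sigma+\alpha(k)}\bigr|^{-1},
\]
the right-hand sides being exactly $|H^{\IX}_t(s,k)|$ and $|Q^{\IX}_0(\sigma,k)|$. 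Propagating this bound through the product/time-integral structure of the lower-order kernels by induction over their tree --- the time integral merely inserts one more dominated factor $|H^\epsilon_t(u,k_{[\cdot]})|$, and the derivative symbols are $\epsilon$-independent --- gives $|H^{\epsilon,\sigma_0}_t|\le H^{\sigma_0}_t$ and $|Q^{\epsilon,\sigma_0}_0|\le|Q^{\sigma_0}_0|$ uniformly in $\epsilon$ for every lower-order $\sigma_0$. Consequently the integrand over $n_{1,\dots,r}$ in the [\cite{HIN}, Proposition~5.7] formula is dominated, uniformly in $\epsilon$, by the $\epsilon=0$ integrand, which is integrable --- this integrability being precisely what makes [\cite{HIN}, Proposition~5.7] applicable to the limit kernels. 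Dominated convergence then yields $H^{\epsilon,\tau}_t\to H^\tau_t$ and $Q^{\epsilon,\tau}_0\to Q^\tau_0$ pointwise, along with finiteness, $L_p^\infty$-membership, square-integrability in time, and the covariance relation for the limits. This settles $\tau\in\{\CICCC,\CCICC,\CCICCC,\CD,\cICCc,\CcICc,\CcICCc\}$ and also $f^{\CD,1}$; note that $f^{\CD}$ and $f^{\CD,1}$ are defined here from the $\nabla\ICC$ kernels alone, so the contributions $\epsilon f^{(\Delta\ICC_\epsilon,\Delta\ICC_\epsilon,r)}$ and the constant $c_\epsilon$ are irrelevant to the present statement (their analysis, including $c_\epsilon\to c$, is deferred to Section~\ref{sec:Differential_terms}).

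The one case escaping this scheme is the renormalized $\CI_1$-kernel $\CR f^{\ccICcc}=\lim_{\epsilon\downarrow0}\bigl(f^{(\ICCC_\epsilon,\XX_\epsilon,2)}-b_\epsilon f^{\epsilon,\IX}\bigr)$, where the two summands diverge individually and the loop integral cannot be taken to the limit term by term; for this, the existence of the limit together with the $\epsilon$-uniform control of $\CR H^{\ccICcc}_t$ and $\CR Q^{\ccICcc}_0$ is supplied by the argument cited above (the analogue of [\cite{HIN}, Proposition~5.18]), after which it only remains to observe that a limit of good $\CI_1$-kernels that is uniformly bounded in the relevant norms is again good --- the factorization $\be_{k_1}(x)(\,\cdot\,)$ and the covariance $\CR Q^{\ccICcc}_t=e^{-2\pi i\sigma_1 t}\CR Q^{\ccICcc}_0$ are stable under pointwise limits, and the $L_1^\infty$ and in-time $L^2$ bounds pass to the limit from the uniform bounds. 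The same remark covers any $\tau$ whose defining expression is a finite linear combination of good kernels with a common number of arguments, since goodness is preserved under such sums and under the limit. I expect the only point needing a little care to be the $\epsilon$-uniform domination of the second-derivative $\Delta\ICC$-kernels, where one wants $\epsilon|k|^2H^{\epsilon,\ICC}_t$ to stay bounded; but this follows from $\alpha_\epsilon\ge\alpha$ and the elementary estimate $|k|^2/\alpha_\epsilon(k)\le|k|^2/\alpha(k)\le(4\pi^2)^{-1}$, and in any event it is not needed for this Proposition, where the $\epsilon\Delta^2$-part has been discarded by definition. Everything else is the bookkeeping of [\cite{HIN}, Proposition~5.7].
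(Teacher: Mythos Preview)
Your argument is correct. The paper states this proposition without proof, treating it as a routine consequence of the explicit formulas in \cite[Proposition~5.7]{HIN} applied to the $\epsilon=0$ lower-order kernels (which are good by the immediately preceding proposition); your dominated-convergence argument, based on the uniform pointwise bounds $|H^\epsilon_t|\le H_t$ and $|Q^\epsilon_0|\le|Q_0|$ that follow from $\alpha_\epsilon\ge\alpha$, is exactly the natural way to justify the interchange of limit and loop integral that makes this rigorous, and your treatment of the renormalized $\CR f^{\ccICcc}$ via the cited analogue of \cite[Proposition~5.18]{HIN} matches what the paper intends.
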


We set
\[
   \EX:=(\IX, \XX, \ICC, \ICCC, \CICCC, \CCICC, \CCICCC, \CD). 
\]
The following is main theorem in this section:
\begin{thm}
    \label{thm:Convergence_of_DV}
    Let $\kappa > 0$ and $T>0$.
    For any $p\geq 1$, we have
    \[
        \lim_{\epsilon\downarrow 0}\EE[\|\EX-\EX_\epsilon\|_{\CX_T^\kappa}^p]=0.
    \]
\end{thm}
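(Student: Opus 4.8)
The plan is to reduce the theorem to an $L^{p}$-bound of the form $\EE[\|\tau-\tau_\epsilon\|^{p}]\lesssim\epsilon^{p\theta}$ for each of the eight components $\tau$ of the driving vector, for a single fixed $\theta>0$; summing and using that $a\mapsto a^{p}$ is convex then gives $\EE[\|\EX-\EX_\epsilon\|_{\CX_T^\kappa}^{p}]\lesssim\epsilon^{p\theta}\to0$. By the It\^o--Wiener expansions of Section~\ref{sec:Ito_Wiener}, each $\tau_\epsilon$ and each $\tau$ is a \emph{finite} sum of terms $c_j\CI_{p_j}(f^{\epsilon,\tau_j}_{(t,x)})$, resp.\ $c_j\CI_{p_j}(f^{\tau_j}_{(t,x)})$, plus the deterministic constant $c_\epsilon$, resp.\ $c$, in the case $\tau=\CD$. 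Hence $\tau-\tau_\epsilon$ is a finite sum of $\CI_{p_j}(f^{\tau_j}-f^{\epsilon,\tau_j})$ together with $c-c_\epsilon$, and since a difference of two good kernels of the same chaos order is again good (the defining identity $Q_t=e^{-2\pi i\sigma_{[1,\dots,p]}t}Q_0$ is linear), one may feed each $f^{\tau_j}-f^{\epsilon,\tau_j}$ into Proposition~\ref{prop:criteria}. It then remains to produce, for each $\tau_j$, a bound
\[
    \int_{\mu_{[1,\dots,p_j]}=\mu}\bigl|Q_0^{\tau_j}(\mu_{1,\dots,p_j})-Q_0^{\epsilon,\tau_j}(\mu_{1,\dots,p_j})\bigr|^{2}\;\lesssim\;\epsilon^{2\theta}\,|\mu|_*^{-2\gamma}|k|_*^{-2\delta}
\]
with $(\gamma,\delta)$ large enough that the space returned by Proposition~\ref{prop:criteria} embeds into the one required of $\tau$, and to bound $|c-c_\epsilon|$.

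The basic estimate lives at the level of the Ornstein--Uhlenbeck kernel. With $\alpha_\epsilon(k)=4\pi^{2}|k|^{2}+16\pi^{4}\epsilon|k|^{4}+1$ and $Q_0^\epsilon(\sigma,k)=(-2\pi i\sigma+\alpha_\epsilon(k))^{-1}$ one has
\[
    Q_0^\epsilon(\sigma,k)-Q_0(\sigma,k)=\frac{-16\pi^{4}\epsilon|k|^{4}}{(-2\pi i\sigma+\alpha_\epsilon(k))(-2\pi i\sigma+\alpha_0(k))},
\]
and using $|-2\pi i\sigma+\alpha_\epsilon(k)|\gtrsim1+|\sigma|+|k|^{2}+\epsilon|k|^{4}$, $|-2\pi i\sigma+\alpha_0(k)|\gtrsim|\mu|_*^{2}$, together with the elementary inequality $x/(1+x)\le x^{\theta}$ for $x\ge0$, $\theta\in[0,1]$, applied to $x=\epsilon|k|^{4}$, I obtain $|Q_0^\epsilon-Q_0|\lesssim\epsilon^{\theta}|\mu|_*^{-2+4\theta}$, while still $|Q_0^\epsilon|\le|Q_0|\lesssim|\mu|_*^{-2}$ since $\alpha_\epsilon\ge\alpha_0$. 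For the composite kernels — built from the $Q_0^\epsilon$'s by products, inner time integrals, multiplication by $k$-polynomials (the $\nabla$ and $\Delta$ factors), the cut-off $\psi_\circ$, and the resonant-product convolutions recalled in Section~\ref{sec:Ito_Wiener} — I write $Q_0^{\tau}-Q_0^{\epsilon,\tau}$ as a telescoping sum in which exactly one factor $Q_0^\epsilon$ is replaced by $Q_0^\epsilon-Q_0$ and every remaining factor is $Q_0^\epsilon$ or $Q_0$. Each summand is then $\epsilon^{\theta}$ times a kernel with precisely the structure already analyzed for $\EX_\epsilon$, but with the decay of one internal factor lowered by $4\theta$; re-running the same hyperplane $L^{2}$-integration that furnishes the $\epsilon$-uniform bounds (as in \cite{HIN}) yields the displayed estimate with $(\gamma,\delta)$ those of the uniform bound reduced by $O(\theta)$. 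Each component of $\EX$ sits in its target space with strictly positive regularity slack (and $\kappa<1/8$ leaves ample room, including for the $C_T^{1/4-\kappa/2}L^\infty$ norm of $\ICCC$), so one fixes a single $\theta>0$ valid for all eight components such that Proposition~\ref{prop:criteria} still returns a space embedding into the required one; this gives $\EE[\|\CI_{p_j}(f^{\tau_j}-f^{\epsilon,\tau_j})\|^{p}]\lesssim\epsilon^{p\theta}$, the passage from even exponents to arbitrary $p\ge1$ being standard by Jensen.

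For $\IX,\XX,\ICC,\ICCC,\CICCC,\CCICC$ the telescoping is unobstructed and the convergence is exactly that of \cite{HIN}. Two points are delicate. First, in $\CCICCC$ the chaos-one piece involves $\CR f^{\epsilon,\ccICcc}=f^{(\ICCC_\epsilon,\XX_\epsilon,2)}-b_\epsilon f^{\epsilon,\IX}$, in which $f^{(\ICCC_\epsilon,\XX_\epsilon,2)}$ and $b_\epsilon$ diverge separately, so $\CR f^{\ccICcc}-\CR f^{\epsilon,\ccICcc}$ must be estimated directly, showing the two divergences cancel up to an $O(\epsilon^{\theta})$ remainder — the $\epsilon$-quantitative analogue of \cite{HIN} Proposition~5.18. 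Second, and this is the genuinely new computation, $\CD_\epsilon=\CI_4(f^{\epsilon,\CD})+4\CI_2(f^{\epsilon,\CD,1})+c_\epsilon$ carries in $f^{\epsilon,\CD}$ and $f^{\epsilon,\CD,1}$ the extra summand $\epsilon f^{(\Delta\ICC_\epsilon,\Delta\ICC_\epsilon,\cdot)}$, whose integrands contain $\epsilon|k_{[\cdot]}|^{2}|l_{[\cdot]}|^{2}|Q_0^{\epsilon,\ICC}|^{2}$; the un-weighted versions diverge, but pairing $\epsilon|k|^{2}$ with the extra $\epsilon|k|^{4}$-decay of $Q_0^\epsilon$, via $\epsilon^{1/2}|k|^{2}(1+\epsilon|k|^{4})^{-1/2}\lesssim1$, makes them finite and $O(\epsilon^{\theta})$; moreover the constant $c_\epsilon=2f^{(\nabla\ICC_\epsilon,\nabla\ICC_\epsilon,2)}+2\epsilon f^{(\Delta\ICC_\epsilon,\Delta\ICC_\epsilon,2)}-b_\epsilon$ is a difference of three divergent integrals whose cancellation — the finiteness of $\CD$ in \cite{JP21}, now in the form of the modification of $\nabla\ICC_\epsilon\rs\nabla\ICC_\epsilon-b_\epsilon$ into $\nabla\ICC_\epsilon\rs\nabla\ICC_\epsilon+\epsilon\Delta\ICC_\epsilon\rs\Delta\ICC_\epsilon-b_\epsilon$ — must be shown to survive, together with $c_\epsilon\to c$ at rate $\epsilon^{\theta}$. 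This is carried out in Section~\ref{sec:Differential_terms}.

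Collecting the estimates, $\EE[\|\EX-\EX_\epsilon\|_{\CX_T^\kappa}^{p}]\lesssim\sum_\tau\EE[\|\tau-\tau_\epsilon\|^{p}]+|c-c_\epsilon|^{p}\lesssim\epsilon^{p\theta}\to0$, which is the assertion. \textbf{The main obstacle} is the $\CD$ term: there the relevant Wiener-chaos objects are rendered finite, let alone convergent, only through the interplay of the $\epsilon$-prefactor with the $\epsilon|k|^{4}$ gain in $Q_0^\epsilon$, and the subtraction against $b_\epsilon$ must be tracked with uniform-in-$\epsilon$ control of the divergent parts; everything else reduces, by the telescoping above, to the estimates of \cite{HIN} with a harmless extra factor $\epsilon^{\theta}$.
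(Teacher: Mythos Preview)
Your proposal is correct and follows essentially the same approach as the paper: reduce to component-wise convergence, express each component as a finite sum of multiple It\^o--Wiener integrals with good kernels, and apply Proposition~\ref{prop:criteria} to the difference kernels using the basic estimate $|Q_0-Q_0^\epsilon|\lesssim\epsilon^\theta|\mu|_*^{-2+4\theta}$ together with a telescoping argument, deferring all components except $\CD$ to \cite{HIN} and treating $\CD$ (including the constant $c_\epsilon$) separately as the genuinely new computation. The only minor deviation is that you claim a quantitative rate $|c-c_\epsilon|\lesssim\epsilon^\theta$, whereas the paper (Proposition~\ref{prop:Convergence_of_D_3}) simply computes the explicit closed form of $c_\epsilon$ and obtains $c_\epsilon\to c$ by monotone convergence; your stronger claim is also true via the same telescoping, but is not needed for the theorem as stated.
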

We only show that $\CD_\epsilon$ converges to $\CD$ in next section.
For the convergence about other elements, see \cite{HIN} Section 5.3.2 and 5.3.3.

\subsection{Proof of convergence of driving vectors}
\label{sec:Convergence_of_DV}
The following two estimates are often used.
\begin{lem}[\cite{HIN} Lemma 5.11]\label{lem:useful_estimates_1}
    If $0 < \alpha,\beta <5$ and $\alpha + \beta > 5$, we have
    \[
        \int_E |\mu|_*^{-\alpha} |\nu - \mu|_*^{-\beta} d \mu \lesssim |\nu|_*^{-\alpha-\beta + 5}.
    \]
\end{lem}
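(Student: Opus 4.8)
The plan is to regard $(E,|\cdot|_*)$ as a metric measure space of homogeneous dimension $5$, where the reference measure $d\mu$ on $E=\R\times\Z^3$ (Lebesgue in the time variable $\sigma$, counting in the spatial variable $k$) assigns to a ``ball'' $\{|\mu|_*\le R\}$ with $R\ge 1$ a volume of order $R^5$, since $\#\{k\in\Z^3:|k|\le R\}\lesssim R^3$ while $\{\sigma\in\R:|\sigma|^{1/2}\le R\}$ has length $\lesssim R^2$. Note also the quasi-triangle inequality $|\nu|_*\le|\mu|_*+|\nu-\mu|_*$, which follows at once from $|\cdot|^{1/2}$ and $|\cdot|$ being subadditive. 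The first step is to record the truncated-integral estimate
\[
    \int_{\{|\mu|_*\le R\}}|\mu|_*^{-a}\,d\mu\lesssim R^{5-a}\qquad(R\ge 1,\ a<5),
\]
together with the shifted version $\int_{\{|\mu|_*\le R\}}|\mu-\nu_0|_*^{-a}\,d\mu\lesssim R^{5-a}$, valid whenever $R\gtrsim|\nu_0|_*$ (translate by $\nu_0$ and enlarge the domain of integration). Both follow from the dyadic splitting $\{|\mu|_*\le R\}=\bigcup_{0\le j\le\log_2 R}\{|\mu|_*\sim 2^j\}$, the elementary bound $\int_{\{|\mu|_*\sim 2^j\}}d\mu\lesssim 2^{5j}$, and summing the geometric series $\sum_j 2^{(5-a)j}\lesssim R^{5-a}$, which converges at its top end precisely because $5-a>0$.

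Next I would split the $\mu$-integral in the lemma into three regions according to the sizes of $|\mu|_*$ and $|\nu-\mu|_*$ relative to $|\nu|_*$. In Region I, $|\mu|_*\le|\nu|_*/2$, the quasi-triangle inequality gives $|\nu-\mu|_*\sim|\nu|_*$, so the contribution is at most $|\nu|_*^{-\beta}\int_{\{|\mu|_*\le|\nu|_*/2\}}|\mu|_*^{-\alpha}\,d\mu\lesssim|\nu|_*^{-\beta}\,|\nu|_*^{5-\alpha}=|\nu|_*^{5-\alpha-\beta}$, using $\alpha<5$. Region II, $|\nu-\mu|_*\le|\nu|_*/2$, reduces to Region I by the change of variables $\mu\mapsto\nu-\mu$, now using $\beta<5$. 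Region III is $|\mu|_*>|\nu|_*/2$ and $|\nu-\mu|_*>|\nu|_*/2$; there I would decompose dyadically over $|\mu|_*\sim 2^j$ with $2^j\gtrsim|\nu|_*$, bound $|\mu|_*^{-\alpha}\sim 2^{-\alpha j}$, and apply the shifted truncated estimate with $\nu_0=\nu$, $R=2^{j+1}$ to obtain $\int_{\{|\mu|_*\sim 2^j\}}|\nu-\mu|_*^{-\beta}\,d\mu\lesssim 2^{(5-\beta)j}$. Summing $\sum_{2^j\gtrsim|\nu|_*}2^{(5-\alpha-\beta)j}$ yields $|\nu|_*^{5-\alpha-\beta}$, the series now converging at its \emph{bottom} end because $5-\alpha-\beta<0$. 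Adding the three contributions gives the claim.

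So the three hypotheses enter exactly where one expects: $0<\alpha<5$ makes $|\mu|_*^{-\alpha}$ integrable at ball scale in Region I, $0<\beta<5$ does the same in Region II, and $\alpha+\beta>5$ is what makes the dyadic series summable at infinity in Region III. I do not anticipate a genuine analytic obstacle here; the one point demanding care is the mixed discrete-in-$k$ / continuous-in-$\sigma$ structure of $d\mu$ together with the anisotropic (parabolic) weighting, so that all implicit constants are genuinely uniform in $\nu$. In particular one should separately dispatch the degenerate case $|\nu|_*\sim 1$, where Regions I and II may be empty and the estimate collapses to $\int_E|\mu|_*^{-\alpha}|\nu-\mu|_*^{-\beta}\,d\mu\lesssim 1$, which is again immediate from the dyadic bound since $\alpha+\beta>5$.
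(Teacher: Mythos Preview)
Your argument is correct and is the standard proof of a convolution inequality of this type: the quasi-triangle inequality $|\nu|_*\le|\mu|_*+|\nu-\mu|_*$ holds, the measure of $\{|\mu|_*\le R\}$ is $\lesssim R^5$, and your three-region split together with the dyadic summation uses each hypothesis exactly once, as you note. The change of variables $\mu\mapsto\nu-\mu$ in Region~II is legitimate since both Lebesgue measure in $\sigma$ and counting measure in $k$ are invariant under translation and reflection, and the shifted truncated estimate in Region~III is justified exactly as you describe.

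There is nothing to compare against in the present paper: the lemma is quoted verbatim from \cite{HIN} (Lemma~5.11) and no proof is given here. Your write-up is therefore a self-contained substitute for the citation.
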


\begin{lem}[\cite{HIN} Lemma 5.12] \label{lem:useful_estimates_2}
    $\psi_{\circ}$ is bounded and supported in the set $\{ (k,l) \in \Z^3 \times \Z^3 ; C^{-1} |l|_* \leq |k|_* \leq C|l|_* \}$ for some $C>0$.
    Moreover, we have
    \[
        |\psi_{\circ}(k,l)| \lesssim |k+l|_*^{-\theta}|l|_*^{\theta}
    \]
    for every $\theta>0$.
\end{lem}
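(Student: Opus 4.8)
The plan is to read off all three assertions directly from the standard properties of the dyadic partition of unity $\{\rho_j\}_{j=-1}^\infty$ recalled in \cite{GIP15} Section A.1. First I would record the facts I need: each $\rho_j$ satisfies $0 \le \rho_j \le 1$; the supports have finite overlap, i.e. $\supp \rho_i \cap \supp \rho_j = \emptyset$ whenever $|i-j| \ge 2$; and for $j \ge 0$ the block $\rho_j$ is supported in an annulus $\{|\xi| \sim 2^j\}$ while $\rho_{-1}$ is supported in a ball $\{|\xi| \lesssim 1\}$. Translating into the weight $|\cdot|_*$, these support properties say that there are uniform constants for which $\rho_j(\xi) \neq 0$ forces $|\xi|_* \sim 2^j$ for every $j \ge -1$; the case $j=-1$ is absorbed into the constants, since there $|\xi|_* \sim 1 \sim 2^{-1}$.

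For the boundedness and support claims I would fix $(k,l) \in \Z^3 \times \Z^3$. By finite overlap, at most two indices $i$ satisfy $\rho_i(k) \neq 0$ and at most two indices $j$ satisfy $\rho_j(l) \neq 0$; together with the constraint $|i-j| \le 1$ this leaves only a bounded number of nonzero summands in $\psi_\circ(k,l)$, each bounded by $1$, so $\psi_\circ$ is bounded by an absolute constant. If moreover $\psi_\circ(k,l) \neq 0$, then some term $\rho_i(k)\rho_j(l)$ with $|i-j| \le 1$ is nonzero, whence $|k|_* \sim 2^i$, $|l|_* \sim 2^j$ and $2^i \sim 2^j$; combining these yields $C^{-1}|l|_* \le |k|_* \le C|l|_*$ for a uniform $C>0$, which is exactly the asserted support condition.

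For the decay estimate I would use that on the support just identified one has $|k|_* \sim |l|_*$. Hence $|k+l| \le |k|+|l| \lesssim |l|_*$, so that $|k+l|_* = 1 + |k+l| \lesssim |l|_*$, i.e. the ratio $|l|_*/|k+l|_*$ is bounded below by a positive constant. Raising to any power $\theta > 0$ keeps it bounded below, giving $1 \lesssim_\theta |k+l|_*^{-\theta}|l|_*^\theta$; multiplying by the bounded value of $\psi_\circ$ then yields $|\psi_\circ(k,l)| \lesssim_\theta |k+l|_*^{-\theta}|l|_*^\theta$, and off the support both sides vanish, so the bound holds on all of $\Z^3 \times \Z^3$. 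The only points requiring care—and the closest thing to an obstacle in an otherwise routine argument—are tracking the $j=-1$ block so that $\rho_j(\xi)\neq 0 \Rightarrow |\xi|_* \sim 2^j$ holds with constants uniform in $j$, and noting that the implicit constant in the final estimate is permitted to depend on $\theta$ (through a factor of the form $C^\theta$), which is consistent with the ``for every $\theta>0$'' phrasing of the statement.
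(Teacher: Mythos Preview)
Your proof is correct. The paper does not give its own proof of this lemma; it simply cites \cite{HIN} Lemma 5.12, and your argument is the standard elementary one that underlies that citation. One tiny phrasing slip: off the support of $\psi_\circ$ the right-hand side $|k+l|_*^{-\theta}|l|_*^\theta$ does not vanish (it is always positive since $|\cdot|_*\ge 1$), but the inequality still holds trivially because the left-hand side is zero.
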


\label{sec:Differential_terms}
In this section, we will show that $\CD_\epsilon $ converges to $\CD$ as $\epsilon$ tends to 0.
\begin{thm}
    \label{thm:Convergence_of_D}
    For every small $\kappa > 0$ and $p\geq 1$, we have
    \[
        \lim_{\epsilon\downarrow 0}\EE[\|\CD_\epsilon - \CD\|_{C_T\CC^{-\kappa}}^p] = 0.
    \]
\end{thm}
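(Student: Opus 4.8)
The plan is to split the difference according to the decompositions \eqref{eq:defi_of_CD_epsilon} and \eqref{eq:defi_of_CD},
\[
 \CD_\epsilon-\CD = \CI_4\big(f^{\epsilon,\CD}-f^{\CD}\big) + 4\,\CI_2\big(f^{\epsilon,\CD,1}-f^{\CD,1}\big) + (c_\epsilon-c),
\]
and to treat each summand separately. The constant contributes only $|c_\epsilon-c|$ to the $\CC^{-\kappa}$-norm, so it is enough to show that $\CI_4(f^{\epsilon,\CD}-f^{\CD})$ and $\CI_2(f^{\epsilon,\CD,1}-f^{\CD,1})$ tend to $0$ in $L^p(\Omega;C_T\CC^{-\kappa})$ and that $c_\epsilon\to c$ in $\R$. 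Since all these random variables live in a fixed finite-order Wiener chaos, by hypercontractivity it suffices to control $2p$-th moments, and for that I would use Proposition \ref{prop:criteria}: the difference of two good kernels with the same number of arguments is again a good kernel, whose $Q_0$ is the difference of the two $Q_0$'s, so everything reduces to verifying the hypothesis of Proposition \ref{prop:criteria} for these difference kernels with constant $C=C(\EX)\,\epsilon^{\theta}$ for some $\theta>0$; then the moment bound is $\lesssim\epsilon^{2p\theta}\to0$.

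The heart is the kernel estimate. First I would record elementary bounds on the Ornstein--Uhlenbeck kernel $Q_0^\epsilon(\sigma,k)=(-2\pi i\sigma+\alpha_\epsilon(k))^{-1}$, $\alpha_\epsilon(k)=4\pi^2|k|^2+16\pi^4\epsilon|k|^4+1$: from $|{-2\pi i\sigma+\alpha_\epsilon(k)}|\gtrsim|\mu|_*^2+\epsilon|k|_*^4$ and $\min(1,x)\le x^{\theta}$ for $x\ge0$, $0\le\theta\le1$, one gets, uniformly in $\epsilon$,
\[
 |Q_0^\epsilon(\mu)|\lesssim|\mu|_*^{-2},\qquad |Q_0^\epsilon(\mu)-Q_0(\mu)|\lesssim\epsilon^{\theta}|k|_*^{4\theta}|\mu|_*^{-2-2\theta},\qquad \epsilon|k|_*^{4}\,|Q_0^\epsilon(\mu)|\lesssim\epsilon^{\theta}|k|_*^{4\theta}|\mu|_*^{-2\theta}.
\]
Feeding these into the explicit formulas for $Q_0^{\epsilon,\ICC}$, $Q_0^{\epsilon,\nabla\ICC}$, $Q_0^{\epsilon,\Delta\ICC}$ and then into the resonant-product formula (\cite{HIN}, Proposition 5.7), the kernel $Q_0^{\epsilon,\CD}-Q_0^{\CD}$ becomes a finite sum of terms each carrying exactly one factor of type $Q_0^\epsilon-Q_0$ (from telescoping the difference $f^{(\nabla\ICC_\epsilon,\nabla\ICC_\epsilon,0)}-f^{\CD}$) or one factor $\epsilon|k|_*^4Q_0^\epsilon$ (absorbing the $\epsilon|k_{[\,\cdot\,]}|^4$ coming from the two Laplacians in $\epsilon f^{(\Delta\ICC_\epsilon,\Delta\ICC_\epsilon,0)}$ into one $Q_0^\epsilon$-factor), with the remaining factors bounded uniformly in $\epsilon$ by powers of $|\cdot|_*$ together with $\psi_\circ$. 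Integrating out the contracted momenta and then over the hyperplane $\{\mu_{[1,\dots,4]}=\mu\}$ with Lemma \ref{lem:useful_estimates_1} and the support/bound of $\psi_\circ$ from Lemma \ref{lem:useful_estimates_2} — exactly the $\epsilon=0$ power counting of \cite{HIN} and \cite{JP21} — then yields
\[
 \int_{\mu_{[1,\dots,4]}=\mu}\big|Q_0^{\epsilon,\CD}(\mu_{1,\dots,4})-Q_0^{\CD}(\mu_{1,\dots,4})\big|^2\lesssim\epsilon^{2\theta}\,|\mu|_*^{-2\gamma}|k|_*^{-2\delta}
\]
with $\gamma>1$ and $-\tfrac{5}{2}+\gamma+\delta>-\kappa$, provided $\theta>0$ is small enough (the $\epsilon=0$ estimate leaves a margin, and each $|k|_*^{4\theta}$ only erodes the decay by $O(\theta)$). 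The same computation, with one further internal integration, handles $Q_0^{\epsilon,\CD,1}-Q_0^{\CD,1}$. Proposition \ref{prop:criteria} then gives $\EE[\|\CI_4(f^{\epsilon,\CD}-f^{\CD})\|_{C_T\CC^{-\kappa}}^{2p}]+\EE[\|\CI_2(f^{\epsilon,\CD,1}-f^{\CD,1})\|_{C_T\CC^{-\kappa}}^{2p}]\lesssim\epsilon^{2p\theta}$.

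It remains to prove $c_\epsilon\to c$. The constant $c_\epsilon$ of \eqref{eq:defi_of_CD_epsilon} is a finite linear combination of explicit integrals over $E^2$ of products of $Q_0^\epsilon$'s (the gradient and Laplacian analogues of the renormalization constant $b_\epsilon=\EE[\ICC_\epsilon\rs\XX_\epsilon]$); each piece diverges as $\epsilon\downarrow0$, but the divergences cancel in $c_\epsilon$. The cleanest route is the identity used in \cite{JP21}, obtained by integration by parts together with $\CL_\epsilon\ICC_\epsilon=\XX_\epsilon$ — here the extra $\epsilon\Delta^2$ in $\CL_\epsilon$ is exactly what makes $\nabla\ICC_\epsilon\rs\nabla\ICC_\epsilon+\epsilon\Delta\ICC_\epsilon\rs\Delta\ICC_\epsilon$, rather than $\nabla\ICC_\epsilon\rs\nabla\ICC_\epsilon$ alone, the correct quantity to subtract $b_\epsilon$ from. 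Writing $Q_0^\epsilon=Q_0+(Q_0^\epsilon-Q_0)$ and using the pointwise bounds above to dominate the (cancelled) integrand uniformly in $\epsilon$ by an integrable function via Lemma \ref{lem:useful_estimates_1}, dominated convergence gives $c_\epsilon\to c$, with $c$ the corresponding $\epsilon=0$ integral. Combining the three parts proves Theorem \ref{thm:Convergence_of_D}. I expect the main obstacle to be the bookkeeping of the middle step: tracking the single $\epsilon^{\theta}$-gaining factor through all resonant contractions while keeping $\gamma>1$ and enough frequency decay, and in particular handling the $\epsilon\Delta\ICC_\epsilon\rs\Delta\ICC_\epsilon$ contribution, whose naive power count is too rough and must be tamed using $\epsilon|k|_*^4|Q_0^\epsilon(\mu)|\lesssim\epsilon^\theta|k|_*^{4\theta}|\mu|_*^{-2\theta}$; the cancellation of the renormalization constants is the second delicate point, but it follows \cite{JP21}.
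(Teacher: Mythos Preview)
Your plan is correct and matches the paper's proof: decompose $\CD_\epsilon-\CD$ into its chaos components via \eqref{eq:defi_of_CD_epsilon}--\eqref{eq:defi_of_CD}, apply Proposition~\ref{prop:criteria} to each It\^o--Wiener piece after proving kernel bounds of the form $\int_{\mu_{[1,\dots,p]}=\mu}|Q_0^{\epsilon,\tau}-Q_0^{\tau}|^2\lesssim\epsilon^\lambda|\mu|_*^{-2\gamma}|k|_*^{-2\delta}$ using exactly the elementary $Q_0^\epsilon$-estimates you list (recorded in the paper as \eqref{eq:Q0_difference}--\eqref{eq:Q0}) together with Lemmas~\ref{lem:useful_estimates_1} and~\ref{lem:useful_estimates_2}. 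The one departure is the constant term: rather than an integration-by-parts identity and dominated convergence, the paper (Proposition~\ref{prop:Convergence_of_D_3}) computes $c_\epsilon$ explicitly as $-2\sum_{k_1,k_2}\big(4\alpha_\epsilon(k_1)\alpha_\epsilon(k_2)\alpha_\epsilon(k_{[1,2]})(\alpha_\epsilon(k_1)+\alpha_\epsilon(k_2)+\alpha_\epsilon(k_{[1,2]}))\big)^{-1}$, from which monotone convergence immediately gives $c_\epsilon\to c$.
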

We prove Theorem \ref{thm:Convergence_of_D} as a consequence of Propositions \ref{prop:Convergence_of_D_1} to \ref{prop:Convergence_of_D_3}.
Recall the It\^{o}-Wiener integral expression (\ref{eq:defi_of_CD_epsilon}) and (\ref{eq:defi_of_CD}) of $\CD_\epsilon$ and $\CD$, respectively.
We show the convergence of the fourth-order It\^{o}-Wiener integral term in Proposition \ref{prop:Convergence_of_D_1}, the second-order It\^{o}-Wiener integral term in Proposition \ref{prop:Convergence_of_D_2} and the constant term in Proposition \ref{prop:Convergence_of_D_3}, respectively.
Before the proof, we introduce the equality and inequalities used frequently.
\begin{align}
    (Q_0 - Q_0^\epsilon)(\mu) &= \epsilon 16 \pi^4|k|^4 Q_0(\mu)Q_0^\epsilon(\mu), \label{eq:Q0_difference} \\
    |\mu|^2_*+\epsilon |k|^4 &\lesssim |Q_0^\epsilon(\mu) |^{-1},\label{eq:Q0_epsilon}\\
    |Q_0^\epsilon(\mu)|&\lesssim|Q_0(\mu)| \lesssim |\mu|_*^{-2}. \label{eq:Q0}
\end{align}
We use the equality (\ref{eq:Q0_difference}) in order to estimate $|(Q_0 - Q_0^\epsilon)(\mu)|$ with $\epsilon$.
We use the inequality (\ref{eq:Q0_epsilon}) in order to estimate $\epsilon |k|^4$ uniformly on $0<\epsilon \leq 1$.
We use the inequality (\ref{eq:Q0}) in order to estimate $|Q_0^\epsilon(\mu)|$ uniformly on $0<\epsilon\leq 1$.

\begin{prop}
    \label{prop:Convergence_of_D_1}
    For every small $\lambda, \kappa >0$, we have
    \begin{align*}
        \int_{\mu_{[1,\dots,4]}=\mu} |Q_0^{\CD}(\mu_{1,\dots,4}) |^2
        &\lesssim |\mu|_{*}^{-5+\kappa} |k|_{*}^{-\kappa},\\
        \int_{\mu_{[1,\dots,4]}=\mu} |Q_0^{\epsilon, \CD} (\mu_{1,\dots,4}) |^2
        &\lesssim |\mu|_{*}^{-5+\kappa} |k|_{*}^{-\kappa},\\
        \int_{\mu_{[1,\dots,4]}=\mu} |(Q_0^{\CD} -Q_0^{\epsilon, \CD}) (\mu_{1,\dots,4}) |^2
        &\lesssim \epsilon^\lambda|\mu|_{*}^{-5+12\lambda}.
    \end{align*}
\end{prop}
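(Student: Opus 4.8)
The plan is to estimate the hyperplane integral $\int_{\mu_{[1,\dots,4]}=\mu}|Q_0^{\CD}(\mu_{1,\dots,4})|^2$ by unwinding the definition of $f^{\CD}$ (and $f^{\epsilon,\CD}$) as a resonant product of two copies of $\nabla\ICC$ (resp. $\nabla\ICC_\epsilon$ plus $\epsilon$ times $\Delta\ICC_\epsilon$). Recall $Q_0^{(\nabla\ICC,\nabla\ICC,0)}(\mu_{1,2,3,4})=\psi_\circ(k_{[1,2]},k_{[3,4]})\,2\pi i k_{[1,2]}\cdot 2\pi i k_{[3,4]}\,Q_0^{\ICC}(\mu_{1,2})Q_0^{\ICC}(\mu_{3,4})$, where $Q_0^{\ICC}(\mu_{1,2})=Q_0(\mu_{[1,2]})Q_0(\mu_1)Q_0(\mu_2)$. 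So up to constants the integrand is a product of factors $|k_{[1,2]}|\,|k_{[3,4]}|\,|\psi_\circ(k_{[1,2]},k_{[3,4]})|\,|Q_0(\mu_{[1,2]})|\,|Q_0(\mu_{[3,4]})|\,\prod_{j=1}^4|Q_0(\mu_j)|$. First I would integrate out the two ``internal'' time-momentum variables: fixing $\mu_{[1,2]}=:\nu_1$ and $\mu_{[3,4]}=:\nu_2$ with $\nu_1+\nu_2=\mu$, the inner integrals $\int_{\mu_1+\mu_2=\nu_1}|Q_0(\mu_1)|^2|Q_0(\mu_2)|^2\,d\mu_1$ are handled by Lemma \ref{lem:useful_estimates_1} together with \eqref{eq:Q0}: each such convolution costs $|\nu_i|_*^{-2\cdot 2-2\cdot 2+5}=|\nu_i|_*^{-3}$ (here $\alpha=\beta=4<5$, $\alpha+\beta=8>5$). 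Combining with $|k_{[1,2]}|^2|Q_0(\nu_1)|^2\lesssim |\nu_1|_*^{2}\,|\nu_1|_*^{-4}=|\nu_1|_*^{-2}$ gives a factor $|\nu_1|_*^{-5}$, and similarly $|\nu_2|_*^{-5}$. Then I would absorb $\psi_\circ$ via Lemma \ref{lem:useful_estimates_2}: since $\psi_\circ(k_{[1,2]},k_{[3,4]})$ is supported where $|\nu_1|_*\simeq|\nu_2|_*$ and satisfies $|\psi_\circ|\lesssim |k|_*^{-\theta}|\nu_2|_*^{\theta}$ for any $\theta>0$, this trades spatial decay at $|k|_*$ against growth in $|\nu_2|_*$. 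Finally the outer integral $\int_E |\nu_1|_*^{-5+\theta}|\mu-\nu_1|_*^{-5+\theta}\,d\nu_1$ — again Lemma \ref{lem:useful_estimates_1}, now with $\alpha=\beta=5-\theta$, requiring $\theta<5/2$ — yields $|\mu|_*^{-5+2\theta}$, and choosing $\theta=\kappa/2$ produces $|\mu|_*^{-5+\kappa}|k|_*^{-\kappa}$, as claimed. The $Q_0^{\epsilon,\CD}$ bound is literally the same computation with $Q_0$ replaced by $Q_0^\epsilon$ everywhere, plus the extra summand $\epsilon^2 f^{(\Delta\ICC_\epsilon,\Delta\ICC_\epsilon,0)}$; using \eqref{eq:Q0} ($|Q_0^\epsilon|\lesssim|Q_0|$) for all $Q_0^\epsilon$ factors and \eqref{eq:Q0_epsilon} (so $\epsilon|k_{[1,2]}|^4|Q_0^\epsilon(\nu_1)|\lesssim |k_{[1,2]}|^2|Q_0^\epsilon(\nu_1)|^{1/2}\cdot$ something bounded, i.e. $\epsilon|k|^4\lesssim|Q_0^\epsilon|^{-1}$) to control the $\epsilon$-Laplacian terms uniformly, one gets the identical bound; note the $(-4\pi^2)|k_{[1,2]}|^2$ from $\Delta\ICC$ is morally two copies of the $2\pi i k_{[1,2]}$ from $\nabla\ICC$, so no new growth appears.

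For the difference estimate I would write $Q_0^{\CD}-Q_0^{\epsilon,\CD}$ as a telescoping sum: replacing one $Q_0$ factor at a time by $Q_0^\epsilon$, and also accounting for the extra $\epsilon^2(\Delta\ICC_\epsilon\rs\Delta\ICC_\epsilon)$ piece. Each telescoping difference contains one factor of the form $(Q_0-Q_0^\epsilon)(\mu_j)=\epsilon\,16\pi^4|k_j|^4 Q_0(\mu_j)Q_0^\epsilon(\mu_j)$ by \eqref{eq:Q0_difference}, with all remaining factors estimated by $|Q_0|$ or $|Q_0^\epsilon|\lesssim|Q_0|$ as before. The new factor $\epsilon|k_j|^4|Q_0(\mu_j)||Q_0^\epsilon(\mu_j)|$: I would bound it by $\epsilon^{\lambda}|k_j|^{4\lambda}|Q_0(\mu_j)|^{1+\lambda}|Q_0^\epsilon(\mu_j)|^{1-\lambda}\cdot(\epsilon|k_j|^4|Q_0^\epsilon(\mu_j)|)^{1-\lambda}$ and then use \eqref{eq:Q0_epsilon} to kill the last parenthesis, leaving $\epsilon^\lambda|k_j|^{4\lambda}|Q_0(\mu_j)|^{1+\lambda}|Q_0^\epsilon(\mu_j)|^{1-\lambda}\lesssim \epsilon^\lambda|\mu_j|_*^{8\lambda}|\mu_j|_*^{-4}$, i.e. we pay $|\mu_j|_*^{8\lambda}$ extra growth (hence at most $|\mu|_*^{12\lambda}$-type loss after all convolutions — the precise bookkeeping gives the stated exponent $-5+12\lambda$) in exchange for the gain $\epsilon^\lambda$. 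Running the same sequence of applications of Lemmas \ref{lem:useful_estimates_1} and \ref{lem:useful_estimates_2} as above, with slightly worsened exponents, gives $\epsilon^\lambda|\mu|_*^{-5+12\lambda}$. Summing the finitely many telescoping terms preserves the bound.

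The main obstacle, and where care is genuinely needed rather than routine, is the bookkeeping in the difference estimate: one must distribute the single extra power of $\epsilon$ (or $\epsilon^\lambda$ after interpolation) so that, after the convolution lemmas are applied, the accumulated polynomial loss is still integrable at infinity in the final outer integral — i.e. that the exponent $5-12\lambda$ stays above the threshold where Lemma \ref{lem:useful_estimates_1} and the final $\mu$-integration converge (this forces $\lambda$ small, consistent with the statement). A secondary technical point is handling the $\epsilon\Delta\ICC_\epsilon\rs\Delta\ICC_\epsilon$ contribution uniformly: here the naive bound $|k|^4$ is not summable, so one is forced to spend the available $\epsilon$ (via \eqref{eq:Q0_epsilon}, which says exactly $\epsilon|k|^4\lesssim|Q_0^\epsilon|^{-1}\lesssim|\mu|_*^{2}+\epsilon|k|^4$, absorbing two powers of $|k|$) rather than leaving it as a prefactor — for the difference estimate this is precisely why only $\epsilon^\lambda$, not $\epsilon^1$, survives. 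Everything else is a mechanical iteration of the two cited convolution lemmas.
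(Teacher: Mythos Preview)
Your overall strategy coincides with the paper's: derive the pointwise bound
\[
|Q_0^{\epsilon,\CD}(\mu_{1,\dots,4})|\lesssim |\psi_\circ(k_{[1,2]},k_{[3,4]})|\,|\mu_{[1,2]}|_*^{-1}|\mu_{[3,4]}|_*^{-1}\prod_{j=1}^4|\mu_j|_*^{-2}
\]
via \eqref{eq:Q0_epsilon}--\eqref{eq:Q0}, then apply Lemma~\ref{lem:useful_estimates_1} twice (inner convolution $\to|\nu_i|_*^{-3}$, combined with $|\nu_i|_*^{-2}$ gives $|\nu_i|_*^{-5}$) and Lemma~\ref{lem:useful_estimates_2} with $\theta=\kappa/2$ for the outer step. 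For the difference the paper does exactly your (A)+(B) split: (A) is $|k_{[1,2]}||k_{[3,4]}|$ times the telescoped product $Q_0^{\ICC}Q_0^{\ICC}-Q_0^{\epsilon,\ICC}Q_0^{\epsilon,\ICC}$, and (B) is the extra $\epsilon|k_{[1,2]}|^2|k_{[3,4]}|^2\,Q_0^{\epsilon,\ICC}Q_0^{\epsilon,\ICC}$.

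Two corrections. First, the $\Delta\ICC_\epsilon$ contribution carries prefactor $\epsilon$, not $\epsilon^2$. Second, your interpolation identity
\[
\epsilon|k_j|^4|Q_0||Q_0^\epsilon|\;\le\;\epsilon^{\lambda}|k_j|^{4\lambda}|Q_0|^{1+\lambda}|Q_0^\epsilon|^{1-\lambda}\bigl(\epsilon|k_j|^4|Q_0^\epsilon|\bigr)^{1-\lambda}
\]
is false: the right side equals $\epsilon|k_j|^4|Q_0|^{1+\lambda}|Q_0^\epsilon|^{2-2\lambda}$, and since $|Q_0|,|Q_0^\epsilon|\le 1$ this need not dominate the left side. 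The paper bypasses this by interpolating the difference itself,
\[
|(Q_0-Q_0^\epsilon)(\mu)|\le |(Q_0-Q_0^\epsilon)(\mu)|^{\lambda/2}|(Q_0-Q_0^\epsilon)(\mu)|^{1-\lambda/2}\lesssim \epsilon^{\lambda/2}|\mu|_*^{-2+\lambda},
\]
using $|Q_0-Q_0^\epsilon|\lesssim\epsilon$ and $|Q_0-Q_0^\epsilon|\lesssim|\mu|_*^{-2}$, and then applies this uniformly to all six $Q_0$-factors; the accumulated loss of $\lambda$ per factor is what produces the exponent $-5+12\lambda$ after the two convolutions. Your telescoping (one difference factor at a time) with the corrected bound $|(Q_0-Q_0^\epsilon)(\mu_j)|\lesssim\epsilon^\lambda|\mu_j|_*^{-2+2\lambda}$ would actually yield a sharper exponent and hence also suffices.
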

\begin{proof}
    Recall the definitions of $Q_0^{\CD}$ and $Q_0^{\epsilon,\CD}$
    \begin{align}
        &Q_0^{\CD}(\mu_{1,\dots,4})\notag\\
        &= \psi_{\circ}(k_{[1,2]},k_{[3,4]})(-4\pi^2k_{[1,2]}\cdot k_{[3,4]}) Q_0^{\ICC}(\mu_{1,2})Q_0^{\ICC}(\mu_{3,4}), \label{eq:Q0_D} \\
        &Q_0^{\epsilon,\CD}(\mu_{1,\dots,4}) \notag\\
        &= \psi_{\circ}(k_{[1,2]},k_{[3,4]})  \notag \\
        & \quad \times\{ -4\pi^2k_{[1,2]}\cdot k_{[3,4]} + \epsilon 16 \pi^4 |k_{[1,2]}|^2|k_{[3,4]}|^2\} Q_0^{\epsilon,\ICC}(\mu_{1,2})Q_0^{\epsilon,\ICC}(\mu_{3,4}), \label{eq:Q0_epsilon_D}
    \end{align}
    where
    \[
        Q_0^{\ICC}(\mu_{1,2}) = Q_0(\mu_{[1,2]})Q_0(\mu_1)Q_0(\mu_2), \quad    
        Q_0^{\epsilon, \ICC}(\mu_{1,2}) = Q_0^{\epsilon}(\mu_{[1,2]})Q_0^{\epsilon}(\mu_1)Q_0^{\epsilon}(\mu_2).
    \]

    For $Q_0^{\epsilon,\CD}$, by using (\ref{eq:Q0_epsilon}) and (\ref{eq:Q0}), we have
    \begin{align}
        &|Q_0^{\epsilon,\CD}(\mu_{1,\dots,4})| \notag \\
        &\lesssim |\psi_{\circ}(k_{[1,2]},k_{[3,4]}) |\notag\\
        & \quad \times( |k_{[1,2]}|+ \epsilon|k_{[1,2]}|^2)|( |k_{[3,4]}|+ \epsilon|k_{[3,4]}|^2) |Q_0^{\epsilon,\ICC}(\mu_{1,2})||Q_0^{\epsilon,\ICC}(\mu_{3,4})| \notag\\
        &\lesssim |\psi_{\circ}(k_{[1,2]},k_{[3,4]}) ||Q_0^\epsilon(\mu_{[1,2]})|^{1/2}|Q_0^\epsilon(\mu_{[3,4]})|^{1/2} \prod_{j=1}^4|\mu_j|_*^{-2} \notag\\
        &\lesssim |\psi_{\circ}(k_{[1,2]},k_{[3,4]})| |\mu_{[1,2]}|_*^{-1}|\mu_{[3,4]}|_*^{-1}  \prod_{j=1}^4|\mu_j|_*^{-2}. \label{eq:Q0_epsilon_D_ineq}
    \end{align}
    By using Lemmas \ref{lem:useful_estimates_1} and \ref{lem:useful_estimates_2}, we have
    \begin{align*}
        &\int_{\mu_{[1,\dots,4]=\mu}} |Q_0^{\epsilon,\CD}(\mu_{1,\dots,4})|^2 \\
        & \lesssim \int_{\mu'_{[1,2]}=\mu} |\psi_{\circ}(k'_1,k'_2)|^2 |\mu'_1|_*^{-2}|\mu'_2|_*^{-2} \\
        &\qquad\qquad \times \left( \int_{\mu_{[1,2]}=\mu'_1} \prod_{j=1}^2|\mu_j|_*^{-4} \right)\left( \int_{\mu_{[3,4]}=\mu'_2}\prod_{j=3}^4|\mu_j|_*^{-4} \right) \\
        & \lesssim\int_{\mu'_{[1,2]}=\mu} |\psi_{\circ}(k'_1,k'_2)|^2 |\mu'_1|_*^{-5}|\mu'_2|_*^{-5}  \\
        &\lesssim |k|_*^{-\kappa} \int_{\mu'_{[1,2]}=\mu}  |\mu'_1|_*^{-5+\kappa/2}|\mu'_2|_*^{-5+\kappa/2} \\
        &\lesssim |\mu|_*^{-5+\kappa} |k|_*^{-\kappa}.
    \end{align*}
    For $Q_0^{\epsilon,\CD}$, from (\ref{eq:Q0}), we have
    \begin{align*}
        |Q_0^\CD(\mu_{1,\dots,4})|
        &\lesssim |\psi_{\circ}(k_{[1,2]},k_{[3,4]}) | |k_{[1,2]}||k_{[3,4]}| |Q_0^{\ICC}(\mu_{1,2})||Q_0^{\ICC}(\mu_{3,4})| \\
        & \lesssim |\psi_{\circ}(k_{[1,2]},k_{[3,4]})| |\mu_{[1,2]}|_*^{-1}|\mu_{[3,4]}|_*^{-1}  \prod_{j=1}^4|\mu_j|_*^{-2}.
    \end{align*}
    Thus, we obtain the estimates of $\int_{\mu_{[1,\dots,4]}=\mu} |Q_0^{\CD} (\mu_{1,\dots,4}) |^2$ in the same way.

    For the difference between $Q_0^\CD$ with $Q_0^{\epsilon,\CD}$, we have
    \begin{align*}
        &|(Q_0^\CD - Q_0^{\epsilon,\CD})(\mu_{1,\dots,4}) | \\
        &= | \psi_{\circ}(k_{[1,2],}k_{[3,4]})| \\
        &\quad \times \Big|(-4\pi^2k_{[1,2]}\cdot k_{[3,4]}) Q_0^{\ICC}(\mu_{1,2})Q_0^{\ICC}(\mu_{3,4}) \\
        &\quad \quad - (-4\pi^2k_{[1,2]}\cdot k_{[3,4]} + \epsilon 16\pi^4|k_{[1,2]}|^2|k_{[3,4]}|^2) Q_0^{\epsilon,\ICC}(\mu_{1,2})Q_0^{\epsilon,\ICC}(\mu_{3,4})\Big| \\
        &\lesssim  | \psi_{\circ}(k_{[1,2]},k_{[3,4]})| \\
        &\quad \times\Big\{|k_{[1,2]}| |k_{[3,4]}| \Big| Q_0^{\ICC}(\mu_{1,2})Q_0^{\ICC}(\mu_{3,4}) - Q_0^{\epsilon, \ICC}(\mu_{1,2})Q_0^{\epsilon,\ICC}(\mu_{3,4}) \Big| \\
        &\quad\qquad + \epsilon |k_{[1,2]}|^2|k_{[3,4]}|^2 |Q_0^{\epsilon,\ICC}(\mu_{1,2})Q_0^{\epsilon,\ICC}(\mu_{3,4})| \Big\} \\
        &=: (A) + (B).
    \end{align*}
    First, we consider $(A)$.
    From (\ref{eq:Q0_difference}) and (\ref{eq:Q0}), we have
    \begin{align*}
        &|(Q_0-Q_0^\epsilon)(\mu)| \leq 2|Q_0(\mu)| \lesssim |\mu|_*^{-2},\\
        &|(Q_0 - Q_0^\epsilon)(\mu)| \lesssim \epsilon |k|^4|Q_0||Q_0^\epsilon| \lesssim \epsilon |k|^4|Q_0|^2 \lesssim \epsilon.
    \end{align*}
    By combining these inequalities, we have
    \[
        |(Q_0 - Q_0^\epsilon)(\mu)| = |(Q_0 - Q_0^\epsilon)(\mu)|^{\lambda/2} |(Q_0 - Q_0^\epsilon)(\mu)|^{1-\lambda/2} \lesssim \epsilon^{\lambda/2} |\mu|_*^{-2+\lambda}
    \]
    for every small $\lambda > 0$.
    From this, it follows
    \begin{multline}
        | Q_0^{\ICC}(\mu_{1,2})Q_0^{\ICC}(\mu_{3,4}) - Q_0^{\epsilon, \ICC}(\mu_{1,2})Q_0^{\epsilon,\ICC}(\mu_{3,4}) | \\
        \lesssim \epsilon^{\lambda/2} |\mu_{[1,2]}|_*^{-2+\lambda}|\mu_{[3,4]}|_*^{-2+\lambda} \prod_{j=1}^4 |\mu_j|_*^{-2+\lambda}. \label{eq:Q0_D_1}
    \end{multline}
    Hence, we have
    \begin{align*}
        (A)
        &\lesssim \epsilon^{\lambda/2} |\mu_{[1,2]}|_{*}^{-1+\lambda}|\mu_{[3,4]}|_{*}^{-1+\lambda} \prod_{j=1}^4 |\mu_j|_{*}^{-2+\lambda}.
    \end{align*}
    Next, we consider $(B)$.
    From (\ref{eq:Q0_epsilon}) and (\ref{eq:Q0}), we have
    \begin{align*}
        (B)
        &\lesssim  \epsilon|k_{[1,2]}|^{2}|k_{[3,4]}|^{2}|Q_0^{\epsilon}(\mu_{[1,2]})||Q_0^{\epsilon}(\mu_{[3,4]})| \prod_{j=1}^{4} |Q_0^{\epsilon}(\mu_j)|\\
        &\lesssim \epsilon^{\lambda/2}|k_{[1,2]}|^{\lambda}|k_{[3,4]}|^{\lambda}(\epsilon |k_{[1,2]}|^4)^{1/2-\lambda/4} (\epsilon |k_{[3,4]}|^4)^{1/2-\lambda/4} \\
        & \quad \times |Q_0^{\epsilon}(\mu_{[1,2]})||Q_0^{\epsilon}(\mu_{[3,4]})| \prod_{j=1}^{4} |\mu_j|_*^{-2} \\
        &\lesssim  \epsilon^{\lambda/2}|k_{[1,2]}|^{\lambda}|k_{[3,4]}|^{\lambda}  |Q_0^{\epsilon}(\mu_{[1,2]})|^{1/2+\lambda/4}|Q_0^{\epsilon}(\mu_{[3,4]})|^{1/2+\lambda/4} \prod_{j=1}^{4} |\mu_j|_*^{-2} \\
        &\lesssim  \epsilon^{\lambda/2} |\mu_{[1,2]}|_*^{-1+\lambda/2}|\mu_{[3,4]}|_*^{-1+\lambda/2} \prod_{j=1}^{4} |\mu_j|_*^{-2} \\
        &\lesssim \epsilon^{\lambda/2} |\mu_{[1,2]}|_{*}^{-1+\lambda}|\mu_{[3,4]}|_{*}^{-1+\lambda} \prod_{j=1}^4 |\mu_j|_{*}^{-2+\lambda}.
    \end{align*}
    By using Lemma \ref{lem:useful_estimates_1} and \ref{lem:useful_estimates_2}, it holds
    \begin{align*}
        &\int_{\mu_{[1,\dots,4]}=\mu}|(Q_0^\CD - Q_0^{\epsilon,\CD})(\mu_{1,\dots,4}) |^2 \\
        &\lesssim \epsilon^{\lambda}  \int_{\mu'_{[1,2]}=\mu}|\mu'_{1}|_*^{-2+2\lambda}|\mu'_{2}|_*^{-2+2\lambda}\\
        &\quad \quad \times \left(\int_{\mu_{[1,2]}=\mu'_1} |\mu_1|_*^{-4+2\lambda}|\mu_2|_*^{-4+2\lambda}\right) \left(\int_{\mu_{[3,4]}=\mu'_2} |\mu_3|_*^{-4+2\lambda}|\mu_4|_*^{-4+2\lambda} \right) \\
        &\lesssim \epsilon^{\lambda} \int_{\mu'_{[1,2]}=\mu}|\mu'_{1}|_*^{-5+6\lambda}|\mu'_{2}|_*^{-5+6\lambda}\\
        &\lesssim  \epsilon^{\lambda}|\mu|_*^{-5+12\lambda}.
    \end{align*}
\end{proof}

\begin{prop}
    \label{prop:Convergence_of_D_2}
    For every small $\lambda, \kappa >0$, we have
    \begin{align*}
        \int_{\mu_{[1,3]}=\mu}|Q_0^{\CD,1}(\mu_{1,3})|^2
        &\lesssim |\mu|_*^{-4+\kappa}|k|_*^{-1-\kappa},\\
        \int_{\mu_{[1,3]}=\mu}|Q_0^{\epsilon,\CD,1}(\mu_{1,3})|^2
        &\lesssim |\mu|_*^{-4+\kappa}|k|_*^{-1-\kappa},\\
        \int_{\mu_{[1,3]}=\mu} |(Q_0^{\CD,1} - Q_0^{\epsilon, \CD,1}) (\mu_{1,3}) |^2
        &\lesssim \epsilon^\lambda |\mu|_{*}^{-4+14\lambda+\kappa} |k|_{*}^{-1-\kappa}.
    \end{align*}
\end{prop}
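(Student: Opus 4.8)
The plan is to prove Proposition~\ref{prop:Convergence_of_D_2} by the same scheme as Proposition~\ref{prop:Convergence_of_D_1}, the only genuinely new feature being the one extra contraction integral coming from the $r=1$ term. First I would record the kernels explicitly. Taking $\tau_1=\tau_2=\nabla\ICC$, and separately $\Delta\ICC$ weighted by $\epsilon$, in the $r=1$ contraction formula of \cite{HIN}, and writing the contracted variable as $\nu=(\rho,q)$, one gets
\[
    Q_0^{\epsilon,\CD,1}(\mu_{1,3}) = \int_E \psi_\circ(a,b)\,\big(-4\pi^2\, a\cdot b+\epsilon\,16\pi^4|a|^2|b|^2\big)\,Q_0^{\epsilon,\ICC}(\mu_1,\nu)\,Q_0^{\epsilon,\ICC}(\mu_3,-\nu)\,d\nu ,
\]
where $a=k_1+q$, $b=k_3-q$ (so that $a+b=k_{[1,3]}=k$), $Q_0^{\epsilon,\ICC}(\mu_1,\nu)=Q_0^\epsilon(\mu_1+\nu)Q_0^\epsilon(\mu_1)Q_0^\epsilon(\nu)$, and $Q_0^{\CD,1}$ is the $\epsilon=0$ version. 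Note $Q_0^\epsilon(\nu)Q_0^\epsilon(-\nu)=|Q_0^\epsilon(\nu)|^2$, so the contracted variable enters without oscillation.

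Next I would establish $\epsilon$-uniform pointwise bounds on the $\nu$-integrand, exactly as in the proof of Proposition~\ref{prop:Convergence_of_D_1}. By (\ref{eq:Q0_epsilon}) one controls the polynomial growth uniformly in $0<\epsilon\le1$ (handling the $\epsilon|a|^2$-type terms by $\epsilon|a|^2|Q_0^\epsilon(\mu_1+\nu)|^{1/2}\lesssim\epsilon^{1/2}$ and the $|a|$-type terms by $|a||Q_0^\epsilon(\mu_1+\nu)|^{1/2}\lesssim1$, and symmetrically for $b$), and then (\ref{eq:Q0}) gives, for both $Q_0^{\CD,1}$ and $Q_0^{\epsilon,\CD,1}$,
\[
    |\text{integrand}|\ \lesssim\ |\psi_\circ(a,b)|\,|\mu_1|_*^{-2}|\mu_3|_*^{-2}\,|\mu_1+\nu|_*^{-1}|\mu_3-\nu|_*^{-1}|\nu|_*^{-4}.
\]
For the difference $Q_0^{\CD,1}-Q_0^{\epsilon,\CD,1}$ I would split the integrand as $(A)+(B)$ as there: $(A)$ is the difference of the two $\ICC$-products, estimated by $|(Q_0-Q_0^\epsilon)(\mu)|\lesssim\epsilon^{\lambda/2}|\mu|_*^{-2+\lambda}$ (from (\ref{eq:Q0_difference}), (\ref{eq:Q0}) and interpolation), while $(B)$ is the residual $\epsilon|a|^2|b|^2$ term, estimated by trading $\epsilon|a|^4\lesssim|Q_0^\epsilon(\mu_1+\nu)|^{-1}$ against interpolation as in term $(B)$ there; both produce an overall $\epsilon^{\lambda/2}$ together with $\lambda$-shifted exponents.

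Then comes the genuinely new step, the $\nu$-integration. Using Lemma~\ref{lem:useful_estimates_2} I would extract a factor $|k|_*^{-\theta}$ of the output momentum $k=a+b$ at the cost of a $\theta$-shift on $|\mu_3-\nu|_*$ (legitimate since $|b|_*\le|\mu_3-\nu|_*$ and, on the support of $\psi_\circ$, $|a|_*\asymp|b|_*$), and then reduce the remaining $d\nu$-integral to iterated applications of Lemma~\ref{lem:useful_estimates_1}, using the elementary split $|\mu_1+\nu|_*^{-\alpha}|\mu_3-\nu|_*^{-\beta}\lesssim|\mu|_*^{-\alpha}|\mu_3-\nu|_*^{-\beta}+|\mu_1+\nu|_*^{-\alpha}|\mu|_*^{-\beta}$ (valid because $(\mu_1+\nu)+(\mu_3-\nu)=\mu$ is fixed) whenever a product must be decoupled. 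This yields a pointwise bound on $Q_0^{\CD,1}(\mu_{1,3})$ — and on the difference, with $\epsilon^{\lambda/2}$ in front — after which squaring and integrating over the hyperplane $\mu_{[1,3]}=\mu$ with one more application of Lemma~\ref{lem:useful_estimates_1} turns $\epsilon^{\lambda/2}$ into $\epsilon^\lambda$ and gives the three claimed estimates.

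The main obstacle I anticipate is precisely the bookkeeping in this last step: the $\nu$-integral is only barely convergent, so one must allocate the available decay carefully between $|\mu|_*$ and $|k|_*$ in order to land exactly on $|\mu|_*^{-4+\kappa}|k|_*^{-1-\kappa}$ (respectively $\epsilon^\lambda|\mu|_*^{-4+14\lambda+\kappa}|k|_*^{-1-\kappa}$) rather than on a bound that is too weak in one of the two variables; the algebraic identity $a\cdot b=\tfrac12\big(|k|^2-|a|^2-|b|^2\big)$, which lets the quadratic growth of the polynomial factor be reorganized against the $\ICC$-kernels, is what should make the correct allocation possible.
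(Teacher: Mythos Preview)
Your overall scheme matches the paper's: contract one leg, obtain the pointwise bound of the form in (\ref{eq:Q0_epsilon_D_ineq}), then handle the $\nu$-integral via Lemmas~\ref{lem:useful_estimates_1} and~\ref{lem:useful_estimates_2}, and treat the difference by the same $(A)+(B)$ decomposition as in Proposition~\ref{prop:Convergence_of_D_1}. The gap is in the $\nu$-integration. Your ``elementary split''
\[
|\mu_1+\nu|_*^{-\alpha}|\mu_3-\nu|_*^{-\beta}\lesssim|\mu|_*^{-\alpha}|\mu_3-\nu|_*^{-\beta}+|\mu_1+\nu|_*^{-\alpha}|\mu|_*^{-\beta}
\]
transfers decay from the $\nu$-dependent factors to the fixed external variable $|\mu|_*$. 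After you extract $|k|_*^{-\theta}$ from $\psi_\circ$ (costing a $\theta$-shift on one factor) and then apply this split, each surviving $\nu$-integral has total exponent at most $5$: for instance $\int_E|\mu_3-\nu|_*^{-1+\theta}|\nu|_*^{-4}\,d\nu$ has exponent sum $5-\theta<5$, and $\int_E|\mu_1+\nu|_*^{-1}|\nu|_*^{-4}\,d\nu$ has exactly $5$. Lemma~\ref{lem:useful_estimates_1} does not apply and both integrals diverge, so the argument breaks at exactly the ``barely convergent'' point you flagged.

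The paper avoids this by using Young's inequality $|\mu_{[1,2]}|_*^{-1}|\mu_{[3,-2]}|_*^{-1}\lesssim|\mu_{[1,2]}|_*^{-2}+|\mu_{[3,-2]}|_*^{-2}$ instead of your split: this concentrates the two units of decay on a \emph{single $\nu$-dependent} factor rather than sending one to $|\mu|_*$. One then absorbs the $\psi_\circ$-gain $|k_{[1,3]}|_*^{-\theta}$ into that same factor (legitimate since $|k_{[1,2]}|_*\asymp|k_{[3,-2]}|_*$ on $\mathrm{supp}\,\psi_\circ$), leaving a genuine two-factor convolution of total exponent $6-\theta>5$, to which Lemma~\ref{lem:useful_estimates_1} applies directly. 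For the difference estimate the paper likewise applies Young to the polynomial prefactors, writing $|k_{[1,2]}||k_{[3,-2]}|\lesssim|k_{[1,2]}|^2+|k_{[3,-2]}|^2$ in $(A)$ and $|k_{[1,2]}|^2|k_{[3,-2]}|^2\lesssim|k_{[1,2]}|^4+|k_{[3,-2]}|^4$ in $(B)$, so that the full growth is absorbed by one $Q_0^\epsilon$-factor via (\ref{eq:Q0_epsilon}). The identity $a\cdot b=\tfrac12(|k|^2-|a|^2-|b|^2)$ is not needed.
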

\begin{proof}
    Recall the definitions of $Q_0^{\CD,1}$ and $Q_0^{\epsilon,\CD,1}$
    \begin{align*}
        Q_0^{\CD,1}(\mu_{1,3})
        &= \int_E Q_0^{\CD}(\mu_{1,2,3,-2}) d\mu_2, \\
        Q_0^{\epsilon,\CD,1}(\mu_{1,3})
        &= \int_E Q_0^{\epsilon,\CD}(\mu_{1,2,3,-2}) d\mu_2,
    \end{align*}
    where the definitions of $Q_0^{\CD}$ and $Q_0^{\epsilon,\CD}$ are (\ref{eq:Q0_D}) and (\ref{eq:Q0_epsilon_D}), respectively.

    For $Q_0^{\epsilon,\CD,1}$, we have
    \begin{align*}
        &|Q_0^{\epsilon,\CD,1}(\mu_{1,3})|\\
        &\leq \int_{E} |Q_0^{\epsilon,\CD}(\mu_{1,2,3,-2})| d\mu_{2} \\
        &\lesssim \int_{E} |\psi_{\circ}(k_{[1,2]},k_{[3,-2]})| |\mu_{[1,2]}|_*^{-1}|\mu_{[3,-2]}|_*^{-1}  |\mu_1|_*^{-2}|\mu_2|_*^{-4}|\mu_3|_*^{-2} d\mu_{2} \\
        &\lesssim |\mu_1|_*^{-2}|\mu_3|_*^{-2}\int_{E}  |\psi_{\circ}(k_{[1,2]},k_{[3,-2]})|( |\mu_{[1,2]}|_*^{-2}+|\mu_{[3,-2]}|_*^{-2}) |\mu_2|_*^{-4} d\mu_{2} \\
        & \lesssim |\mu_1|_*^{-2}|\mu_3|_*^{-2} |k_{[1,3]}|_*^{-1/2-\kappa/2}  \int_{E}( |\mu_{[1,2]}|_*^{-3/2+\kappa/2}+|\mu_{[3,-2]}|_*^{-3/2+\kappa/2})|\mu_2|_*^{-4} d\mu_{2} \\
        &\lesssim |\mu_1|_*^{-2}|\mu_3|_*^{-2} |k_{[1,3]}|_*^{-1/2-\kappa/2} (|\mu_1|_*^{-1/2+\kappa/2}+|\mu_3|_*^{-1/2+\kappa/2}),
    \end{align*}
    where we use the inequality (\ref{eq:Q0_epsilon_D_ineq}) in the second inequality, Young inequality in the third inequality, Lemma \ref{lem:useful_estimates_2} in fourth inequality and Lemma \ref{lem:useful_estimates_1} in last inequality.
    Hence, from Lemma \ref{lem:useful_estimates_1}, we have
    \begin{align*}
        \int_{\mu_{[1,3]}=\mu}|Q_0^{\epsilon,\CD,1}(\mu_{1,3})|^2
        \lesssim |\mu|_*^{-4+\kappa}|k|_*^{-1-\kappa}.
    \end{align*}
    We can show the estimates of $Q_0^{\CD,1}$ in same way.

    For the difference between $Q_0^{\CD,1}$ with $Q_0^{\epsilon,\CD,1}$, we have
    \begin{align*}
        &|(Q_0^{\CD,1} - Q_0^{\epsilon,\CD,1})(\mu_{1,3}) | \\
        &\lesssim \int_E| \psi_{\circ}(k_{[1,2]},k_{[3,-2]})| \\
        &\qquad\times\Big\{|k_{[1,2]}| |k_{[3,-2]}| \Big| Q_0^{\ICC}(\mu_{1,2})Q_0^{\ICC}(\mu_{3,-2}) - Q_0^{\epsilon, \ICC}(\mu_{1,2})Q_0^{\epsilon,\ICC}(\mu_{3,-2}) \Big| \\
        &\qquad \qquad + \epsilon |k_{[1,2]}|^2|k_{[3,-2]}|^2 |Q_0^{\epsilon,\ICC}(\mu_{1,2})Q_0^{\epsilon,\ICC}(\mu_{3,-2})| \Big\}d\mu_2\\
        &=: (A)+(B).
    \end{align*}

    First, we consider $(A)$.
    From Young's inequality and (\ref{eq:Q0_D_1}), we have
    \begin{align*}
        (A)
        &\lesssim \int_E |\psi_{\circ}(k_{[1,2]},k_{[3,-2]}) | |k_{[1,2]}||k_{[3,-2]}|\\
        &\qquad \times |Q^{\ICC}_0(\mu_{1,2})Q^{\ICC}_0(\mu_{3,-2}) - Q^{\epsilon, \ICC}_0(\mu_{1,2})Q^{\epsilon,\ICC}_0(\mu_{3,-2}) | d\mu_2\\
        &\lesssim \epsilon^{\lambda/2} \int_E |\psi_{\circ}(k_{[1,2]},k_{[3,-2]}) | (|k_{[1,2]}|^2+|k_{[3,-2]} |^2)\\
        &\qquad\qquad \times |\mu_{[1,2]} |_{*}^{-2+\lambda}|\mu_{[3,-2]} |_{*}^{-2+\lambda} |\mu_1 |_{*}^{-2+\lambda}|\mu_2 |_{*}^{-4+2\lambda}|\mu_3 |_{*}^{-2+\lambda}d\mu_2 \\
        &=: (A_1) + (A_2).
    \end{align*}
    From Lemmas \ref{lem:useful_estimates_1} and \ref{lem:useful_estimates_2}, we have
    \begin{align*}
        (A_1)
        &\lesssim \epsilon^{\lambda/2} \int_E |\psi_{\circ}(k_{[1,2]},k_{[3,-2]}) | \\
        &\qquad\qquad \times |\mu_{[1,2]} |_{*}^{\lambda}|\mu_{[3,-2]} |_{*}^{-2+\lambda} |\mu_1 |_{*}^{-2+\lambda}|\mu_2 |_{*}^{-4+2\lambda}|\mu_3 |_{*}^{-2+\lambda}d\mu_2 \\
        & \lesssim \epsilon^{\lambda/2} \int_E |\psi_{\circ}(k_{[1,2]},k_{[3,-2]}) | \\
        &\qquad\qquad \times|\mu_{[3,-2]} |_{*}^{-2+\lambda} |\mu_1 |_{*}^{-2+2\lambda}|\mu_2 |_{*}^{-4+3\lambda}|\mu_3 |_{*}^{-2+\lambda}d\mu_2 \\
        &\lesssim \epsilon^{\lambda/2}|\mu_1 |_{*}^{-2+2\lambda} |\mu_3 |_{*}^{-2+\lambda}  |k_{[1,3]}|_*^{-1/2-\kappa/2}\\
        &\quad \times \int_E|\mu_{[3,-2]} |_{*}^{-3/2+\lambda+\kappa/2} |\mu_2 |_{*}^{-4+3\lambda}d\mu_2 \\
        &\lesssim \epsilon^{\lambda/2}|\mu_1 |_{*}^{-2+2\lambda} |\mu_3 |_{*}^{-5/2+5\lambda+\kappa/2}  |k_{[1,3]}|_*^{-1/2-\kappa/2},
    \end{align*}
    where we use $|\mu_{[1,2]}|_* \lesssim |\mu_1|_*+|\mu_2|_*$ in the second inequality.
    Similarly, we have
    \begin{align*}
        (A_2)
        \lesssim \epsilon^{\lambda/2} |\mu_1 |_{*}^{-5/2+5\lambda+\kappa/2} |\mu_3 |_{*}^{-2+2\lambda}  |k_{[1,3]}|_*^{-1/2-\kappa/2}.
    \end{align*}

    Next, we consider $(B)$.
    From Young inequality, it holds
    \begin{align*}
        (B)
        &\lesssim \epsilon \int_E |\psi_{\circ}(k_{[1,2]},k_{[3,-2]})| (|k_{[1,2]}|^4 + |k_{[3,-2]}|^4)\\
        &\qquad \qquad \times |Q_0^\epsilon(\mu_{[1,2]})| |Q_0^\epsilon(\mu_{[3,-2]})| |\mu_1|_*^{-2}|\mu_2|_*^{-4}|\mu_3|_*^{-2} d\mu_2 \\
        &=: (B_1) + (B_2).
    \end{align*}
    From (\ref{eq:Q0_epsilon}), Lemma \ref{lem:useful_estimates_1} and Lemma \ref{lem:useful_estimates_2}, we have
    \begin{align*}
        (B_1)
        &= \epsilon^{\lambda/2} \int_E |\psi_{\circ}(k_{[1,2]},k_{[3,-2]})||k_{[1,2]}|^{2\lambda} (\epsilon |k_{[1,2]}|^4)^{1-\lambda/2} \\
        &\qquad \qquad \times |Q_0^\epsilon(\mu_{[1,2]})| |Q_0^\epsilon(\mu_{[3,-2]})| |\mu_1|_*^{-2}|\mu_2|_*^{-4}|\mu_3|_*^{-2} d\mu_2\\
        &\lesssim \epsilon^{\lambda/2} \int_E |k_{[1,3]}|_*^{-1/2-\kappa/2} |k_{[1,2]}|^{2\lambda}\\
        &\qquad \qquad \times |Q_0^\epsilon(\mu_{[1,2]})|^{\lambda/2} |\mu_{[3,-2]}|_*^{-3/2+\kappa/2} |\mu_1|_*^{-2}|\mu_2|_*^{-4}|\mu_3|_*^{-2} d\mu_2\\
        &\lesssim\epsilon^{\lambda/2} |\mu_1|_*^{-2} |\mu_3|_*^{-2}|k_{[1,3]}|_*^{-1/2-\kappa/2}  \int_E  |\mu_{[1,2]}|_*^{\lambda} |\mu_{[3,-2]}|_*^{-3/2+\kappa/2}|\mu_2|_*^{-4}d\mu_2 \\
        &\lesssim\epsilon^{\lambda/2} |\mu_1|_*^{-2} |\mu_3|_*^{-2}|k_{[1,3]}|_*^{-1/2-\kappa/2} \\
        &\quad \times \int_E (|\mu_{1}|_*^{\lambda}+|\mu_{2}|_*^{\lambda}) |\mu_{[3,-2]}|_*^{-3/2+\kappa/2}|\mu_2|_*^{-4}d\mu_2 \\
        &\lesssim\epsilon^{\lambda/2} |\mu_1|_*^{-2+\lambda} |\mu_3|_*^{-2}|k_{[1,3]}|_*^{-1/2-\kappa/2}  \int_E  |\mu_{[3,-2]}|_*^{-3/2+\kappa/2}|\mu_2|_*^{-4+\lambda}d\mu_2 \\
        &\lesssim\epsilon^{\lambda/2}  |\mu_1|_*^{-2+\lambda} |\mu_3|_*^{-5/2+\lambda+\kappa/2}|k_{[1,3]}|_*^{-1/2-\kappa/2}\\
        &\lesssim \epsilon^{\lambda/2}  |\mu_1 |_{*}^{-2+2\lambda} |\mu_3 |_{*}^{-5/2+5\lambda+\kappa/2} |k_{[1,3]}|_*^{-1/2-\kappa/2}.
    \end{align*}
    In the same way, we have
    \begin{align*}
        (B_2)
        &\lesssim\epsilon^{\lambda/2} |\mu_1|_*^{-5/2+5\lambda+\kappa/2} |\mu_3|_*^{-2+2\lambda} |k_{[1,3]}|_*^{-1/2-\kappa/2}.
    \end{align*}
    Thus, we have
    \begin{align*}
        |(Q_0^{\CD,1} - Q_0^{\epsilon,\CD,1})(\mu_{1,3}) |
        &\lesssim \epsilon^{\lambda/2}  |\mu_1 |_{*}^{-2+2\lambda} |\mu_3 |_{*}^{-5/2+5\lambda+\kappa/2} |k_{[1,3]}|_*^{-1/2-\kappa/2} \\
        &\quad  + \epsilon^{\lambda/2} |\mu_1|_*^{-5/2+5\lambda+\kappa/2} |\mu_3|_*^{-2+2\lambda} |k_{[1,3]}|_*^{-1/2-\kappa/2}.
    \end{align*}
    Then, it follows
    \begin{align*}
        \int_{\mu_{[1,3]}=\mu} |(Q_0^{\CD,1} - Q_0^{\epsilon,\CD,1})(\mu_{1,3}) |^2
        &\lesssim  \epsilon^\lambda |\mu|_*^{-4+14\lambda+\kappa} |k|_*^{-1-\kappa}
    \end{align*}
    from Lemma \ref{lem:useful_estimates_1}
\end{proof}

\begin{prop}
    \label{prop:Convergence_of_D_3}
    We have
    \[
        c_\epsilon = -2\times \sum_{k_1,k_2 \in \Z^3}\frac{1}{4\alpha_\epsilon(k_1)\alpha_\epsilon(k_2)\alpha_\epsilon(k_{[1,2]})(\alpha_\epsilon(k_1)+\alpha_\epsilon(k_2)+\alpha_\epsilon(k_{[1,2]}))}.
    \]
    Also, there exists a limit of $c_\epsilon$ as $\epsilon$ tends to 0.
\end{prop}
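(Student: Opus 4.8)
The plan is to evaluate the three fully contracted It\^o--Wiener constants that make up $c_\epsilon=2f^{(\nabla\ICC_\epsilon,\nabla\ICC_\epsilon,2)}+2\epsilon f^{(\Delta\ICC_\epsilon,\Delta\ICC_\epsilon,2)}-b_\epsilon$, using the identity $b_\epsilon=2f^{(\ICC_\epsilon,\XX_\epsilon,2)}$ recalled before the statement. Each of these has the form $\int_{E^2}H_t^{(\tau_1,\tau_2,0)}(n_{1,2},n_{-1,-2})\,dn_{1,2}$, so the first step is the elementary observation that the cut-off weight trivialises on the contracted diagonal: since each $\rho_j$ is even and $\rho_i\rho_j\equiv0$ for $|i-j|\ge2$, one has $\psi_\circ(k,-k)=\sum_{|i-j|\le1}\rho_i(k)\rho_j(k)=\sum_{i,j}\rho_i(k)\rho_j(k)=(\sum_i\rho_i(k))^2=1$ for every $k\in\Z^3$. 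Thus $\psi_\circ$ drops out of all three constants, and what remains are nested time integrals of exponential kernels weighted by powers of the momenta.

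I would then compute them. For $f^{(\ICC_\epsilon,\XX_\epsilon,2)}$, inserting $H^{\epsilon,\ICC}_t(m_{1,2})=\int_{-\infty}^t e^{-(t-u)\alpha_\epsilon(k_{[1,2]})}\mathbbm{1}_{s_1\le u}\mathbbm{1}_{s_2\le u}e^{-(u-s_1)\alpha_\epsilon(k_1)-(u-s_2)\alpha_\epsilon(k_2)}\,du$ and $H^{\epsilon,\XX}_t$, then using $\alpha_\epsilon(-k)=\alpha_\epsilon(k)$ and the change of variables $a=t-u$, $b_j=u-s_j$, the integral factors into three one-dimensional exponential integrals and gives $f^{(\ICC_\epsilon,\XX_\epsilon,2)}=\sum_{k_1,k_2}\frac{1}{4\alpha_\epsilon(k_1)\alpha_\epsilon(k_2)(\alpha_\epsilon(k_1)+\alpha_\epsilon(k_2)+\alpha_\epsilon(k_{[1,2]}))}$, so $b_\epsilon=\tfrac12\sum_{k_1,k_2}\frac{1}{\alpha_\epsilon(k_1)\alpha_\epsilon(k_2)(\alpha_\epsilon(k_1)+\alpha_\epsilon(k_2)+\alpha_\epsilon(k_{[1,2]}))}$. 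For $f^{(\nabla\ICC_\epsilon,\nabla\ICC_\epsilon,2)}$ and $f^{(\Delta\ICC_\epsilon,\Delta\ICC_\epsilon,2)}$ one instead meets a ``double-$\ICC_\epsilon$'' correlation: integrating out $s_1$ and $s_2$ yields $\int_0^\infty\!\!\int_0^\infty e^{-(a+c)\alpha_\epsilon(k_{[1,2]})}e^{-|a-c|(\alpha_\epsilon(k_1)+\alpha_\epsilon(k_2))}\,da\,dc=\frac{1}{\alpha_\epsilon(k_{[1,2]})(\alpha_\epsilon(k_1)+\alpha_\epsilon(k_2)+\alpha_\epsilon(k_{[1,2]}))}$, while the momentum prefactors $H^{\epsilon,\nabla\ICC}_t=2\pi i k_{[1,2]}H^{\epsilon,\ICC}_t$ and $H^{\epsilon,\Delta\ICC}_t=-4\pi^2|k_{[1,2]}|^2 H^{\epsilon,\ICC}_t$ contract, using $k_{[-1,-2]}=-k_{[1,2]}$, to $4\pi^2|k_{[1,2]}|^2$ and $16\pi^4|k_{[1,2]}|^4$. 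Hence $2f^{(\nabla\ICC_\epsilon,\nabla\ICC_\epsilon,2)}+2\epsilon f^{(\Delta\ICC_\epsilon,\Delta\ICC_\epsilon,2)}=\sum_{k_1,k_2}\frac{2\pi^2|k_{[1,2]}|^2+8\pi^4\epsilon|k_{[1,2]}|^4}{\alpha_\epsilon(k_1)\alpha_\epsilon(k_2)\alpha_\epsilon(k_{[1,2]})(\alpha_\epsilon(k_1)+\alpha_\epsilon(k_2)+\alpha_\epsilon(k_{[1,2]}))}$. The decisive algebraic point is $2\pi^2|k|^2+8\pi^4\epsilon|k|^4=\tfrac12(\alpha_\epsilon(k)-1)$ by definition of $\alpha_\epsilon$; writing the numerator as $\tfrac12(\alpha_\epsilon(k_{[1,2]})-1)$ and splitting the fraction, the $\alpha_\epsilon(k_{[1,2]})$-part of the sum equals $b_\epsilon$ and cancels $-b_\epsilon$, leaving exactly $c_\epsilon=-\tfrac12\sum_{k_1,k_2}\frac{1}{\alpha_\epsilon(k_1)\alpha_\epsilon(k_2)\alpha_\epsilon(k_{[1,2]})(\alpha_\epsilon(k_1)+\alpha_\epsilon(k_2)+\alpha_\epsilon(k_{[1,2]}))}$, the asserted formula.

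Finally, for the limit, note that $\alpha_\epsilon(k)=4\pi^2|k|^2+16\pi^4\epsilon|k|^4+1$ decreases to $\alpha_0(k):=4\pi^2|k|^2+1\ge1$ as $\epsilon\downarrow0$, so every summand of $-c_\epsilon$ is nonnegative and nondecreasing as $\epsilon$ decreases; by monotone convergence $c_\epsilon$ converges to $c:=-\tfrac12\sum_{k_1,k_2}\frac{1}{\alpha_0(k_1)\alpha_0(k_2)\alpha_0(k_{[1,2]})(\alpha_0(k_1)+\alpha_0(k_2)+\alpha_0(k_{[1,2]}))}$, which is finite by a routine power count (the summand is $\lesssim|k_1|_*^{-2}|k_2|_*^{-2}|k_{[1,2]}|_*^{-2}\max(|k_1|_*,|k_2|_*)^{-2}$, and summing over $k_2$ with $|k_2|_*\le|k_1|_*$ via the $3$-dimensional convolution bound $\sum_{k_2\in\Z^3}|k_2|_*^{-2}|k_1+k_2|_*^{-2}\lesssim|k_1|_*^{-1}$ leaves $\sum_{k_1}|k_1|_*^{-5}<\infty$, the complementary region being symmetric). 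The main obstacle is the bookkeeping in the first two steps --- correctly tracking the momentum factors and the nested time integrals of the three contracted Wiener constants, and spotting the identity $4\pi^2|k|^2+16\pi^4\epsilon|k|^4=\alpha_\epsilon(k)-1$ that forces the exact cancellation with $b_\epsilon$; once the closed form is obtained, the convergence is immediate.
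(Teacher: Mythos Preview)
Your proposal is correct and follows essentially the same route as the paper's proof: both compute $\int_{\R^2}(H^{\epsilon,\ICC}_t)^2\,ds_{1,2}$ via the covariance identity $\int_\R H^\epsilon_{u_1}H^\epsilon_{u_2}\,ds=\frac{e^{-|u_1-u_2|\alpha_\epsilon(k)}}{2\alpha_\epsilon(k)}$ and then evaluate the remaining $(u_1,u_2)$-integral, rewrite $b_\epsilon$ with an extra factor $\alpha_\epsilon(k_{[1,2]})/\alpha_\epsilon(k_{[1,2]})$ so that the numerators differ by exactly $1$, and conclude by monotone convergence plus a convolution power count. Your write-up makes two points explicit that the paper leaves implicit --- that $\psi_\circ(k,-k)=1$ so the Littlewood--Paley weight drops out of the fully contracted constants, and the identity $4\pi^2|k|^2+16\pi^4\epsilon|k|^4=\alpha_\epsilon(k)-1$ driving the cancellation --- but the underlying argument is the same.
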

\begin{proof}
    We have
    \begin{align*}
        &2f^{(\nabla \ICC_\epsilon,\nabla \ICC_\epsilon,2)}+2\epsilon f^{(\Delta \ICC_\epsilon,\Delta \ICC_\epsilon,2)}\\
        &=  2 \int_{E^2} (4 \pi^2|k_{[1,2]}|^2 + \epsilon 16 \pi^4 |k_{[1,2]}|^4) (H^{\epsilon,\ICC}_t (m_{1,2}))^2 dm_{1,2} \\
        &= 2 \sum_{k_1,k_2\in \Z^3} (4 \pi^2|k_{[1,2]}|^2 + \epsilon 16 \pi^4 |k_{[1,2]}|^4) \int_{\R^2}  (H^{\epsilon,\ICC}_t (m_{1,2}))^2 ds_{1,2}.
    \end{align*}
    From Fubini theorem, we have
    \begin{align*}
        &\int_{\R^2}  (H^{\epsilon,\ICC}_t (m_{1,2}))^2 ds_{1,2}\\
        &= \int_{\R^2} \left( \int_{\R} H^{\epsilon}_t(u, k_{[1,2]}) H^{\epsilon, \XX}_{u}(m_{1,2}) d u \right)^2ds_{1,2} \\
        &= \int_{\R^2} \prod_{j=1}^2 \left( \int_{\R} H^{\epsilon}_{u_1}(m_j) H^{\epsilon}_{u_2}(m_j) ds_j \right) H^{\epsilon}_t(u_1, k_{[1,2]})H^{\epsilon}_t(u_2, k_{[1,2]}) du_1du_2.
    \end{align*}
    Since
    \begin{align*}
        \int_{\R} H^{\epsilon}_{u_1}(m_j) H^{\epsilon}_{u_2}(m_j) ds_j
        &= \int_{-\infty}^{u_1\wedge u_2} e^{-(u_1+u_2-2s_j)\alpha_\epsilon(k_j)} ds_j
        = \frac{e^{-|u_1-u_2|\alpha_\epsilon(k_j)}}{2\alpha_\epsilon(k_j)},
    \end{align*}
    we have
    \begin{align*}
        &\int_{\R^2}  (H^{\epsilon,\ICC}_t (m_{1,2}))^2 ds_{1,2}\\
        &= \frac{1}{4\alpha_\epsilon(k_1)\alpha_\epsilon(k_2)} \\
        &\qquad \times \int_{-\infty}^{t}\int_{-\infty}^{t} e^{-|u_1-u_2|(\alpha_\epsilon(k_1) + \alpha_\epsilon(k_2))} e^{-(t-u_1)\alpha_\epsilon(k_{[1,2]})}e^{-(t-u_2)\alpha_\epsilon(k_{[1,2]})} du_1du_2\\
        &= \frac{1}{4\alpha_\epsilon(k_1)\alpha_\epsilon(k_2)\alpha_\epsilon(k_{[1,2]})(\alpha_\epsilon(k_1)+\alpha_\epsilon(k_2)+\alpha_\epsilon(k_{[1,2]}))}.
    \end{align*}
    Then, it follows
    \begin{align*}
        &2f^{(\nabla \ICC_\epsilon,\nabla \ICC_\epsilon,2)}+2\epsilon f^{(\Delta \ICC_\epsilon,\Delta \ICC_\epsilon,2)}\\
        &= 2 \sum_{k_1,k_2\in \Z^3} \frac{4 \pi^2|k_{[1,2]}|^2 + \epsilon 16 \pi^4 |k_{[1,2]}|^4}{4\alpha_\epsilon(k_1)\alpha_\epsilon(k_2)\alpha_\epsilon(k_{[1,2]})(\alpha_\epsilon(k_1)+\alpha_\epsilon(k_2)+\alpha_\epsilon(k_{[1,2]}))}.
    \end{align*}
    On the other hands, $b_\epsilon$ can be expressed as follows:
    \begin{align*}
        b_\epsilon
        &= 2 \sum_{k_1,k_2\in \Z^3} \frac{1}{4\alpha_\epsilon(k_1)\alpha_\epsilon(k_2)(\alpha_\epsilon(k_1)+\alpha_\epsilon(k_2)+\alpha_\epsilon(k_{[1,2]}))}\\
        &= 2 \sum_{k_1,k_2\in \Z^3} \frac{4\pi^2|k_{[1,2]} |^2 + \epsilon 16\pi^4|k_{[1,2]} |^4 + 1}{4\alpha_\epsilon(k_1)\alpha_\epsilon(k_2)\alpha_\epsilon(k_{[1,2]})(\alpha_\epsilon(k_1)+\alpha_\epsilon(k_2)+\alpha_\epsilon(k_{[1,2]}))}.
    \end{align*}
    Then, we have
    \begin{align*}
        c_\epsilon
        = -2 \sum_{k_1,k_2\in \Z^3} \frac{1}{4\alpha_\epsilon(k_1)\alpha_\epsilon(k_2)\alpha_\epsilon(k_{[1,2]})(\alpha_\epsilon(k_1)+\alpha_\epsilon(k_2)+\alpha_\epsilon(k_{[1,2]}))}.
    \end{align*}
    By monotone convergence theorem, we have
    \[
        c:=\lim_{\epsilon \downarrow 0} c_\epsilon 
        = -2 \sum_{k_1,k_2\in \Z^3} \frac{1}{4\alpha_0(k_1)\alpha_0(k_2)\alpha_0(k_{[1,2]})(\alpha_0(k_1)+\alpha_0(k_2)+\alpha_0(k_{[1,2]}))}.
    \]
    Moreover, it holds
    \begin{align*}
        c
        & \lesssim \sum_{k_1,k_2\in \Z^3} \frac{1}{|k_1|_*^2|k_2|_*^2|k_{[1,2]}|_*^4}
        \lesssim \sum_{k_{1}\in\Z^3} \frac{1}{|k_{1}|_*^5} < \infty
    \end{align*}
    by a similar convolution estimate to Lemma \ref{lem:useful_estimates_1}.
\end{proof}

\appendix
\section{Useful estimates}

The notations for $\Delta_j$, $\Delta_{<k}$, $\CC^\alpha$, $\CC_\epsilon^{\alpha}$, $\pl$, $\rs$, $e^{t\Delta}$, $e^{-\epsilon t \Delta^2}$, $P_t^\epsilon$ and $\CL_\epsilon^{-1}$ are at the begining of Section \ref{sec:2}.
\subsection{Estimates of paraproducts}
\begin{prop}[Paraproduct estimates (\cite{BCD11} Theorem 2.82 and 2.85)]
    \label{prop:paraproduct_estimates}
    We have the following:
    \begin{enumerate}[(1)]
        \item For every $\beta \in \R$, $\|u\pl v\|_{\CC^\beta} \lesssim \|u\|_{L^\infty}\|v\|_{\CC^\beta}$.
        \item For every $\alpha <0$ and $\beta \in \R$, $\|u\pl v\|_{\CC^{\alpha+\beta}}\lesssim \|u\|_{\CC^\alpha} \|v\|_{\CC^\beta}$.
        \item For every $\alpha, \beta \in \R$ such that $\alpha + \beta >0$, $\|u\rs v\|_{\CC^{\alpha+\beta}}\lesssim \|u\|_{\CC^\alpha} \|v\|_{\CC^\beta}$.
    \end{enumerate}
\end{prop}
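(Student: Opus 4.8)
The plan is to prove all three estimates by the standard Littlewood--Paley argument, exploiting the spectral localization of the two Bony operators. Recall that for $i\ge 0$ the block $\Delta_i u$ has Fourier support in a dyadic annulus $\{|k|\sim 2^i\}$ (and in a ball for $i=-1$), and that each $\Delta_q$ and each $\Delta_{<k}=\sum_{j=-1}^{k-1}\Delta_j$ is a convolution with a dilate of a fixed Schwartz function, hence bounded on $L^\infty$ uniformly in $q,k$. Writing $u\pl v=\sum_{j\ge-1}(\Delta_{<j-1}u)\,\Delta_j v$, the summand $(\Delta_{<j-1}u)\,\Delta_j v$ has Fourier support in an annulus $\{|k|\sim 2^j\}$, so there is a fixed integer $N_0$, depending only on the dyadic partition of unity, with $\Delta_q(u\pl v)=\sum_{|j-q|\le N_0}\Delta_q\big((\Delta_{<j-1}u)\,\Delta_j v\big)$; consequently $\|\Delta_q(u\pl v)\|_{L^\infty}\lesssim\sum_{|j-q|\le N_0}\|\Delta_{<j-1}u\|_{L^\infty}\|\Delta_j v\|_{L^\infty}$, and it remains to insert the right bound on each factor.

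For (1) I would bound $\|\Delta_{<j-1}u\|_{L^\infty}\lesssim\|u\|_{L^\infty}$ uniformly in $j$ and $\|\Delta_j v\|_{L^\infty}\le 2^{-j\beta}\|v\|_{\CC^\beta}$ (the latter is the definition of $\CC^\beta$), giving $2^{q\beta}\|\Delta_q(u\pl v)\|_{L^\infty}\lesssim\sum_{|j-q|\le N_0}2^{(q-j)\beta}\|u\|_{L^\infty}\|v\|_{\CC^\beta}\lesssim\|u\|_{L^\infty}\|v\|_{\CC^\beta}$, since $2^{(q-j)\beta}\le 2^{N_0|\beta|}$ on the finite band $|j-q|\le N_0$. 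For (2), when $\alpha<0$ the geometric sum $\|\Delta_{<j-1}u\|_{L^\infty}\le\sum_{i\le j-2}2^{-i\alpha}\|u\|_{\CC^\alpha}\lesssim 2^{-j\alpha}\|u\|_{\CC^\alpha}$ converges and is dominated by its top term; the identical computation then yields $2^{q(\alpha+\beta)}\|\Delta_q(u\pl v)\|_{L^\infty}\lesssim\|u\|_{\CC^\alpha}\|v\|_{\CC^\beta}$.

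For (3) the key difference is that $\Delta_i u\,\Delta_j v$ with $|i-j|\le 1$ has Fourier support in a \emph{ball} of radius $\lesssim 2^j$ rather than an annulus, so $\Delta_q(\Delta_i u\,\Delta_j v)$ can be nonzero only for $j\ge q-N_0$, and $\Delta_q(u\rs v)=\sum_{j\ge q-N_0}\sum_{|i-j|\le 1}\Delta_q(\Delta_i u\,\Delta_j v)$. Estimating each term by $\|\Delta_i u\|_{L^\infty}\|\Delta_j v\|_{L^\infty}\lesssim 2^{-i\alpha-j\beta}\|u\|_{\CC^\alpha}\|v\|_{\CC^\beta}\lesssim 2^{-j(\alpha+\beta)}\|u\|_{\CC^\alpha}\|v\|_{\CC^\beta}$ and summing the geometric series $\sum_{j\ge q-N_0}2^{-j(\alpha+\beta)}$ — which converges precisely because $\alpha+\beta>0$, with sum $\lesssim 2^{-q(\alpha+\beta)}$ — gives $2^{q(\alpha+\beta)}\|\Delta_q(u\rs v)\|_{L^\infty}\lesssim\|u\|_{\CC^\alpha}\|v\|_{\CC^\beta}$. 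The only delicate point is the bookkeeping of spectral supports that fixes $N_0$ and dictates whether one sums over a finite band (paraproduct) or a half-line (resonant product), together with checking that the relevant geometric series converge; once this is pinned down there is no real obstacle. Since these are exactly Theorems~2.82 and 2.85 of \cite{BCD11}, one may alternatively simply cite that reference.
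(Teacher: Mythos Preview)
Your argument is correct and is exactly the standard Littlewood--Paley proof. Note, however, that the paper itself does not give a proof of this proposition at all: it simply states the result with a citation to \cite{BCD11}, Theorems~2.82 and~2.85, so there is no ``paper's own proof'' to compare against beyond that reference, whose argument is precisely the one you have reproduced.
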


\begin{prop}[Commutator estimates (\cite{GIP15} Lemma 2.4)]
    \label{prop:commutator_estimates}
    Assume that $0< \alpha<1$ and $\beta, \gamma \in \R$ are such that $\alpha + \beta + \gamma > 0$ and $\beta + \gamma <0$.
    Then there exists a bounded trilinear operator $C$ from $\CC^\alpha \times \CC^\beta \times \CC^\gamma$ to $\CC^{\alpha + \beta + \gamma}$ such that
    \[
        C(f,g,h)=(f\pl g) \rs h - f(g\rs h)
    \]  
    for every $f, g, h \in \CS$.
\end{prop}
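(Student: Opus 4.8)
The plan is to establish the quantitative bound
\[
    \|(f\pl g)\rs h - f(g\rs h)\|_{\CC^{\alpha+\beta+\gamma}} \lesssim_{\alpha,\beta,\gamma} \|f\|_{\CC^\alpha}\|g\|_{\CC^\beta}\|h\|_{\CC^\gamma}
\]
for $f,g,h\in\CS$, and then to obtain the bounded trilinear operator $C$ on $\CC^\alpha\times\CC^\beta\times\CC^\gamma$ by approximation. The approximation step is a little more than ordinary density, because when $\beta+\gamma<0$ neither $(f\pl g)\rs h$ nor $f(g\rs h)$ is individually meaningful on $\CC^\alpha\times\CC^\beta\times\CC^\gamma$ — only their difference is, which is exactly the content of the lemma. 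I would run it as follows: for $(f,g,h)\in\CC^\alpha\times\CC^\beta\times\CC^\gamma$ the truncations $(\Delta_{<n}f,\Delta_{<n}g,\Delta_{<n}h)$ lie in $\CS^3$, stay bounded in $\CC^\alpha\times\CC^\beta\times\CC^\gamma$, and converge in $\CC^{\alpha-\eta}\times\CC^{\beta-\eta}\times\CC^{\gamma-\eta}$ for every $\eta>0$; applying the displayed estimate (with $\eta$ absorbed into the exponents) to consecutive differences makes the corresponding commutators Cauchy in $\CC^{\alpha+\beta+\gamma-3\eta}$, the limit is independent of the truncation scheme, and it still satisfies the displayed bound.

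For the block estimate I would argue entirely with Littlewood--Paley pieces. Writing $f\pl g=\sum_i(\Delta_{<i-1}f)\Delta_i g$, each summand is spectrally supported in a fixed dyadic annulus of size $2^i$; with $\widehat\Delta_i:=\Delta_{i-1}+\Delta_i+\Delta_{i+1}$ one has $g\rs h=\sum_i\Delta_i g\,\widehat\Delta_i h$, hence $f(g\rs h)=\sum_i f\,\Delta_i g\,\widehat\Delta_i h$ and $(f\pl g)\rs h=\sum_i\Delta_i(f\pl g)\,\widehat\Delta_i h$. Subtracting and splitting $\Delta_i(f\pl g)=(\Delta_{<i-1}f)\Delta_i g+\bigl(\Delta_i(f\pl g)-(\Delta_{<i-1}f)\Delta_i g\bigr)$ gives
\[
    (f\pl g)\rs h-f(g\rs h)=-\sum_i\bigl(f-\Delta_{<i-1}f\bigr)\Delta_i g\,\widehat\Delta_i h+R,\qquad R:=\sum_i\bigl(\Delta_i(f\pl g)-(\Delta_{<i-1}f)\Delta_i g\bigr)\widehat\Delta_i h.
\]
For the principal term write $f-\Delta_{<i-1}f=\sum_{\ell\ge i-1}\Delta_\ell f$ and split the $\ell$-sum at $\ell\sim i$. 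When $\ell\gg i$ the product $\Delta_\ell f\,\Delta_i g\,\widehat\Delta_i h$ lives at frequency $\sim2^\ell$, so its $q$-th block forces $\ell\sim q$ and hence $i\lesssim q$, and summing yields $2^{-q\alpha}\sum_{i\lesssim q}2^{-i(\beta+\gamma)}\lesssim 2^{-q(\alpha+\beta+\gamma)}$ because $\beta+\gamma<0$ makes the geometric sum equal its top term. When $\ell\sim i$ the product lives at frequency $\lesssim2^{i+O(1)}$, so $i\gtrsim q$, and summing yields $\sum_{i\gtrsim q}2^{-i(\alpha+\beta+\gamma)}\lesssim 2^{-q(\alpha+\beta+\gamma)}$ because $\alpha+\beta+\gamma>0$ makes it converge to its bottom term. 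This gives the required bound for the principal term.

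The remaining piece $R$ is the delicate one, and I expect it to be the main obstacle. Each of its summands is a product of two factors at frequency $\sim2^i$, so its resonant low-frequency output is only controlled naively by $\sum_{i\gtrsim q}2^{-i(\beta+\gamma)}$, which \emph{diverges} for $\beta+\gamma<0$; convergence comes solely from the cancellation hidden in $\Delta_i(f\pl g)-(\Delta_{<i-1}f)\Delta_i g$. I would expose it by splitting this difference into (i) genuine block-commutators $[\Delta_b,\Delta_{<i-1}f]\Delta_i g$ with $b$ within $O(1)$ of $i$, and (ii) ``leftover'' products $(\Delta_{<i-1}f)\,\Delta_b\Delta_i g$. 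For (i) the pointwise identity
\[
    \bigl([\Delta_b,u]v\bigr)(x)=\int K_b(x-y)\bigl(u(y)-u(x)\bigr)v(y)\,dy,\qquad K_b:=\CF^{-1}\rho_b,
\]
combined with $|u(y)-u(x)|\lesssim\|u\|_{\CC^\alpha}|x-y|^\alpha$ — which holds because $0<\alpha<1$, and is exactly where that hypothesis is used — supplies the extra decay $\|[\Delta_b,\Delta_{<i-1}f]\Delta_i g\|_{L^\infty}\lesssim 2^{-i\alpha}\|f\|_{\CC^\alpha}\|\Delta_i g\|_{L^\infty}$, after which the corresponding sum is again of the convergent trilinear type treated above. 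For (ii) one matches each leftover against the analogous piece produced by expanding $f\,\Delta_i g$ inside $f(g\rs h)$, so that these terms cancel before one sums over $i$. The bookkeeping — keeping precise track of all the $O(1)$ index shifts coming from the supports of the dyadic blocks, and verifying that every term that survives the cancellation genuinely carries the gain $2^{-q(\alpha+\beta+\gamma)}$ — is the technical heart of the argument; the rest is soft.
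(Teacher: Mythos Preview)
The paper does not prove this proposition; it is quoted from \cite{GIP15}, Lemma~2.4, with no argument given. Your sketch follows the standard line of proof there: isolate the principal piece $-\sum_i(f-\Delta_{<i-1}f)\,\Delta_ig\,\widehat\Delta_ih$ and control the residual $R$ via the $\alpha$-H\"older commutator gain. The extension step by truncation and the analysis of the principal term are fine, and the kernel argument you invoke for~(i) is exactly the mechanism that makes $0<\alpha<1$ relevant.

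Your description of~(ii), however, is off. You propose to ``match each leftover against the analogous piece produced by expanding $f\,\Delta_i g$ inside $f(g\rs h)$, so that these terms cancel''. But $f(g\rs h)$ has already been completely absorbed into the principal term; there is nothing left to cancel against. The correct bookkeeping is simpler. Writing $\Delta_i(f\pl g)=\sum_{|k-i|\le M}\Delta_i\bigl[(\Delta_{<k-1}f)\,\Delta_kg\bigr]$ and commuting gives
\[
R_i=\Delta_i(f\pl g)-(\Delta_{<i-1}f)\,\Delta_ig
=\sum_{|k-i|\le M}\bigl[\Delta_i,\Delta_{<k-1}f\bigr]\Delta_kg
\;+\;\sum_{|k-i|\le1}\bigl(\Delta_{<k-1}f-\Delta_{<i-1}f\bigr)\,\Delta_i\Delta_kg,
\]
using $\Delta_i\Delta_k=0$ for $|k-i|>1$. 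The first sum is your commutator piece~(i). The second sum --- the genuine ``leftover'' --- involves only $\Delta_{<k-1}f-\Delta_{<i-1}f$ with $|k-i|\le1$, which is either $0$ or a single block $\pm\Delta_{i-2}f$ or $\pm\Delta_{i-1}f$; hence it is already of size $\lesssim 2^{-i\alpha}\|f\|_{\CC^\alpha}\cdot 2^{-i\beta}\|g\|_{\CC^\beta}$ with no cancellation required. So $\|R_i\|_{L^\infty}\lesssim 2^{-i(\alpha+\beta)}$, and since $R_i\,\widehat\Delta_ih$ is spectrally supported in a ball of radius $\sim2^i$, summing over $i\gtrsim q$ gives the claimed $2^{-q(\alpha+\beta+\gamma)}$.

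Note also that the commutator in~(i) is $[\Delta_i,\Delta_{<k-1}f]$ acting on $\Delta_kg$, with the low-frequency cutoff indexed by the paraproduct variable $k\sim i$; your $[\Delta_b,\Delta_{<i-1}f]\Delta_ig$ has the roles of the two indices swapped. This is only a labelling issue, but it may be the source of the confusion in~(ii).
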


\begin{prop}[Paralinearization theorem (\cite{BCD11} Theorem 2.92 or \cite{GIP15} Lemma 2.6)]
    \label{prop:paralinearization}
    For $0<\alpha<1$, $F\in C_b^2(\R)$ and $f\in \CC^\alpha$, we have
    \[
        \|F(f)-F'(f)\pl f\|_{\CC^{2\alpha}} \lesssim \|F\|_{C_b^{2}}(1+\|f\|_{\CC^\alpha}^2).
    \]
\end{prop}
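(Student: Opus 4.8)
The plan is to reproduce the classical Bony paralinearization argument; this is the content of \cite{BCD11} Theorem 2.92 and \cite{GIP15} Lemma 2.6, so I would only need to carry out its steps in the present notation. First I would telescope $F(f)$ along the Littlewood-Paley truncations $S_j:=\Delta_{<j}$. Since $\alpha>0$ we have $S_{-1}f=0$ and $\|f-S_jf\|_{L^\infty}\lesssim 2^{-j\alpha}\|f\|_{\CC^\alpha}$, so $S_jf\to f$ uniformly; writing $S_{j+1}f-S_jf=\Delta_jf$ and Taylor expanding each increment to first order gives
\[
    F(f)=F(0)+\sum_{j\geq -1} m_j\,\Delta_jf,\qquad m_j:=\int_0^1 F'\bigl(S_jf+\tau\Delta_jf\bigr)\,d\tau .
\]
Because $F'$ is $\|F''\|_{L^\infty}$--Lipschitz, $\|S_jf+\tau\Delta_jf\|_{\CC^\alpha}\lesssim\|f\|_{\CC^\alpha}$ uniformly in $j,\tau$, and $\CC^\alpha$ is the classical H\"{o}lder space for $0<\alpha<1$, the composition estimate yields $\|m_j\|_{L^\infty}\leq\|F\|_{C_b^2}$ and $\|m_j\|_{\CC^\alpha}\lesssim\|F\|_{C_b^2}(1+\|f\|_{\CC^\alpha})$. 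With the paper's convention $F'(f)\pl f=\sum_{j\geq -1}S_{j-1}\bigl(F'(f)\bigr)\Delta_jf$, this identifies
\[
    F(f)-F(0)-F'(f)\pl f=\sum_{j\geq -1} r_j\,\Delta_jf,\qquad r_j:=m_j-S_{j-1}F'(f).
\]

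Next I would estimate $r_j$ in $L^\infty$ through the splitting $r_j=(m_j-F'(S_jf))+(F'(S_jf)-F'(f))+(F'(f)-S_{j-1}F'(f))$. The first term is $\lesssim\|F''\|_{L^\infty}\|\Delta_jf\|_{L^\infty}$, the second is $\lesssim\|F''\|_{L^\infty}\|f-S_jf\|_{L^\infty}$, and the third equals $\sum_{k\geq j-1}\Delta_kF'(f)$, of $L^\infty$--norm $\lesssim 2^{-j\alpha}\|F'(f)\|_{\CC^\alpha}$. Using $\|\Delta_jf\|_{L^\infty}+\|f-S_jf\|_{L^\infty}\lesssim 2^{-j\alpha}\|f\|_{\CC^\alpha}$ (again crucially $\alpha>0$) and $\|F'(f)\|_{\CC^\alpha}\lesssim\|F\|_{C_b^2}(1+\|f\|_{\CC^\alpha})$ gives $\|r_j\|_{L^\infty}\lesssim 2^{-j\alpha}\|F\|_{C_b^2}(1+\|f\|_{\CC^\alpha})$, hence the key pointwise bound
\[
    \|r_j\,\Delta_jf\|_{L^\infty}\lesssim 2^{-2j\alpha}\,\|F\|_{C_b^2}\,(1+\|f\|_{\CC^\alpha})^2\lesssim 2^{-2j\alpha}\,\|F\|_{C_b^2}\,(1+\|f\|_{\CC^\alpha}^2),
\]
while the constant $F(0)$ is harmless since $\|F(0)\|_{\CC^{2\alpha}}\lesssim|F(0)|\leq\|F\|_{C_b^2}$.

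The remaining and genuinely delicate step, which I expect to be the main obstacle, is to sum $\sum_j r_j\Delta_jf$ in $\CC^{2\alpha}$: the summands are not spectrally localized at scale $2^j$ because $m_j$, hence $r_j$, is not band-limited, so the elementary Besov summation lemma cannot be applied verbatim. Following \cite{BCD11} I would split $r_j\Delta_jf=(S_{j-1}r_j)\Delta_jf+\bigl((\mathrm{Id}-S_{j-1})m_j\bigr)\Delta_jf$ (noting $(\mathrm{Id}-S_{j-1})S_{j-1}F'(f)=0$); the first term is spectrally supported in an annulus of scale $2^j$ and obeys the pointwise bound above, so the standard summation lemma places $\sum_j(S_{j-1}r_j)\Delta_jf$ in $\CC^{2\alpha}$ with the required norm. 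For the second term one estimates $\|\Delta_N(((\mathrm{Id}-S_{j-1})m_j)\Delta_jf)\|_{L^\infty}$ separately in the regime $2^j\gtrsim 2^N$, where the pointwise bound summed over $j$ as a geometric series already gives $2^{-2N\alpha}$, and in the regime $2^j\ll 2^N$, where one exploits that $m_j$ is a $C^1$ function of $S_jf+\tau\Delta_jf$, a function with Fourier support in a ball of radius $\sim 2^j$, so that the high--frequency blocks $\Delta_km_j$ with $k\gtrsim N$ decay as $k\to\infty$ at a rate controlled by Bernstein's inequality together with the uniform continuity of $F''$; summing the two contributions over $N$ gives $\|F(f)-F(0)-F'(f)\pl f\|_{\CC^{2\alpha}}\lesssim\|F\|_{C_b^2}(1+\|f\|_{\CC^\alpha}^2)$, which is the claim. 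Everything outside this resummation is routine bookkeeping with the paraproduct and composition estimates already recorded.
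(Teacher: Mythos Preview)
The paper does not supply its own proof of this proposition; it is stated as a citation to \cite{BCD11} Theorem~2.92 and \cite{GIP15} Lemma~2.6. Your proposal reproduces the standard Bony paralinearization argument from those references and is correct in outline; the only slip is the parenthetical claim that $(\mathrm{Id}-S_{j-1})S_{j-1}F'(f)=0$, which is not literally true for smooth Littlewood--Paley cut-offs, but the resulting term is spectrally localized near scale $2^{j}$ and can simply be absorbed into the first piece of your split.
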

Here, for $\alpha>0$, $C_b^\alpha(\R)$ is the space of $\lfloor \alpha \rfloor$ times continuously differentiable functions, bounded with bounded partial derivatives and with $(\alpha - \lfloor \alpha \rfloor)$-H\"{o}lder continuous partial derivatives of order $\lfloor \alpha \rfloor$, equipped with its usual norm $\|\cdot\|_{C_b^\alpha}$.

\begin{prop}[Bony's paramultiplication bound (\cite{Bon81} Theorem 2.3)]
    \label{prop:Bonys_paramultiplication}
    Let $\alpha >0$ be non-integer and $\beta \in \R$.
    For $f,g \in \CC^\alpha$ and $h \in \CC^\beta$, we have
    \[
        \|f\pl (g\pl h)\|_{\CC^{\alpha + \beta}} \lesssim \|f\|_{\CC^\alpha }\|g\|_{\CC^\alpha} \|h\|_{\CC^\beta}.
    \]
\end{prop}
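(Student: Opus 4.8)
The plan is to prove the estimate in the form in which it is actually used in \eqref{eq:y_Bony}, that is, as a bound on the associativity defect
\[
    R(f,g,h):=f\pl(g\pl h)-(fg)\pl h,
\]
which is the genuine content of Bony's theorem: the bare iterated paraproduct $f\pl(g\pl h)$ only inherits the regularity $\beta$ of $h$, and it is the subtraction of the diagonal term $(fg)\pl h$ that produces the gain of $\alpha$ recorded in the inequality. I would work blockwise with the projectors $\Delta_j$ and the low-pass sums $S_j:=\Delta_{<j}$, using Bernstein's inequalities together with the paraproduct estimates of Proposition~\ref{prop:paraproduct_estimates}.

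The first step is to localise to a single frequency band. Since $g\pl h=\sum_k S_{k-1}g\,\Delta_k h$ and each summand is spectrally supported in an annulus of size $2^k$, the block $\Delta_m(g\pl h)$ only receives contributions from $|k-m|\le N_0$ for a fixed $N_0$; the same localisation applies to the outer paraproduct $f\pl(\cdot)$ and to $(fg)\pl h$. Hence $\Delta_n R(f,g,h)$ is a finite sum, over indices $m,k\sim n$, of terms $\Delta_n\bigl(S_{m-1}f\,\Delta_m(S_{k-1}g\,\Delta_k h)\bigr)$ minus the matching blocks of $(fg)\pl h$, and it suffices to establish $\sup_n 2^{n(\alpha+\beta)}\|\Delta_n R\|_{L^\infty}\lesssim\|f\|_{\CC^\alpha}\|g\|_{\CC^\alpha}\|h\|_{\CC^\beta}$.

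The heart of the matter is to compare the two low-frequency coefficients and extract the decisive factor $2^{-n\alpha}$. I would split $\Delta_m(S_{k-1}g\,\Delta_k h)=S_{k-1}g\,\Delta_m\Delta_k h+[\Delta_m,S_{k-1}g]\Delta_k h$, so that $\Delta_n R$ reduces, at frequency $2^n$ and modulo the spectral commutator, to the multiplier defect $S_{m-1}f\,S_{k-1}g-S_{k-1}(fg)$ acting on $\Delta_k h$. For the defect I would use $\|fg-(S_{n}f)(S_{n}g)\|_{L^\infty}\lesssim 2^{-n\alpha}\|f\|_{\CC^\alpha}\|g\|_{\CC^\alpha}$, which follows from $\|(\mathrm{Id}-S_n)a\|_{L^\infty}\lesssim 2^{-n\alpha}\|a\|_{\CC^\alpha}$ and the resonant bound of Proposition~\ref{prop:paraproduct_estimates}(3) (legitimate since $2\alpha>0$), while the difference $(S_{m-1}-S_{k-1})f$ for $|m-k|\le N_0$ contributes only a few blocks near frequency $2^n$ and is again $O(2^{-n\alpha})$. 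For the spectral commutator, when $0<\alpha<1$ a first-order mean-value estimate gives $\|[\Delta_m,S_{k-1}g]\Delta_k h\|_{L^\infty}\lesssim 2^{-m}\|\nabla S_{k-1}g\|_{L^\infty}\|\Delta_k h\|_{L^\infty}\lesssim 2^{-n\alpha}2^{-n\beta}\|g\|_{\CC^\alpha}\|h\|_{\CC^\beta}$ for $m\sim k\sim n$; for larger non-integer $\alpha$ one expands $S_{k-1}g$ to order $\lfloor\alpha\rfloor$, the non-integrality guaranteeing that the Taylor remainder still carries the full $2^{-n\alpha}$.

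Combining these pieces over the bounded band $m,k\sim n$ yields, uniformly in $n$,
\[
    2^{n(\alpha+\beta)}\|\Delta_n R(f,g,h)\|_{L^\infty}
    \lesssim 2^{n(\alpha+\beta)}\,2^{-n\alpha}\|f\|_{\CC^\alpha}\|g\|_{\CC^\alpha}\,2^{-n\beta}\|h\|_{\CC^\beta}
    =\|f\|_{\CC^\alpha}\|g\|_{\CC^\alpha}\|h\|_{\CC^\beta},
\]
and taking the supremum over $n$ gives the claim. I expect the main obstacle to be the bookkeeping in the third step: cleanly isolating the coefficient defect $S_{m-1}f\,S_{k-1}g-S_{k-1}(fg)$ from the spectral commutator $[\Delta_m,S_{k-1}g]$ and verifying that neither the overlap of the bounded bands in $m,k$ nor the low-frequency end points degrade the $2^{-n\alpha}$ gain, while using positivity and non-integrality of $\alpha$ only where legitimate.
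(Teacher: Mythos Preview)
The paper does not supply a proof of this proposition; it merely cites \cite{Bon81}, Theorem~2.3. There is therefore no ``paper's own proof'' to compare against.

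Your key observation is correct and worth stating explicitly: the inequality as printed in the proposition cannot hold. For $\alpha>0$ the bare iterated paraproduct $f\pl(g\pl h)$ lies only in $\CC^\beta$, not in $\CC^{\alpha+\beta}$, since each paraproduct with a bounded low-frequency factor preserves but does not improve the regularity of the high-frequency factor. What Bony actually proves, and what the paper actually uses in \eqref{eq:y_Bony}, is the bound on the associativity defect
\[
    \|f\pl(g\pl h)-(fg)\pl h\|_{\CC^{\alpha+\beta}}\lesssim \|f\|_{\CC^\alpha}\|g\|_{\CC^\alpha}\|h\|_{\CC^\beta}.
\]
You identified this and proved the right thing.

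Your sketch is the standard blockwise argument and is correct in outline: localise to frequency $2^n$, use spectral support to restrict to $m,k\sim n$, split off the commutator $[\Delta_m,S_{k-1}g]\Delta_k h$, and reduce the rest to the coefficient defect $S_{m-1}f\,S_{k-1}g-S_{k-1}(fg)$. Both pieces carry the factor $2^{-n\alpha}$: the coefficient defect because $\|(\mathrm{Id}-S_n)a\|_{L^\infty}\lesssim 2^{-n\alpha}\|a\|_{\CC^\alpha}$ for $\alpha>0$ together with the fact that $\CC^\alpha$ is an algebra, and the commutator by the Taylor-remainder argument you describe, where the non-integrality of $\alpha$ is used exactly as you say. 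This is essentially how the result is proved in \cite{Bon81} and in the textbook treatments such as \cite{BCD11}.
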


\subsection{Basic estimates of heat semigroup}
\begin{prop}[Smoothing effects of heat semigroup]
    \label{prop:Effects_of_heat_semigroup}
    For all $T>0$, $0< t \leq T$, $\epsilon >0$, $\alpha \in \R$, $\delta  \geq 0$ and $u \in \CS'$ we have
    \[
        \|e^{t \Delta} u \|_{\CC^{\alpha + \delta}} \lesssim_T t^{-\delta/2} \|u\|_{\CC^{\alpha}}, \quad 
        \|e^{t \Delta} u \|_{\CC^{\delta}} \lesssim_T t^{-\delta/2} \|u\|_{L^\infty}
    \]
    and 
    \[
        \|e^{-\epsilon t \Delta^2} u \|_{\CC^{\alpha + \delta}} \lesssim_T (\epsilon t)^{-\delta/4} \|u\|_{\CC^{\alpha}}, \quad 
        \|e^{-\epsilon t \Delta^2} u \|_{\CC^{\delta}} \lesssim_T (\epsilon t)^{-\delta/4} \|u\|_{L^\infty}.
    \]
\end{prop}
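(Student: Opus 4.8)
The plan is to deduce both estimates from a single localized smoothing bound for the Littlewood--Paley blocks, together with an elementary scalar inequality. Recall that $\|v\|_{\CC^s}=\sup_{j\ge -1}2^{js}\|\Delta_j v\|_{L^\infty}$, so it suffices to control $\|\Delta_j e^{t\Delta}u\|_{L^\infty}$ and $\|\Delta_j e^{-\epsilon t\Delta^2}u\|_{L^\infty}$ for each $j$. The first step is to prove that there is a constant $c>0$ such that, for every $j\ge 0$,
\[
    \|\Delta_j e^{t\Delta}u\|_{L^\infty}\lesssim e^{-ct2^{2j}}\|\Delta_j u\|_{L^\infty},\qquad
    \|\Delta_j e^{-\epsilon t\Delta^2}u\|_{L^\infty}\lesssim e^{-c\epsilon t2^{4j}}\|\Delta_j u\|_{L^\infty},
\]
while for $j=-1$ both operators are bounded on $L^\infty$ with a universal constant. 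This is the standard Fourier-multiplier argument: using $\rho_j=\rho_j(\rho_{j-1}+\rho_j+\rho_{j+1})$, the operator $\Delta_j e^{t\Delta}$ acts on $\TT^3$ by convolution with the periodization of the inverse Fourier transform on $\R^3$ of the symbol $k\mapsto\rho_j(k)\phi(t^{1/2}k)$; on $\supp\rho_j\subset\{|k|\asymp 2^j\}$ one factors out $\phi(t^{1/2}k)=e^{-4\pi^2 t|k|^2}\le e^{-ct2^{2j}}$, and the residual symbol is, after rescaling, the Fourier transform of a family of Schwartz functions whose $L^1(\R^3)$ norm is bounded uniformly in $j$ and $t$; Young's inequality on $\TT^3$ then gives the claim. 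The biharmonic case is identical, with $\widetilde{\phi}((\epsilon t)^{1/4}k)=e^{-16\pi^4\epsilon t|k|^4}\le e^{-c\epsilon t2^{4j}}$ on $\{|k|\asymp 2^j\}$ (see also \cite{BCD11}).

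The second step is summation. Writing $2^{j(\alpha+\delta)}=2^{j\delta}2^{j\alpha}$, for $j\ge 0$ one gets
\[
    2^{j(\alpha+\delta)}\|\Delta_j e^{t\Delta}u\|_{L^\infty}
    \lesssim \bigl(2^{j\delta}e^{-ct2^{2j}}\bigr)\,2^{j\alpha}\|\Delta_j u\|_{L^\infty}
    \lesssim_{\delta} t^{-\delta/2}\|u\|_{\CC^\alpha},
\]
where the last inequality uses $\sup_{j\ge 0}2^{j\delta}e^{-ct2^{2j}}\le\sup_{x\ge 1}x^{\delta/2}e^{-ctx}\lesssim_{\delta} t^{-\delta/2}$ (the bound being a harmless constant when $t$ is bounded away from $0$, at the cost of a factor depending on $T$ since $t\le T$). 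For $j=-1$, $2^{-(\alpha+\delta)}\|\Delta_{-1}e^{t\Delta}u\|_{L^\infty}\lesssim\|u\|_{\CC^\alpha}\le T^{\delta/2}t^{-\delta/2}\|u\|_{\CC^\alpha}$. Taking the supremum over $j\ge -1$ gives the first bound. The estimate for $e^{-\epsilon t\Delta^2}$ follows verbatim with $2^{2j}$ replaced by $2^{4j}$ and $t$ by $\epsilon t$, using $\sup_{x\ge 1}x^{\delta/4}e^{-c\epsilon tx}\lesssim_{\delta}(\epsilon t)^{-\delta/4}$ and, for $j=-1$, bounding the multiplier by $1$ and absorbing the resulting constant into the $T$-dependence (recalling $\epsilon\le 1$ throughout, so $(\epsilon t)^{-\delta/4}\ge T^{-\delta/4}$). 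Finally, the two $L^\infty$-input versions are obtained by the same computation after replacing $\|\Delta_j u\|_{L^\infty}\le\|u\|_{L^\infty}$, which holds uniformly in $j$.

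I expect no genuine difficulty; the one point requiring care is the first step, namely checking that after extracting the scalar decay factor the localized symbols associated with $\phi(t^{1/2}\cdot)$ and $\widetilde{\phi}((\epsilon t)^{1/4}\cdot)$ have inverse Fourier transforms bounded in $L^1(\R^3)$ uniformly in the dyadic index and in the parameters $t$ (and $\epsilon$), and that periodization to $\TT^3$ preserves this bound. For the fourth-order semigroup this relies on $e^{-16\pi^4|x|^4}$, and its smooth frequency-localized modifications, being Schwartz-class with scale-invariant $L^1$ transform.
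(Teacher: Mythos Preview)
Your argument is correct and is precisely the standard Littlewood--Paley proof that the paper invokes: the paper's own proof simply cites \cite{GIP15} Lemma~A.7 for $e^{t\Delta}$ and remarks that the biharmonic case follows by replacing $\varphi(t^{1/2}\cdot)$ with $\widetilde{\varphi}((\epsilon t)^{1/4}\cdot)$, which is exactly the substitution you carry out. Your write-up is thus a faithful expansion of the paper's one-line reference, with the same block-level decay bound and the same scalar supremum step.
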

\begin{proof}
    See \cite{GIP15} Lemma A.7 for the estimates of $e^{t\Delta}$.
    For $e^{-\epsilon t \Delta^2}$, by taking $\widetilde{\varphi}((\epsilon t)^{1/4} \mathrm{D})$ with $\widetilde{\varphi}(x) := e^{-16 \pi^4|x|^4}$, instead of $\varphi(t^{1/2} \mathrm{D})$ with $\varphi(x) := e^{-4\pi^2|x|^2}$, we can prove the estimates in same way.
\end{proof}
\begin{prop}[Continuity of heat semigroup]
    \label{prop:Effects_of_heat_semigroup_2}
    For all $t>0$, $\epsilon>0$, $0< \beta<1$ and $u \in \CS'$, we have
    \[
        \|(e^{t\Delta}-\mathrm{Id})u\|_{L^\infty} \lesssim t^{\frac{\beta}{2}} \|u\|_{\CC^\beta}
    \]  
    and
    \[
        \|(e^{-\epsilon t\Delta^2}-\mathrm{Id})u\|_{L^\infty} \lesssim (\epsilon t)^{\frac{\beta}{4}} \|u\|_{\CC^\beta}.
    \]
\end{prop}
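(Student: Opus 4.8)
The plan is to estimate block by block in the Littlewood--Paley decomposition and then sum a geometric series. First note that, for $\beta>0$, $\|(e^{t\Delta}-\mathrm{Id})u\|_{L^\infty}\le\|e^{t\Delta}u\|_{L^\infty}+\|u\|_{L^\infty}\lesssim\|u\|_{\CC^\beta}$, so the inequality is trivial once $t\ge 1$ (resp.\ $\epsilon t\ge 1$), and we may assume $t\le 1$ (resp.\ $\epsilon t\le 1$). Writing $u=\sum_{j\ge-1}\Delta_j u$ and using $\|\Delta_j u\|_{L^\infty}\le 2^{-j\beta}\|u\|_{\CC^\beta}$, it then suffices to prove the per-block bounds
\[
    \|(e^{t\Delta}-\mathrm{Id})\Delta_j u\|_{L^\infty}\lesssim \min(1,\,t2^{2j})\,\|\Delta_j u\|_{L^\infty},
\]
\[
    \|(e^{-\epsilon t\Delta^2}-\mathrm{Id})\Delta_j u\|_{L^\infty}\lesssim \min(1,\,\epsilon t\,2^{4j})\,\|\Delta_j u\|_{L^\infty}.
\]

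For these I would use the Duhamel identities $e^{t\Delta}-\mathrm{Id}=\int_0^t\Delta e^{s\Delta}\,ds$ and $e^{-\epsilon t\Delta^2}-\mathrm{Id}=-\int_0^t\epsilon\Delta^2 e^{-\epsilon s\Delta^2}\,ds$, which hold since all operators involved are Fourier multipliers. The symbol of $\Delta e^{s\Delta}$ is $-4\pi^2|k|^2e^{-4\pi^2 s|k|^2}$, which on a frequency annulus $|k|\sim 2^j$ is bounded by $C2^{2j}$ \emph{uniformly in} $s\ge 0$ (since $y e^{-y}\le 1$) and satisfies the expected Bernstein-type derivative bounds there; the standard Littlewood--Paley multiplier estimate (as in \cite{GIP15} Lemma~A.7 or \cite{BCD11}) then gives $\|\Delta e^{s\Delta}\Delta_j u\|_{L^\infty}\lesssim 2^{2j}\|\Delta_j u\|_{L^\infty}$ uniformly in $s$. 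Integrating over $s\in[0,t]$ produces the factor $t2^{2j}$; combining with the trivial bound $\|(e^{t\Delta}-\mathrm{Id})\Delta_j u\|_{L^\infty}\lesssim\|\Delta_j u\|_{L^\infty}$ (boundedness of $e^{t\Delta}$ on each block) gives the claimed $\min(1,t2^{2j})$. The biharmonic case is identical: the symbol $16\pi^4\epsilon|k|^4e^{-16\pi^4\epsilon s|k|^4}$ is $\le C\epsilon 2^{4j}$ on $|k|\sim 2^j$ uniformly in $s$, so integration gives the factor $\epsilon t2^{4j}$. Here one uses that $\widetilde{\phi}(x)=e^{-16\pi^4|x|^4}$ is still a Schwartz function, so the multiplier machinery is unchanged.

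To finish, I would sum over $j\ge-1$, splitting according to whether $t2^{2j}\le 1$ or $t2^{2j}>1$. On the low-frequency part, $\sum_{t2^{2j}\le 1}t2^{2j}\cdot 2^{-j\beta}\lesssim t\cdot t^{-(2-\beta)/2}=t^{\beta/2}$, a convergent geometric series dominated by its largest term because $\beta<2$; on the high-frequency part, $\sum_{t2^{2j}>1}2^{-j\beta}\lesssim (t^{-1/2})^{-\beta}=t^{\beta/2}$, convergent because $\beta>0$. This yields $\|(e^{t\Delta}-\mathrm{Id})u\|_{L^\infty}\lesssim t^{\beta/2}\|u\|_{\CC^\beta}$. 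The biharmonic bound follows the same way, splitting at $\epsilon t2^{4j}\sim 1$ and using $0<\beta<4$, which produces the exponent $(\epsilon t)^{\beta/4}$. (The hypothesis $0<\beta<1$ is comfortably inside the ranges $\beta\in(0,2)$ and $\beta\in(0,4)$ actually needed.)

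The only genuinely delicate point will be the uniform-in-$s$ Littlewood--Paley multiplier estimate on $\TT^3$; this is handled in the usual way, by extending the symbol smoothly off the annulus and periodising the corresponding Schwartz kernel via Poisson summation, exactly as in the references used for Proposition~\ref{prop:Effects_of_heat_semigroup}, with the uniformity in $s$ being immediate from the boundedness of $y\mapsto y^a e^{-y}$ on $[0,\infty)$ for $a\ge 0$. Everything else is a routine geometric summation.
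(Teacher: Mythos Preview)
Your proof is correct and is precisely the standard Littlewood--Paley argument that the paper defers to by citing \cite{GIP15} Lemma~A.8; the paper's own proof consists only of that citation together with the remark that the biharmonic case follows by replacing $\varphi$ with $\widetilde{\varphi}$, which is exactly the substitution you carry out. In effect you have written out in full what the reference contains, including the correct acknowledgment that the only nontrivial step is the uniform-in-$s$ $L^\infty$ multiplier bound on the torus.
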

\begin{proof}
    For the estimates of $e^{t\Delta}$, see \cite{GIP15} Lemma A.8.
    For the estimates of $e^{-\epsilon t\Delta^2}$, we can prove in the same way by taking the same $\tilde{\varphi}$ as in Proposition \ref{prop:Effects_of_heat_semigroup}.
\end{proof}
Fix $\kappa>0$.
\begin{prop}[Schauder estimates (\cite{GIP15} Lemma A.9)]
    \label{prop:Schauder_estimates}
    Let $T>0$, $0\leq \epsilon\leq 1$, $\alpha \in \R$ and $u \in C_T\CC^{\alpha}$.
    Then, we have
    \begin{equation}
        \label{eq:Sch_result}
        \|\CL_\epsilon^{-1}[u]\|_{C_T\CC_\epsilon^{\alpha+2}}\lesssim \|u\|_{C_T\CC^\alpha}.
    \end{equation}
    If $-2 < \alpha<0$, we also have
    \begin{equation}
        \label{eq:Sch_result_2}
        \|\CL_\epsilon^{-1}[u]\|_{C_T^{\frac{\alpha+2}{2}}L^\infty}\lesssim \|u\|_{C_T\CC^\alpha}.
    \end{equation}
\end{prop}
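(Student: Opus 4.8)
The plan is to run the standard parabolic Schauder argument (as in \cite{GIP15} Lemma A.9) starting from the mild formulation $\CL_\epsilon^{-1}[u](t)=\int_0^t P_{t-s}^\epsilon u(s)\,ds$; the only genuinely new input is the frequency behaviour of the modified semigroup. Since $P_\tau^\epsilon$ is the Fourier multiplier $k\mapsto e^{-\tau\lambda_\epsilon(k)}$ with $\lambda_\epsilon(k)=4\pi^2|k|^2+16\pi^4\epsilon|k|^4+1$, a Bernstein/Mikhlin estimate (exactly as in the proofs of Propositions \ref{prop:Effects_of_heat_semigroup} and \ref{prop:Effects_of_heat_semigroup_2}, now carrying the extra factor $e^{-16\pi^4\epsilon\tau|k|^4}$) gives, uniformly in $\epsilon\in(0,1]$, $j\ge-1$ and $\tau>0$,
\[
    \|\Delta_j P_\tau^\epsilon v\|_{L^\infty}\lesssim e^{-c\tau\Lambda_j}\|\Delta_j v\|_{L^\infty},\qquad
    \|\Delta_j(P_\tau^\epsilon-\mathrm{Id})v\|_{L^\infty}\lesssim\min(1,\tau\Lambda_j)\|\Delta_j v\|_{L^\infty},
\]
where $\Lambda_j:=2^{2j}+\epsilon2^{4j}+1\simeq\lambda_\epsilon(k)$ for $|k|\sim2^j$; note $\Lambda_j\gtrsim2^{2j}$ always. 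Everything follows from these two bounds.

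For \eqref{eq:Sch_result}, fix $t\in(0,T]$. Using the first bound together with $\|\Delta_j u(s)\|_{L^\infty}\le2^{-j\alpha}\|u\|_{C_T\CC^\alpha}$,
\[
    2^{j(\alpha+2)}\|\Delta_j\CL_\epsilon^{-1}[u](t)\|_{L^\infty}\lesssim2^{2j}\Big(\int_0^t e^{-c(t-s)\Lambda_j}\,ds\Big)\|u\|_{C_T\CC^\alpha}\lesssim\frac{2^{2j}}{\Lambda_j}\|u\|_{C_T\CC^\alpha}\le\|u\|_{C_T\CC^\alpha},
\]
which controls the $\CC^{\alpha+2}$-part of the $\CC_\epsilon^{\alpha+2}$-norm; the same computation gives $\epsilon^{1-\kappa/4}2^{j(\alpha+4-\kappa)}\|\Delta_j\CL_\epsilon^{-1}[u](t)\|_{L^\infty}\lesssim\epsilon^{1-\kappa/4}2^{j(4-\kappa)}\Lambda_j^{-1}\|u\|_{C_T\CC^\alpha}$ for the $\epsilon$-weighted part. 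The point — and this is where the precise exponent $1-\kappa/4$ in the definition of $\CC_\epsilon^\alpha$ enters — is that this prefactor is bounded uniformly in $j$ and $\epsilon$: by weighted AM--GM with weights $\kappa/2,\ 1-\kappa/2$,
\[
    \Lambda_j\ge2^{2j}+\epsilon2^{4j}\ge(2^{2j})^{\kappa/2}(\epsilon2^{4j})^{1-\kappa/2}=\epsilon^{1-\kappa/2}\,2^{j(4-\kappa)},
\]
so $\epsilon^{1-\kappa/4}2^{j(4-\kappa)}\Lambda_j^{-1}\le\epsilon^{1-\kappa/4}\epsilon^{-(1-\kappa/2)}=\epsilon^{\kappa/4}\le1$. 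Taking suprema over $j$ and over $t\in[0,T]$ yields \eqref{eq:Sch_result}. I expect this elementary but sharp balancing of powers of $\epsilon$ against powers of $2^j$ to be the main (though not deep) point of the proof; everything else is routine bookkeeping.

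For \eqref{eq:Sch_result_2}, let $0\le s<t\le T$, write $\tau:=t-s$, and split
\[
    \CL_\epsilon^{-1}[u](t)-\CL_\epsilon^{-1}[u](s)=\int_s^t P_{t-r}^\epsilon u(r)\,dr+\int_0^s\big(P_{t-r}^\epsilon-P_{s-r}^\epsilon\big)u(r)\,dr.
\]
For the first integral, $\|P_{t-r}^\epsilon u(r)\|_{L^\infty}\lesssim(t-r)^{\alpha/2}\|u\|_{C_T\CC^\alpha}$ (by summing the first frequency bound over $j$, uniformly in $\epsilon$), and $\alpha>-2$ makes $r\mapsto(t-r)^{\alpha/2}$ integrable with $\int_s^t(t-r)^{\alpha/2}\,dr\lesssim\tau^{(\alpha+2)/2}$. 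For the second integral, the symbol of $P_{t-r}^\epsilon-P_{s-r}^\epsilon$ at frequency $k$ has modulus $e^{-(s-r)\lambda_\epsilon(k)}\,|1-e^{-\tau\lambda_\epsilon(k)}|\le e^{-(s-r)\lambda_\epsilon(k)}\min(1,\tau\lambda_\epsilon(k))$, so after the Bernstein estimate, integration in $r$, and $\int_0^s e^{-c(s-r)\Lambda_j}\,dr\le(c\Lambda_j)^{-1}$,
\[
    \Big\|\int_0^s\big(P_{t-r}^\epsilon-P_{s-r}^\epsilon\big)u(r)\,dr\Big\|_{L^\infty}\lesssim\sum_{j\ge-1}\min\big(\Lambda_j^{-1},\tau\big)\,2^{-j\alpha}\,\|u\|_{C_T\CC^\alpha}.
\]
Using only $\Lambda_j^{-1}\lesssim2^{-2j}$ and splitting this geometric-type sum at $2^j\sim\tau^{-1/2}$, both the low- and high-frequency parts are $\lesssim\tau^{(\alpha+2)/2}$, again uniformly in $\epsilon$. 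Adding the two contributions and taking the supremum over $0\le s<t\le T$ gives \eqref{eq:Sch_result_2}.
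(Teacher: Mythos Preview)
Your proof is correct and more self-contained than the paper's, though the underlying ingredient --- parabolic smoothing --- is of course the same.

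The paper argues differently for the $\epsilon$-weighted part of \eqref{eq:Sch_result}. Rather than working at the Littlewood--Paley level, integrating $e^{-c(t-s)\Lambda_j}$ in $s$ to get $\Lambda_j^{-1}$, and then invoking your AM--GM bound $\Lambda_j\ge\epsilon^{1-\kappa/2}2^{j(4-\kappa)}$, the paper stays in physical space: it uses the smoothing effect of the biharmonic factor alone (Proposition~\ref{prop:Effects_of_heat_semigroup}), namely $\|P_\tau^\epsilon v\|_{\CC^{\alpha+4-\kappa}}\lesssim(\epsilon\tau)^{-1+\kappa/4}\|v\|_{\CC^\alpha}$, and then integrates in time to get $\int_0^t(\epsilon(t-s))^{-1+\kappa/4}\,ds\lesssim\epsilon^{-1+\kappa/4}T^{\kappa/4}$. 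Your frequency-side balancing and the paper's time-side integrability are dual expressions of the same trade-off between $\epsilon$ and regularity gain, so neither is deeper; your route has the minor advantage of treating the heat and biharmonic parts of $P_t^\epsilon$ jointly (no splitting of the semigroup) and of producing a $T$-independent constant. For the $\CC^{\alpha+2}$ part of \eqref{eq:Sch_result} and for \eqref{eq:Sch_result_2} the paper simply defers to \cite{GIP15} Lemma~A.9 (the extra factor $e^{-\epsilon t\Delta^2}$ being contractive on every $\CC^\beta$), whereas you redo those arguments from scratch; this costs a few lines but makes the $\epsilon$-uniformity fully explicit.
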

\begin{proof}
    From \cite{GIP15} Lemma A.9, we have
    \[
        \|\CL_\epsilon^{-1}[u](t)\|_{C_T\CC^{\alpha+2}}\lesssim \|u\|_{C_T\CC^\alpha}
    \]
    for every $0\leq t\leq T$.
    If $\epsilon>0$, by using Proposition \ref{prop:Effects_of_heat_semigroup}, we have
    \begin{align*}
        \|\CL_\epsilon^{-1}[u](t)\|_{\CC^{\alpha+4-\kappa}}
        &\leq \int_0^t \|P_{t-s}^\epsilon u (s)\|_{\CC^{\alpha+4-\kappa}} ds \\
        &\lesssim \int_0^t (\epsilon(t-s))^{-1+\frac{\kappa}{4}} \|u(s)\|_{\CC^{\alpha}} ds\\
        &\lesssim \epsilon^{-1+\frac{\kappa}{4}} T^{\frac{\kappa}{4}}\|u\|_{\CC^\alpha}.
    \end{align*}
    Thus, we have
    \[
        \epsilon^{1-\frac{\kappa}{4}} \|\CL_\epsilon^{-1}[u](t) \|_{\CC^{\alpha+4-\kappa}} \lesssim \|u\|_{\CC^\alpha}.
    \]
    Therefore, we have (\ref{eq:Sch_result}).
    For (\ref{eq:Sch_result_2}), see \cite{GIP15} Lemma A.9.
\end{proof}

\begin{prop}
    \label{prop:Schauder_estimates_2}
    Let $\kappa>0$, $0 \leq \epsilon\leq 1$, $T>0$ and $\alpha \in \R$.
    If $u(t)\in \CC^\alpha$ for any $0 < t \leq T$, then we have
        \begin{align}
            \sup_{0<t\leq T}\|\CL_\epsilon^{-1}[u](t)\|_{\CC^{\alpha}}
            &\lesssim T^{1-\eta}\left( \sup_{0<t\leq T} t^{\eta}\|u(t)\|_{\CC^{\alpha}} \right) \label{eq:Schauder_2_t_unif}
        \end{align}
        and
        \begin{align}
            \sup_{0<t\leq T} t^\eta \|\CL_\epsilon^{-1}[u](t)\|_{\CC_\epsilon^{\alpha+2-\kappa}}
            \lesssim T^{\frac{\kappa}{2}}\left( \sup_{0<t\leq T} t^{\eta}\|u(t)\|_{\CC^{\alpha}} \right).\label{eq:Schauder_2_t_depend}
        \end{align}
        for any $0\leq \eta < 1$.
\end{prop}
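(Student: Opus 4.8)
The plan is to rely on the Duhamel representation $\CL_\epsilon^{-1}[u](t)=\int_0^t P_{t-s}^\epsilon u(s)\,ds$ together with the smoothing estimates of Proposition \ref{prop:Effects_of_heat_semigroup}, exploiting the factorisation $P_r^\epsilon=e^{-r}\,e^{-\epsilon r\Delta^2}\,e^{r\Delta}$ into commuting Fourier multipliers (the scalar factor $e^{-r}\le 1$ is harmless throughout, and when $\epsilon=0$ only the heat factor is present). Everything below is carried out uniformly in $\epsilon\in[0,1]$, and the right-hand sides are assumed finite, so that the Duhamel integral converges in $\CC^\alpha$ near $s=0$.

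First I would prove \eqref{eq:Schauder_2_t_unif}. By Proposition \ref{prop:Effects_of_heat_semigroup} with $\delta=0$, both $e^{r\Delta}$ and $e^{-\epsilon r\Delta^2}$ are bounded on $\CC^\alpha$ uniformly in $r\in(0,T]$, hence $\|P_r^\epsilon v\|_{\CC^\alpha}\lesssim\|v\|_{\CC^\alpha}$, so that
\[
    \|\CL_\epsilon^{-1}[u](t)\|_{\CC^\alpha}\lesssim\int_0^t\|u(s)\|_{\CC^\alpha}\,ds\le\Big(\sup_{0<s\le T}s^\eta\|u(s)\|_{\CC^\alpha}\Big)\int_0^t s^{-\eta}\,ds .
\]
Since $\eta<1$, the last integral equals $t^{1-\eta}/(1-\eta)\le T^{1-\eta}/(1-\eta)$, and taking the supremum over $t\in(0,T]$ finishes this part.

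Next I would prove \eqref{eq:Schauder_2_t_depend}, using $\|w\|_{\CC_\epsilon^{\alpha+2-\kappa}}=\|w\|_{\CC^{\alpha+2-\kappa}}+\epsilon^{1-\kappa/4}\|w\|_{\CC^{\alpha+4-2\kappa}}$ and estimating the two summands separately. For the $\CC^{\alpha+2-\kappa}$ part I would gain the $2-\kappa$ derivatives from $e^{r\Delta}$ and keep $e^{-\epsilon r\Delta^2}$ as a bounded operator, which gives $\|P_r^\epsilon v\|_{\CC^{\alpha+2-\kappa}}\lesssim r^{-1+\kappa/2}\|v\|_{\CC^\alpha}$ (this choice is forced at $\epsilon=0$); hence
\[
    \|\CL_\epsilon^{-1}[u](t)\|_{\CC^{\alpha+2-\kappa}}\lesssim\Big(\sup_{0<s\le T}s^\eta\|u(s)\|_{\CC^\alpha}\Big)\int_0^t(t-s)^{-1+\kappa/2}s^{-\eta}\,ds ,
\]
and the Beta-type integral $\int_0^t(t-s)^{-1+\kappa/2}s^{-\eta}\,ds=B(\kappa/2,1-\eta)\,t^{\kappa/2-\eta}$ converges because $\kappa/2>0$ and $1-\eta>0$; multiplying by $t^\eta$ leaves $t^{\kappa/2}\le T^{\kappa/2}$ times the supremum. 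For the $\epsilon^{1-\kappa/4}\,\CC^{\alpha+4-2\kappa}$ part (only present when $\epsilon>0$) I would instead gain all $4-2\kappa$ derivatives from the biharmonic semigroup, so that $\|P_r^\epsilon v\|_{\CC^{\alpha+4-2\kappa}}\lesssim(\epsilon r)^{-1+\kappa/2}\|v\|_{\CC^\alpha}$; since $\epsilon^{1-\kappa/4}\cdot\epsilon^{-1+\kappa/2}=\epsilon^{\kappa/4}\le 1$, the same Beta integral yields the same $T^{\kappa/2}$ bound. Adding the two summands and taking the supremum over $t\in(0,T]$ gives \eqref{eq:Schauder_2_t_depend}.

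The only delicate point is the bookkeeping of which semigroup absorbs which amount of regularity: the top $4-2\kappa$ derivatives must be extracted from $e^{-\epsilon r\Delta^2}$, whose cost $(\epsilon r)^{-1+\kappa/2}$ is integrable in $r$ and whose $\epsilon^{-1+\kappa/2}$ is more than compensated by the prefactor $\epsilon^{1-\kappa/4}$ (leaving $\epsilon^{\kappa/4}\le 1$), whereas assigning them to $e^{r\Delta}$ would cost the non-integrable $r^{-2+\kappa}$; conversely, the intermediate $2-\kappa$ derivatives must be produced by $e^{r\Delta}$, since $e^{-\epsilon r\Delta^2}$ would introduce an uncompensated negative power of $\epsilon$ and is unavailable at $\epsilon=0$. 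Everything else is a routine Beta-integral computation.
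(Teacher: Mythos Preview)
Your proof is correct and follows essentially the same approach as the paper: both arguments use the Duhamel formula, apply the smoothing estimates of Proposition~\ref{prop:Effects_of_heat_semigroup} (extracting the $2-\kappa$ derivatives from $e^{r\Delta}$ and the $4-2\kappa$ derivatives from $e^{-\epsilon r\Delta^2}$), and then reduce to a Beta integral $\int_0^t (t-s)^{-1+\kappa/2}s^{-\eta}\,ds\simeq t^{\kappa/2-\eta}$. Your explicit factorisation $P_r^\epsilon=e^{-r}e^{-\epsilon r\Delta^2}e^{r\Delta}$ and the discussion of which semigroup must absorb which regularity gain make the argument slightly more transparent than the paper's presentation, but the underlying computation is identical.
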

\begin{proof}
        Fix $0<t\leq T$ and $0\leq \epsilon\leq 1$.
        First, we find the $\epsilon$-uniform estimates of $\CL_\epsilon^{-1}[u]$.
        By using smoothing effects of heat semigroup (Proposition \ref{prop:Effects_of_heat_semigroup}), we have
        \begin{align}
            \|\CL_\epsilon^{-1}[u](t)\|_{\CC^{\alpha+\delta}}
            &\leq \int_0^t \|P_{t-s}^\epsilon u(s)\|_{\CC^{\alpha+\delta}} ds \notag \\
            &\lesssim \int_0^t (t-s)^{-\frac{\delta}{2}} \|u(s)\|_{\CC^\alpha} ds \notag \\
            &\leq \left( \sup_{0<t\leq T} t^{\eta} \|u(t)\|_{\CC^{\alpha}} \right) \int_0^t (t-s)^{-\frac{\delta}{2}} s^{-\eta} ds \notag \\
            &\lesssim t^{1-\frac{\delta}{2}-\eta}\left( \sup_{0<t\leq T} t^{\eta}\|u(t)\|_{\CC^{\alpha}} \right) \label{eq:pr_Sch_1}
        \end{align}
        for every $0\leq \delta < 2$ and $0\leq \eta< 1$.
        
        Next, we find the $\epsilon$-depend estimates of $\CL_\epsilon^{-1}[u]$.
        If $\epsilon>0$, by using smoothing effects of heat semigroup (Proposition \ref{prop:Effects_of_heat_semigroup}), we have
        \begin{align}
            &\|\CL_\epsilon^{-1}[u](t)\|_{\CC^{\alpha+\delta'+2-\kappa}} \notag \\
            &\leq \int_0^t \|P_{t-s}^\epsilon u(s)\|_{\CC^{\alpha+4-2\kappa}} ds \notag \\
            &\lesssim \int_0^t (\epsilon(t-s))^{-\frac{4-2\kappa}{4}} \|u(s)\|_{\CC^\alpha} ds \notag \\
            &\leq \epsilon^{-1+\frac{\kappa}{2}}\left( \sup_{0<t\leq T} t^{\eta} \|u(t)\|_{\CC^{\alpha}} \right) \int_0^t (t-s)^{-1+\frac{\kappa}{2}} s^{-\eta} ds \notag \\
            &\lesssim \epsilon^{-1+\frac{\kappa}{4}}t^{\frac{\kappa}{2}-\eta}\left( \sup_{0<t\leq T} t^{\eta}\|u(t)\|_{\CC^{\alpha}} \right)  \label{eq:pr_Sch_2}
        \end{align}
        for every $0\leq \eta< 1$.

        Taking $\delta=0$ for the equation (\ref{eq:pr_Sch_1}), we have (\ref{eq:Schauder_2_t_unif}).
        On the other hand, taking $\delta = 2-\kappa$ for the equation (\ref{eq:pr_Sch_1}) and using the equation (\ref{eq:pr_Sch_2}), we have (\ref{eq:Schauder_2_t_depend}).
\end{proof}

\begin{lem}[Commutation between heat semigroup and paraproduct]
    \label{lem:commutation_heat_semigroup_paraproduct}
    Let $\alpha < 1$, $\beta \in \R$, $\delta > 0$.
    For $t>0$ and $\epsilon > 0$, we have
    \[
        \|e^{t\Delta}(f\pl g)-f\pl(e^{t\Delta}g)\|_{\CC^{\alpha+\beta +\delta}} \lesssim t^{-\frac{\delta}{2}}\|f\|_{\CC^{\alpha}}\|g\|_{\CC^{\beta}}
    \]
    and 
    \[
        \|e^{-\epsilon t\Delta^2}(f\pl g)-f\pl(e^{-\epsilon t\Delta^2}g)\|_{\CC^{\alpha+\beta +\delta}} \lesssim  (\epsilon t)^{-\frac{\delta}{4}}\|f\|_{\CC^{\alpha}}\|g\|_{\CC^{\beta}}.
    \]
\end{lem}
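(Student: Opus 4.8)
The plan is to expand both terms along the paraproduct into Littlewood--Paley blocks and reduce everything to a single frequency-localised estimate. Writing $f\pl g=\sum_{j\ge 1}\Delta_{<j-1}f\cdot\Delta_j g$ (the inner sums with $j\le 0$ being empty), set
\[
h_j:=e^{t\Delta}(\Delta_{<j-1}f\cdot\Delta_j g)-\Delta_{<j-1}f\cdot e^{t\Delta}(\Delta_j g),
\]
so that $e^{t\Delta}(f\pl g)-f\pl e^{t\Delta}g=\sum_{j\ge 1}h_j$. Since $\Delta_{<j-1}f\cdot\Delta_j g$ and $\Delta_j g$ both have Fourier support in a fixed dyadic annulus $2^j\CA$, and $e^{t\Delta}$ preserves Fourier supports, each $h_j$ is supported in $2^j\CA$; hence by the standard summation lemma for series of spectrally localised functions (\cite{BCD11} Lemma 2.69) it suffices to prove
\[
\sup_{j\ge 1}2^{j(\alpha+\beta+\delta)}\|h_j\|_{L^\infty}\lesssim t^{-\delta/2}\|f\|_{\CC^\alpha}\|g\|_{\CC^\beta}.
\]

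To estimate a single $h_j$, I would introduce a fattened projector $\widetilde{\Delta}_j$ equal to the identity on the Fourier supports of $\Delta_{<j-1}f\cdot\Delta_j g$ and of $\Delta_j g$, and let $\kappa_{t,j}$ be the kernel of $e^{t\Delta}\widetilde{\Delta}_j$ on $\TT^3$. Then $e^{t\Delta}$ may be replaced by convolution with $\kappa_{t,j}$ in both terms of $h_j$, and pulling the low-frequency factor $\Delta_{<j-1}f$ across the convolution gives
\[
h_j(x)=\int_{\TT^3}\kappa_{t,j}(z)\bigl(\Delta_{<j-1}f(x-z)-\Delta_{<j-1}f(x)\bigr)\Delta_j g(x-z)\,dz,
\]
whence $\|h_j\|_{L^\infty}\le\bigl(\int|z|\,|\kappa_{t,j}(z)|\,dz\bigr)\|\nabla\Delta_{<j-1}f\|_{L^\infty}\|\Delta_j g\|_{L^\infty}$. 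The symbol of $\kappa_{t,j}$ is $e^{-4\pi^2 t|\zeta|^2}\widetilde{\rho}(2^{-j}\zeta)$, supported where $|\zeta|\sim 2^j$, on which $e^{-4\pi^2 t|\zeta|^2}\le e^{-ct2^{2j}}$; rescaling by $2^j$ and using that the rescaled profile is Schwartz-like with all relevant seminorms dominated by $e^{-ct2^{2j}}$ up to polynomial factors in $t2^{2j}$ yields $\int|z|\,|\kappa_{t,j}(z)|\,dz\lesssim 2^{-j}e^{-ct2^{2j}}$ (on the torus this follows by periodising the $\R^3$ kernel). Combining this with $\|\nabla\Delta_{<j-1}f\|_{L^\infty}\lesssim\sum_{i\le j-2}2^{i(1-\alpha)}\|f\|_{\CC^\alpha}\lesssim 2^{j(1-\alpha)}\|f\|_{\CC^\alpha}$ — where the hypothesis $\alpha<1$ is precisely what makes the geometric sum sum to $2^{j(1-\alpha)}$ — and $\|\Delta_j g\|_{L^\infty}\lesssim 2^{-j\beta}\|g\|_{\CC^\beta}$, one gets
\[
2^{j(\alpha+\beta+\delta)}\|h_j\|_{L^\infty}\lesssim 2^{j\delta}e^{-ct2^{2j}}\|f\|_{\CC^\alpha}\|g\|_{\CC^\beta}=t^{-\delta/2}\,(t2^{2j})^{\delta/2}e^{-ct2^{2j}}\,\|f\|_{\CC^\alpha}\|g\|_{\CC^\beta},
\]
and since $s\mapsto s^{\delta/2}e^{-cs}$ is bounded on $[0,\infty)$ this is $\lesssim t^{-\delta/2}\|f\|_{\CC^\alpha}\|g\|_{\CC^\beta}$ uniformly in $j$, proving the first inequality.

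For the biharmonic semigroup $e^{-\epsilon t\Delta^2}$ the argument is identical: its kernel restricted to frequencies $|\zeta|\sim 2^j$ has symbol $e^{-16\pi^4\epsilon t|\zeta|^4}\widetilde{\rho}(2^{-j}\zeta)$, bounded by $e^{-c\epsilon t2^{4j}}$, so $\int|z|\,|\kappa^\epsilon_{t,j}(z)|\,dz\lesssim 2^{-j}e^{-c\epsilon t2^{4j}}$, and the same chain gives $2^{j(\alpha+\beta+\delta)}\|h^\epsilon_j\|_{L^\infty}\lesssim(\epsilon t)^{-\delta/4}(\epsilon t2^{4j})^{\delta/4}e^{-c\epsilon t2^{4j}}\|f\|_{\CC^\alpha}\|g\|_{\CC^\beta}\lesssim(\epsilon t)^{-\delta/4}\|f\|_{\CC^\alpha}\|g\|_{\CC^\beta}$, using the boundedness of $s\mapsto s^{\delta/4}e^{-cs}$.

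The main obstacle is the kernel estimate $\int|z|\,|\kappa_{t,j}(z)|\,dz\lesssim 2^{-j}e^{-ct2^{2j}}$ (and its biharmonic analogue): one must retain the smoothing factor $e^{-ct2^{2j}}$ while bounding the weighted $L^1$ norm of the periodised, rescaled kernel by finitely many $L^\infty$-seminorms of the symbol $e^{-4\pi^2 s|\eta|^2}\widetilde{\rho}(\eta)$ with $s=t2^{2j}$, each of which carries the factor $e^{-cs}$ up to a power of $s$ absorbed by the exponential. Everything else — the spectral-support bookkeeping, the geometric sum for $\|\nabla\Delta_{<j-1}f\|_{L^\infty}$ that uses $\alpha<1$, and the final scalar optimisation — is routine.
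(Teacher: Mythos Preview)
Your proof is correct and follows essentially the same route as the paper, which cites \cite{MW17b} Proposition A.16 and then adapts it to the biharmonic semigroup. Your fattened kernel $\kappa_{t,j}$ is precisely the paper's $G_{k,t}=\CF^{-1}\bigl(\phi(2^{-k}\cdot)e^{-t|\cdot|^2}\bigr)$, your weighted $L^1$ quantity $\int|z|\,|\kappa_{t,j}(z)|\,dz$ is the paper's $\|\tilde G_{k,t}\|_{L^1}$ with $\tilde G_{k,t}(y)=yG_{k,t}(y)$, and the paper's key bound $\|\tilde G_{k,\epsilon,t}\|_{L^1}\lesssim 2^{-k}(1+2^{4k}\epsilon t)e^{-c\epsilon t 2^{4k}}$ is exactly your kernel estimate with the polynomial prefactor made explicit rather than absorbed into the exponential.
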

\begin{proof}
    See \cite{MW17b} Proposition A.16.
    The key part of this proof for the estimates of $e^{t\Delta}$ is 
    \[
        \|h_k\|_{L^p}\lesssim \|\tilde{G}_{k,t}\|_{L^1}\|\nabla (\Delta_{<k-1} f)\|_{L^{p_1}} \|\Delta_k g\|_{L^{p_2}},
    \]
    where we set
    \begin{align*}
        h_k &:= G_{k,t}*(\Delta_{<k-1} f \Delta_k g) - (\Delta_{<k-1} f) (G_{k,t}*\Delta_k g), \\
        G_{k,t}&:= \CF^{-1}\left( \phi(2^{-k}\cdot) e^{-t|\cdot|^2} \right), \\
        \tilde{G}_{k,t}(y)&:= yG_{k,t}(y)
    \end{align*}
    and $\phi \in C_c^\infty$ is an appropriate function.

    For the estimates of $e^{-\epsilon t \Delta^2}$, take
    \begin{align*}
        G_{k,\epsilon,t} &:=\CF^{-1}\left(\phi(2^{-k}\cdot) e^{-16\pi^4 \epsilon t|\cdot|^4}\right),\\
        \tilde{G}_{k,\epsilon,t}(y)&:= yG_{k,\epsilon,t}(y)
    \end{align*}
    instead of $G_{k,t}$ and $\tilde{G}_{k,t}$, respectively.
    Since it holds
    \[
        \|\tilde{G}_{k,\epsilon,t} \|_{L^1}\lesssim 2^{-k}(1+2^{4k}\epsilon t)e^{-c\epsilon t 2^{4k}},
    \]
    we can show the estimates of $e^{-\epsilon t \Delta^2}$.
\end{proof}

\begin{prop}\label{prop:commutation_L_inverse_paraproduct}
    For $0 \leq \epsilon \leq 1$, $0< \alpha < 1$, $\beta \in \R$ and $\delta < 2$, we have
    \[
        \|\CL_\epsilon^{-1}[f\pl g] - f \pl \CL_\epsilon^{-1}[g]\|_{C_T\CC^{\alpha + \beta + \delta}}
        \lesssim (\|f\|_{C_T\CC^{\alpha}} + \|f\|_{C_T^{\alpha/2}L^\infty})\|g\|_{C_T\CC^{\beta}}.
    \]
\end{prop}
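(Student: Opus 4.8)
I would work directly from the Duhamel formula $\CL_\epsilon^{-1}[v](t)=\int_0^t P_{t-s}^\epsilon v(s)\,ds$ and separate the spatial non‑commutativity of $P_{t-s}^\epsilon$ with the paraproduct from the time increment of $f$. Since $\CC^{\gamma'}\hookrightarrow\CC^{\gamma}$ for $\gamma'\geq\gamma$, we may assume $0<\delta<2$. Writing $\tau=t-s$ and $P_\tau^\epsilon=e^{-\tau}e^{\tau\Delta}e^{-\epsilon\tau\Delta^2}$, we have
\[
    \CL_\epsilon^{-1}[f\pl g](t)-f(t)\pl\CL_\epsilon^{-1}[g](t)=\int_0^t\big(\mathcal{A}(s)+\mathcal{B}(s)\big)\,ds,
\]
\[
    \mathcal{A}(s):=P_{t-s}^\epsilon(f(s)\pl g(s))-f(s)\pl(P_{t-s}^\epsilon g(s)),\qquad
    \mathcal{B}(s):=(f(s)-f(t))\pl(P_{t-s}^\epsilon g(s)).
\]
First I would bound $\mathcal{B}(s)$: by the paraproduct estimate (Proposition \ref{prop:paraproduct_estimates}(1)), the time‑H\"older continuity of $f$, and Proposition \ref{prop:Effects_of_heat_semigroup} (smoothing of $e^{t\Delta}$ together with the $\epsilon$‑uniform boundedness of $e^{-\epsilon t\Delta^2}$ on $\CC^\beta$, the case $\delta=0$),
\[
    \|\mathcal{B}(s)\|_{\CC^{\alpha+\beta+\delta}}\lesssim\|f(s)-f(t)\|_{L^\infty}\,\|P_{t-s}^\epsilon g(s)\|_{\CC^{\alpha+\beta+\delta}}\lesssim(t-s)^{\alpha/2-\max(\alpha+\delta,0)/2}\,\|f\|_{C_T^{\alpha/2}L^\infty}\|g\|_{C_T\CC^\beta},
\]
and the exponent exceeds $-1$ since $0<\alpha<1$ and $\delta<2$, so the $s$‑integral is finite (with a $T$‑dependent constant).

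Next I would treat the spatial commutator $\mathcal{A}(s)$. Using $e^{-\tau}\leq1$ and peeling the two semigroups off one at a time,
\[
    \mathcal{A}(s)=e^{-\tau}\Big(e^{\tau\Delta}\big[e^{-\epsilon\tau\Delta^2}(f(s)\pl g(s))-f(s)\pl(e^{-\epsilon\tau\Delta^2}g(s))\big]+\big[e^{\tau\Delta}(f(s)\pl h_s)-f(s)\pl(e^{\tau\Delta}h_s)\big]\Big),
\]
with $h_s:=e^{-\epsilon\tau\Delta^2}g(s)$. The second bracket is handled immediately by Lemma \ref{lem:commutation_heat_semigroup_paraproduct} (the $e^{t\Delta}$ estimate), giving $\lesssim\tau^{-\delta/2}\|f(s)\|_{\CC^\alpha}\|h_s\|_{\CC^\beta}\lesssim\tau^{-\delta/2}\|f(s)\|_{\CC^\alpha}\|g(s)\|_{\CC^\beta}$ after using boundedness of $e^{-\epsilon\tau\Delta^2}$ on $\CC^\beta$. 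For the first bracket I would apply the smoothing of $e^{\tau\Delta}$ to move $\delta$ derivatives out front and then bound the remaining $e^{-\epsilon\tau\Delta^2}$--paraproduct commutator in $\CC^{\alpha+\beta}$ \emph{uniformly in $\epsilon\in[0,1]$ and $\tau>0$}. This uniform bound is the crux: it is not literally the $\delta\downarrow0$ limit of the second estimate in Lemma \ref{lem:commutation_heat_semigroup_paraproduct}, whose constant carries the divergent factor $(\epsilon\tau)^{-\delta/4}$; instead it follows by rerunning the proof of that lemma and observing that the relevant kernel satisfies $\|\widetilde{G}_{k,\epsilon,\tau}\|_{L^1}\lesssim 2^{-k}(1+2^{4k}\epsilon\tau)e^{-c2^{4k}\epsilon\tau}\lesssim 2^{-k}$, by $\sup_{y\geq0}(1+y)e^{-cy}<\infty$; hence the $\Delta^2$‑commutator maps $\CC^\alpha\times\CC^\beta\to\CC^{\alpha+\beta}$ with a constant independent of $\epsilon$ and $\tau$ (and it is identically $0$ at $\epsilon=0$). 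Combining the two brackets, $\|\mathcal{A}(s)\|_{\CC^{\alpha+\beta+\delta}}\lesssim(t-s)^{-\delta/2}\|f\|_{C_T\CC^\alpha}\|g\|_{C_T\CC^\beta}$.

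Finally, integrating the bounds for $\mathcal{A}$ and $\mathcal{B}$ over $s\in[0,t]$, the kernels $(t-s)^{-\delta/2}$ and $(t-s)^{\alpha/2-\max(\alpha+\delta,0)/2}$ are integrable because $0<\delta<2$, which gives a bound by a $T$‑dependent multiple of $\big(\|f\|_{C_T\CC^\alpha}+\|f\|_{C_T^{\alpha/2}L^\infty}\big)\|g\|_{C_T\CC^\beta}$; taking the supremum over $t\in[0,T]$ yields the claim. I expect the only genuine obstacle to be the $\epsilon$‑uniform estimate of the biharmonic‑semigroup--paraproduct commutator; the rest is a routine combination of Schauder‑type smoothing (Proposition \ref{prop:Effects_of_heat_semigroup}), paraproduct bounds, and the already‑established Lemma \ref{lem:commutation_heat_semigroup_paraproduct}.
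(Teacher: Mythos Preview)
Your proposal is correct and follows exactly the approach the paper has in mind: the paper's own proof is a one-line citation of Proposition \ref{prop:Effects_of_heat_semigroup}, Lemma \ref{lem:commutation_heat_semigroup_paraproduct}, and the paraproduct estimates, and your Duhamel decomposition into the spatial commutator $\mathcal{A}$ and the time-increment term $\mathcal{B}$ is the standard way to combine these ingredients. Your observation that the $e^{-\epsilon\tau\Delta^2}$--paraproduct commutator must be bounded in $\CC^{\alpha+\beta}$ \emph{uniformly in $\epsilon$ and $\tau$} (via the kernel bound $\|\widetilde{G}_{k,\epsilon,\tau}\|_{L^1}\lesssim 2^{-k}$ already displayed in the proof of Lemma \ref{lem:commutation_heat_semigroup_paraproduct}) is exactly the point the paper leaves implicit, and your treatment of it is correct.
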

\begin{proof}
    It follows from Proposition \ref{prop:Effects_of_heat_semigroup}, Lemma \ref{lem:commutation_heat_semigroup_paraproduct} and paraproduct estimates (Proposition \ref{prop:paraproduct_estimates}).
\end{proof}

\section{The derivation of the equation (\ref{eq:L_epsilon_v})}
In this section, we use the following differential operators
\[
    L_\epsilon := \Delta -\epsilon \Delta^2, \quad \CL_\epsilon:= \pt - L_\epsilon +1
\]
for $0\leq \epsilon \leq 1$.
We will express $\CL_\epsilon v$ using $\CL_\epsilon g$ when $v=e^h g$.
\subsection{The expansion of $L_0 v$}
First, we consider the simple case ($\epsilon = 0$).
We denote
\[
    B(f,g) := \nabla f\cdot \nabla g.
\]
For $L_0$, the following relations hold.
\begin{prop}
    We have
    \begin{align}
        L_0(fg)
        & = (L_0 f) g + f (L_0 g) + 2B(f,g) \label{eq:L0(fg)},\\
        L_0(e^h) &= e^h \{L_0 h +B(h,h)\} \label{eq:L0(eh)},\\
        L_0(e^h g) &= e^h\{(L_0 h)g + L_0 g +  B(h,h)g + 2B(h,g) \} \label{eq:L0(ehg)}.\\
        B(h, e^{k}v) &=e^k\{vB(h,k)+B(h,v)\} \label{eq:B0(ehv_g)}.
    \end{align}
\end{prop}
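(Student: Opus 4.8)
The four displayed identities are elementary pointwise identities for smooth functions, and the plan is to verify each by direct differentiation, deriving the later ones from the earlier ones. Recall that here $L_0=\Delta=\sum_{i=1}^3\partial_i^2$ (the case $\epsilon=0$ of $L_\epsilon=\Delta-\epsilon\Delta^2$) and $B(f,g)=\nabla f\cdot\nabla g$. First I would prove (\ref{eq:L0(fg)}): applying the Leibniz rule twice gives $\partial_i^2(fg)=(\partial_i^2 f)g+2(\partial_i f)(\partial_i g)+f(\partial_i^2 g)$ for each $i$, and summing over $i$ yields $\Delta(fg)=(\Delta f)g+f(\Delta g)+2\sum_i(\partial_i f)(\partial_i g)$, where the last sum is exactly $B(f,g)$.

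Next, for (\ref{eq:L0(eh)}) I would use the chain rule: $\partial_i(e^h)=e^h\,\partial_i h$, hence $\partial_i^2(e^h)=e^h\big((\partial_i h)^2+\partial_i^2 h\big)$; summing over $i$ gives $\Delta(e^h)=e^h\big(|\nabla h|^2+\Delta h\big)=e^h\big(B(h,h)+L_0 h\big)$. For (\ref{eq:L0(ehg)}) I would not recompute from scratch but instead apply (\ref{eq:L0(fg)}) with $f$ replaced by $e^h$, so that $L_0(e^h g)=(L_0 e^h)g+e^h(L_0 g)+2B(e^h,g)$; then I substitute (\ref{eq:L0(eh)}) for $L_0 e^h$ together with the auxiliary identity $B(e^h,g)=\nabla e^h\cdot\nabla g=e^h\,\nabla h\cdot\nabla g=e^h B(h,g)$ (again the chain rule), and collect terms to obtain the claimed form. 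Finally, (\ref{eq:B0(ehv_g)}) follows by expanding $\nabla(e^k v)=e^k\big(v\nabla k+\nabla v\big)$ via the product and chain rules and taking the dot product with $\nabla h$.

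There is no real obstacle here: the whole proposition is a bookkeeping exercise in the product and chain rules, and the only point worth noting is that every manipulation is carried out pointwise and is justified for sufficiently regular — here, smooth — functions, which is precisely the setting of this appendix; in the main text the $\epsilon\Delta^2$ regularization guarantees that $\phi_\epsilon$ (and the objects built from it) are smooth enough for these classical identities to apply. The identities then feed into the derivation of (\ref{eq:L_epsilon_v}), where the analogous but longer computation for $L_\epsilon=\Delta-\epsilon\Delta^2$ is what produces the extra $\widetilde B$ and $T$ terms appearing in $F_\epsilon$ and $G_\epsilon$.
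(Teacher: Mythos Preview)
Your proposal is correct and matches the paper's own proof essentially line for line: the paper also derives (\ref{eq:L0(fg)}), (\ref{eq:L0(eh)}) and (\ref{eq:B0(ehv_g)}) directly from the Leibniz/chain rule, records the auxiliary identity $B(e^h,g)=e^h B(h,g)$, and then obtains (\ref{eq:L0(ehg)}) by combining (\ref{eq:L0(fg)}), (\ref{eq:L0(eh)}) and that auxiliary identity. There is nothing to add.
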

\begin{proof}
    By Leibniz rule, we have (\ref{eq:L0(fg)}), (\ref{eq:L0(eh)}), (\ref{eq:B0(ehv_g)}), and 
    \begin{align}
        B(e^h, g)
        = e^hB(h,g). \label{eq:B0(eh_g)}
    \end{align}
    It holds (\ref{eq:L0(ehg)}) from (\ref{eq:L0(fg)}), (\ref{eq:L0(eh)}) and (\ref{eq:B0(eh_g)}).
\end{proof}
\begin{cor}
    We have
    \begin{align}
        L_0v = (L_0 h)v + e^h(L_0 g) + \{-B(h,h)v + 2B(h,v)\}. \label{eq:L0(v)}
    \end{align}
\end{cor}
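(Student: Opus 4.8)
The plan is to start from the already-established expansion \eqref{eq:L0(ehg)} of $L_0(e^h g)$ and then rewrite every occurrence of $g$ in terms of $v = e^h g$. First I would substitute $v = e^h g$ into \eqref{eq:L0(ehg)}, which gives
\[
    L_0 v = e^h\{(L_0 h)g + L_0 g + B(h,h)g + 2B(h,g)\}
          = (L_0 h)v + e^h(L_0 g) + B(h,h)v + 2e^h B(h,g),
\]
using $e^h g = v$ on the first, third and the $B(h,h)g$ terms.

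The only term not yet in the desired form is $2e^h B(h,g)$, so next I would express $e^h B(h,g)$ through $B(h,v)$. Applying \eqref{eq:B0(ehv_g)} with $k = h$ and the field there equal to $g$ yields $B(h, e^h g) = e^h\{gB(h,h) + B(h,g)\}$, i.e. $B(h,v) = B(h,h)v + e^h B(h,g)$, hence $e^h B(h,g) = B(h,v) - B(h,h)v$. Substituting this back gives
\[
    L_0 v = (L_0 h)v + e^h(L_0 g) + B(h,h)v + 2\bigl(B(h,v) - B(h,h)v\bigr)
          = (L_0 h)v + e^h(L_0 g) + \{-B(h,h)v + 2B(h,v)\},
\]
which is exactly \eqref{eq:L0(v)}.

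There is essentially no obstacle here: the argument is pure bookkeeping built on the Leibniz-rule identities \eqref{eq:L0(ehg)} and \eqref{eq:B0(ehv_g)} proved just above. The one place to be careful is the direction of the substitution — one must eliminate $g$ in favour of $v$ (not the reverse) and remember that the factor $e^h$ multiplying $B(h,g)$ is precisely what the identity \eqref{eq:B0(ehv_g)} is designed to absorb, producing the compensating $-B(h,h)v$ term.
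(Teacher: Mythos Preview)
Your proposal is correct and is precisely the intended derivation: the paper states this result as an immediate corollary of the preceding proposition without further proof, and your computation---starting from \eqref{eq:L0(ehg)} and using \eqref{eq:B0(ehv_g)} to rewrite $e^h B(h,g)$ as $B(h,v)-B(h,h)v$---is exactly the bookkeeping that the corollary is meant to record.
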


\subsection{The expansion of $L_\epsilon v$}
Next, we consider the case $L_\epsilon$.
We denote
\begin{align*}
    \widetilde{B}(f,g) &:= \nabla\cdot(\Delta f \nabla g) + \nabla\cdot(\nabla f \Delta g) + \Delta B(f,g),\\
    T(f,g,h)&:= \nabla\cdot(B(f,g) \nabla h)=B(f,g)\Delta h + B(B(f,g),h).
\end{align*}
For $T$, the following relations hold:
\begin{align}
    T(fg,h,k)
    &= B(fg,h)\Delta k + B(B(fg,h),k) \notag \\
    &= \{fB(g,h)+gB(f,h)\} \Delta k + B(fB(g,h)+gB(f,h),k) \notag \\
    &= \{fB(g,h)+gB(f,h)\} \Delta k + fB(B(g,h),k)+B(f,k)B(g,h) \notag \\
    &\quad  + gB(B(f,h),k) + B(g,k)B(f,h) \notag \\
    &= f T(g,h,k) + g T(f,h,k) + B(f,k)B(g,h) + B(g,k)B(f,h) \label{eq:T(fg,h,k)}, \\
    T(f,g,hk)
    &= B(f,g)\Delta (hk) + B(B(f,g),hk) \notag \\
    &= B(f,g)\{(\Delta h)k + h(\Delta k) + 2B(h,k)\} \notag \\
    &\quad + B(B(f,g),h)k + B(B(f,g),k)h \notag \\
    &= hT(f,g,k) + kT(f,g,h) + 2B(f,g)B(h,k) \label{eq:T(f,g,hk)}.
\end{align}
Since the computation is a little long, we divide it into three propositions.
\begin{prop}
    We have
    \begin{align}
        L_\epsilon (fg)
        &= (L_\epsilon f) g + f(L_\epsilon g) + 2A^1_\epsilon(f,g) \label{eq:L_epsilon(fg)},\\
        L_\epsilon(e^h) &= e^h\{L_\epsilon h + A^2_\epsilon(h)\} \label{eq:L_epsilon(eh)}, \\
        L_\epsilon(e^h g)&= e^h\{(L_\epsilon h)g + L_\epsilon g + 2A^1_\epsilon(h,g) + A^2_\epsilon(h)g + 2A^3_\epsilon(h,g) \} \label{eq:L_epsilon(ehg)},
    \end{align}
    where
    \begin{align*}
        A^1_\epsilon(f,g) &:=  B(f,g)\\
        &\quad -\epsilon\{B(\Delta f,g) + B(f,\Delta g) + (\Delta f)(\Delta g) + \Delta(B(f,g))  \}\\
        &\,\,  = (\nabla f \cdot \nabla g + \epsilon\Delta f \Delta g) - \epsilon \widetilde{B}(f,g),\\
        A^2_\epsilon(h)
        &:= A^1_\epsilon(h,h) -\epsilon \{2T(h,h,h) + B(h,h)^2\},\\
        A^3_\epsilon(h,g)
        &:= -\epsilon \{ 2T(g,h,h) + T(h,h,g) + 2B(h,h)B(h,g) \}.
    \end{align*}
\end{prop}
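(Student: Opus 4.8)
The plan is to prove the three identities (\ref{eq:L_epsilon(fg)}), (\ref{eq:L_epsilon(eh)}), (\ref{eq:L_epsilon(ehg)}) in that order, each being a Leibniz-rule computation bootstrapped from the $\epsilon=0$ relations (\ref{eq:L0(fg)})--(\ref{eq:L0(ehg)}) of the previous subsection (recall that there $L_0=\Delta$). For (\ref{eq:L_epsilon(fg)}) I would write $L_\epsilon(fg)=\Delta(fg)-\epsilon\Delta^2(fg)$. By (\ref{eq:L0(fg)}) the first term is $(\Delta f)g+f(\Delta g)+2B(f,g)$, and applying $\Delta$ once more to each summand (again by (\ref{eq:L0(fg)}), with $\Delta(2B(f,g))=2\Delta B(f,g)$) gives
\[
    \Delta^2(fg)=(\Delta^2 f)g+f(\Delta^2 g)+2\{(\Delta f)(\Delta g)+B(\Delta f,g)+B(f,\Delta g)+\Delta B(f,g)\}.
\]
Collecting the terms without $\epsilon$-differentiation into $(L_\epsilon f)g+f(L_\epsilon g)$, and using the divergence identities $\nabla\cdot(\Delta f\nabla g)=B(\Delta f,g)+(\Delta f)(\Delta g)$ and $\nabla\cdot(\nabla f\Delta g)=(\Delta f)(\Delta g)+B(f,\Delta g)$ --- which also yield $\widetilde B(f,g)=B(\Delta f,g)+B(f,\Delta g)+2(\Delta f)(\Delta g)+\Delta B(f,g)$ --- one reads off that the remainder is $2A^1_\epsilon(f,g)$ in both of its stated forms, thereby also verifying their equivalence.

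For (\ref{eq:L_epsilon(eh)}) I would abbreviate $w:=\Delta h+B(h,h)$, so that $\Delta(e^h)=e^h w$ by (\ref{eq:L0(eh)}). Then $\Delta^2(e^h)=\Delta(e^h w)=e^h\{w^2+\Delta w+2B(h,w)\}$ by (\ref{eq:L0(fg)}); expanding $\Delta w=\Delta^2 h+\Delta B(h,h)$ and $B(h,w)=B(h,\Delta h)+B(h,B(h,h))$ and substituting into $L_\epsilon(e^h)=e^h w-\epsilon\Delta^2(e^h)=e^h\{w-\epsilon(w^2+\Delta w+2B(h,w))\}$, the bracket reorganizes --- using $\widetilde B(h,h)=2B(\Delta h,h)+2(\Delta h)^2+\Delta B(h,h)$, the symmetry $B(f,g)=B(g,f)$, and $T(h,h,h)=B(h,h)\Delta h+B(B(h,h),h)$ --- into $L_\epsilon h+A^1_\epsilon(h,h)-\epsilon\{2T(h,h,h)+B(h,h)^2\}$, which is $L_\epsilon h+A^2_\epsilon(h)$.

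For (\ref{eq:L_epsilon(ehg)}) I would factor $e^h g=(e^h)g$ and apply (\ref{eq:L_epsilon(fg)}) with $f=e^h$, obtaining $L_\epsilon(e^h g)=(L_\epsilon e^h)g+e^h L_\epsilon g+2A^1_\epsilon(e^h,g)$, and then substitute (\ref{eq:L_epsilon(eh)}) for $L_\epsilon e^h$. What remains is the conjugation identity $A^1_\epsilon(e^h,g)=e^h\{A^1_\epsilon(h,g)+A^3_\epsilon(h,g)\}$, which I would prove by expanding $A^1_\epsilon(e^h,g)=\nabla(e^h)\cdot\nabla g+\epsilon\Delta(e^h)\Delta g-\epsilon\widetilde B(e^h,g)$ using $\nabla(e^h)=e^h\nabla h$, $\Delta(e^h)=e^h w$, and $B(e^h,\cdot)=e^h B(h,\cdot)$ (i.e. (\ref{eq:B0(eh_g)})), and writing out all three pieces of $\widetilde B(e^h,g)=\nabla\cdot(\Delta(e^h)\nabla g)+\nabla\cdot(\nabla(e^h)\Delta g)+\Delta B(e^h,g)$ by the Leibniz rule. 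After pulling out the common factor $e^h$, the surviving terms recombine --- via (\ref{eq:B0(ehv_g)}), (\ref{eq:T(fg,h,k)}) and (\ref{eq:T(f,g,hk)}), or equivalently straight from the definitions of $T$ and $\widetilde B$ --- into $(\nabla h\cdot\nabla g+\epsilon\Delta h\Delta g)-\epsilon\widetilde B(h,g)-\epsilon\{2T(g,h,h)+T(h,h,g)+2B(h,h)B(h,g)\}=A^1_\epsilon(h,g)+A^3_\epsilon(h,g)$. Feeding this back in gives (\ref{eq:L_epsilon(ehg)}).

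No step involves anything beyond the Leibniz rule, so there is no conceptual obstacle; the real difficulty is purely combinatorial --- controlling the proliferation of second- and third-order terms produced by the two $\Delta^2$-expansions and by $\widetilde B(e^h,g)$, and recognizing which groups of them assemble into $\widetilde B$, $T$, and $B(h,h)^2$. I would mitigate this by fixing the abbreviation $w=\Delta h+B(h,h)$ at the outset, by using $B(f,g)=B(g,f)$ freely, and by establishing the equivalence of the two forms of $A^1_\epsilon$ (as in the first step) before tackling $A^2_\epsilon$ and $A^3_\epsilon$, so that the compact form of $A^1_\epsilon$ is available for matching terms.
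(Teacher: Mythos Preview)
Your proposal is correct and follows essentially the same route as the paper: both compute $\Delta^2(fg)$ by iterating (\ref{eq:L0(fg)}), expand $\Delta^2(e^h)$ via $\Delta(e^h)=e^h(\Delta h+B(h,h))$, and reduce (\ref{eq:L_epsilon(ehg)}) to the conjugation identity $A^1_\epsilon(e^h,g)=e^h\{A^1_\epsilon(h,g)+A^3_\epsilon(h,g)\}$, which is then checked term by term. The only cosmetic differences are your abbreviation $w=\Delta h+B(h,h)$ and your choice to expand $A^1_\epsilon(e^h,g)$ through its compact $\widetilde B$-form rather than its four-term form; the paper does the latter, but the bookkeeping is identical.
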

\begin{remark}
    By definition, $A^2_\epsilon$ and $A^3_\epsilon$ are nonlinear with respect to $h$ and $A^3_\epsilon$ is not symmetric.
\end{remark}

\begin{proof}
    First, we show (\ref{eq:L_epsilon(fg)}).
    By using (\ref{eq:L0(fg)}) twice, we have
    \begin{align}
        \Delta^2 (fg)
        &= \Delta \{(\Delta f)g + f(\Delta g) + 2B(f,g)\} \notag \\
        &= (\Delta^2 f)g + f(\Delta^2 g) \notag \\
        &\quad + 2\{B(\Delta f, g) + B(f,\Delta g) + (\Delta f)(\Delta g) + \Delta (B(f,g))\}\label{eq:Delta2(fg)},
    \end{align}
    therefore we obtain (\ref{eq:L_epsilon(fg)}) as follows.
    \begin{align*}
        L_\epsilon (fg) 
        &= L_0(fg)-\epsilon\Delta^2(fg)\\
        &=\{(L_0 f) g + f (L_0 g) + 2B(f,g)\} - \epsilon\{ (\Delta^2 f) g + f (\Delta^2 g) \} \\
        &\quad - 2\epsilon\{B(\Delta f, g) + B(f,\Delta g) + (\Delta f)(\Delta g) + \Delta (B(f,g))\}\\
        &=(L_\epsilon f) g + f(L_\epsilon g) + 2A^1_\epsilon(f,g).
    \end{align*}

    Next, we show (\ref{eq:L_epsilon(eh)}).
    From (\ref{eq:L0(fg)}) and (\ref{eq:L0(eh)}), we have
    \begin{align*}
        \Delta^2 e^h
        &= \Delta (e^h \Delta h + e^h B(h,h)) \\
        &= (\Delta e^h) \Delta h + e^h \Delta^2 h + 2 B(e^h,\Delta h)\\
        &\quad  + (\Delta e^h) B(h,h) + e^h \Delta B(h,h) + 2B(e^h,B(h,h)) \\
        &= e^h \{ \Delta^2 h +(\Delta h)^2 + 2B(h,h)\Delta h + 2 B(\Delta h,h)\\
        &\qquad \qquad + B(h,h)^2 + \Delta B(h,h)+ 2 B(B(h,h),h)\} \\
        &= e^h\{ \Delta^2 h -\epsilon^{-1} (A_\epsilon^1(h,h)-B(h,h)) +2T(h,h,h) +B(h,h)^2 \}.
    \end{align*}
    Therefore, we obtain (\ref{eq:L_epsilon(eh)}) as follows.
    \begin{align*}
        L_\epsilon (e^h)
        &= L_0(e^h) - \epsilon \Delta^2 e^h \\
        &= e^h\{(L_0 h) + B(h,h)\}\\
        &\quad - \epsilon e^h\{ \Delta^2 h -\epsilon^{-1} (A_\epsilon^1(h,h)-B(h,h)) +2T(h,h,h) +B(h,h)^2 \}\\
        &=e^h L_\epsilon h + e^h A^2_\epsilon(h).
    \end{align*}

    At last, we show (\ref{eq:L_epsilon(ehg)}).
    From (\ref{eq:L_epsilon(fg)}) and (\ref{eq:L_epsilon(eh)}), we have
    \begin{align*}
        L_\epsilon (e^hg)
        &= (L_\epsilon e^h) g + e^h(L_\epsilon g) + 2A^1_\epsilon(e^h,g) \\
        &= e^h\{ (L_\epsilon h)g + L_\epsilon g + A^2_\epsilon(h)g \}+ 2A^1_\epsilon(e^h,g).
    \end{align*}
    It is sufficient that we consider $A^1_\epsilon(e^h,g)$.
    We derive the relations between the derivatives of $e^h$ and $g$. 
    From (\ref{eq:L0(fg)}) and (\ref{eq:L0(eh)}), we have
    \begin{align*}
        B(\Delta e^h,g)
        &= B(e^h\Delta h+e^hB(h,h),g) \\
        &= e^h\{B(\Delta h,g) + (\Delta h) B(h,g) + B(h,h)B(h,g) + B(B(h,h),g)\},\\
        B(e^h,\Delta g)
        &= e^h B(h,g), \\
        (\Delta e^h)(\Delta g)
        &= e^h\{ (\Delta h)(\Delta g) + B(h,h)\Delta g\}, \\
        \Delta(B(e^h,g))
        &= \Delta(e^h B(h,g)) \\
        &= (\Delta e^h)B(h,g) + e^h \Delta B(h,g) + 2B(e^h,B(h,g)) \\
        &= e^h\{ (\Delta h)B(h,g) + B(h,h)B(h,g) + \Delta B(h,g) + 2B(h,B(h,g))\}\\
        &= e^h\{ \Delta B(h,g) + T(g,h,h) + B(h,h)B(h,g) + B(B(h,g),h) \}.
    \end{align*}
    Hence, $A^1_\epsilon(e^h,g)$ can be expressed as follows.
    \begin{align}
        A^1_\epsilon(e^h,g)
        &= B(e^h,g) \notag \\
        &\quad -\epsilon\{B(\Delta e^h,g) + B(e^h,\Delta g) + (\Delta e^h)(\Delta g) + \Delta(B(e^h,g))\} \notag \\
        &= e^h[ A^1_\epsilon(h,g) -\epsilon \{ (\Delta h) B(h,g) + 2B(h,h)B(h,g) + B(B(h,h),g) \notag \\
        &\quad \qquad + B(h,h)\Delta g + T(g,h,h)  + B(B(h,g),h)\}] \notag \\
        &=e^h[A^1_\epsilon(h,g) -\epsilon \{ 2T(g,h,h) + T(h,h,g)+ 2B(h,h)B(h,g)\}] \notag \\
        &= e^h\{A^1_\epsilon(h,g) + A^3_\epsilon(h,g)\} \label{eq:B_epsilon(eh_g)} .
    \end{align}
\end{proof}

\begin{prop}\label{prop:B_4}
    We have
    \begin{multline}\label{eq:B_epsilon(ehv_g)}
        A^1_\epsilon(h, e^{k}v)
        = e^k[ v\{A^1_\epsilon(h,k)+A^3_\epsilon(k,h)\} + A^1_\epsilon(h,v) \\
        -\epsilon \{ 2T(h,k,v)+ 2T(k,v,h) + 2T(h,v,k)\\
        + 4B(k,h)B(k,v) + 2B(k,k)B(h,v)\}],
    \end{multline}
    \begin{multline}\label{eq:D_epsilon(ehv_g)}
        A^3_\epsilon(h,e^kv)
        = e^k[ v A^3_\epsilon(h,k) + A^3_\epsilon(h,v) \\
        - \epsilon \{v( 2 B(h,k)^2 + B(h,h)B(k,k)) \\
        + 4B(k,h)B(v,h)+ 2 B(h,h)B(k,v) \}].
    \end{multline}
\end{prop}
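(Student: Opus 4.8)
Here is the plan I would follow.

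The plan is to prove both identities by direct expansion, pulling the scalar factor $e^k$ out of every term and then recognising the structured blocks $v\{A^1_\epsilon(h,k)+A^3_\epsilon(k,h)\}$, $A^1_\epsilon(h,v)$ (for (\ref{eq:B_epsilon(ehv_g)})), respectively $v\,A^3_\epsilon(h,k)$, $A^3_\epsilon(h,v)$ (for (\ref{eq:D_epsilon(ehv_g)})), together with the explicitly displayed $T$- and $B$-remainders. Before that I would record three elementary facts. First, the very definition $T(f,g,h)=B(f,g)\Delta h+B(B(f,g),h)$ rewrites as $B(B(f,g),h)=T(f,g,h)-B(f,g)\Delta h$, which is what turns every stray nested $B$ into a $T$. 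Second, using $\Delta e^k=e^k\{\Delta k+B(k,k)\}$ (eq.~(\ref{eq:L0(eh)})) and $B(e^k,\cdot)=e^k B(k,\cdot)$ (eq.~(\ref{eq:B0(eh_g)})) together with the first fact, a two-line computation gives the ``conjugation rules''
\[
    T(e^k,a,b)=e^k\{T(k,a,b)+B(k,a)B(k,b)\},\qquad T(a,b,e^k)=e^k\{T(a,b,k)+B(a,b)B(k,k)\}
\]
(the first one also covers $e^k$ in the middle slot since $T(f,g,h)=T(g,f,h)$). Third, each summand in the definition of $A^1_\epsilon$ is symmetric in its two arguments, so $A^1_\epsilon$ is symmetric; combined with the already-proved eq.~(\ref{eq:B_epsilon(eh_g)}) this yields $A^1_\epsilon(h,e^k)=e^k\{A^1_\epsilon(h,k)+A^3_\epsilon(k,h)\}$.

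For (\ref{eq:B_epsilon(ehv_g)}) I would first establish a Leibniz rule for $A^1_\epsilon$ in its second slot, namely
\[
    A^1_\epsilon(h,fg)=f\,A^1_\epsilon(h,g)+g\,A^1_\epsilon(h,f)-2\epsilon\{T(f,g,h)+T(h,f,g)+T(h,g,f)\}.
\]
This reduces to a product rule for $\widetilde B$: expanding $\widetilde B(h,fg)$ using (\ref{eq:L0(fg)}) for $\Delta(fg)$, bilinearity of $B$ and $B(h,\phi\psi)=\phi B(h,\psi)+\psi B(h,\phi)$, and the nested-$B$ identity, the cross terms $(\Delta f)B(h,g)$ and $(\Delta g)B(h,f)$ cancel and one is left with $\widetilde B(h,fg)=f\widetilde B(h,g)+g\widetilde B(h,f)+2\{T(f,g,h)+T(h,f,g)+T(h,g,f)\}+2(\Delta h)B(f,g)$, after which the surviving $2(\Delta h)B(f,g)$ is exactly killed by the $\epsilon(\Delta h)(\Delta(fg))$ term inside $A^1_\epsilon$. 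Then I would apply the Leibniz rule with $f=e^k$, $g=v$, substitute $A^1_\epsilon(h,e^k)=e^k\{A^1_\epsilon(h,k)+A^3_\epsilon(k,h)\}$, rewrite the three $T$-terms $T(e^k,v,h)$, $T(h,v,e^k)$, $T(h,e^k,v)$ by the conjugation rules, and factor out $e^k$; the resulting $B$-products combine to $4B(k,h)B(k,v)+2B(k,k)B(h,v)$, giving precisely the right-hand side of (\ref{eq:B_epsilon(ehv_g)}).

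For (\ref{eq:D_epsilon(ehv_g)}) I would expand $A^3_\epsilon(h,e^kv)=-\epsilon\{2T(e^kv,h,h)+T(h,h,e^kv)+2B(h,h)B(h,e^kv)\}$ term by term: $T(e^kv,h,h)$ via the first-slot product rule (\ref{eq:T(fg,h,k)}) followed by the conjugation rule for $T(e^k,h,h)$; $T(h,h,e^kv)$ via the third-slot product rule (\ref{eq:T(f,g,hk)}) followed by the conjugation rule for $T(h,h,e^k)$; and $B(h,e^kv)=e^k\{vB(h,k)+B(h,v)\}$ by (\ref{eq:B0(ehv_g)}). After factoring $e^k$, the $v$-free terms assemble into $A^3_\epsilon(h,v)$, the $v$-proportional terms $2vT(k,h,h)+vT(h,h,k)+2vB(h,h)B(h,k)$ assemble into $v\,A^3_\epsilon(h,k)$, and the leftover is exactly $-\epsilon\{v(2B(h,k)^2+B(h,h)B(k,k))+4B(k,h)B(v,h)+2B(h,h)B(k,v)\}$.

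The only real difficulty is bookkeeping, not any genuine idea: one must keep the ordered arguments of $T$ straight (recall $A^3_\epsilon$ and $T$ are not symmetric in all slots) and verify that the roughly a dozen cross terms generated in the $\widetilde B$ product rule cancel as claimed; everything else is Leibniz rule plus the conjugation identities.
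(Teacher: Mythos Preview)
Your proposal is correct and follows essentially the same route as the paper: derive a Leibniz rule $A^1_\epsilon(h,fv)=vA^1_\epsilon(h,f)+fA^1_\epsilon(h,v)-2\epsilon\{T(h,f,v)+T(f,v,h)+T(h,v,f)\}$, specialize to $f=e^k$ using $A^1_\epsilon(h,e^k)=e^k\{A^1_\epsilon(h,k)+A^3_\epsilon(k,h)\}$ and the three conjugation identities for $T$, and handle $A^3_\epsilon(h,e^kv)$ by the product rules (\ref{eq:T(fg,h,k)}), (\ref{eq:T(f,g,hk)}) plus (\ref{eq:B0(ehv_g)}). The only cosmetic difference is that the paper expands each of the four summands of $A^1_\epsilon(h,fv)$ separately rather than packaging the computation as a product rule for $\widetilde B$, but the algebra and the cancellations are identical.
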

\begin{proof}
    First, we consider $A^1_\epsilon(h,fv)$.
    From (\ref{eq:L0(fg)}) and (\ref{eq:L0(eh)}), we have the relations between the derivatives of $h$ and $fv$.
    \begin{align*}
        B(\Delta h, fv)
        &= vB(\Delta h, f) + fB(\Delta h,v), \\
        B(h, \Delta (fv))
        &= B(h, (\Delta f)v + f(\Delta v) + 2B(f,v)) \\
        &= vB(h,\Delta f) + fB(h,\Delta v)\\
        &\quad + (\Delta f)B(h,v) + (\Delta v)B(h,v) + 2B(h,B(f,v)),\\
        (\Delta h)(\Delta (fv))
        &= v(\Delta h)(\Delta f) + f(\Delta h)(\Delta h) + 2(\Delta h)B(f,v), \\
        \Delta B(h,fv)
        &= \Delta (vB(h,f)+fB(h,v))\\
        &= v \Delta B(h,f) +(\Delta v )B(h,f)+ 2B(v,B(h,f))\\
        &\quad + f\Delta B(h,v) + (\Delta f)B(h,v)+ 2B(f,B(h,v))\\
        &= v \Delta B(h,f)+ f\Delta B(h,v) \\
        &\quad +T(h,f,v) + T(h,v,f) + B(v,B(h,f)) + B(f,B(h,v)).
    \end{align*}
    Hence, we have
    \begin{align*}
        &A^1_\epsilon(h,fv)\\
        &=v A^1_\epsilon(h,f) + fA^1_\epsilon(h,v)\\
        &\quad -\epsilon \{ (\Delta f)B(h,v) + (\Delta v)B(h,v) + 2B(h,B(f,v)) + 2(\Delta h)B(f,v)\\
        &\qquad \quad  + T(h,f,v) + T(h,v,f) + B(v,B(h,f)) + B(f,B(h,v))\}\\
        &= v A^1_\epsilon(h,f) + fA^1_\epsilon(h,v) -2\epsilon \{ T(h,f,v)+T(f,v,k)+T(h,v,f) \}.
    \end{align*}
    Take $f=e^k$.
    From the definition of $T$ and (\ref{eq:B0(eh_g)}), we have
    \begin{align*}
        T(h,e^k,v)&= e^k \{T(h,k,v) + B(k,h)B(k,v)\}, \\
        T(e^k,v,h)&= e^k \{T(k,v,h) + B(k,h)B(k,v)\}, \\
        T(h,v,e^k)&= e^k \{T(h,v,k) + B(h,v)B(k,k)\}.
    \end{align*}
    From these and (\ref{eq:B_epsilon(eh_g)}), we obtain (\ref{eq:B_epsilon(ehv_g)}).

    Next, We consider $A^3_\epsilon(h, e^kv)$.
    From (\ref{eq:T(fg,h,k)}), (\ref{eq:T(f,g,hk)}) and above relations of $T$, we have
    \begin{align*}
        T(e^kv,h,h)
        &= vT(e^k,h,h)+e^kT(v,h,h) + 2B(e^k,h)B(v,h)\\
        &= e^k\{v T(k,h,h) + T(v,h,h) + v B(k,h)^2+ 2B(k,h)B(v,h)\}, \\
        T(h,h,e^k v)
        &= vT(h,h,e^k) + e^kT(h,h,v) + 2B(h,h)B(e^k,v) \\
        &= e^k\{v T(h,h,k) + T(h,h,v)\\
        &\quad + vB(h,h)B(k,k) + 2B(h,h)B(k,v) \}.
    \end{align*}
    By using these and (\ref{eq:B0(ehv_g)}), it follows (\ref{eq:D_epsilon(ehv_g)}) .
\end{proof}

\begin{prop}\label{prop:B_5}
    When $g=e^{-h}v$, from the above relations, we have
    \begin{align}
        L_\epsilon v = (L_\epsilon h)v + e^h(L_\epsilon g) - F_\epsilon(h)v + 2G_\epsilon(h,v) \label{eq:L_epsilon(v)},
    \end{align}
    where
    \begin{align*}
        F_\epsilon(h) &:= |\nabla h|^2+\epsilon(\Delta h)^2 + \epsilon \{- \widetilde{B}(h,h) + 2T(h,h,h) - B(h,h)^2\}, \\
        G_\epsilon(h,v) &:= \nabla h \cdot \nabla v +\epsilon\Delta h\Delta v\\
        &\quad + \epsilon \{-\widetilde{B}(h,v) + 2T(v,h,h) + T(h,h,v) - 2 B(h,h)B(h,v) \}.
    \end{align*}
\end{prop}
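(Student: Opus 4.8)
The plan is to obtain \eqref{eq:L_epsilon(v)} as a purely algebraic consequence of the identities already collected in Proposition~\ref{prop:B_4} and the two propositions preceding it; no analysis is involved, so one may take $h$ and $g$ smooth throughout. First I would set $g=e^{-h}v$ and apply \eqref{eq:L_epsilon(ehg)}, which yields
\[
    L_\epsilon v = e^h\{(L_\epsilon h)g + L_\epsilon g + 2A^1_\epsilon(h,g) + A^2_\epsilon(h)g + 2A^3_\epsilon(h,g)\}.
\]
Since $e^h(L_\epsilon h)g=(L_\epsilon h)v$ and $e^hA^2_\epsilon(h)g=A^2_\epsilon(h)v$, everything reduces to proving
\[
    A^2_\epsilon(h)v + 2e^hA^1_\epsilon(h,e^{-h}v) + 2e^hA^3_\epsilon(h,e^{-h}v) = -F_\epsilon(h)v + 2G_\epsilon(h,v),
\]
and I would expand the two mixed terms on the left by the expansion formulas \eqref{eq:B_epsilon(ehv_g)} and \eqref{eq:D_epsilon(ehv_g)} for $A^1_\epsilon(h,e^kv)$ and $A^3_\epsilon(h,e^kv)$ in Proposition~\ref{prop:B_4}, taken with $k=-h$.

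The next step is to simplify all the coefficients produced by the choice $k=-h$. Since $B$, $\widetilde{B}$, $T$ and $A^1_\epsilon$ are multilinear in their slots and $B$ is symmetric, one gets $A^1_\epsilon(h,-h)=-A^1_\epsilon(h,h)$, $T(h,-h,w)=-T(h,h,w)$, $T(-h,w,h)=T(h,w,-h)=-T(h,w,h)$, $B(-h,\cdot)=-B(h,\cdot)$, together with the ``diagonal'' values $A^3_\epsilon(-h,h)=-\epsilon\{3T(h,h,h)-2B(h,h)^2\}$ and $A^3_\epsilon(h,-h)=\epsilon\{3T(h,h,h)+2B(h,h)^2\}$. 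Feeding these into \eqref{eq:B_epsilon(ehv_g)} and \eqref{eq:D_epsilon(ehv_g)}, multiplying through by $e^h$ and adding $A^2_\epsilon(h)v$, I would split the outcome into a part proportional to $v$ and a part linear in $v$. I expect the $v$-proportional part to telescope to $-vA^1_\epsilon(h,h)-2\epsilon vT(h,h,h)+\epsilon vB(h,h)^2$, which equals $-F_\epsilon(h)v$ after using $A^1_\epsilon(h,h)=|\nabla h|^2+\epsilon(\Delta h)^2-\epsilon\widetilde{B}(h,h)$.

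For the part linear in $v$ I anticipate being left with $2A^1_\epsilon(h,v)+2A^3_\epsilon(h,v)+4\epsilon T(h,h,v)+8\epsilon T(h,v,h)$. Expanding $A^1_\epsilon$ and $A^3_\epsilon$ and then invoking $T(h,v,h)=T(v,h,h)$---which is immediate from the symmetry of $B$ in $T(a,b,c)=B(a,b)\Delta c+B(B(a,b),c)$---the trilinear terms collapse to $2\epsilon T(v,h,h)+\epsilon T(h,h,v)$, so the whole expression becomes $2G_\epsilon(h,v)$. Assembling the three contributions then gives \eqref{eq:L_epsilon(v)}.

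The proof is thus essentially bookkeeping, but two points deserve care. The main obstacle will be keeping the coefficient of $T(h,h,h)$ in the $v$-proportional part correct: a naive ``linear in the first slot'' evaluation of $A^3_\epsilon(\pm h,\mp h)$ gives the wrong numerical factor, and one must exploit the trilinearity of $T$ in each slot separately, using in particular $T(h,-h,-h)=T(-h,-h,h)=T(h,h,h)$. The second delicate point is the identity $T(h,v,h)=T(v,h,h)$, which is exactly what makes the surviving trilinear terms reproduce the $2T(v,h,h)$ in $G_\epsilon(h,v)$.
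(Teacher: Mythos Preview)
Your proposal is correct and follows essentially the same route as the paper: apply \eqref{eq:L_epsilon(ehg)} with $g=e^{-h}v$, then evaluate $A^1_\epsilon(h,e^{-h}v)$ and $A^3_\epsilon(h,e^{-h}v)$ via Proposition~\ref{prop:B_4} at $k=-h$, and regroup. The paper organizes the bookkeeping slightly differently (it first adds $A^1_\epsilon(h,e^{-h}v)+A^3_\epsilon(h,e^{-h}v)$, using $A^3_\epsilon(-h,h)+A^3_\epsilon(h,-h)=4\epsilon B(h,h)^2$, and only then plugs into \eqref{eq:L_epsilon(ehg)}), but the algebra is the same and the same symmetry $T(h,v,h)=T(v,h,h)$ is used. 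One small slip: after expanding $2A^3_\epsilon(h,v)$ and combining with $4\epsilon T(h,h,v)+8\epsilon T(h,v,h)$, the trilinear terms actually collapse to $4\epsilon T(v,h,h)+2\epsilon T(h,h,v)$, not $2\epsilon T(v,h,h)+\epsilon T(h,h,v)$; this is exactly the trilinear content of $2G_\epsilon(h,v)$, so your conclusion is unaffected.
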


\begin{proof}
    Let $g=e^{-h}v$.
    From Proposition \ref{prop:B_4}, we have
    \begin{align*}
        A^1_\epsilon(h, e^{-h}v)
        &=e^{-h} [v\{-A^1_\epsilon(h,h)+A^3_\epsilon(-h,h)\}+ A^1_\epsilon(h,v) \\
        &\quad\qquad  -\epsilon \{ -4T(v,h,h) - 2T(h,h,v) + 6B(h,h)B(h,v)\}], \\
        A^3_\epsilon(h,e^{-h}v)
        &= e^{-h}[ vA^3_\epsilon(h,-h) + A^3_\epsilon(h,v) \\
        &\quad\qquad - \epsilon \{3 vB(h,h)^2 -6 B(h,h)B(h,v)\}].
    \end{align*}
    Note that
    \begin{align*}
        A^3_\epsilon(-h,h)
        &= -\epsilon \{3T(h,h,h) -2B(h,h)^2\},\\
        A^3_\epsilon(h,-h)
        &= -\epsilon\{ -3T(h,h,h) - 2B(h,h)^2 \},
    \end{align*}
    therefore we have 
    \begin{align*}
        A^3_\epsilon(-h,h)+A^3_\epsilon(h,-h) = 4\epsilon B(h,h)^2.    
    \end{align*}
    Hence, we have
    \begin{align*}
        &A^1_\epsilon(h,e^{-h}v)+A^3_\epsilon(h,e^{-h}v)\\
        &= e^{-h}[ v \{ -A^1_\epsilon(h,h) + A^3_\epsilon(-h,h) + A^3_\epsilon(h,-h) \} + A^1_\epsilon(h,v) + A^3_\epsilon(h,v) \\
        &\quad\qquad -\epsilon \{ -4T(v,h,h) -2T(h,h,v)+ 3v B(h,h)^2 \} ]\\
        &= e^{-h} [v \{ -A^1_\epsilon(h,h) + \epsilon B(h,h)^2 \}\\
        &\quad\qquad +  A^1_\epsilon(h,v) + \epsilon \{2T(v,h,h)+T(h,h,v) -2B(h,h)B(h,v) \}].
    \end{align*}
    From this and (\ref{eq:L_epsilon(ehg)}), we obtain (\ref{eq:L_epsilon(v)}) as follows.
    \begin{align*}
        L_\epsilon(v)
        &=L_\epsilon(e^hg)\\
        &=e^h(L_\epsilon h)g + e^h(L_\epsilon g) + e^h\{2A^1_\epsilon(h,g) + A^2_\epsilon(h)g + 2A^3_\epsilon(h,g) \}\\
        &= (L_\epsilon h) v + e^h(L_\epsilon g) + v\{ A^1_\epsilon(h,h) -\epsilon (2T(h,h,h) + B(h,h)^2)\} \\
        &\quad+ 2v \{ -A^1_\epsilon(h,h) + \epsilon B(h,h)^2 \}\\
        &\quad + 2 [ A^1_\epsilon(h,v) + \epsilon \{2T(v,h,h)+T(h,h,v) -2B(h,h)B(h,v) \}]\\
        &=(L_\epsilon h) v + e^h(L_\epsilon g)- v\{A^1_\epsilon(h,h) + \epsilon (2T(h,h,h) - B(h,h)^2) \}\\
        &\quad +2[A^1_\epsilon(h,v) + \epsilon\{2T(v,h,h) +T(h,h,v) - 2B(h,h)B(h,v)\}]\\
        &= (L_\epsilon h)v + e^h(L_\epsilon g) - F_\epsilon(h)v + 2G_\epsilon(h,v).
    \end{align*}
\end{proof}

\subsection{The expansion of $\CL_\epsilon v$}
\begin{prop}{\label{prop:derivation_of_equation}}
    When $g=e^{-h}v$, we have
    \begin{equation}
        \CL_\epsilon v=(\CL_\epsilon h) v -hv + e^h (\CL_\epsilon g) + F_\epsilon(h) v - 2G_\epsilon(h,v).
        \label{eq:CL_epsilon_v_relation}
    \end{equation}
\end{prop}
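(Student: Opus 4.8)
The plan is to reduce the statement to the purely spatial identity already proved in Proposition \ref{prop:B_5} by peeling off the time derivative and the mass term. Recall that in this appendix $\CL_\epsilon = \pt - L_\epsilon + 1$, so I would start from $\CL_\epsilon v = \pt v + v - L_\epsilon v$ with $v = e^h g$, and treat the two pieces $\pt v$ and $L_\epsilon v$ separately.

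First I would compute $\pt v$ by the product rule. Since only $h$ and $g$ carry the time dependence, $\pt v = e^h(\pt h) g + e^h \pt g = (\pt h) v + e^h \pt g$; there is no commutator subtlety because $\pt$ is a scalar derivative. Next I would substitute the spatial expansion from Proposition \ref{prop:B_5}, namely $L_\epsilon v = (L_\epsilon h) v + e^h (L_\epsilon g) - F_\epsilon(h) v + 2 G_\epsilon(h,v)$. Combining the two gives
\[
\CL_\epsilon v = (\pt h) v + e^h \pt g + v - (L_\epsilon h) v - e^h (L_\epsilon g) + F_\epsilon(h) v - 2 G_\epsilon(h,v).
\]

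Then I would regroup the $h$-terms and the $g$-terms using the definition of $\CL_\epsilon$ applied to $h$ and to $g$: from $\CL_\epsilon h = \pt h - L_\epsilon h + h$ one gets $(\pt h - L_\epsilon h) v = (\CL_\epsilon h) v - h v$, and from $\CL_\epsilon g = \pt g - L_\epsilon g + g$ one gets $e^h(\pt g - L_\epsilon g) = e^h \CL_\epsilon g - e^h g = e^h \CL_\epsilon g - v$. Inserting these two identities, the spurious $-v$ cancels against the $+v$ coming from the mass term, and one is left precisely with
\[
\CL_\epsilon v = (\CL_\epsilon h) v - h v + e^h \CL_\epsilon g + F_\epsilon(h) v - 2 G_\epsilon(h,v),
\]
which is \eqref{eq:CL_epsilon_v_relation}.

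I do not expect any genuine obstacle here: all the nontrivial algebra, in particular the emergence of $F_\epsilon$ and $G_\epsilon$ with their trilinear $T$ and $\widetilde{B}$ contributions, is already contained in Proposition \ref{prop:B_5}, and the present statement is a mechanical bookkeeping step. The only point needing a little care is keeping track of signs so that the mass term $+1$ in $\CL_\epsilon$ produces exactly the $-hv$ correction through the two regrouping identities above; once those are written out, the cancellation of the $\pm v$ terms is immediate and the proof is complete.
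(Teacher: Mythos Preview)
Your proposal is correct and follows essentially the same approach as the paper: compute $\pt v$ by the product rule, invoke Proposition~\ref{prop:B_5} for $L_\epsilon v$, and regroup using $\CL_\epsilon = \pt - L_\epsilon + 1$ applied to $h$ and $g$. The paper's proof is slightly terser in that it performs the regrouping in a single line without explicitly isolating the $\pm v$ cancellation, but the logic is identical.
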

\begin{proof}
    By using Leibniz rule, we have
    \begin{align*}
        \pt v
        &= \pt (e^h g)
        = (\pt h)e^h g  + e^h \pt g
        = (\pt h)v + e^h \pt g.
    \end{align*}
    From Proposition \ref{prop:B_5}, we have (\ref{eq:CL_epsilon_v_relation}) as follows.
    \begin{align*}
        \CL_\epsilon v
        &= (\pt - L_\epsilon +1) v \\
        &= \{ (\pt h)v + e^h \pt g \} - \{ (L_\epsilon h)v + e^h (L_\epsilon g) - F_\epsilon(h)v + 2G_\epsilon(h,v) \} + v \\
        &= (\CL_\epsilon h) v -hv + e^h (\CL_\epsilon g) + F_\epsilon(h) v - 2G_\epsilon(h,v).
    \end{align*}
\end{proof}

\end{document}